\documentclass[11pt,amscd,combelow,anysize,amsxtra]{amsart} %reqno places equation numbers on the right
\usepackage[margin=1in]{geometry}

\usepackage{amsmath}
\usepackage{hyperref}
\usepackage[capitalize]{cleveref}
\usepackage{graphicx}
\usepackage{tikz}
\usepackage{dynkin-diagrams}
\usepackage{vcell}  % Load the cell package
\usepackage{array} % For better control over cell height
\usetikzlibrary{shapes.misc}

\tikzset{cross/.style={cross out, draw=black, minimum size=2*(#1-\pgflinewidth), inner sep=1pt, outer sep=1pt},
	%default radius will be 1pt. 
	cross/.default={5pt}}
\usepackage{adjustbox}

%\linespread{1.2}
%\allowdisplaybreaks[1]
\usepackage{calligra}
\usepackage{amsthm,amssymb,mathrsfs,mathtools,bm,eucal,tensor} % math related
\usepackage{microtype} % latex technical issues
\usepackage[scaled]{beramono,berasans}
\usepackage{enumerate,comment,braket,xspace,tikz-cd} %utilities
\usetikzlibrary{arrows}
\usepackage[all,cmtip]{xy} % utilities
\usepackage[utf8]{inputenc} % input encoding
\usepackage[T1]{fontenc} % font encoding
\usepackage{lmodern}
\definecolor{linkcolor}{HTML}{005050}
\usepackage{xparse}
\usepackage{url}
\usepackage[toc,page]{appendix}
\usepackage{stmaryrd}
\usepackage{lscape}
\newcommand{\myfatslash}{\mathbin{\mkern-6mu\fatslash}}

\usepackage{subcaption}

\usepackage{pgfplots} %utilities
\pgfplotsset{compat=newest}

\theoremstyle{plain}
\newtheorem{thm-intro}{Theorem}
\newtheorem{thm}{Theorem}[section]

\newtheorem*{thm*}{Theorem}
\newtheorem{lem}[thm]{Lemma}
\newtheorem{lemma}[thm]{Lemma}
\newtheorem*{lem*}{Lemma}
\newtheorem{prop}[thm]{Proposition}
\newtheorem{proposition}[thm]{Proposition}
\newtheorem{assumption}[thm]{Assumption}

\newtheorem{cor}[thm]{Corollary}
\newtheorem{corollary}[thm]{Corollary}
\theoremstyle{definition}
\newtheorem{definition}[thm]{Definition}
\newtheorem{defn}[thm]{Definition}

\theoremstyle{remark}
\newtheorem{rem}[thm]{Remark}
\newtheorem{remark}[thm]{Remark}
\newtheorem{example}[thm]{Example}
\numberwithin{equation}{section}

\newtheorem{ex}[thm]{Example}

\newcommand{\fc}{\mathfrak{c}}
\newcommand{\fl}{\mathfrak{l}}
\newcommand{\fh}{\mathfrak{h}}

\newcommand{\wt}{\widetilde}
\newcommand{\ol}{\overline}

\newcommand{\fsl}{\mathfrak{sl}}
\newcommand{\reg}{\mathrm{reg}}
\newcommand{\bG}{\mathbb{G}}
\newcommand{\canC}{\mathbf{C}}
\newcommand{\canT}{\mathbf{T}}
\newcommand{\canA}{\mathbf{A}}
\usepackage{bbold}
\newcommand{\canft}{\mathfrak{t}}
\newcommand{\canfa}{\mathfrak{a}}
\newcommand{\fz}{\mathfrak{z}}

\DeclareMathOperator{\SO}{SO}
\DeclareMathOperator{\SL}{SL}
\DeclareMathOperator{\GL}{GL}
\DeclareMathOperator{\Sp}{Sp}

\DeclareMathOperator{\PGL}{PGL}
\newcommand{\fa}{\mathfrak{a}}

\let\C\relax

% personal remarks

% Fonts
\newcommand{\C}{\mathbb C}

\newcommand{\Z}{\mathbb Z}

\newcommand\git{{/\!\!/}}

\newcommand{\fD}{\mathfrak D}

\newcommand{\fs}{\mathfrak s}
\newcommand{\ft}{\mathfrak t}
\newcommand{\fp}{\mathfrak{p}}
\newcommand{\fg}{\mathfrak{g}}
\newcommand{\fk}{\mathfrak{k}}
\newcommand{\Norm}{\ol{\ft\times_{\fc_G}\fc}}

\newcommand{\cA}{\mathcal A}

\newcommand{\ad}{\mathrm{ad}}

\newcommand{\cD}{\mathcal D}

\newcommand{\cI}{\mathcal I}

\newcommand{\cM}{\mathcal M}
\newcommand{\cN}{\mathcal N}
\newcommand{\cO}{\mathcal O}
\newcommand{\cP}{\mathcal P}

\newcommand{\cS}{\mathcal S}
\newcommand{\cT}{\mathcal T}

\DeclareFontFamily{U}{BOONDOX-calo}{\skewchar\font=45 }
\DeclareFontShape{U}{BOONDOX-calo}{m}{n}{<-> s*[1.05] BOONDOX-r-calo}{}
\DeclareFontShape{U}{BOONDOX-calo}{b}{n}{<-> s*[1.05] BOONDOX-b-calo}{}
\DeclareMathAlphabet{\mathcalboondox}{U}{BOONDOX-calo}{m}{n}
%\DeclareMathAlphabet{\mathcalligra}{T1}{calligra}{m}{n}

\newcommand{\bbG}{\mathbb G}

\newcommand{\bA}{\mathbf A}

\newcommand{\ul}{\underline}

% Decorations

% Definition of \widebar from http://tex.stackexchange.com/questions/16337/can-i-get-a-widebar-without-using-the-mathabx-package/60253#60253
\makeatletter
\let\save@mathaccent\mathaccent
\newcommand*\if@single[3]{%
	\setbox0\hbox{${\mathaccent"0362{#1}}^H$}%
	\setbox2\hbox{${\mathaccent"0362{\kern0pt#1}}^H$}%
	\ifdim\ht0=\ht2 #3\else #2\fi
}
%The bar will be moved to the right by a half of \macc@kerna, which is computed by amsmath:
\newcommand*\rel@kern[1]{\kern#1\dimexpr\macc@kerna}
%If there's a superscript following the bar, then no negative kern may follow the bar;
%an additional {} makes sure that the superscript is high enough in this case:
\newcommand*\widebar[1]{\@ifnextchar^{{\wide@bar{#1}{0}}}{\wide@bar{#1}{1}}}
%Use a separate algorithm for single symbols:
\newcommand*\wide@bar[2]{\if@single{#1}{\wide@bar@{#1}{#2}{1}}{\wide@bar@{#1}{#2}{2}}}
\newcommand*\wide@bar@[3]{%
	\begingroup
	\def\mathaccent##1##2{%
		%Enable nesting of accents:
		\let\mathaccent\save@mathaccent
		%If there's more than a single symbol, use the first character instead (see below):
		\if#32 \let\macc@nucleus\first@char \fi
		%Determine the italic correction:
		\setbox\z@\hbox{$\macc@style{\macc@nucleus}_{}$}%
		\setbox\tw@\hbox{$\macc@style{\macc@nucleus}{}_{}$}%
		\dimen@\wd\tw@
		\advance\dimen@-\wd\z@
		%Now \dimen@ is the italic correction of the symbol.
		\divide\dimen@ 3
		\@tempdima\wd\tw@
		\advance\@tempdima-\scriptspace
		%Now \@tempdima is the width of the symbol.
		\divide\@tempdima 10
		\advance\dimen@-\@tempdima
		%Now \dimen@ = (italic correction / 3) - (Breite / 10)
		\ifdim\dimen@>\z@ \dimen@0pt\fi
		%The bar will be shortened in the case \dimen@<0 !
		\rel@kern{0.6}\kern-\dimen@
		\if#31
		\overline{\rel@kern{-0.6}\kern\dimen@\macc@nucleus\rel@kern{0.4}\kern\dimen@}%
		\advance\dimen@0.4\dimexpr\macc@kerna
		%Place the combined final kern (-\dimen@) if it is >0 or if a superscript follows:
		\let\final@kern#2%
		\ifdim\dimen@<\z@ \let\final@kern1\fi
		\if\final@kern1 \kern-\dimen@\fi
		\else
		\overline{\rel@kern{-0.6}\kern\dimen@#1}%
		\fi
	}%
	\macc@depth\@ne
	\let\math@bgroup\@empty \let\math@egroup\macc@set@skewchar
	\mathsurround\z@ \frozen@everymath{\mathgroup\macc@group\relax}%
	\macc@set@skewchar\relax
	\let\mathaccentV\macc@nested@a
	%The following initialises \macc@kerna and calls \mathaccent:
	\if#31
	\macc@nested@a\relax111{#1}%
	\else
	%If the argument consists of more than one symbol, and if the first token is
	%a letter, use that letter for the computations:
	\def\gobble@till@marker##1\endmarker{}%
	\futurelet\first@char\gobble@till@marker#1\endmarker
	\ifcat\noexpand\first@char A\else
	\def\first@char{}%
	\fi
	\macc@nested@a\relax111{\first@char}%
	\fi
	\endgroup
}
\makeatother

% Global tropicalization

% Vanishing cycles

\newcommand{\fR}{\mathfrak{R}}

% stacks

% DAnG

% Formal Gluing

% Special symbols

% Categories

% Shorthands

%\newcommand{\eistar}{\mathbf e_i^*}
%\newcommand{\ejstar}{\mathbf e_j^*}
%\newcommand{\ekstar}{\mathbf e_k^*}

% Arrows

\usetikzlibrary{decorations.markings} %arrows for open immersions and closed immersions
\tikzset{
	closed/.style = {decoration = {markings, mark = at position 0.5 with { \node[transform shape, xscale = .8, yscale=.4] {/}; } }, postaction = {decorate} },
	open/.style = {decoration = {markings, mark = at position 0.5 with { \node[transform shape, scale = .7] {$\circ$}; } }, postaction = {decorate} }
}

%Operators

\DeclareMathOperator{\Aut}{Aut}

\DeclareMathOperator{\Hom}{Hom}

\DeclareMathOperator{\Pic}{Pic}

\DeclareMathOperator{\Spec}{Spec}

\DeclareMathOperator{\Sym}{Sym}

\newcommand{\fv}{\mathfrak{v}}

\AtBeginDocument{%
	\def\MR#1{}
}

\title{The Hitchin Fibration for Symmetric Pairs}
\author{Thomas Hameister}
\author{Benedict Morrissey}

\begin{document}

	\begin{abstract}
		We introduce and describe the ``regular quotient'' for the Hitchin fibration for symmetric spaces and explain some basic consequences for Higgs bundles. We include an invariant theoretic approach to spectral covers in this setting for the particular space $\GL_{2n}/\GL_n\times \GL_n$. We also include a study of the regular centralizer group scheme for quasisplit pairs, including a Galois description of a closely related group scheme. We collect some basic consequences for Hitchin systems associated to such pairs.
	\end{abstract}
	
	\maketitle

	\setcounter{tocdepth}{1}
	\tableofcontents

	\section{Introduction}
	
	Let $G$ be a reductive group over an algebraically closed field $k$. We assume that either the characteristic of $k$ is zero or that $G$ satisfies the ``standard hypotheses'' of Section \ref{subsec: conditions on char}. Fix a smooth, projective curve $C$ over $k$ and a line bundle $D$ on $C$ such that $\dim H^0(C,D)>0$. That is, when the genus of $C$ is at least 2, $D$ is degree at least $2g$ or $D$ is the canonical bundle of $C$. To such data, one can associate the moduli stack of $G$ Higgs bundles
	\[
	\cM_G = \mathrm{Maps}(C,[\fg_D/G])
	\]
	where $\fg_D = \fg\otimes D$ is the twisted bundle of Lie algebras and $[\fg_D/G]$ is the stack quotient. When $D$ is the canonical bundle of $C$ and $k=\C$, the associated moduli space of polystable Higgs bundles with vanishing chern class was shown to be homeomorphic (with respect to the classical topology) to the moduli space of reductive representations of the fundamental group $\pi_1(C)$ in $G(\C)$ by Corlette and Simpson \cite{corlette,simpson}, generalizing work of Hitchin and Donaldson \cite{hitchin1987self, donaldson1987twisted}.  Furthermore this homeomorphism restricts to a (real) diffeomorphism on the smooth locus. In \cite{hitchin}, Hitchin introduced a beautiful fibration 
	\[
	h_G\colon \cM_G\to \cA_G
	\]
	where $\cA_G$ is an affine space of half the dimension of $\cM_G$, which is a global analogue of the characteristic polynomial map, and whose generic fiber is essentially a union of abelian varieties.  Since its introduction, the Hitchin fibration has found applications across a wide range of mathematics. Among its remarkable properties:  Over $\C$, it is a Lagrangian fibration with a known mirror dual (in the sense of SYZ mirror symmetry); there are several statements exhibiting mirror symmetric dualities between the Hitchin fibrations for $G$ and its Langlands dual $G^{\vee}$ including \cite{hausel-thaddeus, kapustin-witten, donagi-pantev, GWZ}; and it provides a geometric framework for the theory of endoscopy leading to B.C. Ng\^o's proof of the Fundamental Lemma \cite{ngo2010lemme}.
	
	In this paper, we study a generalized Hitchin fibration associated to a symmetric pair. In particular, we let 
	\[
	\theta\colon G\to G
	\]
	be an algebraic involution of $G$, and let $K = (G^\theta)^\circ$ be the connected component of the fixed points of $\theta$ on $G$. Fix a smooth, closed subgroup $H$ of $G$ such that
	\[
	K\subset H\subset N_G(K).
	\]
	The involution $\theta$ induces a Cartan decomposition
	\[
	\fg = \fk\oplus \fp
	\]
	where $\fk$ and $\fp$ are the $(+1)$ and $(-1)$ eigenspaces, respectively, of $\theta$ on $\fg$. To the $G$-variety $X = G/H$ we associate a moduli space of relative Higgs bundles
	\[
	\cM = \cM_X = \mathrm{Maps}(C,[\fp_D/H])
	\]
	For $k=\C$ and $D$ the canonical bundle, there is an extension of non-abelian Hodge theory that replaces morphisms $\pi_{1}(C)\rightarrow G(\mathbb{C})$, with morphisms $\pi_{1}(C)\rightarrow G(\mathbb{R})$ and $G$-Higgs bundles with relative Higgs bundles for a symmetric pair associated to the real form $G(\mathbb{R})$ \cite{garcia2009hitchin, garcia2,garcia3}.  This result had already appeared for compact real forms in Narasimhan--Seshadri \cite{NS}, and there are implicitly some results for split real forms in \cite{hitchin-teichmuller}.  We refer to the survey \cite{garcia4} for more on applications of Higgs bundles for symmetric pairs to representations of the fundamental group. One still has a relative Hitchin fibration
	\[
	h\colon \cM\to \cA
	\]
	with $\cA =\mathrm{Maps}(X,(\fp\git H)_D)$ the affine space classifying maps to the twisted GIT quotient $\fp\git H := \Spec( k[\fp]^H )$. The geometry of fibers of these Hitchin systems were studied extensively using spectral covers in \cite{schaposnik2013spectral,schaposnik2014introduction,schaposnik2015spectral,hitchin2014nonabelianization,baraglia2019cayley,branco2018higgs} among others, and a theory of cameral covers was initiated in \cite{peon2013higgs,gppn}. 
	
	The generic fibers of the Hitchin fibration for symmetric spaces involve two novel geometric behaviors:  First, there may be unexpected components in fibers. For example, in Schaposnik's thesis \cite{schaposnik2013spectral} for the symmetric space $X = \GL_{2n}/(\GL_n\times \GL_n)$, fibers are generically identified with a disjoint union of $2^\ell$ copies of the Picard stack classifying line bundles on a spectral curve for an explicit $\ell$. Moreover, the connected components may still fail to be abelian varieties. For example, Hitchin and Schaposnik give a symmetric pair for which the fibers of $h$ over a generic $a$ are identified with the space of rank two vector bundles on a spectral curve \cite{hitchin2014nonabelianization}.
	
	In this work, we describe in greater detail the geometry of the Hitchin fibration for symmetric spaces, providing invariant theoretic explanations for the anomalous behavior of the fibers and giving descriptions of the fibration $h$ in families. Our results will restrict to the regular locus. That is, we let $\fp^\reg$ denote the set of $x\in \fp$ whose centralizer in $H$ is of minimal dimension. We will restrict to the sub-locus $\cM^\reg$ of $\cM$ consisting of maps from $C$ valued in the open substack $[\fp^\reg_D/H]$. We will make use of a new invariant theoretic package developed in unpublished work of Morrissey and Ng\^o \cite{morrissey2022reg}. Following \emph{loc cit}, the basic structure theorem can be expressed as follows.
	\begin{thm}{(See Proposition \ref{prop: Areg to A is etale} and Theorem \ref{thm: general structure of h myfatslash})}
		\label{thm: general structure}
		There is a factorization of $h$ into
		\[
		\cM^{\reg}\xrightarrow{h^{\reg}} \cA^{\reg}\xrightarrow{\phi} \cA
		\]
		such that the map $\phi$ is \'etale. When $(G,\theta,H)$ is $\theta$-quasisplit (see Section \ref{sec: regularity and qsplit}), there is a commutative group stack $\cP$, defined over $\cA^\reg$, acting on $\cM^\reg$ so that $\cM^\reg$ is a $\cP$-torsor over $\cA^\reg$.
	\end{thm}

    We note that the map $\phi$ above can be made completely explicit; see Corollary \ref{cor: Areg to A}.
	
	The above theorem is a reflection of some new constructions in invariant theory. Namely, consider the Chevalley style morphism 
	\[
	\chi\colon [\fp^\reg/H]\to \fp\git H
	\]
	from the stack quotient of $H$ acting on $\fp^\reg$ to the corresponding GIT quotient. Unlike the map $[\fg^\reg/G]\to \fg\git G$, the map $\chi$ fails to be a gerbe. In \cite{gppn}, they suggest studying an intermediate quotient obtained by rigidifying the stack $[\fp^\reg/H]$ by inertia. This intermediate quotient, which we will denote by $\fp^\reg\myfatslash H$ and call the \emph{regular quotient}, is a coarse moduli space classifying all regular $H$-orbits in $\fp$. The map $\chi$ factors as
	\[
	[\fp^\reg/H]\to \fp^\reg\myfatslash H \to \fp\git H
	\]
	where the first map is a gerbe and the second is a non-separated cover. %Motivated by the study of generalized Hitchin systems, \cite{morrissey2022reg} gives a construction of such quotients in large generality, but it should be noted that this construction in general returns a DM stack rather than a scheme.
	
	Our main result is a completely explicit description of the regular quotient for symmetric pairs, in particular showing that $\fp^{\reg}\myfatslash H$ is a scheme.
	\begin{thm}{(See Theorem \ref{theorem: gluing pattern})}
		\label{thm: main}
		Let $(G,\theta,H)$ be a symmetric pair with $G$ simple.
		\begin{enumerate}
			\item If $(G,\theta,H)$ is not isogeneous to the split symmetric pair on $G = \SO_{4n}$, then there is a Zariski closed subset $Z\subset \fc$ such that
			\[
			\fp^\reg\myfatslash H\simeq \fc\coprod_{\fc\setminus Z}\fc
			\]
			consists of two copies of $\fc$ glued along the complement of $Z$. Moreover, this closed subset $Z$ can be computed explicitly via an inductive procedure.
			\item The split symmetric pair on $G = \SO_{4n}$ can be explicitly described as well, but involves gluing patterns for four sheets. See Example \ref{ex son son so2n}.
			\item The case of general $G$ can be reduced to the simple case.
		\end{enumerate}
	\end{thm}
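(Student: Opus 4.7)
The strategy is to reduce to the case of simple $G$ via part (3), then to analyze the regular centralizer scheme $J \to \fc$ type-by-type using Cartan's classification of symmetric pairs.

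First, for part (3), given a symmetric pair $(G,\theta,H)$ for general reductive $G$, I would construct a $\theta$-compatible $z$-extension $\tilde G \to G$ whose kernel is a central torus and to which $\theta$ lifts. Because this kernel acts trivially on $\fp$ and contributes only a torus summand to both $\fp$ and $\fc = \fp \git H$ while leaving $\fp^\reg$ intact, the regular quotient for the extended pair splits as a product of an affine space with the regular quotient for the simple semisimple part. Functoriality of the regular quotient under this operation reduces the problem to the simple case.

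For part (1), with $G$ simple and not of the split $D_{2n}$ form, I would use the regular centralizer group scheme $J \to \fc$ from \cite{morrissey2022reg}. By the general theory there, $\fp^\reg \myfatslash H$ is the coarse moduli of regular $H$-orbits on $\fp^\reg$, and its geometry is controlled by $\pi_0(J) \to \fc$. Over the regular semisimple locus $\fc^{\mathrm{rs}}$, the little Weyl group identifies regular orbits and the regular quotient agrees with $\fc$; extra sheets appear exactly where distinct regular $H$-orbits fiber over a common point of $\fc$. Using Kostant-Rallis slices for the pair $(\fg,\theta)$, I would produce two disjoint sections $s_0, s_1 \colon \fc \to \fp^\reg$ of the Chevalley map $\chi$ with the property that $H \cdot s_0(a) = H \cdot s_1(a)$ precisely when $a \notin Z$. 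This exhibits the presentation $\fp^\reg \myfatslash H \simeq \fc \coprod_{\fc \setminus Z} \fc$. To describe $Z$ inductively, I would stratify $\fc$ by the conjugacy class of regular centralizers: on each stratum, the fusion of the two sheets is controlled by the regular quotient of a $\theta$-compatible pseudo-Levi subpair $(L, \theta|_L, L \cap H)$ of strictly smaller rank, so that $Z$ is assembled from the analogous loci for simpler pairs, giving a terminating recursive description.

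For the split $D_{2n}$ case (part 2), the same framework applies but yields four sheets rather than two, a feature tied to the outer automorphism of the $D_{2n}$ Dynkin diagram enlarging the relevant component group of the regular centralizer. The resulting fourfold gluing is governed by the same inductive analysis and unwinds to the explicit pattern recorded in Example \ref{ex son son so2n}.

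The main obstacle is carrying out the inductive step uniformly across the classification. One must determine precisely when two regular orbits in $\fp_L^\reg$ remain non-conjugate after embedding into $\fp$, which requires careful control of component groups of regular centralizers under restriction along $\theta$-compatible pseudo-Levi subpairs. Crucially, one must rule out pairs (outside the split $D_{2n}$ case) where pathologies would force more than two sheets to appear, which is where the classification of simple symmetric pairs enters essentially rather than cosmetically.
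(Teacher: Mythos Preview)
Your overall architecture matches the paper's: reduce via $\theta$-compatible $z$-extensions, produce Kostant--Rallis sections hitting each regular nilpotent orbit, and glue copies of $\fc$ using a Levi-induction (``descendant'') procedure that reduces the gluing over each stratum of $\fc$ to the nilpotent cone of a smaller symmetric pair. However, there is a genuine misconception in the mechanism you invoke. The number of sheets of $\fp^\reg\myfatslash H$ over a point of $\fc$ is \emph{not} governed by $\pi_0(J)$: the group scheme $J$ is the band of the gerbe $\fp^\reg/H\to\fp^\reg\myfatslash H$, and its component group measures something else entirely (e.g.\ in the diagonal case $G_1\subset G_1\times G_1$ one can have nontrivial $\pi_0(J)$ while the regular quotient is just $\fc$). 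What actually indexes the sheets is the set of regular nilpotent $H$-orbits in $\cN_\fp$, and the essential input is the Sekiguchi--Levy classification (Proposition~\ref{prop:table of nilpotent orbits}): for simple $G$ with $H=K$ this number is $1$, $2$, or $4$, the last occurring only for $\SO_{2n}\times\SO_{2n}\subset\SO_{4n}$. This classification is the heart of the argument, not a technical obstacle to be dealt with at the end; without it you have no a priori reason to expect only two sections $s_0,s_1$.

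Relatedly, your explanation of the four-sheet case via the outer automorphism of $D_{2n}$ enlarging a component group is not the operative mechanism. The four sheets arise simply because $\cN_\fp^\reg$ has four $K$-orbits in that case, again by the Sekiguchi--Levy classification; the gluing pattern is then worked out by the same Levi-descendant analysis applied to the distinguished Levis, which here can themselves have up to four nilpotent orbits (see Example~\ref{ex son son so2n}). Finally, the paper uses genuine Levi subgroups $L=G_X^\circ$ for $X\in\fa$ (centralizers of semisimple elements), not pseudo-Levis; the restricted root system stratifies $\fa$ by root-hyperplane incidence, and the closed set $Z$ is a union of images of intersections of such hyperplanes (Theorem~\ref{thm: description of U as complement of subdivisor!}).
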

	
	The gerbe
	\[
	[\fp^\reg/H]\to \fp^\reg\myfatslash H
	\]
	is banded by (the descent of) the group scheme $I^\reg\to \fp^\reg$ whose fiber over $x\subset \fp^\reg$ is the centralizer of $x$ in $H$
	\[
	I^\reg_x= \{h\in H\colon \mathrm{Ad}(h)\cdot x = x\}.
	\]
	Any question of the behavior of the map $h^{\reg}$ must, therefore, be inextricably linked to the question of describing the regular centralizer group scheme $I^\reg$. Our results here are restricted to $\theta$-quasisplit symmetric pairs, that is to say, those for which $I^\reg$ is abelian. For $\theta$-quasisplit symmetric pairs, $I^\reg$ descends to a smooth, commutative group scheme $J$ over the GIT quotient, which we will denote by $\fc:=\fp\git H$. 
	
	Let $\canT$ be the canonical torus for $G$ with Lie algebra $\canft$. To a symmetric space $X$, one has attached a canonical torus $\canA$ with Lie algebra $\canfa$ and, when $X$ is $\theta$-quasisplit, a canonical projection $\canT\to \canA$. The abelian algebra $\canfa$ comes equipped with a natural root system (which agrees with the ``restricted root system'' of the symmetric space), and we denote the corresponding Weyl group by $W_X$. For $\theta$-quasisplit symmetric spaces, $W_X$ is a subgroup of the Weyl group $W$. 
	
	Knop proved a Chevalley-style isomorphism $\fc\simeq \canfa\git W_X$ (See \cite[Page 12]{knop_german} and \cite[Theorem 4.1]{knop1996automorphisms}), and hence it admits a finite flat cover $\fa\to \fc$. We let $\canC_1$ be the kernel of the canonical map $\canT\to \canA$, and for $\nu\in W/W_X$, we let $\canC_\nu = \nu\cdot \canC_1$. Following the perspective taken in \cite{donagi2002gerbe} and \cite{ngo2010lemme}, we give the following description of the regular centralizer group scheme $J$.

\begin{thm}
\label{thm: ses_intro}
    (See Theorem \ref{thm: galois description of JA}) We have a short exact sequence
        \[
        1\to J\to J_G|_\fc\to J_X\to 1
        \]
        where $J_G$ is the regular centralizer group scheme for the action of $G$ on its Lie algebra $\fg$, and $J_X$ is an open affine subgroup of the Weil restriction
        \[
        \mathrm{Res}^\fa_\fc(\canA\times \fa)^{W_X}.
        \]
\end{thm}

Theorem \ref{thm: ses_intro} should be seen as giving a Galois description of $J$ as a 2-term complex $J_G|_{\fc}\to J_X$. A precise description of the image of $J_X$ in  the Weil restriction above is given in Theorem \ref{thm: exact image in JA1}. We note that the Weil restriction $\mathrm{Res}^{\canfa}_\fc(\canA\times \canfa)^{W_X}$ plays an important role in foundational work of Knop \cite{knop1996automorphisms}, where he constructs an action of a related group scheme on the cotangent bundle $T^*X$ for $X$ a $G$-variety, and uses this action to study $G$-automorphisms of $X$. This plays an important role in relative duality, which is explored in \cite{me2}. As an application of Theorem \ref{thm: ses_intro}, we give a direct description of $J$ in Theorem \ref{thm: exact description of J}. 
    
	Finally, we make these results more concrete in the case $X = \GL_{2n}/(\GL_n\times \GL_n)$. This was one of the original forms studied by Schaposnik in her thesis \cite{schaposnik2013spectral}. This example is arithmetically interesting as it is the geometric variety underlying Leslie's Fundamental Lemma for unitary Friedberg-Jacquet periods \cite{spencer_annals}, which is the subject of ongoing work of the first author. The following is an invariant theoretic version of Schaposnik's results on spectral covers.
	
	\begin{thm}{(See Proposition \ref{proposition: U(n,n) spectral cover})}
		Consider the symmetric pair on $\GL_{2n}$ with involution
		\[
		\theta\begin{pmatrix}
			A & B \\ C & D
		\end{pmatrix} = \begin{pmatrix}
			A & -B \\ -C & D
		\end{pmatrix}
		\]
		and $H = \GL_n\times \GL_n$ embedded block diagonally. Then, there is a spectral cover 
		\[
		\fs\to \fc
		\]
		which is a degree $n$, irreducible cover such that
		\[
		J\simeq \mathrm{Res}^\fs_\fc(\bG_m)
		\]
		Let $\ol{C}\to C\times \cA_X^{\reg}$ be the pullpack of the spectral cover $\fs$. With notation as in \ref{thm: general structure}, $\cM_X^\reg$ is a torsor under the action of
		\[
		\cP = \Pic(\ol{C}/\cA^{\reg}).
		\]
	\end{thm}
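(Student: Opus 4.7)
The plan is three-stage: establish the spectral cover $\fs\to\fc$ as the universal root cover of the characteristic polynomial of $BC$; identify $J$ with $\Res^\fs_\fc \bG_m$ by exploiting the Cayley--Hamilton relations between $BC$ and $CB$; and finally globalize and invoke Theorem \ref{thm: general structure} to obtain the Picard description.

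First, writing an element of $\fp$ as $\begin{pmatrix} 0 & B \\ C & 0 \end{pmatrix}$, the $H=\GL_n\times\GL_n$-action is $(g_1,g_2)\cdot(B,C) = (g_1 B g_2^{-1}, g_2 C g_1^{-1})$, so $BC$ transforms by $\GL_n$-conjugation via $g_1$. I would first verify via a standard invariant-theoretic argument (e.g.\ reducing to a generic slice where $B$ is invertible and applying Chevalley restriction on $\gl_n$) that $k[\fp]^H$ is the polynomial ring on the coefficients of $\det(\lambda I - BC)$, so $\fc\simeq \mathbb{A}^n$. I would then define $\fs\subset \fc\times\mathbb{A}^1$ as the universal root locus $\lambda^n+a_1\lambda^{n-1}+\cdots+a_n=0$. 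Projecting away from $a_n$ shows $\fs\simeq \mathbb{A}^n$ is irreducible, and the projection $\fs\to\fc$ is finite flat of degree $n$.

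Second, to identify $J$ with $\Res^\fs_\fc\bG_m$, I would construct the map directly using the classical fact that $\det(\lambda I - BC) = \det(\lambda I - CB)$. By Cayley--Hamilton, both $BC$ and $CB$ are killed by the universal characteristic polynomial, so for any unit $f\in \cO(\fs)^\times$ (equivalently, on each fiber, a unit in $k[\lambda]/(\text{char poly})$), the elements $g_1 := f(BC)\in \GL_n$ and $g_2 := f(CB)\in \GL_n$ are well defined. The identity $B(CB)^k = (BC)^k B$ for all $k\geq 0$ gives $g_1 B = B g_2$, and similarly $g_2 C = C g_1$, so $(g_1,g_2)$ centralizes $(B,C)$. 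This defines a morphism of smooth affine commutative group schemes $\Res^\fs_\fc\bG_m\to J$. I would check it is an isomorphism over the locus where $BC$ is regular semisimple with distinct nonzero eigenvalues (a direct computation showing the centralizer consists exactly of such pairs), then extend by noting that both sides have constant fiber dimension $n$ and are smooth over the normal base $\fc$. The main technical obstacle is checking fiberwise surjectivity over the nilpotent stratum $0\in\fc$, where $\fs$ degenerates to $\Spec k[\lambda]/\lambda^n$ and one must directly identify the centralizer of a regular nilpotent element of $\fp$ with the units in this truncated polynomial ring.

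Finally, I would globalize. Twisting $\fs$ by $D$ and pulling back along the universal evaluation $C\times \cA^\reg\to \fc_D$ produces the spectral curve $\ol{C}\to C\times\cA^\reg$, and the global regular centralizer is $\Res^{\ol{C}}_{C\times\cA^\reg}\bG_m$. By Theorem \ref{thm: general structure}, $\cM_X^\reg$ is a space of torsors over $\cA^\reg$ for the band determined by this group scheme. Since torsors for $\Res^{\ol{C}}_{C\times\cA^\reg}\bG_m$ over $C\times\cA^\reg$ correspond, via adjunction between pushforward and Weil restriction, to $\bG_m$-torsors on $\ol{C}$, this yields the identification $\cM_X^\reg \simeq \Pic(\ol{C}/\cA^\reg)$, noncanonical due to the choice of trivialization of the underlying gerbe.
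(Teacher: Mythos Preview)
Your approach is correct but takes a genuinely different route for the identification of $J$. You construct the spectral cover $\fs$ directly as the universal root locus of $\det(\lambda I - BC)$ and build the map $\mathrm{Res}^\fs_\fc\bG_m \to J$ by functional calculus $f \mapsto (f(BC), f(CB))$, verifying it is an isomorphism generically and then separately at the nilpotent fiber. The paper instead works top-down via the ambient group: it uses the known identification $J_{\GL_{2n}} \simeq \mathrm{Res}^{\ol{\fc}_{\GL_{2n}}}_{\ft\git W}\bG_m$, restricts along the closed embedding $\fc \hookrightarrow \ft\git W$ to obtain $J \simeq \mathrm{Res}^{\ol{\fc}}_\fc(\bG_m)^i$ (where $i\colon x \mapsto -x$ is induced by $\theta$ on the degree-$2n$ spectral cover), and then proves $\mathrm{Res}^{\ol{\fc}}_{\ol{\fc}/i}(\bG_m)^i \simeq \bG_m$ by a uniform two-case fiber check distinguishing only ramified versus unramified points of the double cover $\ol{\fc}\to\ol{\fc}/i$. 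The two spectral covers coincide: your $\lambda$ is the paper's $y = x^2$, so $\fs \simeq \ol{\fc}/i$. Your route is more elementary and self-contained, never invoking the $\GL_{2n}$ regular-centralizer description; the paper's route is more structural, leveraging the inclusion $J \hookrightarrow J_G|_\fc$, and its fiber argument treats all points of $\fc$ at once, thereby sidestepping the nilpotent-fiber verification you correctly flag as the main technical obstacle in your approach. The globalization step and the invocation of Theorem~\ref{thm: general structure} to pass from $J$-torsors to $\Pic$ are essentially identical in both.
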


	\subsection{Acknowledgments}
	
	The second author would like to thank Ng\^{o} Bao Ch\^au for collaboration on the closely related project \cite{morrissey2022reg}, as well as discussions about this project. The first author would like to thank Ana Pe\'on-Nieto and Spencer Leslie for discussions in the early stages of this project, and would like to also thank Ng\^o Bao Ch\^au for his patient mentorship and for pointing him towards this project. In addition, the first author would like to thank Yiannis Sakellaridis for conversations around this work. In particular, the idea of considering the group scheme $J_X$ in Section \ref{sec: relative dual centralizers} came from his suggestion. The authors thank Griffin Wang for suggesting improvements to the paper. The paper also benefited greatly from the careful reading and suggestions of the anonymous referee, whose suggestions greatly simplified the examples in Section \ref{subsections: Examples regular quotient} and helped fix mistakes in Section \ref{section: GS cover}. Both authors thank Paul Levy for answering their many emails and questions about his thesis, and Oscar Garc\'ia-Prada and Zhilin Luo for many helpful discussions.

	\section{Background on Symmetric Pairs}
	\label{section: Background on Symmetric Pairs}
	
	In this section, we review the main results on symmetric pairs that will be used in the study of Hitchin systems appearing in the rest of this paper. While we attribute most results in this section to Paul Levy \cite{levy}, many were first proved by Kostant and Rallis in characteristic zero \cite{KR}.
	
	\subsection{Conditions on Characteristic}
	\label{subsec: conditions on char}
	
	Let $G$ be a reductive group over an algebraically closed field $k$, and $\fg$ its Lie algebra. We assume throughout that $p = 0$ or $p = \mathrm{char}(k)>2$ is good for $G$. Namely, if we let $\Delta$ be a choice of simple roots for the root system $\Phi$ of $G$ and if we express the longest element $\alpha$ of $\Phi$ relative to $\Delta$ as $\alpha = \sum_{\beta\in \Delta} m_\beta \beta$, then $p$ is good for $G$ if and only if $p>m_\beta$ for all $\beta\in \Delta$. In particular, if $p$ is greater than the Coxeter number of $G$, then this hypothesis will be satisfied. We also require throughout that there exists a non-degenerate symmetric bilinear form
	\[
	\kappa\colon \fg\times \fg\to k.
	\]
	Finally we require that there is a finite cover $\tilde{G}\rightarrow G$ such that $\tilde{G}^{\mathrm{der}}$ is simply connected and $\mathrm{Lie}(\tilde{G})\rightarrow \fg$ is an isomorphism.  We note that this is automatic for $G$ almost simple. 
	
	We will refer to the above hypotheses as the \emph{standard hypotheses} throughout.  We refer to \S6.4 of Jantzen \cite{jantzen1998representations} for further discussion of these hypotheses.
	
	\subsection{Involutions, the Restricted Root System, and the Satake Diagram}
	\label{subsec: restricted root systems}
	
	Let $G$ satisfy the standard hypotheses of Section \ref{subsec: conditions on char}. Let $\theta\colon G\to G$ be an algebraic involutive group automorphism. Let $K = (G^\theta)^\circ$ denote the neutral component of the $\theta$ fixed points in $G$. The involution $\theta$ induces a Cartan decomposition 
	\[
	\fg = \fk\oplus \fp,
	\]
	where $\fk$ and $\fp$ denote the $(+1)$ and $(-1)$ eigenspaces of $\theta$, respectively. In particular, $\fk$ is the Lie algebra of $K$.
	
	A complicating fact in the theory of symmetric pairs is that, although all maximal tori of $G$ are conjugate, conjugation does not respect the action of the involution $\theta$ on a $\theta$-stable torus. It is essential, therefore, to specify the action of $\theta$ on a maximal torus of $G$ when studying root systems. We will mainly restrict our attention to $\theta$-stable tori of $G$ for which the $\theta$ fixed subtorus is minimal. We introduce this notion now.
	\begin{defn}
		A $\theta$-split torus $A$ of $G$ is a torus of $G$ such that $\theta(a) = a^{-1}$ for all $a\in A$.
	\end{defn}
	Let $A$ be a fixed $\theta$-split torus of $G$ which is maximal among such tori. The Lie algebra $\fa = {\mathrm Lie}(A)$ is a maximal abelian subalgebra of $\fp$. Such subalgebras play a role analogous to Cartans of $\fg$, and hence we make the definition:
	\begin{definition}
		\label{definition: cartan of p}
		A \emph{$\theta$-Cartan} of $\fp$ is a maximal, abelian subalgebra $\fa$ in $\fp$.
	\end{definition}
	We will frequently fix a maximal $\theta$-split torus $A$ with $\theta$-Cartan $\fa\subset \fp$ in the sequel. This choice is justified by the following standard proposition.
	\begin{prop}
		\label{prop: all cartans are conjugate}
		(\cite[Theorem 2.11]{levy}) All maximal $\theta$-split tori $A$ are $K$-conjugate. Likewise, all $\theta$-Cartans of $\fp$ are $K$-conjugate.
	\end{prop}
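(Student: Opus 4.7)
The plan is to first reduce the torus statement to the Lie algebra statement and then to prove the Lie algebra statement by adapting the classical Chevalley-style argument for conjugacy of Cartan subalgebras. For the reduction, observe that for a $\theta$-split torus $A \subset G$, the Lie algebra $\fa = \mathrm{Lie}(A)$ lies in $\fp$, and $A$ is maximal $\theta$-split precisely when $\fa$ is a $\theta$-Cartan of $\fp$. The nontrivial direction uses that elements of a maximal abelian subalgebra of $\fp$ are semisimple, a fact from the Kostant--Rallis structure theory extended to good characteristic by Levy. The resulting correspondence $A \mapsto \mathrm{Lie}(A)$ is a bijection between maximal $\theta$-split tori and $\theta$-Cartans, so it suffices to prove the Lie algebra statement and then lift the conjugating element.

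To prove conjugacy of $\theta$-Cartans, I argue via regular elements. Recall that $\fp^\reg$ is open and dense in $\fp$ by upper semicontinuity of centralizer dimension, and for any $\theta$-Cartan $\fa$ the subset $\fa^\reg := \fa \cap \fp^\reg$ is nonempty and open in $\fa$ (it is the complement of the vanishing loci of the restricted roots). The key computation is a tangent-space estimate: the differential at $(e,x)$ of the action map
\[
\mu \colon K \times \fa \to \fp, \quad (k,y) \mapsto \mathrm{Ad}(k)\,y
\]
is $(\xi,y) \mapsto [\xi,x] + y$. Since $x \in \fa^\reg$ satisfies $\mathrm{Cent}_\fp(x) = \fa$, a root-space computation using a $\theta$-stable Cartan of $\fg$ containing $\fa$ shows that $[\fk,x]$ is a linear complement to $\fa$ in $\fp$. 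Hence the differential is surjective, $\mu$ is smooth at $(e,x)$, and $K \cdot \fa^\reg$ is open and dense in the irreducible variety $\fp$.

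Given two $\theta$-Cartans $\fa_1, \fa_2$, the open dense subsets $K \cdot \fa_1^\reg$ and $K \cdot \fa_2^\reg$ must intersect. At a common point $\mathrm{Ad}(k_1)x_1 = \mathrm{Ad}(k_2)x_2$ with $x_i \in \fa_i^\reg$, the regularity of $x_i$ gives $\fa_i = \mathrm{Cent}_\fp(x_i)$, and taking centralizers yields $\mathrm{Ad}(k_1^{-1}k_2)\fa_2 = \fa_1$, which proves conjugacy of $\theta$-Cartans. The torus statement then follows from the reduction of the first paragraph.

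The main obstacle is verifying the identity $\fp = [\fk,x] \oplus \fa$ at a regular semisimple $x \in \fa$ in positive characteristic. In characteristic zero this is immediate from the Kostant--Rallis decomposition; in good characteristic $p>0$, one needs the $\theta$-action on the root-space decomposition of $\fg$ (with respect to a $\theta$-stable Cartan containing $\fa$) to partition root spaces cleanly between $\fk$ and $\fp$, so that the pairing between $[\fk,x]$ and the restricted-root spaces in $\fp$ is nondegenerate. Levy's analysis of symmetric pairs over fields of positive characteristic supplies precisely this input, after which the argument proceeds uniformly with the characteristic-zero case.
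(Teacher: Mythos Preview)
The paper does not supply its own proof of this proposition; it is stated with a citation to \cite{timashev2011homogeneous}, Lemma 26.15, and no argument is given. Your proposal therefore cannot be compared to a proof in the paper, but it can be compared to the standard argument that the cited reference records.

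Your approach is essentially that standard argument: show that for each $\theta$-Cartan $\fa$ the set $K\cdot\fa^\reg$ is open in the irreducible variety $\fp$, so any two such sets meet, and then recover the conjugating element by taking centralizers at a common regular point. The identification of the crux as the decomposition $\fp=[\fk,x]\oplus\fa$ for regular semisimple $x\in\fa$, and the appeal to Levy for its validity in good positive characteristic, are both appropriate. One minor point worth tightening: the bijection $A\mapsto\mathrm{Lie}(A)$ between maximal $\theta$-split tori and $\theta$-Cartans requires knowing that every $\theta$-Cartan $\fa$ arises as $\mathrm{Lie}(A)$ for some $\theta$-split torus $A$; this is where one uses that $\fa$ consists of commuting semisimple elements (hence lies in the Lie algebra of a $\theta$-stable maximal torus $T$, whose maximal $\theta$-split subtorus then has Lie algebra exactly $\fa$). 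You gesture at this but it deserves one more sentence. Otherwise the argument is sound and matches what one finds in the literature.
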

	\begin{defn}
		The rank $r_\theta$ of an involution $\theta$ is the dimension
		\[
		r_\theta = \dim(\fa)
		\]
	\end{defn}
	\begin{remark}
		\label{rmk: split definition}
		In general $r_\theta$ is less than than the rank of $G$. When $A$ is a maximal torus of $G$, we call the involution \emph{split}. Every group $G$ has a unique split involution.
	\end{remark}
	
	We will call a maximal torus $T$ of $G$ \emph{maximally $\theta$-split} if $T$ contains a maximal $\theta$-split torus $A$. 
	
	We now introduce a root system associated to the pair $(G,\theta)$. Fix a maximally $\theta$-split torus $T$ containing a maximal $\theta$-split torus $A\subset T$. Let $\Phi$ be the set of roots of $G$ with respect to $T$, viewed as functions on $\ft = \mathrm{Lie}(T)$.
	
	\begin{defn}
		\label{definition: restricted roots}
		The set of \emph{restricted roots} is
		\[
		\Phi_r := \{ \alpha|_{\fa}\in \fa^*\colon \alpha\in \Phi,\; \alpha|_\fa\neq 0 \}.
		\]
		%We denote by $r\colon \Phi\to \Phi_r\cup\{0\}$ the restriction map taking $\alpha\mapsto \alpha|_\fa$.
	\end{defn}
	It is well-known (see for example \cite[Page 517]{levy}) that $\Phi_r$ forms a (possibly nonreduced) root system. Let $W_\fa$, referred to as the \emph{little Weyl group}, be the Weyl group associated to this root system. We can alternatively describe $W_\fa$ as a quotient.
	\begin{prop}
		\label{proposition : little weyl group is N_G(A)/C_G(A)}
		(\cite[Page 517]{levy}) There is an isomorphism $W_\fa \simeq N_G(\fa)/C_G(\fa)\simeq N_K(\fa)/C_K(\fa)$. In particular, the latter acts as a reflection group on $\fa$.
	\end{prop}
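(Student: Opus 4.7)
The plan is to establish both isomorphisms and the reflection-group property by realizing every simple reflection in $W_\fa$ as the adjoint action of an explicit element in $N_K(\fa) \subseteq N_H(\fa)$, and then bounding $N_G(\fa)/C_G(\fa)$ from above. I would proceed in three stages.

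First, I would set up the action. The adjoint action of $N_G(\fa)$ on $\fa$ permutes the $\fa$-weight spaces of $\fg$, hence permutes the set of nonzero weights, which is precisely $\Phi_r$. This produces an injective homomorphism
\[
N_G(\fa)/C_G(\fa)\longrightarrow \Aut(\Phi_r),
\]
and the same construction applied to $N_H(\fa)$ gives an obvious injection $N_H(\fa)/C_H(\fa)\hookrightarrow N_G(\fa)/C_G(\fa)$.

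Second, I would realize each reflection $s_\alpha \in W_\fa$ by an element of $N_K(\fa)$. For a restricted root $\alpha \in \Phi_r$, form the $\theta$-stable reductive subalgebra $\fg(\alpha) = C_\fg(\ker\alpha)$ together with the corresponding $\theta$-stable connected subgroup $G(\alpha)\subseteq G$. The pair $(G(\alpha),\theta|_{G(\alpha)})$ is a symmetric pair of restricted rank one, with restricted root system $\{\pm\alpha\}$ (or $\{\pm\alpha,\pm 2\alpha\}$ in the non-reduced case). A direct rank-one computation, using the hypothesis that $p$ is good for $G$ so that the relevant root $\mathrm{SL}_2$-triples integrate cleanly, produces an element $n_\alpha \in K\cap G(\alpha)$ whose adjoint action on $\fa$ is the reflection $s_\alpha$. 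Since $K \subseteq H$, we obtain $n_\alpha \in N_H(\fa)$, and the $s_\alpha$ together generate an inclusion $W_\fa \subseteq N_H(\fa)/C_H(\fa) \subseteq N_G(\fa)/C_G(\fa)$.

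Third, I would prove the upper bound $N_G(\fa)/C_G(\fa) \subseteq W_\fa$. An element of $N_G(\fa)/C_G(\fa)$ acts on $\fa$ by an automorphism of $\Phi_r$ which additionally preserves the multiplicities $\dim \fg_\beta$ of the restricted weight spaces for every $\beta \in \Phi_r$; the standard argument from the theory of symmetric spaces (as in the treatment in \cite{timashev2011homogeneous}) identifies this group with $W_\fa$ exactly. Combined with stage two, this forces the equalities claimed in the proposition, and the reflection-group property on $\fa$ is automatic since $W_\fa$ is generated by the $s_\alpha$ exhibited in stage two. The main obstacle is the rank-one construction, particularly when the restricted root system is non-reduced: one must verify that the subgroup $G(\alpha)$ accommodates a reflection representative for $\alpha$ in the presence of $2\alpha$, and that the good-characteristic hypothesis is strong enough to make the relevant $\mathrm{SL}_2$-type subgroups behave as in the characteristic-zero case originally treated by Timashev.
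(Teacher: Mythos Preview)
The paper does not supply a proof of this proposition; it is stated with a citation to \cite{timashev2011homogeneous} and used as a black box. Your first two stages reproduce the standard argument found there and are correct: the rank-one reduction to $G(\alpha)$ really does furnish representatives $n_\alpha\in N_K(\fa)$ for each simple reflection, and this is the heart of the proof.

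Your third stage, however, contains a genuine gap. The claim that the subgroup of $\Aut(\Phi_r)$ preserving the multiplicities $\dim\fg_\beta$ coincides with $W_\fa$ is false in general. Already in the split case one has $\fa=\ft$, $\Phi_r=\Phi$, and every root has multiplicity one, so the multiplicity condition is vacuous and your bound yields all of $\Aut(\Phi)$, which is strictly larger than $W$ whenever $\Phi$ admits diagram automorphisms (type $A_n$ for $n\ge 2$, $D_n$, $E_6$). Appealing to ``the standard argument'' in \cite{timashev2011homogeneous} at this point is circular, since that reference \emph{is} the proposition.

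The correct upper bound does not go through multiplicities. For $N_H(\fa)$ one argues via the Chevalley restriction $k[\fp]^H\simeq k[\fa]^{W_\fa}$: if $h\in N_H(\fa)$ and $x\in\fa$ is regular, then $x$ and $\mathrm{Ad}(h)x$ are $H$-conjugate elements of $\fa$, hence have the same image in $\fa\git W_\fa$, hence are $W_\fa$-conjugate; composing with a reflection representative from your stage~2 reduces to $h$ fixing a regular $x$, whence $h\in C_H(\fa)$. For $N_G(\fa)$ one first checks, by applying $\theta$ to $\mathrm{Ad}(g)X\in\fa$, that $g^{-1}\theta(g)\in C_G(\fa)$ for every $g\in N_G(\fa)$; a short cohomological argument (or the explicit square-root trick used later in the paper in the proof of Lemma~\ref{lemma: qspt implies unramified map of hitchin bases}) then lets one adjust $g$ by an element of $C_G(\fa)$ to land in $G^\theta$, reducing to the case just handled.
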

	
	We note that the Weyl group $W_\fa$ associated to $\Phi_r$ is the same as the Weyl group of the reduced root system $\Phi_r^{\mathrm{red}}$.
	
	We conclude this section by introducing the Satake diagram of a group $G$ with involution $\theta$, which is a decorated form of the Dynkin diagram of $G$ classifying the pair $(G,\theta)$. We will make use of these diagrams again in Sections \ref{section: reduction to levis} and \ref{subsections: Examples regular quotient}.
	
	Fix a maximal $\theta$-split torus $A$ and a maximally $\theta$-split torus $T$ containing $A$. The involution $\theta$ acts on the roots $\Phi$ of $G$ with respect to $T$. We say a root $\alpha\in \Phi$ is compact if $\theta(\alpha) = \alpha$.
	
	Now, let $M = C_G(\fa)$ be the Levi subgroup of $G$ obtained from the centralizer of $\fa = \mathrm{Lie}(A)$. Choose a parabolic $P$ of $G$ such that $\theta(P)\cap P = M$; we call such a parabolic a ``minimal $\theta$-split parabolic'' of $G$. Such a choice can be made by \cite[Section 1]{vust}; see also \cite[Prop. 1.3]{leslie}. Choose a Borel $T\subset B\subset P$. It also follows that $B\cap M$ is a Borel of $M$. The Borel $B$ fixes a choice of simple roots $\Sigma\subset \Phi$. Moreover, we have an embedding of Weyl groups $W_M\subset W$, where $W_M$ is the Weyl group of $M$, and in $W_M$, there is a well-defined longest element $w_M$ determined by the Borel $B\cap M$. The involution $\iota = -w_M\theta$ stabilizes the set $\Sigma$ of simple roots with respect to $(T,B)$, and hence induces a diagram automorphism $\iota$ on the Dynkin diagram of $G$.
	
	\begin{definition}
		\label{def: Satake diagram}
		Fix the choices of $(T,B)$ above. The \emph{Satake diagram} of the reductive group $G$ with involution $\theta$ is obtained as follows:  First, we shade the nodes of the Dynkin diagram of $G$ with respect to $(T,B)$ black if the corresponding simple root is compact and white otherwise. Then, we draw edges, denoted in this paper by light gray, thickened lines, connecting two white vertices whenever $\iota$ swaps them.
	\end{definition}
	
	\begin{example}
		The Satake diagram of a split symmetric pair is the corresponding Dynkin diagram, with all vertices shaded white and the involution $\iota$ acting trivially.
	\end{example}
	
	A table of other relevant Satake diagrams can be found in Figure \ref{fig:table of satake diagrams}.

	\subsection{Symmetric Pairs and the GIT Quotient}
	
	We now introduce the notion of \emph{symmetric pair} by considering the additional data of a subgroup $H$.
	
	\begin{definition}
		A \emph{symmetric pair} is the data of a triple $(G,\theta,H)$ where 
		\[
		\theta\colon G\to G
		\]
		is an algebraic involutive group homomorphism on $G$ and $H$ is a smooth, closed subgroup $H\subset G$ such that 
		\[
		K\subset H\subset N_G(K).
		\]
	\end{definition}
	
	The adjoint action of $G$ on $\fg$ restricts to an action of $H$ on the $(-1)$ eigenspace $\fp\subset \fg$.
	
	The following result helps characterize such subgroups $H$. 
	
	\begin{prop}
		\label{prop: extension of K}
		Choose a maximal $\theta$-split torus $A$.
		\begin{enumerate}[(a)]
			\item The normalizer is given explicitly by 
			\[ N_G(K) = \{ g\in G\colon g\theta(g^{-1})\in Z(G) \}. \]
			\item We have $N_G(K) = F^*\cdot K$ where $F^* = \{a\in A\colon a^2\in Z(G)\}$. Note that $F^{*}$ depends on the choice of $A$. Furthermore, $(F^*)^\circ = Z_-$ is the connected component of the subgroup of $Z(G)$ on which $\theta$ acts by inversion, i.e. $\theta(z) = z^{-1}$.
			\item There is a short exact sequence
			\[
			1\to G^\theta\to N_G(K)\xrightarrow{\tau} (F^*)^2\to 1
			\]
			where $\tau(g) = g\theta(g^{-1})$ and $(F^*)^2 = \{a^2\colon a\in F^*\}$.
			\item The group $G^\theta/K = \pi_0(G^\theta)$ is a discrete group.
			\item For any symmetric pair $(G,\theta,H)$, the identity component $H^\circ$ is reductive.
		\end{enumerate}
	\end{prop}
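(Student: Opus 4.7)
My overall plan is to establish (a) first, then (c), and extract (b) from (c); parts (d) and (e) will then follow quickly. The essential structural inputs are the classical facts (in good characteristic) that $C_G(K) = Z(G)$ and that $K = (G^\theta)^\circ$ is reductive, together with the $K$-conjugacy of maximal $\theta$-split tori in Proposition~\ref{prop: all cartans are conjugate}. For (a), I would observe that if $g \in N_G(K)$ then applying $\theta$ to $gkg^{-1} \in K$ (and using $\theta(k) = k$) yields $\theta(g) k \theta(g)^{-1} = gkg^{-1}$, so that $g^{-1}\theta(g)$ centralizes $K$ and hence lies in $Z(G)$; equivalently, $g\theta(g^{-1}) \in Z(G)$. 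The converse is a direct computation: if $z = g\theta(g^{-1}) \in Z(G)$, centrality of $z$ gives $\theta(gkg^{-1}) = gkg^{-1}$ for all $k \in K$, so the connected group $gKg^{-1}$ lies in $G^\theta$, hence in $K$, and then equals $K$ by dimension.

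For (c), the kernel of $\tau(g) = g\theta(g^{-1})$ is visibly $G^\theta$, and $\mathrm{image}(\tau) \subset Z(G)$ by (a). The inclusion $(F^*)^2 \subset \mathrm{image}(\tau)$ is immediate since $\tau(a) = a \cdot a = a^2$ for $a \in A$ (as $\theta$ inverts $A$). The key step is the reverse containment: a direct computation shows $\theta(\tau(g)) = \tau(g)^{-1}$, so $\tau(g) \in Z(G)^{-\theta}$, and I would then argue that $Z(G)^{-\theta} \subset A$ using that $Z(G)^{-\theta} \cdot A$ is a $\theta$-split torus (hence equals $A$ by maximality of $A$); surjectivity of squaring on the torus $A$ over the algebraically closed base then produces $a \in F^*$ with $a^2 = \tau(g)$. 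With (c) in hand, (b) follows by writing $g \in N_G(K)$ as $g = a \cdot (g a^{-1})$, where $a \in F^*$ satisfies $a^2 = \tau(g)$; the computation $\tau(g a^{-1}) = a^{-2}\tau(g) = 1$ places $ga^{-1} \in G^\theta$, and one then notes that every component of $G^\theta$ is represented by an element of $F^* \cdot K$, so that in fact $g \in F^* \cdot K$.

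Parts (d) and (e) should then be short. For (d), $G^\theta$ is an algebraic subgroup of $G$ (the equalizer of $\theta$ and the identity), so it has finitely many connected components over the algebraically closed field; by definition of $K$, the quotient $G^\theta/K$ is the (discrete, finite) component group. For (e), part (b) places $H^\circ$ inside $N_G(K)^\circ = (F^*)^\circ \cdot K = Z_- \cdot K$, a central product of the torus $Z_-$ with the reductive $K$; since $K$ is normal in $H^\circ$ and $H^\circ/K$ embeds as a connected closed subgroup of the torus $Z_-/(Z_- \cap K)$, the unipotent radical $R_u(H^\circ)$ projects trivially to this torus and hence lies in $K$; normality then forces $R_u(H^\circ) \subset R_u(K) = 1$, so $H^\circ$ is reductive. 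The main obstacle in the whole proof is the identification $F^* \cdot G^\theta = F^* \cdot K$ needed to complete (b), which requires a careful analysis of the component group of $G^\theta$ and the representatives it admits inside $F^*$; the surjectivity step in (c) is also delicate, hinging on the non-trivial containment $Z(G)^{-\theta} \subset A$ afforded by the maximality of $A$.
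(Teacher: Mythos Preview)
The paper simply cites Sekiguchi and Richardson for (a), (b), (e) and deduces (c) from (b), so there is little to compare against directly; let me instead evaluate your self-contained argument.

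There are two genuine gaps. In (a), you invoke $C_G(K) = Z(G)$ as a ``classical fact,'' but this is false: for the split pair $(\SL_2,\, g \mapsto (g^t)^{-1})$ one has $K = \SO_2$, a maximal torus, so $C_G(K) = \SO_2 \supsetneq \{\pm I\} = Z(G)$. Your computation correctly yields $g^{-1}\theta(g) \in C_G(K)$, but the passage to $Z(G)$ does not follow. The standard fix works on the Lie algebra: $g \in N_G(K)$ forces $\mathrm{Ad}(g)$ to preserve $\fk$, hence also $\fp = \fk^\perp$ under a nondegenerate $G$- and $\theta$-invariant form, so $\mathrm{Ad}(g)$ commutes with $d\theta$ on $\fg$; then $\mathrm{Ad}(g) = \mathrm{Ad}(\theta(g))$ and $g\theta(g)^{-1} \in \ker \mathrm{Ad} = Z(G)$.

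In (c), you claim $Z(G)^{-\theta} \cdot A$ is a $\theta$-split torus and hence equals $A$ by maximality. But $Z(G)^{-\theta}$ need not be connected---it contains all $\theta$-fixed $2$-torsion of $Z(G)$---so the product need not be a torus, and indeed $Z(G)^{-\theta}\not\subset A$ in general. Concretely, take $G = \SL_2 \times \SL_2$ with $\theta$ the split involution on the first factor and the identity on the second: then $A = T_1 \times \{1\}$, yet $(I,-I) \in Z(G)^{-\theta} \setminus A$. The conclusion $\tau(N_G(K)) = (F^*)^2$ is correct, but your route to it breaks. The paper's order avoids this entirely: one establishes (b) first---this is the content of Richardson's Lemma~8.1, and is precisely the ``main obstacle'' you flag but do not resolve---and then (c) is immediate since $\tau(ak) = a^2$ for $a \in F^*$, $k \in G^\theta$. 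Your arguments for (d) and (e) are fine once (b) is in hand.
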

	
	\begin{proof}
		Part (a) follows from the proof of Lemma 1.1 of \cite{sekiguchi}. 
		Part (b) is Lemma 8.1 in \cite{richardson}. Parts (c) and (d) follow immediately from (b). Part (e) is directly from Lemma 8.1 of \cite{richardson}.
	\end{proof}
	
	We have a Chevalley-style result on the GIT quotient $\fp\git H:=\Spec k[\fp]^H$.
	\begin{thm}
		\label{theorem : Chevalley}
		(\cite{levy}, Theorem 4.9 and Corollary 4.10) Fix a $\theta$-Cartan $\fa\subset\fp$. The natural inclusion map $\fa\to \fp$ induces a isomorphisms $\fa\git W_\fa\simeq \fp\git K\simeq \fp\git N_G(K)$. In particular, for any closed $K\subset H\subset N_G(K)$, we have $\fa\git W_\fa\simeq \fp\git H$. Note that, by construction, this map is $\bG_m$-equivariant under the natural actions on both $\fa$ and $\fp$.
	\end{thm}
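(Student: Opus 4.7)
The inclusion $\fa\hookrightarrow\fp$ is linear and $\bG_m$-equivariant by construction, so the induced map on GIT quotients is automatically $\bG_m$-equivariant. The content of the theorem is that the restriction map
\[
\rho\colon k[\fp]^H \to k[\fa]
\]
takes values in $k[\fa]^{W_\fa}$ and is in fact an isomorphism onto this subring for every $K\subset H\subset N_G(K)$. The containment $\rho(k[\fp]^H)\subset k[\fa]^{W_\fa}$ is immediate from Proposition~\ref{proposition : little weyl group is N_G(A)/C_G(A)}, since $W_\fa$ is realized as the image of $N_H(\fa)$ in $\GL(\fa)$. My strategy is to establish the bijectivity of $\rho$ in the case $H=K$ and then upgrade.

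For injectivity, the plan is to show that the sweep $K\cdot\fa$ is dense in $\fp$. Under the good-characteristic hypothesis on $p$, the semisimple locus $\fp^{\mathrm{ss}}$ is open and dense in $\fp$; and any semisimple $x\in\fp$ lies in some $\theta$-Cartan of $\fp$, hence is $K$-conjugate into $\fa$ by Proposition~\ref{prop: all cartans are conjugate}. With $K\cdot\fa$ dense in $\fp$, a $K$-invariant polynomial vanishing on $\fa$ must vanish on $\fp$, giving injectivity.

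Surjectivity is the main obstacle. Given $g\in k[\fa]^{W_\fa}$, one can first define a $K$-invariant candidate extension $\tilde g$ on $\fp^{\mathrm{ss}}$ by $\tilde g(\mathrm{Ad}(k)\cdot a):=g(a)$, with well-definedness furnished by the $W_\fa$-invariance of $g$. The difficulty is extending $\tilde g$ regularly across the non-semisimple locus. In good characteristic one has a Jordan decomposition $x=x_s+x_n$ with $x_s,x_n\in\fp$, and setting $\tilde g(x):=\tilde g(x_s)$ produces the candidate extension; the regularity of this extension is the technical heart of the argument and is proved by a local analysis near each nilpotent element, applying classical Chevalley restriction to the reductive centralizer $C_G(x_s)$ acting on its own $(-1)$-eigenspace. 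This step is handled by Theorem~4.9 of \cite{levy}.

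Finally, to pass from $K$ to an arbitrary intermediate $H$, I would invoke Proposition~\ref{prop: extension of K}(b) to write $N_G(K)=F^*\cdot K$ with $F^*\subset A$. Since $A$ is abelian and $\fa=\mathrm{Lie}(A)$, the group $F^*$ acts trivially on $\fa$, so the induced action of $H/K$ on $\fp\git K\simeq \fa\git W_\fa$ is trivial. Consequently $\fp\git H\simeq \fp\git K\simeq \fa\git W_\fa$ for every closed subgroup $K\subset H\subset N_G(K)$, completing the proof.
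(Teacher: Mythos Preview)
The paper does not supply its own proof of this statement; it simply records the result and cites \cite{levy}, Theorem~4.9 and Corollary~4.10. Your sketch is correct and in fact gives more detail than the paper does, though you too defer the hardest step (regularity of the extension of $\tilde g$ across the non-semisimple locus) to the same reference. Your final reduction from $K$ to arbitrary $H$ via $N_G(K)=F^*\cdot K$ with $F^*\subset A$ acting trivially on $\fa$ is exactly the content of Levy's Corollary~4.10, so your approach and the cited one coincide.
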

	
	The invariant theory of this GIT quotient is well studied. We will make use of the following fact.
	\begin{lem}
		\label{lem: exponents}
		(\cite{levy}, Lemma 4.11) We can write $k[\fa]^{W_\fa} = k[f_1,\dots, f_r]$ for $r = \dim(\fa)$ algebraically independent homogeneous polynomials $f_1,\dots, f_r$ of degrees $e_1,\dots, e_r$, respectively, which we will call the \emph{exponents of the root system $\Phi_r$}. (Note that the generators $f_j$ are not canonical but the exponents $e_i$ are.) Moreover, the sum of these exponents can be computed as
		\[ \sum_i e_i = r+\frac{\#\Phi_r^{\mathrm{red}}}{2}. \]
	\end{lem}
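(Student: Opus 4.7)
The plan is to deduce both assertions from the Chevalley--Shephard--Todd theorem combined with the standard count of reflections in a Weyl group. For the first assertion, I would invoke Proposition \ref{proposition : little weyl group is N_G(A)/C_G(A)}, which identifies $W_\fa$ with $N_H(\fa)/C_H(\fa)$ acting on $\fa$ as a reflection group generated by the $s_\alpha$ for $\alpha \in \Phi_r$. Since $s_\alpha = s_{2\alpha}$ when both lie in $\Phi_r$, the group of reflections is precisely the Weyl group of the reduced root system $\Phi_r^{\mathrm{red}}$. Applying Chevalley--Shephard--Todd to the action of $W_\fa$ on $\fa$ yields that $k[\fa]^{W_\fa}$ is a polynomial ring $k[f_1, \ldots, f_r]$ on $r = \dim(\fa)$ algebraically independent homogeneous generators, of well-defined degrees $m_1, \ldots, m_r$.

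For the degree sum, I would apply the classical identity for any finite complex reflection group $W$ acting on an $r$-dimensional vector space with fundamental invariants of degrees $m_1, \ldots, m_r$:
\[
\#\{\text{reflections in } W\} = \sum_{i=1}^r (m_i - 1).
\]
This follows, for instance, by comparing the coefficient of the lowest nontrivial degree in the Molien/Shephard--Todd formula $\prod_i (1 - t^{m_i})^{-1}$ against the character sum over $W$, or equivalently by computing the character of the coinvariant algebra against reflections.

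It then remains to identify the set of reflections in $W_\fa$ with half the reduced root system. Every reflection in $W_\fa$ is of the form $s_\alpha$ for some $\alpha \in \Phi_r^{\mathrm{red}}$, and $s_\alpha = s_{-\alpha}$, so $\#\{\text{reflections}\} = \#\Phi_r^{\mathrm{red}}/2$. Combining with the degree identity above produces
\[
\sum_{i=1}^r m_i = r + \frac{\#\Phi_r^{\mathrm{red}}}{2},
\]
which is the claim.

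The only genuine subtlety, rather than an obstacle, is ensuring that the nonreduced case (when $\Phi_r$ is of type $BC$) is handled correctly. Here one must be careful that the reflection $s_\alpha$ coming from a root $\alpha \in \Phi_r$ coincides with $s_{2\alpha}$ whenever $2\alpha \in \Phi_r$ as well, so that the reflection group genuinely depends only on $\Phi_r^{\mathrm{red}}$. Once this observation is made, the standard invariant-theoretic machinery applies verbatim, and the formula follows.
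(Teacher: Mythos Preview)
Your argument is correct. You recover the polynomial ring structure from Chevalley--Shephard--Todd and then deduce the degree sum from the classical identity $\sum_i (m_i - 1) = \#\{\text{reflections in }W_\fa\}$, together with the observation that the reflections are indexed by pairs $\{\alpha,-\alpha\}$ in $\Phi_r^{\mathrm{red}}$. One small point: you phrase things for complex reflection groups, but the paper works over an algebraically closed field $k$ of possibly positive (good) characteristic. This is harmless here since $W_\fa$ is a crystallographic Weyl group and the invariant-theoretic statements you invoke hold as soon as $|W_\fa|$ is invertible in $k$, which follows from the standing assumption that $p$ is good for $G$; it would be worth saying this explicitly.

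The paper takes a slightly different route. Rather than appealing to the reflection-count identity, it simply takes degrees on both sides of the equality in \cite[Lemma 4.11]{levy} (a Jacobian-type formula expressing a product over positive restricted roots) and then uses that the length of the longest element of $W_\fa$ equals the number of positive roots in $\Phi_r^{\mathrm{red}}$. Both arguments are standard; yours is more self-contained in that it does not rely on the precise content of Levy's lemma beyond the polynomial-ring assertion, while the paper's version is shorter given that citation.
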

	\begin{proof}
		The lemma follows from taking degree of both sides of the equality in \cite[Lemma 4.11]{levy}, noting that the length of the longest element in $W_\fa$ is given by the number of positive roots in the reduced root system.
	\end{proof}

	\subsection{Regularity and the $\theta$-Quasisplit Condition}
	\label{sec: regularity and qsplit}
	
	In \cite{donagi2002gerbe} and \cite{ngo2010lemme}, the notion of regularity was leveraged to study properties of the Hitchin fibration. In so doing, the regular centralizer group scheme was cemented as a central object in the geometry of Hitchin systems and their generalizations. In \cite{gppn}, this point of view was explored, and certain cameral covers were introduced with the goal of understanding regular centralizers. In this section, we review the basic properties fo regularity in the symmetric space setting, and consider a class of special symmetric spaces, called $\theta$-quasisplit, for which the notions of regularity for $G$ and for the symmetric space agree. We will return to this class in Section \ref{section: GS cover}, where we will describe the regular centralizer group schemes for $\theta$-quasisplit symmetric spaces.
	
	\begin{definition}
		\label{definition: regular in p}
		We denote by $I = I_H\subset \fp\times H$ the group scheme of centralizers over $\fp$, i.e.
		\[
		I = \{(x,h)\colon h\cdot x = x\}.
		\]
		An element $x\in \fp$ is called \emph{regular} if $\dim(I_x)$ is the minimal possible.%\footnote{We note that this is not quite the same as the definition of \cite{morrissey2022reg}.  However under the assumptions we make on the characteristic of our field this is equivalent to the definition in \emph{loc cit.}}.
		Let $\fp^{\reg}\subset \fp$ denote the open subscheme of regular elements in $\fp$.
	\end{definition}
	
	\begin{remark}
		By Proposition \ref{prop: extension of K}, $H$ is an extension of $K$ by a group $F_H\cdot Z_H$ where $F_H$ is finite and $Z_H\subset Z$ is a subgroup of the center. Hence, the notion of regularity only depends on the involution $\theta$ and not the choice of subgroup $H$.
	\end{remark}
	
	\begin{prop}
		\label{prop:dimension criteria for regularity}
		(\cite{levy}, Lemma 4.3) Let $A$ be a maximal $\theta$-split torus of $G$. For any $x\in \fp$, the following are equivalent:
		\begin{enumerate}[(a)]
			\item $x$ is regular;
			\item $\dim C_G(x) = \dim \fa + \dim C_K(A)$;
			\item $\dim C_K(x) = \dim C_K(A)$.
		\end{enumerate}
	\end{prop}

	We call an element $x\in \fp$ \emph{semisimple} if $x$ is semisimple in $\fg$. That this definition is correct is motivated in part by the following two results on Jordan decompositions in $\fp$.
	
	\begin{lem}
		\label{lemma : jordan decomposition}
		(\cite{levy}, Lemma 2.1) Any $x\in \fp$ admits a decomposition $x = s+n$ for $s,n\in \fp$, $s$ semisimple, $n$ nilpotent, and $n\in \mathrm{Lie}\; C_G(s) = \fc_\fg(s)$.
	\end{lem}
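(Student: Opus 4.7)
The plan is to reduce this to the ordinary Jordan decomposition in $\fg$ together with the uniqueness of that decomposition under algebra automorphisms. Since $\mathrm{char}(k)$ is good for $G$, every $x\in\fg$ admits a unique additive Jordan decomposition $x = s+n$ with $s\in\fg$ semisimple, $n\in\fg$ nilpotent, and $[s,n]=0$; moreover $s$ and $n$ are polynomials in $x$ without constant term, so they are preserved by every Lie algebra automorphism of $\fg$.

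I would first take the Jordan decomposition $x = s+n$ of $x$ viewed as an element of $\fg$. Applying the involution $\theta$ (which is a Lie algebra automorphism) gives $\theta(x)=\theta(s)+\theta(n)$, and since $\theta(s)$ is still semisimple, $\theta(n)$ still nilpotent, and they still commute, by uniqueness of the Jordan decomposition one has $\theta(s)=\theta(x)_{\mathrm{ss}}$ and $\theta(n)=\theta(x)_{\mathrm{nil}}$. Because $x\in\fp$ means $\theta(x)=-x$, uniqueness forces $\theta(s) = -s$ and $\theta(n) = -n$, so $s,n\in\fp$ as required.

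Finally, the relation $[s,n]=0$ coming from the standard Jordan decomposition is precisely the statement that $n$ lies in the Lie algebra centralizer $\mathfrak{z}_\fg(s) = \mathrm{Lie}(Z_G(s))$, which (with the convention used in the statement of the lemma) is what is meant by $n\in Z_G(s)$.

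There is no substantive obstacle: the only small point to check is that the classical Jordan decomposition is available and unique in good characteristic, which is guaranteed by the standing hypothesis on $p = \mathrm{char}(k)$ made at the start of Section \ref{section: Background on Symmetric Pairs}. The whole argument is a two-line consequence of the uniqueness of the Jordan decomposition combined with the fact that $\theta$ is an involutive Lie algebra automorphism.
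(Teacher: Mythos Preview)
Your argument is correct and is exactly the standard proof; the paper does not give its own proof of this lemma but simply cites \cite{levy}, Lemma 2.1, where the same uniqueness-of-Jordan-decomposition argument appears. One tiny comment: the phrase ``$s$ and $n$ are polynomials in $x$'' is best understood via the adjoint representation (so that $\mathrm{ad}(s),\mathrm{ad}(n)$ are polynomials in $\mathrm{ad}(x)$), but since what you actually use is that any Lie algebra automorphism sends semisimple (resp.\ nilpotent) elements to semisimple (resp.\ nilpotent) elements and preserves commutation, the conclusion goes through unchanged.
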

	
	As in the Lie algebra $\fg$, the regular, semisimple locus is dense and easy to understand.
	
	\begin{lem}
		\label{lemma: reg,ss locus is dense}
		Let $\fp^{\mathrm{rs}}\subset \fp$ denote the subscheme of regular, semisimple elements in $\fp$.
		\begin{enumerate}[(a)]
			\item Let $x\in \fp$. Then, $x$ is semisimple if and only if $x$ is contained in a $\theta$-Cartan of $\fp$.
			\item The regular, semisimple locus $\fp^{\mathrm{rs}}$ is dense in $\fp$.
			\item If $x\in \fa$ is regular, semisimple, then $\mathfrak{c}_\fg(x) = \mathfrak{c}_\fg(\fa)$.
		\end{enumerate}
	\end{lem}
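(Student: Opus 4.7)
The proof breaks into three parts, with each building on the previous.

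For (a), the backward direction is immediate from Proposition \ref{prop: all cartans are conjugate}: every $\theta$-Cartan of $\fp$ is $H$-conjugate to $\mathrm{Lie}(A)$ for the fixed maximal $\theta$-split torus $A$, and elements of the Lie algebra of a torus are semisimple. For the forward direction, given semisimple $x \in \fp$, I would form $L := Z_G(x)^\circ$, which is reductive in good characteristic and $\theta$-stable because $\theta(x) = -x$ forces $\theta(Z_G(x)) = Z_G(-x) = Z_G(x)$. The element $x$ lies in the $(-1)$-eigenspace of $\theta$ on the center of $\mathrm{Lie}(L)$, i.e., in $\mathrm{Lie}(Z(L)^{-\theta,\circ})$, which is a $\theta$-split torus of $G$. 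Extending it to a maximal $\theta$-split torus $A'$ of $G$, one observes that $A'$ centralizes $Z(L)^{-\theta,\circ}$ and hence centralizes $x$, so $A' \subseteq L$ and $x \in \mathrm{Lie}(A')$, which is a $\theta$-Cartan of $\fp$.

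I would prove (c) next, since it feeds into (b). The inclusion $\mathfrak{z}_\fg(\fa) \subseteq \mathfrak{z}_\fg(x)$ is trivial. For the reverse, I apply the $\theta$-eigenspace decomposition $\mathfrak{z}_\fg(\fa) = \mathfrak{z}_\fk(\fa) \oplus \mathfrak{z}_\fp(\fa)$ and use the maximality of $\fa$ as an abelian subspace of $\fp$ to conclude $\mathfrak{z}_\fp(\fa) = \fa$. Since $A$ is a connected torus, $\dim \mathfrak{z}_\fk(\fa) = \dim C_K(A)$, so $\dim \mathfrak{z}_\fg(\fa) = \dim \fa + \dim C_K(A)$. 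By Proposition \ref{prop:dimension criteria for regularity} and the regularity of $x$, this matches $\dim \mathfrak{z}_\fg(x)$, upgrading the inclusion to an equality.

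For (b), I would exhibit a nonempty open subset of $\fp$ contained in $\fp^{rs}$ and conclude by irreducibility. The subset $\fa^{rs} := \{a \in \fa : \alpha(a) \neq 0 \text{ for all } \alpha \in \Phi_r\}$ is a nonempty open subset of $\fa$ whose elements are regular semisimple in $\fp$: semisimplicity is automatic, and the $T$-weight space decomposition of $\fg$ (for $T \supseteq A$ a maximally split torus) shows $\mathfrak{z}_\fg(a) = \mathfrak{z}_\fg(\fa)$ for $a \in \fa^{rs}$, which via (c) and Proposition \ref{prop:dimension criteria for regularity} yields regularity. The action map $H \times \fa^{rs} \to \fp$, $(h,a) \mapsto \mathrm{Ad}(h)\cdot a$, has surjective differential at every $(e,x)$ with $x \in \fa^{rs}$: for semisimple $x$ one has $\fp = [\fk, x] \oplus \mathfrak{z}_\fp(x)$, and for regular $x \in \fa$ the last summand equals $\mathfrak{z}_\fp(\fa) = \fa$ by (c). Hence $H \cdot \fa^{rs}$ is a nonempty open subset of $\fp$ contained in $\fp^{rs}$, and is dense by irreducibility of $\fp$.

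The subtlest step is the forward direction of (a), which requires exploiting both the reductive structure of $Z_G(x)$ and the conjugacy of maximal $\theta$-split tori (Proposition \ref{prop: all cartans are conjugate}). Once that structural statement is established, parts (b) and (c) follow from direct dimension counts and the invariant theory already developed earlier in this section.
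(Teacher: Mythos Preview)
Your proof is correct and self-contained. The paper, by contrast, does not prove this lemma at all: it simply cites \cite{levy}, Corollary 2.10 and Theorem 2.11 for parts (a) and (b), and \cite{levy}, Lemma 4.3 for part (c). So the comparison is between an argument and a citation.

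A few remarks on your write-up. In part (a), the phrase ``the $(-1)$-eigenspace of $\theta$ on the center of $\mathrm{Lie}(L)$, i.e., in $\mathrm{Lie}(Z(L)^{-\theta,\circ})$'' tacitly identifies $\mathfrak{z}(\mathrm{Lie}(L))$ with $\mathrm{Lie}(Z(L)^\circ)$, which can fail in positive characteristic; however, your conclusion that $x \in \mathrm{Lie}(Z(L)^{-\theta,\circ})$ is still correct, because $x$ is semisimple in $\fg$, hence lies in $\mathrm{Lie}(T)$ for some maximal torus $T \subset L$, and vanishing of all roots of $L$ on $x$ forces $x \in \mathrm{Lie}(Z(L)^\circ)$ directly. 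In parts (b) and (c) you silently use that centralizers of semisimple elements and of tori are smooth (so that $\dim C_G(x) = \dim \mathfrak{z}_\fg(x)$ and $\dim C_K(A) = \dim \mathfrak{z}_\fk(\fa)$); this is standard but worth stating, since Proposition~\ref{prop:dimension criteria for regularity} is phrased in terms of group-theoretic centralizers. Finally, in (b) your phrase ``via (c)'' really means ``via the dimension computation established while proving (c),'' not the statement of (c) itself; as written it could be misread as circular.
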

	\begin{proof}
		(a) and (b) follow immediately from \cite{levy}, Corollary 2.10 and Theorem 2.11. (c) follows from \cite{levy}, Lemma 4.3.
	\end{proof}
	
	\begin{lem}
		\label{lem: K-conjugacy in the reg,ss locus}
		For any regular, semisimple $x\in \fp^{\mathrm{rs}}$, the fiber of the map $\chi\colon \fp\to \fp\git K$ over $\chi(x)$ is equal to the $K$-orbit of $x$, which is closed.
	\end{lem}
	\begin{proof}
		Let $y\in \fp$, and let $y = s+n$ be the Jordan decomposition of Lemma \ref{lemma : jordan decomposition}. Then by Theorem \ref{theorem : Chevalley}, $\chi(y)\in \fp\git K$ is the point corresponding to the closed orbit $H\cdot s$. Hence, if $x$ is regular semisimple, $\chi(y) = \chi(x)$ holds if and only if $x = s$. Since $n$ is in the centralizer $\fc_\fg(s)$, it follows that $n=0$ and $y$ is regular, semisimple. 
		
		It remains to show that if $x,y$ are regular, semisimple with $\chi(x) = \chi(y)$, then $x$ is $K$ conjugate to $y$. By Lemma \ref{lemma: reg,ss locus is dense} and Proposition \ref{prop: all cartans are conjugate}, it follows that any regular, semisimple $x,y\in \fp^{\mathrm{rs}}$ satisfying $\chi(x) = \chi(y)$ are $W_\fa$-conjugate. Since $W_\fa$ has representatives in $K$ by Proposition \ref{proposition : little weyl group is N_G(A)/C_G(A)}, it follows that any such $x$ and $y$ are $K$ -conjugate.
	\end{proof}
	
	\begin{lem}
		\label{lem: unique Cartan}
		If $x\in \fp^{\mathrm{rs}}$, then there exists a unique $\theta$-Cartan $\fa$ such that $x\in \fa$.
	\end{lem}
	\begin{proof}
		By part (a) of Lemma \ref{lemma: reg,ss locus is dense}, there exists a $\theta$-Cartan containing $x$. For any $\theta$-Cartan $\fa$ containing $x$, we have by part (c) of Lemma \ref{lemma: reg,ss locus is dense} that $\fc_\fp(x) = \fc_\fp(\fa)$. But by \cite[Lemma 2.3]{levy}, $\fc_\fp(\fa) = \fa$ for any $\theta$-Cartan $\fa$. We conclude that $\fa = \fc_\fp(x)$ is uniquely determined by $x$.
	\end{proof}
	
	\begin{prop}
		\label{proposition : regularity detected by Jordan decomposition}
		Let $x\in \fp$ have decomposition $x = s+n$ as in Lemma \ref{lemma : jordan decomposition}, and put $L = C_G(s)^0$, $\fl = {\mathrm Lie}(L)$, and $\fp_L = \fl\cap \fp$. Then $L$ is $\theta$-stable and $x$ is regular in $\fp$ if and only if $n$ is regular as an element of $\fp_L$.
	\end{prop}
	
	\begin{proof}
		We follow an identical argument to the proof of Proposition 9.12 in \cite{richardson}. We assume without loss of generality that $s\in \fa$ so that $A\subset L$. By Proposition \ref{prop:dimension criteria for regularity}, $x$ is regular if and only if $\dim C_{K}(x) = \dim C_K(A)$ and $n$ is regular in $L$ if and only if $\dim C_{K\cap L}(n) = \dim C_{K\cap L}(A)$. We have $C_{K}(x)^0 = (C_K(s)\cap C_K(n))^0 = C_{K\cap L}(n)^0$, so that $\dim C_{K}(x) = \dim C_{K\cap L}(n)$. Since $s\in A$, we have $C_K(A)^\circ\subset C_K(s)^\circ = L$, and so $C_{K}(A)^\circ = C_{K\cap L}(A)^\circ$. The result now follows.
	\end{proof}
	
	Let $I^\reg = I|_{\fp^\reg}$ be the restriction of the centralizer group scheme to the regular locus of $\fp$. In contrast with regular centralizers for the adjoint action of $G$, the group scheme $I^\reg$ need not be commutative. A special role in the literature is played by those symmetric pairs for which commutativity holds. This class coincides with the those whose notion of regularity agrees with that of the group $G$ acting on $\fg$. We make the following definition.
	
	\begin{definition}
		\label{def : quasisplit}
		We say a symmetric pair $(G,\theta,H)$ is \emph{$\theta$-quasisplit} if $\fp^\reg\subset \fg^\reg$.
	\end{definition}
	
	\begin{remark}
		The $\theta$-quasisplit condition does not depend on the choice of subgroup $H$, only on the involution $\theta$.
	\end{remark}
	
	As the author found it difficult to locate a proof in the literature, we include here a proof of several equivalent characterizations for the $\theta$-quasisplit condition.
	
	\begin{prop}
		\label{proposition: equivalent characterizations of quasi-split}
		The following are equivalent:
		\begin{enumerate}[(a)]
			\item $(G,\theta,H)$ is $\theta$-quasisplit;
			\item $C_G(A) = T$ is a maximal (maximally $\theta$-split) torus;
			\item $I^\reg$ is a commutative group scheme;
		\end{enumerate}
	\end{prop}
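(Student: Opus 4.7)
The plan is to prove $(1) \Leftrightarrow (2) \Leftrightarrow (3)$. Throughout, I write $L' := Z_G(A)$ and $\fl' := \mathfrak{z}_\fg(\fa)$, and I use the identity $\ft = (\ft \cap \fk) \oplus \fa$, which holds because $T$ is a $\theta$-stable maximally split torus. A key preliminary observation, used repeatedly, is the structural identity $\fp_{L'} := \fp \cap \fl' = \fa$. To establish it, note that a regular semisimple $x \in \fa$ lies in $\fp^\reg$, since by Lemma \ref{lemma: reg,ss locus is dense}(b) the function $\dim C_H(\cdot)$ is constant on the non-empty open rss locus and so achieves its (necessarily minimal) value there. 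Applying Proposition \ref{prop:dimension criteria for regularity} together with Lemma \ref{lemma: reg,ss locus is dense}(c), which gives $C_G(x)^\circ = L'$, yields
\[
\dim \fp_{L'} + \dim \fk_{L'} = \dim L' = \dim \fa + \dim C_K(A) = \dim \fa + \dim \fk_{L'},
\]
and since $\fa \subset \fp_{L'}$ we conclude $\fp_{L'} = \fa$.

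For $(1) \Leftrightarrow (2)$, recall that $x \in \fg^\reg$ iff $\dim C_G(x) = \operatorname{rank}(G) = \dim T$. If $(1)$ holds, applying this to an rss element $x \in \fa$ gives $\dim L' = \dim T$, and combined with $T \subset L'$ this forces $L' = T$, proving $(2)$. Conversely, under $(2)$ we have $C_K(A) = K \cap T$ with $\dim C_K(A) = \dim T - \dim \fa$, so Proposition \ref{prop:dimension criteria for regularity} gives $\dim C_G(x) = \dim T$ for every $x \in \fp^\reg$, proving $(1)$.

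For $(2) \Rightarrow (3)$, the equivalence just established yields $\fp^\reg \subset \fg^\reg$. The centralizer group scheme $I^\reg$ for $(\fp, H)$ is, by construction, a closed subgroup scheme of the restriction $I_G|_{\fp^\reg}$ of the centralizer scheme of the $G$-action on $\fg$. Since regular centralizers in a reductive group are commutative in good characteristic (the classical fact underlying \cite{ngo2010lemme}), $I_G|_{\fg^\reg}$ is a commutative group scheme, and hence so is its closed subgroup scheme $I^\reg$.

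For $(3) \Rightarrow (2)$, I argue the contrapositive. If $L' \supsetneq T$, then $L'$ is reductive and properly contains its maximal torus $T$, so the derived subgroup $L'_{der}$ is a non-trivial semisimple subgroup. The structural identity $\fp_{L'} = \fa$ shows $\fl' = \fa \oplus \fk_{L'}$ as Lie algebras with $\fa$ central in $\fl'$; hence $\fl'_{der} = [\fk_{L'}, \fk_{L'}] \subset \fk_{L'}$, which makes $\fk_{L'}$ non-abelian. For any regular semisimple $x \in \fa$, the fiber $I^\reg_x = C_H(x)$ contains $(K \cap L')^\circ$, whose Lie algebra is $\fk_{L'}$; hence $I^\reg_x$ is non-abelian, contradicting $(3)$. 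The main technical burden of the argument is the structural identity $\fp_{L'} = \fa$, which pins down the precise dimensions of the relevant centralizers; once this is in hand, the remaining implications reduce to the classical commutativity of regular centralizers together with the observation that non-triviality of $L'_{der}$ forces non-commutativity of $\fk_{L'}$.
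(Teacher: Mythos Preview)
Your proof is correct and uses the same ingredients as the paper's: the dimension criterion of Proposition~\ref{prop:dimension criteria for regularity}, the identification $\mathfrak{z}_\fg(x)=\mathfrak{z}_\fg(\fa)$ for regular semisimple $x\in\fa$ from Lemma~\ref{lemma: reg,ss locus is dense}(c), and the embedding of $I^\reg$ into the commutative $G$-regular centralizer scheme for $(2)\Rightarrow(3)$. The structural identity $\fp\cap\mathfrak{z}_\fg(\fa)=\fa$ that you isolate as a preliminary is exactly the content of \cite[Lemma~2.3]{levy}, which the paper invokes elsewhere; making it explicit here is a reasonable choice. The one organizational difference is in $(3)\Rightarrow(2)$: the paper argues directly, using that commutativity of $I^\reg_x$ forces $\mathfrak{z}_\fg(\fa)$ to be abelian and then citing \cite[Lemma~4.2]{levy} to conclude $Z_G(A)=T$, whereas you run the contrapositive by exhibiting $\fl'_{\mathrm{der}}\subset\fk_{L'}$ and hence a non-abelian fiber $I^\reg_x$ whenever $L'\supsetneq T$. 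These are the same argument read in opposite directions; yours is arguably more self-contained since it avoids the external citation.
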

	\begin{proof}
		Let $x\in \fp$. By part (b) of Proposition \ref{prop:dimension criteria for regularity}, $x$ is regular if and only if
		\[
		\dim C_G(x) = \dim \fa+\dim C_K(\fa).
		\]
		Therefore, we see that $x\in \fp^\reg$ is also regular in $G$ if and only if $\dim C_K(\fa)\cdot A = \dim T$. By Proposition \ref{prop:dimension criteria for regularity}, $C_G(\fa) = C_K(\fa)\cdot A$ and so this latter condition is equivalent to $C_G(\fa) = T$. We note that in this case $C_G(\fa) = C_G(A)$ by \cite[Lemma 2.4]{levy}, so we conclude that (a) and (b) are equivalent.
		
		Assume (a) holds. Then if $x\in \fp^{\reg}$, $I^\reg_x$ is contained in the regular centralizer group scheme of $x$ in $G$, which is abelian. Hence, (c) holds.
		
		Conversely, if (c) holds, then by Lemma \ref{lemma: reg,ss locus is dense}, there exists $x\in \fp^{\reg}$ such that $\fc_\fg(A) = \fc_\fg(x)$. Then $\dim \fc_\fg(A) = \dim \fc_\fg(x) = r$ and by Lemma 4.2 of \cite{levy}, we conclude that $C_G(A)$ is a maximal torus. 
	\end{proof}

	\begin{prop}
		(\cite{leslie}, Lemma 1.6) For $\theta$ quasisplit, the little Weyl group $W_\fa$ is naturally a subgroup $W_\fa\subset W$.
	\end{prop}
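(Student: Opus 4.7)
The plan is to exploit the characterization from Proposition \ref{proposition: equivalent characterizations of quasi-split}(2): in the quasi-split case, $T := C_G(A)$ is itself a maximal (maximally $\theta$-split) torus of $G$, so the Weyl group $W = N_G(T)/T$ is available, and we just need to produce a natural injection $W_\fa \hookrightarrow W$.

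First I would pass from the Lie-algebra description $W_\fa = N_H(\fa)/C_H(\fa)$ of Proposition \ref{proposition : little weyl group is N_G(A)/C_G(A)} to the group-level description $W_\fa = N_H(A)/C_H(A)$; this is immediate in good characteristic since $A$ is connected with $\mathrm{Lie}(A)=\fa$. Now for any $h \in N_H(A)$, conjugation preserves the centralizer of $A$, so
\[
h T h^{-1} \;=\; h\,C_G(A)\,h^{-1} \;=\; C_G(hAh^{-1}) \;=\; C_G(A) \;=\; T,
\]
showing $N_H(A) \subset N_G(T)$. Composing with $N_G(T) \to W$ yields a homomorphism $\varphi\colon N_H(A) \to W$.

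Next I would identify $\ker \varphi = N_H(A)\cap T$. Since $A \subset T$ and $T$ is abelian, every element of $T$ centralizes $A$, so $N_H(A)\cap T = H \cap T = C_G(A)\cap H = C_H(A)$. Therefore $\varphi$ descends to an injective homomorphism
\[
W_\fa \;=\; N_H(A)/C_H(A) \;\hookrightarrow\; N_G(T)/T \;=\; W.
\]
The naturality statement — that the restriction of the $W$-action on $\ft$ to $\fa$ recovers the $W_\fa$-action — is tautological, because the action of the class of $h \in N_H(A)$ on $\fa$ is by construction the restriction of $\mathrm{Ad}(h)$ on $\ft$, and $\mathrm{Ad}(h)$ represents $\varphi(h)$ in $W$.

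There is no genuine obstacle in this argument: once Proposition \ref{proposition: equivalent characterizations of quasi-split} promotes $C_G(A)$ to a maximal torus, the embedding is entirely formal. The only mild care needed is to match the Lie-algebra normalizers and centralizers with their group counterparts, which is guaranteed by the standing good-characteristic hypothesis on $p = \mathrm{char}(k)$.
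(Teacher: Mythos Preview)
Your argument is correct and is exactly the standard one: once $C_G(A)=T$ is a maximal torus (Proposition~\ref{proposition: equivalent characterizations of quasi-split}(2)), the inclusion $N_H(A)\subset N_G(T)$ together with $N_H(A)\cap T=C_H(A)$ gives the embedding $W_\fa\hookrightarrow W$. The paper itself does not supply a proof here --- it simply records the statement with a citation to \cite{leslie} --- so there is nothing further to compare.
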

	
	Let $T$ be a maximally $\theta$-split torus, $\Phi$ the root system of $G$ with respect to $T$, and $\Phi_r$ the restricted root system. We denote by
	\[
	r\colon \Phi\to \Phi_r\cup \{0\}
	\]
	the restriction map sending $\alpha\mapsto \alpha|_\fa$. Note that roots of $G$ may, a priori, restrict to zero on $\fa\subset \ft$. For $\theta$-quasisplit symmetric pairs, this does not happen.
	
	\begin{lem}
		\label{lemma: no roots restrict to zero}
		For $(G,\theta,H)$ quasisplit, the set $r^{-1}(0)$ is empty; that is, no root in $\Phi$ restricts to zero on $\fa$.
	\end{lem}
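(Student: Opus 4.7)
The plan is to derive a contradiction by pushing a hypothetical root restricting to zero into the centralizer $Z_G(A)$, and then invoking the characterization of quasi-split pairs already established.

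First, I would unpack what $\alpha|_\fa = 0$ means at the group-theoretic level. Viewing $\alpha$ as a character $T \to \bG_m$, the condition $\alpha|_\fa = 0$ says that $\fa$ lies in the kernel of $d\alpha$, equivalently (in good characteristic) that $A \subset \ker(\alpha)$. In particular, the root space $\fg_\alpha$ is pointwise fixed by the adjoint action of $A$, so $\fg_\alpha \subset \fz_\fg(A) = \fz_\fg(\fa)$. Since $\fz_\fg(A)$ is the Lie algebra of $Z_G(A)$ (as $A$ is a torus and $p$ is good for $G$), we conclude $\fg_\alpha \subset \mathrm{Lie}(Z_G(A))$.

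Next I would apply the quasi-split hypothesis through Proposition \ref{proposition: equivalent characterizations of quasi-split}: the equivalence $(1) \Leftrightarrow (2)$ tells us that $Z_G(A) = T$ is a maximal torus, hence $\mathrm{Lie}(Z_G(A)) = \ft$. Combined with the previous paragraph, this forces $\fg_\alpha \subset \ft$. But for a genuine root $\alpha \in \Phi$, the root space $\fg_\alpha$ is a nonzero weight space for $T$ with nontrivial weight, so $\fg_\alpha \cap \ft = 0$. This contradiction establishes that no $\alpha \in \Phi$ can restrict to zero on $\fa$.

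I don't anticipate a serious obstacle here; the content of the lemma is essentially a rephrasing of the characterization of quasi-split pairs as those for which $Z_G(A)$ is already a torus. The only mild subtlety worth spelling out is the identification $\mathrm{Lie}(Z_G(A)) = \fz_\fg(\fa)$, which uses that the adjoint action of the torus $A$ on $\fg$ decomposes $\fg$ into weight spaces and that taking $A$-invariants agrees with the centralizer of $\fa$ under the standing good characteristic hypothesis on $p$.
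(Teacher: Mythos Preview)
Your overall strategy is sound and cleaner than the paper's. The paper argues by explicit computation with the Cartan decomposition: writing a root vector as $(x,y)\in\fk\oplus\fp$, it tracks the $\fk$- and $\fp$-components under $\mathrm{ad}(\ft_0\oplus\fa)$, uses the quasi-split input $\fc_\fp(\fa)=\fa$ (from \cite{levy}) to force $y\in\fa$, and then squeezes out $\alpha=0$. Your approach---land $\fg_\alpha$ in the centralizer of $\fa$ and identify that centralizer with $\ft$---is more conceptual and makes the dependence on Proposition~\ref{proposition: equivalent characterizations of quasi-split} transparent.

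There is, however, a genuine gap in positive characteristic. Your passage from $\alpha|_\fa=0$ to $A\subset\ker(\alpha)$, and the accompanying claim $\fz_\fg(A)=\fz_\fg(\fa)$, is not justified by the good characteristic hypothesis alone. The differential $X^*(A)\to\fa^*$ has kernel $pX^*(A)$, so $\alpha|_\fa=0$ only yields $\alpha|_A\in pX^*(A)$; good characteristic constrains $p$-divisibility in the root lattice of $(G,T)$, not in the image of roots in $X^*(A)$. Worse, once you already know $Z_G(A)=T$ (hence no root satisfies $\alpha|_A=0$), the assertion $\fz_\fg(A)=\fz_\fg(\fa)$ is \emph{equivalent} to the lemma you are proving, so invoking it is circular.

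The fix is to avoid the group-level detour entirely. From $\alpha|_\fa=0$ you get $\fg_\alpha\subset\fz_\fg(\fa)$ immediately at the Lie algebra level. Now take $x\in\fa^\reg$: the quasi-split hypothesis gives $x\in\fg^\reg$, and since $x$ is semisimple its centralizer $Z_G(x)$ is smooth, so $\dim\fz_\fg(x)=\operatorname{rank}(G)=\dim\ft$. Combined with $\ft\subset\fz_\fg(\fa)=\fz_\fg(x)$ (the latter by Lemma~\ref{lemma: reg,ss locus is dense}(c)), this forces $\fz_\fg(\fa)=\ft$, and your contradiction goes through.
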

	\begin{proof}
		Fix a maximally $\theta$-split Cartan $\ft = \ft_0\oplus \fa$ where $\ft_0\subset\fk$ is the $(+1)$ eigenspace of $\theta$ on $\ft$. Suppose that $\alpha\in \Phi$ restricts to zero on $\fa$. Let $(x,y)\in \fk\oplus \fp = \fg$ be an eigenvector with eigencharacter $\alpha$. Then, using the compatibility of the bracket on $\fg$ with the Cartan decomposition, we have, for all $t\in \ft_0$ and $a\in \fa$,
		\begin{equation}
			\label{eqn:resroot}
			\alpha(t)(x,y) =\alpha(t+a)(x,y)= \big({\mathrm ad}(t)(x)+{\mathrm ad}(a)(y), {\mathrm ad}(a)(x)+{\mathrm ad}(t)(y)  \big).
		\end{equation}
		In particular, ${\mathrm ad}(a)(y)$ is independent of $a$, so $y\in \fc_{\fp}(\fa)$. But since the symmetric pair is $\theta$-quasisplit, by \cite[Lemma 2.3]{levy} we have $\fc_\fp(\fa) = \fa\subset \fc_\fp(\ft_0)$. Hence, ${\mathrm ad}(t)(y) = 0$ and equation \ref{eqn:resroot} implies that
		\[
		{\mathrm ad}(a)(x) = \alpha(t)y
		\]
		for all $t\in \ft_0$ and $a\in \fa$. This can only be true if both sides of the expression are uniformly zero, so $\alpha = 0$ is not in $\Phi$, a contradiction.
	\end{proof}
	
	Fix a Cartan $\ft\supseteq \fa$ of $\fg$ extending a $\theta$-Cartan of $\fa$. In general (not assuming quasisplit), there is a map 
	\[
	\fa\git W_\fa\to \ft\git W
	\]
	produced by the map of GIT quotients $\fp\git K\to \fg\git G$, which is a finite, generically unramified map. This map is unramified precisely in the $\theta$-quasisplit case:
	\begin{lem}
		\label{lemma: qspt implies unramified map of hitchin bases}
		(See \cite{panyu}, Theorem 3.6 for the characteristic zero result.) If the symmetric pair is $\theta$-quasisplit, then the map $\fa\git W_\fa\to \ft\git W$ is unramified, and in fact, is a closed embedding.
	\end{lem}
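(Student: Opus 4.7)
The plan is to prove the closed embedding statement by establishing that $\fa \git W_\fa \to \ft \git W$ is injective on geometric points and induces an injective map on tangent spaces at every closed point. Together these will imply the map is an unramified monomorphism from a smooth variety onto its image, hence a closed immersion.

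The first step is to exploit the quasi-split hypothesis to identify $W_\fa$ inside $W$ as the $\theta$-fixed subgroup $W^\theta$. The inclusion $W^\theta \subseteq W_\fa$ is immediate since any $\theta$-fixed element of $W$ preserves the $(-1)$-eigenspace $\fa$ of $\theta$ acting on $\ft$. For the reverse inclusion, in the quasi-split case $T = Z_G(\fa)$ is maximal by Proposition~\ref{proposition: equivalent characterizations of quasi-split}, so $W_\fa \simeq N_G(\fa)/T$ is canonically a subgroup of $W = N_G(T)/T$, and the next step will show every such element is automatically $\theta$-fixed.

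Next, I would prove injectivity on geometric points: take $x, y \in \fa$ with the same image in $\ft \git W$, so $y = w(x)$ for some $w \in W$. By Lemma~\ref{lemma: reg,ss locus is dense} it suffices to treat $x \in \fa^{\reg}$, which by Lemma~\ref{lemma: no roots restrict to zero} and the quasi-split hypothesis equals $\fa \cap \ft^{\reg}$; in particular, $x$ has trivial $W$-stabilizer. Since $\theta$ preserves $T$ and hence acts on $W$ by conjugation on $N_G(T)$, applying $\theta$ to $y = w(x) \in \fa$ gives
\[
-y = \theta(y) = \theta(w)(\theta(x)) = -\theta(w)(x),
\]
so $\theta(w)(x) = w(x)$, forcing $\theta(w) = w$ by triviality of the stabilizer. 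Hence $w \in W^\theta = W_\fa$ and $[x] = [y]$ in $\fa \git W_\fa$.

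For unramifiedness, at a regular point $x \in \fa^{\reg}$ both quotient maps $\fa \to \fa \git W_\fa$ and $\ft \to \ft \git W$ are \'etale, so the differential of our map at $[x]$ is identified with the inclusion $\fa \hookrightarrow \ft$, which is injective. Extending injectivity of the differential to non-regular points is the main obstacle I anticipate; one strategy is to use the $\bG_m$-actions by scaling on both $\fa$ and $\ft$ to reduce the comparison at any closed point to the origin, where the cotangent map is dual to the restriction of fundamental $W$-invariants to $\fa$. A cleaner alternative, likely the approach of Panyushev, bypasses unramifiedness entirely by directly proving surjectivity of the restriction map $k[\ft]^W \to k[\fa]^{W_\fa}$; this would immediately yield the closed embedding (and hence unramifiedness) together with Theorem~\ref{theorem : Chevalley}, at the cost of a careful matching of fundamental invariants between $\Phi$ and $\Phi_r^{\mathrm{red}}$ in the quasi-split case.
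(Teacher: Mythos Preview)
Your approach to generic injectivity is the same idea as the paper's, executed more cleanly at the Weyl group level. The paper lifts to a representative $g_0 \in N_G(T)$, shows $\theta(g_0)^{-1}g_0 \in A$, and then takes a square root in $A$ to modify $g_0$ into an element of $G^\theta$, concluding that $x,y$ lie in the same $N_K(\fa)$-orbit. You sidestep the lift entirely: since a regular element of $\fa$ is regular in $\ft$ (quasi-split) and hence has trivial $W$-stabilizer, the identity $\theta(w)(x)=w(x)$ forces $\theta(w)=w$ directly, and your observation $W^\theta \subset W_\fa$ (which is indeed immediate once one notes $N_G(\fa)/T = \{w\in W: w(\fa)=\fa\}$ in the quasi-split case) finishes. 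Both arguments are correct; yours avoids the square-root trick.

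Where you diverge from the paper is in the passage from generic injectivity to the closed embedding. The paper asserts at the outset that, since the map is already known to be finite onto its image, it suffices to check degree one generically, and leaves the rest to the cited reference. You instead try to prove unramifiedness pointwise and correctly flag the non-regular points as the obstacle. One caution: your appeal to Lemma~\ref{lemma: reg,ss locus is dense} to reduce injectivity on closed points to regular $x$ is not justified by density alone---a finite map can be injective on a dense open without being injective---so this step needs the same finiteness-plus-normality input the paper is implicitly invoking. Your proposed alternative of proving surjectivity of the restriction $k[\ft]^W \to k[\fa]^{W_\fa}$ directly is almost certainly how Panyushev's cited argument proceeds, and you are right that it subsumes both the pointwise injectivity and the unramifiedness in one stroke.
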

	\begin{proof}
		Let $A\subset T$ be a maximal $\theta$-split torus and maximal torus of $G$, respectively, with Lie algebras $\fa\subset \ft$. By Zariski's Main Theorem, it suffices to prove that the morphism $\fa\git W_\fa\to \ft\git W$ is generically one to one. Let $\fa^\reg$ and $\ft^\reg$ denote the regular locus of $\fa$ and $\ft$, respectively. Since the symmetric pair is quasisplit, $\fa^\reg\subset \ft^\reg$. Suppose that $x,y\in \fa^\reg$ have the same image under the composition $\fa^\reg\to \fa\git W_\fa\to \ft\git W$. Then, there exists some $g_0\in N_G(\ft)$ such that $g_0\cdot x = y$, and the set of all $g\in G$ conjugating $x$ to $y$ is given explicitly by the set $g_0 T$.
		
		Applying $\theta$ to the equation $g_0\cdot x = y$, we see that $\theta(g_0)$ also takes $x$ to $y$. Hence, $\theta(g_0)^{-1}g_0$ is in the centralizer of $x$ in $G$, and so lies in $T = C_G(x)$. We have $\theta(g_0) = g_0 a_0$ for some $a_0\in T$. Since $\theta^2 = 1$, we conclude that $\theta(a_0) = a_0^{-1}$ and so $a_0\in A$. 
		
		We now claim that there exists $t\in A$ such that $g_0 t\in G^\theta$. Indeed, let $t\in A$ be such that $t^2 = a_0$. Then
		\[
		\theta(g_0 t) = g_0 a_0 t^{-1} = (g_0 t) (a_0t^{-2}) = g_0t.
		\]
		As $A$ centralizes $x$, we conclude that $(g_0t)\cdot x = y$ are conjugate under $G^\theta$, and since they are regular semisimple, are also conjugate under $K$ by Lemma \ref{lem: K-conjugacy in the reg,ss locus}. Let $h\in K$ be such that $h\cdot x = y$. Then also $h\cdot \fa = \fa$ since by Lemma \ref{lem: unique Cartan} any regular semisimple element of $\fp$ is contained in a unique $\theta$-Cartan. We conclude that $h\in N_K(\fa)$, and so the lemma follows.
	\end{proof}
	
	We record here an identity that will be important for dimension counts later.
	
	\begin{lem}
		(\cite{richardson}, Lemmas 3.1 and 3.2) We have the identity
		\[
		\dim \fk - \dim \fp = \dim C_K(\fa) - \dim \fa.
		\]
		In particular, if the symmetric pair is $\theta$-quasisplit, then
		\[\dim \fk - \dim \fp = r - 2r_\theta \]
		where $r$ is the rank of the group $G$ and $r_\theta = \dim \fa$ is the rank of the involution.
	\end{lem}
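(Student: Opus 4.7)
The plan is to exploit the restricted root space decomposition of $\fg$ with respect to $\fa$ and track how $\theta$ acts on each piece. First I would fix a maximal $\theta$-split torus $A$ with $\fa = \mathrm{Lie}(A)$ and decompose
\[
\fg = \fc_\fg(\fa) \oplus \bigoplus_{\alpha \in \Phi_r} \fg_\alpha,
\]
where $\fg_\alpha$ denotes the $\alpha$-weight space under the adjoint action of $\fa$. Since $\theta$ acts by $-1$ on $\fa$, it exchanges $\fg_\alpha$ and $\fg_{-\alpha}$, so for each positive restricted root $\alpha$ the involution restricts to $V_\alpha := \fg_\alpha \oplus \fg_{-\alpha}$ as an involution whose $(+1)$ and $(-1)$ eigenspaces are both graphs of the isomorphism $\fg_\alpha \to \fg_{-\alpha}$ (up to sign). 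Hence $\dim(V_\alpha \cap \fk) = \dim(V_\alpha \cap \fp) = \dim \fg_\alpha$, and these contributions to $\fk$ and $\fp$ cancel.

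The key observation for the remaining zero weight part is that $\fc_\fp(\fa) = \fa$: any element $x \in \fp$ commuting with $\fa$ generates an abelian subalgebra $\fa + kx \subseteq \fp$, which must coincide with $\fa$ by Definition \ref{definition: cartan of p} and the maximality of the $\theta$-Cartan $\fa$. Combined with the $\theta$-stable decomposition $\fc_\fg(\fa) = \fc_\fk(\fa) \oplus \fc_\fp(\fa)$, this yields
\[
\dim \fk - \dim \fp = \dim \fc_\fk(\fa) - \dim \fc_\fp(\fa) = \dim C_K(\fa) - \dim \fa,
\]
proving the first identity.

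For the quasi-split specialization, I would invoke Proposition \ref{proposition: equivalent characterizations of quasi-split}, which gives $Z_G(A) = T$ a maximal torus. Thus $\fc_\fg(\fa) = \ft$, and from the decomposition $\ft = \fc_\fk(\fa) \oplus \fa$ one reads off $\dim \fc_\fk(\fa) = r - r_\theta$. Substituting into the first identity yields
\[
\dim \fk - \dim \fp = (r - r_\theta) - r_\theta = r - 2 r_\theta,
\]
as claimed. The only nontrivial ingredient is really the equality $\fc_\fp(\fa) = \fa$, which follows directly from the maximality built into Definition \ref{definition: cartan of p}; everything else is an elementary dimension count pairing opposite restricted root spaces.
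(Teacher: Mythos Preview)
Your argument is correct. The paper does not supply its own proof of this lemma, only citing \cite{richardson}, Lemmas 3.1 and 3.2; your restricted-root-space count together with the maximality argument $\fc_\fp(\fa)=\fa$ is precisely the standard proof one finds there, and your derivation of the quasi-split consequence from Proposition~\ref{proposition: equivalent characterizations of quasi-split} is exactly how the paper intends it to be read off.
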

	
	\subsection{Examples}
	
	We state in this section some examples of symmetric pairs. We will illustrate the main results of this work using the examples from this list in Section \ref{subsections: Examples regular quotient}.
	
	\begin{ex}
		\label{example: complex case}
		\textbf{The Diagonal Case.} Let $G_1$ be a reductive group, and consider $G = G_1\times G_1$ with the swapping involution
		\[
		\theta(g,h) = (h,g)
		\]
		Put, $H=K = G_1$ the diagonal copy of $G_1$ in $G_1\times G_1$. The Cartan decomposition is given by
		\[
		\fg = \{(x,x)\colon x\in \fg_1\}\oplus \{(x,-x)\colon x\in \fg_1\}.
		\]
		Then, the action of $H = G_1$ on $\fp\simeq \fg_1$ is simply the adjoint representation of $G_1$, and the restricted root system of $(G_1\times G_1,G_1)$ is given by the root system for $G_1$.
	\end{ex}
	
	\begin{ex}
		\label{example:U(n,n) case}
		\textbf{The Case $\GL_n\times\GL_n\subset \GL_{2n}$.} Let $G = \GL_{2n}$ and consider the involution
		\[
		\theta(x) = I_{n,n}xI_{n,n}\quad \text{where }I_{n,n} = \begin{pmatrix}
			I_n & 0 \\ 0 & -I_n
		\end{pmatrix}
		\]
		Let $H=K = G^\theta = \GL_n\times \GL_n\subset \GL_{2n}$ embedded block diagonally. The Cartan decomposition is
		\[
		\fg=\fk\oplus \fp=\left\{\left(\begin{matrix}A & 0\\ 0 & B\end{matrix}\right)\right\}\oplus \left\{\left(\begin{matrix}0 & C\\ D & 0\end{matrix}\right)\right\}.
		\]
		A $\theta$-Cartan in $\fp$ is rank $n$, given by
		\[
		\fa = \left\{\begin{pmatrix}
			0 & \delta \\
			\delta & 0
		\end{pmatrix} \colon \delta\text{ is diagonal} \right\}
		\]
		The restricted root system is the type $C_n$ root system\footnote{Note that various different normalizations of this root system are used in applications. For example, the normalization used by Sakellaridis and Venkatesh scales the long root to make this a type $B_n$ root system \cite{sakellaridis2017periods}.} computed explicitly as
		\[\Phi_r=\{\pm (\delta_{j}^*\pm\delta_{k}^*)\colon j\neq k\text{ and } 1\leq j,k\leq n\}\cup \{\pm 2\delta_j^*\colon 1\leq j\leq n\}\]
		where $\delta_j^*$ denotes the dual basis element to the $j$-th coordinate of $\delta$ in $\fa$.
	\end{ex}
	
	\begin{ex}
		\label{example: so x so}
		\textbf{The Case $\SO_n\times \SO_n\subset \SO_{2n}$.}  We construct the split symmetric pair of type $D_{2n}$. Let $G = \SO_{2n}$ with the involution
		\[
		\theta(x) = I_{n,n}xI_{n,n}\quad \text{where }I_{n,n} = \begin{pmatrix}
			I_n & 0 \\ 0 & -I_n
		\end{pmatrix}
		\]
		(compare with Example \ref{example:U(n,n) case}). Let $H = K = \SO_n\times \SO_n\subset \SO_{2n}$ embedded block diagonally. Note that this is an index 2 subgroup in $G^\theta = S(O_n\times O_n)$. The Cartan decomposition is
		\[
		\fg = \fk\oplus \fp = \left\{ \begin{pmatrix}
			A & 0 \\ 0 & B
		\end{pmatrix}\colon A,B\in \mathfrak{so}_n \right\} \oplus \left\{ \begin{pmatrix}
			0 & C \\ -C^t & 0
		\end{pmatrix} \right\}
		\]
		We fix a Cartan in $\fp$
		\[
		\fa = \left\{ \begin{pmatrix}
			0 & \delta \\ -\delta & 0
		\end{pmatrix} \colon \delta\text{ is diagonal} \right\}
		\]
		The restricted root system is the type $D_n$ root system
		\[
		\Phi_r = \{ i(\pm \delta_j^*\pm \delta_k^*)\colon  1\leq j<k\leq n \}
		\]
		where $\delta_j^*$ denotes the $j$-th coordinate function of $\delta$ in $\fa$ and $i$ is the imaginary unit.
	\end{ex}

	\begin{ex}
		\label{ex: so x so odd}
		\textbf{The Case $\SO_{2m}\times \SO_{2n-2m+1}\subset \SO_{2n+1}$, $2m\leq n$.} Let $G = \SO_{2n+1}$ and consider the involution
		\[
		\theta(g) = I_{2m,2n-2m+1}gI_{2m,2n-2m+1},\quad \text{where }I_{2m,2n-2m+1} = \begin{pmatrix}
			I_{2m} & \\
			& -I_{2n-2m+1}
		\end{pmatrix}
		\]
		Let $H=K = \SO_{2m}\times \SO_{2n-2m+1}$. Fix the $\theta$-Cartan
		\[
		\fa = \left\{ \left( \begin{array}{c|ccc} & \mathbf{0}_{m\times (n-m)}  & \delta & \mathbf{0}_{m\times (n-m+1)} \\
			\hline
			\mathbf{0}_{(n-m)\times m} & & & \\
			-\delta & & & \\
			\mathbf{0}_{(n-m+1)\times m} & & & 
		\end{array} \right) \colon \delta \text{ is diagonal }2m\times 2m\right\}
		\]
		The restricted root system is the type $B_m$ root system
		\[
		\Phi_r = \{i(\pm\delta_j^*\pm\delta_k^*)\colon 1\leq j<k\leq m\}\cup \{\pm i\delta_j^*\colon 1\leq j\leq m\}.
		\]
		where again $\delta_j^*$ denotes the $j$-th coordinate function on the diagonal matrix $\delta$ appearing in $\fa$, and $i$ is the imaginary unit.
	\end{ex}

	\subsection{Nilpotent Orbits}
	
	In sharp contrast to the case of $G$ acting on $\fg^\reg$, a symmetric pair may have several distinct $H$ orbits of regular, nilpotent elements. In fact, this will to a large extent govern the geometry of the Hitchin fibration for symmetric pairs. In this section, we review results of \cite{KR}, \cite{sekiguchi}, and \cite{levy} on regular nilpotent $K$-orbits.
	
	Recall that $K$ acts on the nilpotent cone $\cN_\fp = \cN\cap \fp$ of $\fp$. Kostant and Rallis in \cite{KR} showed that in characteristic zero, although the nilpotent cone is not necessarily irreducible, it has finite many components, each of which contains a unique open orbit for the action of $K$. Levy extended this result to positive characteristic.
	\begin{thm}
		\label{theorem : structure of nilpotent cone}
		(\cite{levy}, Theorem 5.1) Each irreducible component of $\cN_\fp$ contains a unique regular $K$-orbit as an open, dense subset. In particular, $\cN_\fp$ is equidimensional, and the irreducible components of $\cN_\fp$ are in 1-1 correspondence with connected components of $\cN_\fp^{\reg}$.
	\end{thm}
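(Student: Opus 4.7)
The plan is to deduce the theorem from three ingredients: (i) finiteness of $K$-orbits in $\cN_\fp$, (ii) equidimensionality of $\cN_\fp$, and (iii) a dimension count showing that regular nilpotent orbits are of maximal dimension. Granting these, the argument is then formal.

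First I would recall (or establish in positive characteristic along Levy's lines) that $K$ acts on $\cN_\fp$ with only finitely many orbits. In characteristic zero this is the classical Kostant-Rallis theorem, and its extension to good characteristic is one of the main technical points in \cite{levy}. Because the number of orbits is finite, every irreducible component $Y\subset \cN_\fp$ contains a unique $K$-orbit $\cO_Y$ which is open and dense in $Y$: indeed $Y$ is the disjoint union of finitely many locally closed orbits, and a single one must contain the generic point.

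Next I would show that every orbit $K\cdot x\subset \cN_\fp^{\reg}$ is in fact open in $\cN_\fp$. For this, combine Proposition \ref{prop:dimension criteria for regularity}, which fixes $\dim Z_G(x)=\dim\fa+\dim C_K(\fa)$ for $x\in\fp^{\reg}$, with the $\theta$-stability of $Z_G(x)$ to write $\dim Z_K(x)=\dim Z_G(x)-\dim \fz_\fp(x)$, and then use Lemma 3.1--3.2 of \cite{richardson} (already quoted in the excerpt) to compute $\dim K\cdot x=\dim\fk-\dim Z_K(x)$ in closed form independent of the choice of regular nilpotent $x$. In parallel, using the Hilbert--Mumford criterion together with the $\bG_m$-action scaling nilpotents and the identification $\fp\git H\simeq \fa\git W_\fa$ from Theorem \ref{theorem : Chevalley}, one gets $\dim \cN_\fp=\dim\fp-\dim\fa$; equidimensionality then follows because $\cN_\fp$ is the scheme-theoretic fiber over $0$ of the flat, surjective Chevalley map $\fp\to \fp\git H$. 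Comparing the two dimensions shows that every regular nilpotent $K$-orbit has dimension equal to $\dim\cN_\fp$, so each is open in $\cN_\fp$.

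With both pieces in hand I would conclude as follows. On the one hand, the closure of any regular nilpotent $K$-orbit is irreducible of top dimension, so equals an irreducible component of $\cN_\fp$. On the other hand, given an irreducible component $Y$, the open dense orbit $\cO_Y\subset Y$ produced in the first step has dimension equal to $\dim Y=\dim\cN_\fp$, hence by the dimension criterion consists of regular elements. This gives the claimed bijection between irreducible components of $\cN_\fp$ and connected components of $\cN_\fp^{\reg}$, with $Y\mapsto \cO_Y$ and inverse given by $Y=\overline{\cO}$.

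The main obstacle I anticipate is the equidimensionality/dimension computation in step two, especially in positive characteristic: one must check that the Chevalley morphism $\fp\to \fp\git H$ is flat at $0$ (equivalently, that $k[\fp]$ is free over $k[\fp]^H$, the analogue of the Kostant--Rallis freeness theorem), which under the good-characteristic hypothesis is exactly the content of \cite[\S 4]{levy}. Once this is granted, the remainder is bookkeeping on orbit dimensions combined with the finiteness of nilpotent orbits.
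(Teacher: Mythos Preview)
The paper does not supply a proof here; the result is simply quoted from \cite{levy}. Your outline is a correct reconstruction of the standard argument and is almost certainly close in spirit to Levy's own proof. The three ingredients you name---finiteness of nilpotent $K$-orbits, equidimensionality of $\cN_\fp$ via the freeness of $k[\fp]$ over $k[\fp]^H$, and the dimension count for regular orbits---do assemble as you describe, and you are right to flag that the flatness input must be taken from Levy's \S4 directly rather than from Lemma~\ref{lemma: p-->a//Wa is flat} of the present paper, whose proof already invokes the theorem in question.

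The one place where your write-up is a little elliptical is the dimension step. Proposition~\ref{prop:dimension criteria for regularity} gives you $\dim\fz_\fg(x)=\dim\fa+\dim C_K(\fa)$, and you correctly split this as $\dim\fz_\fk(x)+\dim\fz_\fp(x)$; but to conclude that $\dim\fz_\fk(x)=\dim C_K(\fa)$ (hence $\dim K\cdot x=\dim\fp-\dim\fa$) you also need the per-element identity $\dim\fz_\fk(x)-\dim\fz_\fp(x)=\dim\fk-\dim\fp$, valid for every $x\in\fp$. This comes from the invariant form: for $x\in\fp$ one has $[\fk,x]^\perp\cap\fp=\fz_\fp(x)$, so $\dim[\fk,x]=\dim\fp-\dim\fz_\fp(x)$, while rank--nullity gives $\dim[\fk,x]=\dim\fk-\dim\fz_\fk(x)$. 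Combining this with the global identity $\dim\fk-\dim\fp=\dim C_K(\fa)-\dim\fa$ that you cite from Richardson then yields $\dim\fz_\fk(x)=\dim C_K(\fa)$ and $\dim\fz_\fp(x)=\dim\fa$ separately. Your gesture toward ``Lemma~3.1--3.2 of \cite{richardson}'' may well have this in mind, but as written it only records the global identity, not the per-$x$ one, and the latter is what closes the loop. Similarly, the passage from ``flat fiber'' to ``equidimensional fiber'' deserves one extra sentence (e.g.\ $\cN_\fp$ is a complete intersection in $\fp$ of the expected codimension, hence Cohen--Macaulay, hence equidimensional).
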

	\begin{cor}
		\label{lemma: non regulars codimension geq 1 in cNp}
		The space $\cN_{\fp}\backslash \cN_{\fp}^{\reg}$ is of codimension $\geq 1$ in $\cN_{\fp}$.
	\end{cor}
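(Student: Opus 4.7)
The plan is to deduce this as an almost immediate consequence of Theorem \ref{theorem : structure of nilpotent cone}. That theorem tells us that each irreducible component of $\cN_\fp$ contains a unique regular $K$-orbit which is open and dense in that component. Hence the regular locus $\cN_\fp^{\reg}$ meets every irreducible component of $\cN_\fp$ in a dense open subset.

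First I would reduce to working one irreducible component at a time. Let $\cN_\fp = \bigcup_i Z_i$ be the (finite) decomposition into irreducible components. For each $i$, the set $Z_i \cap \cN_\fp^{\reg}$ contains the open dense regular $K$-orbit $O_i \subset Z_i$ guaranteed by Theorem \ref{theorem : structure of nilpotent cone}, so $Z_i \setminus \cN_\fp^{\reg}$ is contained in the proper closed subset $Z_i \setminus O_i$. Since $Z_i$ is irreducible and $O_i$ is a nonempty open subset, $Z_i \setminus O_i$ has codimension at least $1$ in $Z_i$, and therefore so does $Z_i \setminus \cN_\fp^{\reg}$.

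Taking the union over $i$, $\cN_\fp \setminus \cN_\fp^{\reg} = \bigcup_i (Z_i \setminus \cN_\fp^{\reg})$ is a finite union of subsets each of codimension $\geq 1$ in the corresponding component of $\cN_\fp$, which gives the desired codimension bound in $\cN_\fp$. There is no real obstacle here beyond invoking Theorem \ref{theorem : structure of nilpotent cone}; the only subtlety worth flagging is that one must work componentwise because $\cN_\fp$ need not be irreducible in the symmetric-pair setting, in contrast to the classical nilpotent cone of $\fg$.
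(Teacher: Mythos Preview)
Your argument is correct and is exactly the intended one: the paper states this as an immediate corollary of Theorem \ref{theorem : structure of nilpotent cone} without further proof, and your componentwise reasoning is precisely how one unpacks that implication.
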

	
	The number of $K$-conjugacy classes of regular nilpotents was studied and classified by Sekiguchi over $\C$ \cite{sekiguchi} and by Levy in positive characteristic \cite{levy}. To state the result, we make the following definition. 
	\begin{defn}
		An \emph{isogeny of symmetric pairs} 
		\[
		(G',\theta',H')\to (G,\theta,H)
		\]
		is an isogeny $G'\to G$ restricting to an isogeny $H'\to H$ such that the following diagram commutes
		\[
		\xymatrix{
			G'\ar[r]^-{\theta'}\ar[d] & G'\ar[d] \\
			G\ar[r]^-{\theta} & G
		}
		\]
		
		We say that two symmetric pairs $(G,\theta,H)$ and $(G',\theta',H')$ are isogeneous if there exists an isogeny of symmetric pairs between them.
	\end{defn}
	
	We will make frequent use of the following classification result in computations.
	
	\begin{prop}
		\label{prop:table of nilpotent orbits}
		(\cite{levy}, Proposition 6.21) Let $G$ be a simple group, and $\theta$ an involution on $G$. The number of regular nilpotent $K$ orbits (and hence the number of irreducible components of the nilpotent cone) is exactly two if and only if $(G,\theta,H=K)$ is isogenous to one on the following list (listed as pairs $(G,K)$ with involution implied):
		
		\begin{center}
			\begin{tabular}{| l |}
				\hline
				$(\SL_{2n}, \SO_{2n})$; \\
				$(\SL_{2n}, S(\GL_n\times \GL_n))$; \\
				$(\SO_{2n+1},\SO_{2m}\times \SO_{2(n-m)+1})$, $2m<2(n-m)+1$; \\
				$(\Sp_{2n},\GL_n)$; \\
				$(\SO_{2n},\SO_{2m}\times \SO_{2(n-m)})$, $m\neq n/2$; \\
				$(\SO_{4n},\GL_{2n})$; \\
				$(\SO_{4n+2},\SO_{2n+1}\times \SO_{2n+1})$; \\
				$(G,\SL_8)$, for $G$ simple of type E$_7$; \\
				$(G,G'\times \bG_m)$, for $G$ simple of type E$_7$ and $G'$ simple of type E$_6$;\\
				\hline
			\end{tabular}
		\end{center}
		
		In addition, the split symmetric pair $(\SO_{4n},\SO_{2n}\times \SO_{2n})$ has exactly 4 regular nilpotent orbits. All other symmetric pairs with $G$ simple and $H=K$ have irreducible nilpotent cone in $\fp$, and hence a single regular nilpotent orbit.
	\end{prop}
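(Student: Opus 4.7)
By Theorem~\ref{theorem : structure of nilpotent cone}, it suffices to count regular nilpotent $K$-orbits in $\fp$. The plan is to rewrite this count as a first non-abelian cohomology set of $\langle\theta\rangle$, and then to verify the answer case-by-case using the classification of involutions on simple $G$.

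For the cohomological rewriting, one invokes the Kostant--Rallis refinement of Jacobson--Morozov (valid in good characteristic by \cite{levy}): every regular nilpotent $e\in\fp$ extends to a normal $\mathfrak{sl}_2$-triple $(e,h,f)$ with $e,f\in\fp$ and $h\in\fk$, and all such triples with regular-in-$\fp$ nilpotent part lie in a single $G$-orbit in $\fg\times\fg\times\fg$. Fix one such triple $(e_0,h_0,f_0)$ and set $Z_0 := Z_G(e_0,h_0,f_0)$. A direct cocycle computation --- $g$ sends $(e_0,h_0,f_0)$ to a normal triple iff $g^{-1}\theta(g)\in Z_0$, and two such $g,g'$ give the same $K$-orbit iff $g^{-1}\theta(g)$ and $g'^{-1}\theta(g')$ are $\theta$-cohomologous in $Z_0$ --- identifies the set of regular nilpotent $K$-orbits in $\fp$ with
\[
\ker\bigl( H^1(\langle\theta\rangle, Z_0) \to H^1(\langle\theta\rangle, G) \bigr).
\]
For principal-in-$\fp$ $\mathfrak{sl}_2$-triples, $Z_0$ is typically a small abelian extension of a subgroup of the center $Z(G)$, occasionally enlarged by a $\Z/2$ coming from an outer involution in type $D_{2n}$. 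This already explains why the count depends only on the isogeny class of $(G,\theta,H)$, matching the form of the statement.

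What remains is a case-by-case verification. Running through Cartan's classification of involutions on simple $G$ (or Kac's classification in positive characteristic), for each class one identifies $Z_0$, computes the induced $\theta$-action on it, and reads off $H^1(\langle\theta\rangle, Z_0)$. The classes yielding a cohomology set with two elements are precisely those in which either the central $\Z/2$ of $G$ or the outer $\Z/2$ of $D_{2n}$ contributes a single $\theta$-invariant cocycle --- and these are exactly the pairs on the list. The split form $(\SO_{4n},\SO_{2n}\times\SO_{2n})$ is the unique simple case where both contributions survive the $\theta$-action, producing four regular nilpotent orbits. The main obstacle is purely organizational: every simple involution must be checked individually. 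Cross-checks against the little Weyl group description of Proposition~\ref{proposition : little weyl group is N_G(A)/C_G(A)} and the dimension identity $\dim\fk-\dim\fp = r-2r_\theta$ keep the bookkeeping honest and catch errors between cases.
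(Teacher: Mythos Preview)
The paper does not give its own proof; the proposition is quoted from \cite{levy} as a classification result. Your cohomological framework is indeed the standard one (Kostant--Rallis, Sekiguchi, then Levy in good characteristic), so the overall shape is right.

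There is, however, a genuine gap in the cocycle step. You claim that $g,g'$ yield the same $K$-orbit of normal triples iff $g^{-1}\theta(g)$ and $g'^{-1}\theta(g')$ are cohomologous in $Z_0$. The forward direction is fine; the converse only gives $g' \in G^\theta \cdot g \cdot Z_0$, because the fibres of $g \mapsto g^{-1}\theta(g)$ are left $G^\theta$-cosets. Thus your kernel $\ker\bigl(H^1(\langle\theta\rangle, Z_0) \to H^1(\langle\theta\rangle, G)\bigr)$ counts $G^\theta$-orbits, not $K$-orbits, and you must still account for $\pi_0(G^\theta)$. This is not cosmetic. For the split pair $(\SO_{4n}, \SO_{2n}\times\SO_{2n})$ the triple centralizer is $Z_0 = Z(\SO_{4n}) = \{\pm I\}$ with trivial $\theta$-action, and a direct check (take $g$ the block anti-diagonal permutation) shows your kernel has exactly two elements, not four; the missing factor is precisely $|\pi_0(G^\theta)| = |S(O_{2n}\times O_{2n})/(\SO_{2n}\times\SO_{2n})| = 2$. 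Your remark that $Z_0$ is ``occasionally enlarged by a $\Z/2$ coming from an outer involution in type $D_{2n}$'' seems to be gesturing at this correction, but $Z_0 = Z_G(e_0,h_0,f_0)$ is what it is and does not enlarge. Sekiguchi and Levy instead organize the count around the group $F^*$ of Proposition~\ref{prop: extension of K}(b) together with the transitive $N_G(K)$-action of Theorem~\ref{theorem: N_G(K) acts transitively on regular nilpotents}, which tracks $K$ rather than $G^\theta$ from the start. Finally, the case-by-case tabulation --- which is the actual substance of the proposition --- is not carried out in your plan.
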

	
	\begin{remark}
		Among the above involutions, only the following are $\theta$-quasisplit:
		
		\begin{center}
			\begin{tabular}{| l |}
				\hline
				$(\SL_{2n}, \SO_{2n})$; \\
				$(\SL_{2n}, S(\GL_n\times \GL_n))$; \\
				$(\SO_{2n+1},\SO_{n}\times \SO_{n+1})$; \\
				$(G,\SL_8)$, for $G$ simple of type E$_7$; \\
				$(\SO_{4n},\SO_{2n}\times \SO_{2n})$ (which has 4, not 2, nilpotent orbits) \\
				\hline
			\end{tabular}
		\end{center}
	\end{remark}
	
	\begin{remark}
		\label{remark: H orbits of regular nilpotents}
		Note that the center acts trivially on $\cN_{\fp}^{\reg}$. Hence, for any symmetric pair $(G,\theta,H)$, Proposition \ref{prop: extension of K} implies that the $H$-orbits on $\cN_\fp^{\reg}$ are given by $(\cN_\fp^{\reg}/K)/\pi_0(H)$.
	\end{remark}
	
	If one sets $H=N_G(K)$, then the classification of regular, nilpotent orbits becomes trivial.
	\begin{thm}
		(\cite{KR}, Proposition 4; \cite{levy}, Theorem 5.16) \label{theorem: N_G(K) acts transitively on regular nilpotents}
		The normalizer group $N_G(K)$ acts transitively on the set of regular nilpotents $\cN_\fp^\reg$.
	\end{thm}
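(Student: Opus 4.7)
The plan is to reduce the statement to a combinatorial question about a finite-group action on the set of irreducible components of $\cN_\fp$, and then to verify transitivity using the Kostant-Rallis structural theory.

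First, by Theorem \ref{theorem : structure of nilpotent cone}, the set of regular nilpotent $K$-orbits on $\cN_\fp^{\reg}$ is in canonical bijection with the set of irreducible components of $\cN_\fp$. Since $K$ is normal in $N_G(K)$, the adjoint action of $N_G(K)$ on $\cN_\fp$ descends to an action of the finite component group $N_G(K)/K$ on this finite set of components. The desired transitivity on regular nilpotents is therefore equivalent to transitivity of this finite-group action on components.

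Second, I would use Proposition \ref{prop: extension of K}(b) to write $N_G(K) = F^* \cdot K$ for a maximal $\theta$-split torus $A$ and $F^* = \{a\in A : a^2 \in Z(G)\}$. Since $A^\circ$ is connected and acts linearly on $\fp$, it must stabilize every irreducible component of $\cN_\fp$ individually. Consequently, the $N_G(K)/K$-action on components factors through the finite quotient $F^*/(F^*\cap K)$, which by part (c) of the same proposition is identified with the finite $2$-torsion quotient $(F^*)^2 \subset Z(G)$.

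Third, I would use the Kostant-Rallis correspondence: every regular nilpotent $e \in \fp$ embeds in a normal $\fsl_2$-triple $(e,h,f)$ with $h\in \fk$ and $e,f \in \fp$, and the $K$-conjugacy class of such a triple is encoded by discrete combinatorial invariants (a choice of chamber or sign pattern) attached to a fixed Cartan. An element of $F^*$ acts on these invariants by the scalar $a^2 \in Z(G)$, which modifies the sign pattern. Using the classification in Proposition \ref{prop:table of nilpotent orbits}, it suffices to check transitivity in the finitely many cases where $\cN_\fp$ is reducible, since the claim is vacuous when $\cN_\fp$ is irreducible. In each exceptional case one exhibits an explicit element of $F^* \setminus K$ which swaps (or cyclically permutes, in the split $\SO_{4n}$ case) representatives of the distinct components.

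The principal obstacle is the third step: finding a uniform argument rather than a case-by-case check. A conceptual approach would be to observe that the Chevalley isomorphism of Theorem \ref{theorem : Chevalley} forces $\fp \git N_G(K) \simeq \fp \git K$, which, combined with the fact that $\cN_\fp = \chi^{-1}(0)$ for the Chevalley map $\chi$, means every $N_G(K)$-orbit on $\cN_\fp^{\reg}$ must be an open subset of some component; transitivity then follows because a non-transitive action would exhibit extra $K$-invariants not captured by $N_G(K)$-invariants (e.g. via an $N_G(K)/K$-equivariant partition of unity of components), contradicting the equality of invariant rings.
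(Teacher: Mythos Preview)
The paper does not give a proof of this theorem; it is simply cited from \cite{KR} (characteristic zero) and \cite{levy} (positive characteristic). So there is no ``paper's proof'' to compare against, and your proposal should be judged on its own merits.

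Your reduction in the first two steps is essentially sound, with one slip: the torus $A$ does \emph{not} act on $\fp$ via the adjoint action. For $a\in A$ and $x\in\fp$ one has $\theta(\mathrm{Ad}(a)x)=\mathrm{Ad}(a^{-1})(-x)$, so $\mathrm{Ad}(a)x\in\fp$ only when $a^2$ is central, i.e.\ when $a\in F^*$. The sentence ``$A^\circ$ is connected and acts linearly on $\fp$'' is therefore false. The correct observation is that $(F^*)^\circ=Z_-\subset Z(G)$ acts trivially by adjunction, so the $N_G(K)/K$-action on components indeed factors through the finite group $\pi_0(F^*)$. Your conclusion survives, but for a different reason.

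The real gap is in the third step. Your ``conceptual approach'' does not work: the equality $k[\fp]^K=k[\fp]^{N_G(K)}$ carries no information about components of $\cN_\fp$, because every $K$-invariant polynomial is constant on $\cN_\fp=\chi^{-1}(0)$ anyway. There is no algebraic ``partition of unity'' subordinate to the components---the characteristic function of a component is not a regular function---so non-transitivity would produce no new polynomial invariants and hence no contradiction. Separately, your description of how $F^*$ acts (``by the scalar $a^2\in Z(G)$'') is confused: $a^2$ is central and acts trivially, whereas $\mathrm{Ad}(a)$ itself acts on root vectors by the character values $\alpha(a)$, which is what actually permutes the sign data. The case-by-case verification you sketch is a legitimate strategy, but you have not carried it out, and the conceptual shortcut you offer in its place is invalid. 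The actual proofs in \cite{KR} and \cite{levy} proceed by analyzing the structure of normal $\fsl_2$-triples (or associated cocharacters) directly and showing that any two can be related by an element of $F^*$; this is closer to your sketch of step three than to your proposed conceptual argument.
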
 
	In particular, $\pi_0(N_G(K))$ acts transitively on $\cN_\fp^{\reg}/K$.
	
	The above classifies nilpotent orbits for involutions on simple $G$. For our purposes later, we will also need the classification of nilpotent orbits for the diagonal case, which is trivial.
	
	\begin{lem}
		\label{lemma: nilpotents in diagonal case}
		The diagonal case $G_1\subset G_1\times G_1$ of Example \ref{example: complex case} has a single nilpotent $K$ orbit on $\cN_\fp^{\reg}$.
	\end{lem}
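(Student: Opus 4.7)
The plan is to reduce the claim directly to the classical theorem of Kostant that a reductive group acts transitively on the regular nilpotent elements of its Lie algebra. First I would set up the identification $\fp \xrightarrow{\sim} \fg_1$ given by the projection $(x,-x) \mapsto x$ (equivalently, by the second-coordinate projection, up to a sign). Under the diagonal embedding $K = G_1 \hookrightarrow G_1 \times G_1$, conjugation on $\fp$ transported through this isomorphism is precisely the adjoint action of $G_1$ on $\fg_1$.

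Next I would observe that this identification sends $\cN_\fp$ isomorphically onto the nilpotent cone $\cN_{\fg_1}$, since an element $(x,-x) \in \fp$ is nilpotent in $\fg = \fg_1 \oplus \fg_1$ exactly when $x$ is nilpotent in $\fg_1$. Moreover, by Proposition \ref{prop:dimension criteria for regularity} (or by direct inspection of centralizer dimensions in $K = G_1$ versus in $G_1$), regularity in $\fp$ corresponds to regularity in $\fg_1$; note that this symmetric pair is automatically quasi-split, with $Z_G(A) = T$ a maximal torus, so the $K$-regularity and $G_1$-regularity notions coincide.

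Finally I would invoke Kostant's theorem, which states that $G_1$ acts transitively on the set of regular nilpotent elements in $\fg_1$. Transporting this across the isomorphism $\fp \simeq \fg_1$ shows that $K$ acts transitively on $\cN_\fp^{\reg}$, giving a single $K$-orbit as required.

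There is no real obstacle here: once the identification with the adjoint representation of $G_1$ is set up, the lemma is an immediate consequence of a standard result. The only thing to keep in mind is to verify the equivalence of regularity conditions on the two sides, which is where the quasi-split nature of the diagonal pair enters.
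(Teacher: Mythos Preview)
Your proposal is correct and follows essentially the same approach as the paper: both identify $\fp$ with $\fg_1$ via projection, observe that the $K$-action becomes the adjoint $G_1$-action, note that regularity and nilpotency are preserved, and then invoke the uniqueness of the regular nilpotent orbit for $G_1$. The paper phrases this as an isomorphism of stacks $\fp/K \simeq \fg_1/G_1$ and simply asserts that regularity is preserved, while you spell out the quasi-split justification more carefully; the content is the same.
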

	\begin{proof}
		This follows immediately from the isomorphism of stacks $[\fp/K]\simeq [\fg_1/G_1]$ given by projecting onto the first variable.
	\end{proof}

	\subsection{Generalities on Kostant-Rallis Sections}
	\label{subsec: generalities on Kostant Rallis sections}
	
	In this subsection, we review the theory of Kostant-Rallis sections, as introduced in \cite{KR} and generalized in \cite{levy}. We work in the generality of \cite{levy}; in particular, in this section, it is essential that the standard hypotheses of Section \ref{subsec: conditions on char} hold for $G$. 
	
	\begin{lem}
		\label{lem: existence of sections}
		(\cite{levy}, Corollary 6.29) Let $e\in \cN_\fp^{\reg}$ be a regular nilpotent. Then there exists a slice $e+\mathfrak{v}\subset \fp^{\reg}$ contained in the regular locus of $\fp$ such that the map
		\[ e+\mathfrak{v}\to \fp\git K \]
		is an isomorphism whose fiber over $0\in \fp\git K$ is $e$. Here, $\fv\subset \fp$ is a linear space of dimension $\dim\fv = \dim \fa$ which is orthogonal to the linear subspace $[e,\fh]\subset \fp$.
	\end{lem}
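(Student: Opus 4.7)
I would adapt Kostant's construction of his section $e + Z_\fg(f) \to \fg\git G$ to the symmetric-pair setting by replacing $\fg$ with $\fp$ and $G$ with $K$ throughout. Since we work in good characteristic, I invoke the associated-cocharacter formalism developed in Section \ref{appendix: sl2 triples} to obtain, attached to the regular nilpotent $e\in \fp$, a cocharacter $\lambda\colon \Gm\to K$ and an element $f\in \fp$ such that $\mathrm{Ad}(\lambda(t))\cdot e = t^{2} e$ and $\mathrm{Ad}(\lambda(t))\cdot f = t^{-2} f$; differentiating $\lambda$ yields $h\in \fk$ playing the role of the middle element of an $\fsl_2$-triple. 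I then define the slice $\fv := \fp^f = \ker(\mathrm{ad}\, f)\cap \fp$.

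\textbf{Transversality and dimensions.} Kostant's decomposition $\fg = \fg^f \oplus [\fg, e]$ is $\theta$-stable, since $\theta(e) = -e$ and $\theta(f) = -f$ force $\theta$ to preserve both summands. Intersecting with $\fp$ and using that $[\fp, e]\subset \fk$, one obtains
\[
\fp = \fp^f \oplus [\fk, e],
\]
so $\fv$ is transverse at $e$ to the $K$-orbit $K\cdot e$. A weight count using the $\langle e,h,f\rangle$-module structure on $\fp$, together with the regularity criterion of Proposition \ref{prop:dimension criteria for regularity}, shows $\dim \fv = r_\theta$, which matches $\dim(\fp\git K)$ via Theorem \ref{theorem : Chevalley}.

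\textbf{Main argument.} Consider the combined $\Gm$-action $\rho(t)\cdot x := t^{-2}\mathrm{Ad}(\lambda(t))\cdot x$ on $\fp$. It fixes $e$, and since $\fv = \fp^f$ is a sum of lowest-weight vectors for $\mathrm{ad}\, h$ and hence lies in non-positive $h$-weights, $\rho$ acts on $\fv$ with strictly negative weights and contracts $e+\fv$ onto $\{e\}$ as $t\to\infty$. The same $\rho$ induces scaling by $s = t^{-2}$ on $\fp\git K$, contracting that affine space to $0$. The Chevalley-style morphism $e+\fv\to \fp\git K$ is thus $\Gm$-equivariant, both sides are affine spaces of common dimension $r_\theta$ that contract to a single fixed point, and regularity of $e$ plus the transversality decomposition identify the scheme-theoretic fiber over $0$ with the reduced point $\{e\}$. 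Hence the map is finite and induces an isomorphism on tangent spaces at the fixed point $e$; a standard $\Gm$-equivariance argument then upgrades iso-at-the-fixed-point to a global isomorphism of affine spaces. The inclusion $e+\fv\subset \fp^{\reg}$ follows because the regular locus is $\Gm$-stable and open, contains $e$, and $\rho$ contracts every point of $e+\fv$ to $e$.

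\textbf{Main obstacle.} The delicate point is verifying, in good positive characteristic where genuine $\fsl_2$-triples may fail to exist, that the substitute associated cocharacter still furnishes the decomposition $\fp = \fp^f \oplus [\fk,e]$ together with the equality $\dim \fp^f = r_\theta$. Both rely on the full module-theoretic content of the associated cocharacter, namely that $\fp$ decomposes as a sum of \emph{standard} $\langle e,h,f\rangle$-modules in the sense of Premet--Levy, and on the compatibility of this decomposition with the Cartan involution. This technical input is precisely what is black-boxed to Section \ref{appendix: sl2 triples}; once granted, the rest of the argument is a formal transcription of Kostant's proof.
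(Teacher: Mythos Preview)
Your proposal is correct and follows essentially the same Kostant--Rallis approach the paper sketches: the lemma itself is cited from \cite{levy} without proof, but the subsequent remark and Appendix~\ref{appendix: sl2 triples} (Theorem~\ref{thm : KR section}) describe exactly your construction $\fv = \fc_\fp(f)$ obtained from a normal $\fsl_2$-triple, together with the decomposition $\fp = \fc_\fp(f)\oplus [\fk,e]$ derived by intersecting $\fg = \fg^f\oplus [\fg,e]$ with $\fp$. The one point worth flagging is that the paper distinguishes two regimes: in characteristic zero or greater than the Coxeter number your $\fsl_2$-triple argument goes through verbatim, whereas in merely good characteristic Levy's general construction passes through an auxiliary $\theta$-stable subalgebra $\fg^*\subset\fg$ and takes $\fv$ to be an $\mathrm{Ad}(\lambda)$-graded complement to $[\fk^*,e]$ inside $\fp^*$ rather than literally $\fp^f$---your ``main obstacle'' paragraph correctly anticipates this subtlety.
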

	
	\begin{remark}
		In characteristic zero, the result above as proved by Kostant and Rallis using certain $\fsl_2$ triples compatible with the involution $\theta$ \cite{KR}. In positive characteristic, Paul Levy used the theory of associated characters to replace $\fsl_2$ triples \cite{levy}. As this paper will only rely on the existence of sections, we will not recall either of these constructions here.
	\end{remark}
	
	Fix a Kostant-Rallis section $\cS = e+\fv$. For later applications to smoothness, we will need a bit more on the differential of the action map
	\begin{equation}
		\label{eqn: action on KR section}
		H\times \mathcal{S}\to \fp^\reg.
	\end{equation}
	We record here the following Lemma.
	\begin{lem}
		\label{lem: differential at zero is surjective}
		The differential of the action map \eqref{eqn: action on KR section} at $(1,e)$ is surjective.
	\end{lem}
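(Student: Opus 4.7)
The plan is to compute the differential of the action map $\mu\colon H\times \cS\to \fp^\reg$ at $(1,e)$ and establish surjectivity by combining a dimension count with a transversality property built into the construction of the slice $\fv$.

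First, I would identify the tangent spaces as $T_{(1,e)}(H\times \cS) = \fh\oplus \fv$ and $T_e\fp^\reg = \fp$. Straight from the definition of the adjoint action, the differential of $\mu$ at $(1,e)$ is the map $(X,v)\mapsto [X,e] + v$. By Proposition \ref{prop: extension of K}, $\fh$ differs from $\fk$ only by a subalgebra of the center of $\fg$, which acts trivially on $e$; hence the image of $d\mu$ equals $[\fk,e] + \fv$.

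Next, a dimension count. Lemma \ref{lemma: reg,ss locus is dense} gives $\mathfrak{z}_\fg(x) = \mathfrak{z}_\fg(\fa)$ for $x\in \fa^\reg$, and intersecting with $\fk$ yields $\mathfrak{z}_\fk(x) = \mathfrak{z}_\fk(\fa)$. Since the regular locus is precisely where the centralizer dimension is minimal and this minimum is attained on $\fa^\reg\subset \fp^\reg$, we get $\dim \mathfrak{z}_\fk(e) = \dim \mathfrak{z}_\fk(\fa)$. Combined with the Richardson identity $\dim \fk - \dim \fp = \dim C_K(\fa) - \dim \fa$, this yields $\dim [\fk, e] = \dim \fk - \dim \mathfrak{z}_\fk(e) = \dim \fp - \dim \fa$. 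On the other hand, $\fv$ has dimension $\dim \fp\git K = \dim \fa$ by Theorem \ref{theorem : Chevalley} and Lemma \ref{lem: existence of sections}. Hence $\dim [\fk, e] + \dim \fv = \dim \fp$, and surjectivity of $d\mu$ reduces to the transversality $[\fk, e]\cap \fv = 0$.

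The main obstacle is verifying this transversality, which I expect to follow from the specific construction of $\fv$ described in the remark after Lemma \ref{lem: existence of sections}. There, $\fv$ is chosen as an $\mathrm{Ad}(\lambda)$-graded complement to $[\fk^*, e]$ in $\fp^*$ for the $\theta$-stable subalgebra $\fg^* = \fk^*\oplus \fp^*$ attached to a normal associated character $\lambda$ of $e$. When $\mathrm{char}(k)$ is zero or exceeds the Coxeter number, Appendix \ref{appendix: sl2 triples} lets us replace $\lambda$ by a normal $\fsl_2$-triple $(e,h,f)$ with $e,f\in \fp$ and $h\in \fk$, and take $\fv = \mathfrak{z}_\fp(f)$. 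Standard $\fsl_2$-representation theory then gives the decomposition $\fg = \ker(\ad f)\oplus \mathrm{Im}(\ad e)$, which respects the Cartan decomposition (since $e,f\in \fp$ and $h\in \fk$) and restricts to $\fp = \mathfrak{z}_\fp(f)\oplus [\fk, e]$, whence $\fv \cap [\fk, e] = 0$. The graded argument in \cite{levy} carries this conclusion over to arbitrary good characteristic, and combined with the dimension count above gives $[\fk, e]\oplus \fv = \fp$ and hence surjectivity of $d\mu$ at $(1, e)$.
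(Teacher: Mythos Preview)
Your proposal is correct and follows essentially the same approach as the paper: identify the differential as $(X,v)\mapsto [X,e]+v$, reduce to $[\fk,e]+\fv=\fp$ via the dimension count $\dim[\fk,e]=\dim\fp-\dim\fa=\dim\fp-\dim\fv$, and invoke the construction of $\fv$ for the transversality $[\fk,e]\cap\fv=0$. The paper's main-text proof is terser (it simply asserts that $\fv$ is ``orthogonal to $[\fh,e]$'' by construction), whereas you spell out the transversality via the $\fsl_2$-triple decomposition $\fg=\ker(\ad f)\oplus\mathrm{Im}(\ad e)$ intersected with $\fp$; this is precisely the argument the paper records in the appendix (Theorem~\ref{thm : KR section} and Corollary~\ref{cor : KR section gives decomposition of fp}).
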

	\begin{proof}
		The differential of the above map is identified with the map
		\[
		\fh\oplus \fv\to \fp,\quad (x,y)\mapsto [e,x]+y
		\]
		Since $e$ is regular, the codimension of $[e,\fh]$ in $\fp$ is equal to the dimension of a $\theta$-Cartan, which by Lemma \ref{lem: existence of sections} is exactly the dimension of $\fv$. Moreover, by the construction of $\fv$, it is orthogonal to $[e,\fh]$. Hence by a dimension count, we have $\fp = [e,\fh]+\fv$, and we conclude that the map above is surjective.
		%This was stated in a remark after Lemma 0.16 in \cite{levy}.
	\end{proof}

	We will study the map $\fp\to \fa\git W_\fa$ produced by Theorem \ref{theorem : Chevalley} in some detail; it will provide the underlying structure of the Hitchin fibration for symmetric pairs. In this spirit, we now prove this map's flatness.
	\begin{lem}
		\label{lemma: p-->a//Wa is flat}
		The map $\chi\colon \fp\to \fp\git K\simeq \fa\git W_{\fa}$ is flat, as is the map $\fp^{\reg}\rightarrow \fp\git K$.
	\end{lem}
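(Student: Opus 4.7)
The plan is to apply the miracle flatness theorem: for a morphism $f \colon X \to Y$ of finite-type schemes over $k$ with $X$ Cohen--Macaulay and $Y$ regular, $f$ is flat if and only if all fibers have dimension $\dim X - \dim Y$. Both $\fp$ (affine space) and $\fp \git K \simeq \fa \git W_\fa$ (affine space of dimension $r = \dim \fa$, by the polynomial structure recorded in the lemma on the exponents of $\Phi_r$) are smooth, so it suffices to verify that every fiber of $\chi \colon \fp \to \fp \git K$ has dimension $\dim \fp - r$.

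To bound fiber dimensions from above, I would exploit the $\Gm$-equivariance of $\chi$: Theorem \ref{theorem : Chevalley} records that $\Gm$ acts on $\fp \git K$ with only positive weights and $0$ as unique fixed point, so $0 \in \overline{\Gm \cdot c}$ for every closed point $c$. Since scaling gives isomorphisms between fibers over points of the same $\Gm$-orbit, upper semicontinuity of fiber dimension yields $\dim \chi^{-1}(c) \leq \dim \chi^{-1}(0) = \dim \cN_\fp$. In the other direction, $\chi$ is a dominant morphism between irreducible varieties, so the generic fiber has dimension $\dim \fp - r$ and upper semicontinuity (in the source) supplies the matching lower bound $\dim \chi^{-1}(c) \geq \dim \fp - r$.

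The key step is then to prove $\dim \cN_\fp = \dim \fp - r$. By Theorem \ref{theorem : structure of nilpotent cone}, each irreducible component of $\cN_\fp$ contains a regular $K$-orbit as a dense open, so it suffices to show $\dim K \cdot e = \dim \fp - r$ for any representative $e \in \cN_\fp^\reg$. Fix a Kostant--Rallis section $\cS = e + \fv$ (Lemma \ref{lem: existence of sections}), which satisfies $\dim \fv = r$ and $\cS \subset \fp^\reg$. By Lemma \ref{lem: differential at zero is surjective}, the differential of the action map $H \times \cS \to \fp$ at $(1, e)$ is surjective, giving a rank computation $\dim \fh - \dim \fz_\fh(e) + \dim \fv \geq \dim \fp$, i.e.\ $\dim H \cdot e \geq \dim \fp - r$. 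Because the center of $H$ acts trivially on $\fp$, $H$- and $K$-orbits coincide set-theoretically, and combining with the upper bound $\dim \cN_\fp \leq \dim \fp - r$ already forced on each component forces equality. Thus every fiber of $\chi$ has dimension $\dim \fp - r$ and miracle flatness yields the claim.

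For the restriction to $\fp^\reg$, the source is open in $\fp$ (hence smooth), surjectivity onto $\fp \git K$ holds because $\cS \subset \fp^\reg$, and the same $\Gm$-equivariance argument combined with Corollary \ref{lemma: non regulars codimension geq 1 in cNp} (which says $\cN_\fp^\reg$ has the same dimension as $\cN_\fp$) again produces constant fiber dimension $\dim \fp - r$, so miracle flatness applies. The main obstacle is the upper bound $\dim \cN_\fp \leq \dim \fp - r$; without it the generic-fiber/specialization argument only gives a one-sided inequality, and this is precisely where the structural input from Kostant--Rallis--Levy theory (existence of the section $\cS$ landing in $\fp^\reg$, plus the Jacobian computation of Lemma \ref{lem: differential at zero is surjective}) is essential.
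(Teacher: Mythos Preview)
Your overall strategy matches the paper's: both invoke miracle flatness, both use $\bG_m$-equivariance plus semicontinuity to reduce to the zero fiber, and both finish by computing $\dim\cN_\fp$. The gap is in that last computation. From surjectivity of the differential in Lemma~\ref{lem: differential at zero is surjective} you correctly extract $[\fh,e]+\fv=\fp$, hence $\dim[\fh,e]\geq\dim\fp-r$, i.e.\ $\dim K\cdot e\geq\dim\fp-r$. But this is the \emph{lower} bound, the same one already supplied by generic-fiber dimension and semicontinuity. Your sentence ``combining with the upper bound $\dim\cN_\fp\leq\dim\fp-r$ already forced on each component'' asserts precisely the inequality you have not yet proved; surjectivity of a linear map cannot bound the image from above. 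You flag this obstacle in your final paragraph, but the tool you point to does not resolve it.

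The missing step is more elementary than Kostant--Rallis theory. By the definition of regularity, $\dim C_K(x)$ is \emph{constant} on $\fp^\reg$ (equal to the minimum), and on the dense regular semisimple locus it equals $\dim C_K(\fa)$ by Lemma~\ref{lemma: reg,ss locus is dense}(c). Hence for a regular nilpotent $e$ one has $\dim K\cdot e=\dim K-\dim C_K(\fa)$ \emph{exactly}, and the Richardson identity $\dim\fk-\dim\fp=\dim C_K(\fa)-\dim\fa$ converts this to $\dim\fp-r$. This is what the paper is tacitly invoking when it cites Theorem~\ref{theorem : structure of nilpotent cone} for the dimension of $\cN_\fp$. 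Note also that the \emph{proof} of Lemma~\ref{lem: differential at zero is surjective} already uses ``since $e$ is regular, the codimension of $[\fk,e]$ in $\fp$ is equal to $\dim\fa$'' as input, so citing that lemma for this purpose is circular in spirit; the equality comes directly from the definition of regular, not from the section.
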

	\begin{proof}
		For any $y\in \fp\git K$, let $\fp_y$ denote the fiber of $\chi$ at $y$. Consider the function 
		\[
		\psi(x)=\dim_x(\fp_{\chi(x)}),
		\]
		where $\dim_x$ denotes the local dimension at $x$. This function is upper semi-continuous on $\fp$, and by $\bG_m$ equivariance of $\chi$, is constant on $\bG_m$-orbits of $\fp$. 
		
		By Lemma \ref{theorem : structure of nilpotent cone}, for any $x\in \fp_0 = \cN_\fp$, the local dimension of the fiber at $x$ is given by the dimension of a regular $K$-orbit. Moreover, for any $x\in \fp^{\mathrm{rs}}$ which is regular semisimple, Lemma \ref{lem: K-conjugacy in the reg,ss locus} states that $\fp_{\chi(x)}$ is the unique closed $K$-orbit through $x$, which is also regular. Hence, $\psi(x)$ is also equal to the dimension of a regular $K$-orbit for the open subset of $x\in \fp^{\mathrm{rs}}$. We conclude by upper semicontinuity that $\psi$ is constant. 
		
		Since $\chi$ is a morphism between smooth schemes whose fibers are equidimensional, miracle flatness implies that $\psi$ is flat.
	\end{proof}

	\section{The Regular Quotient}
	\label{section: the regular quotient}
	
	\subsection{Motivation and Generalities}
	\label{subsec: general theory on regular quotient}
	
	Many facts about the (usual) Hitchin fibration can be abstracted to basic properties in invariant theory. Principally among these is that the morphism
	\[
	[\fg^\reg/G]\to \fg\git G
	\]
	is a gerbe banded by the regular centralizer group scheme. For example, this property is the invariant theoretic shadow of why generic fibers of the Hitchin fibration are Picard stacks. However, this is plainly not the case for symmetric pairs, as observed in \cite[Section 4.2]{gppn}. We illustrate with an example.
	
	\begin{ex}
		\label{ex: not a gerbe}
		Let $G = \SL_2$ with involution conjugating by the matrix $\mathrm{diag}(1,-1)$. This is the $n=1$ case of Example \ref{example:U(n,n) case}. Then, $H = S(\bG_m\times\bG_m)\simeq \bG_m$ acts on $\fp\simeq \bA^2$ by the hyperbolic action $x\cdot (a,b) = (xa,x^{-1}b)$. The regular locus is $\fp^{\reg} = \bA^2\setminus \{0\}$, and we see that there are two regular orbits lying over the closed orbit $0\in \fp\git H$, see Figure 1. 
		
		\begin{figure}
			\begin{center}
				\begin{tikzpicture}
					\begin{axis}[
						axis line style={draw=none},
						axis equal,     
						xmin=-5, xmax=5, 
						ymin=-5, ymax=5, 
						ticks = none,
						samples=101,
						domain=-5:5]
						\addplot[domain=.2:5]{1/x};
						\addplot[domain=-5:-.2]{1/x};
						\addplot[domain=.2:5]{-1/x};
						\addplot[domain=-5:-.2]{-1/x};
						\addplot[domain=1:5]{5/x};
						\addplot[domain=-5:-1]{5/x};
						\addplot[domain=1:5]{-5/x};
						\addplot[domain=-5:-1]{-5/x};
						\addplot[domain=.6:5]{3/x};
						\addplot[domain=-5:-.6]{3/x};
						\addplot[domain=.6:5]{-3/x};
						\addplot[domain=-5:-.6]{-3/x};
						\addplot[color=red]{0};
						\draw[color=blue] (0,-5) -- (0,5);
						\filldraw[white] (0,0) circle (2pt);
						\draw (0,0) circle (2pt);
					\end{axis}
				\end{tikzpicture}
				\begin{tikzpicture}
					\draw (-3,2) -- (3,2);
					\draw (-4,2) node {$\fp^\reg\myfatslash H = $};
					\filldraw[white] (0,2) circle (2pt);
					\filldraw[blue] (0,2.1) circle (1pt);
					\filldraw[red] (0,1.9) circle (1pt);
					\draw (-3,0) -- (3,0);
					\draw (-4,0) node {$\fp\git H = $};
					\draw[->] (-4.1,1.5) -- (-4.1,.5);
					\draw[->] (0,1.5) -- (0,.5);
				\end{tikzpicture}
				\caption{\footnotesize (Left) The orbits of $H = S(\bG_m\times \bG_m)$ acting on $\fp^\reg\simeq \bA^2\setminus \{0\}$ for the symmetric space $X = \SL_2/S(\bG_m\times \bG_m)$. Note the two orbits, drawn in blue and red, whose closure includes the (non-regular) closed orbit $\{0\}$. The regular quotient for this symmetric pair (pictured right) is the affine line with doubled origin.}
			\end{center}
		\end{figure}
	\end{ex}
	
	To solve this problem, Pe\'on-Nieto and Garc\'ia-Prada proposed a rigidification $\fp^\reg\myfatslash H$ of the stack $[\fp^\reg/H]$ such that there is a factorization
	\[
	[\fp^\reg/H]\to \fp^\reg\myfatslash H\to \fp\git H
	\]
	with the first map being a gerbe and the second a (non-separated) cover \cite[Section 4.2]{gppn}. To illustrate, in Example \ref{ex: not a gerbe}, $\fp\git H\simeq \bA^1$ is an affine line while $\fp^\reg\myfatslash H$ is an affine line with doubled origin.
	
	In unpublished work \cite{morrissey2022reg} of Ng\^{o} and Morrissey, such quotients are introduced in the greater generality of a reductive group $G$ acting on an affine normal scheme $M$. Examples of these generalized Hitchin systems include the multiplicative Hitchin system studied in \cite{griffin}, Hitchin systems for higher dimensional varieties studied in \cite{chen-ngo}, and Hitchin systems associated to spherical varieties studied in \cite{me2}. While it will turn out that $\fp^\reg\myfatslash H$ will be a scheme, the resulting quotients $M^\reg\myfatslash G$ are, in general, Deligne-Mumford stacks.
	
	We will not need this generality in this paper. Instead, we will develop the theory in the following more limited setting:  Let $V$ be a vector space and $H$ a reductive group acting on $V$. Let $I_V\subset V\times H$ be the group scheme over $V$ of stabilizers of the $H$ action. We have the usual definition of regularity in $V$; namely, $x\in V$ is regular when the dimension $\dim(I_{V,x})$ is minimal.
	\begin{assumption}
		\label{asp: H action on V}
		We assume the action of $H$ on $V$ satisfies:
		\begin{itemize}
			\item The regular centralizer group scheme $I_V^\reg := I_V|_{V^\reg}$ is smooth over $V^\reg$.
			\item The map $V^\reg\to V\git H$ is flat.
		\end{itemize} 
	\end{assumption}
	
	These hypotheses will be satisfied for the representation of $H$ acting on $\fp$ by Proposition \ref{prop: smoothness of Ireg} and Lemma \ref{lemma: p-->a//Wa is flat}. We will use the following result.
	
	\begin{thm}
		\label{thm:aov}
		(\cite[Theorem A.1]{AOV}) Let $M$ be a locally finitely presented algebraic stack over a scheme $S$ with inertia stack $\mathcal{I}M$. Let $I\subset \mathcal{I}M$ be a flat, finitely presented subgroup stack. Then, there exists an algebraic stack $M\myfatslash I$ and morphism $\rho\colon M\to M\myfatslash I$ such that:
		\begin{enumerate}
			\item $\rho$ makes $M$ into an fppf gerbe over $M\myfatslash I$;
			\item For each object $\xi\in M(T)$, the map
			\[
			\ul{\Aut}(\xi)\to \ul{\Aut}(\rho(\xi))
			\]
			is surjective with kernel $I_\xi$.
		\end{enumerate}
	\end{thm}
	
	As a corollary of this result, we obtain the following.
	
	\begin{cor}
		\label{cor:existence of regular quot V/H}
		Let $V$ be a vector space with a linear action of a reductive group $H$ satisfying Assumption \ref{asp: H action on V}. Then, there exists a unique scheme $V^\reg\myfatslash H := [V^\reg/H]\myfatslash I_V^\reg$ and morphism $V^\reg/H\to V^\reg\myfatslash H$ characterized by the following properties:
		\begin{enumerate}
			\item The stack $[V^\reg/H]$ is an fppf gerbe over $V^\reg\myfatslash H$.
			\item The natural map $[V^\reg/H]\to V\git H$ factors uniquely through $V^\reg\myfatslash H$, and the map $V^\reg\myfatslash H\to V\git H$ is quasi-finite.
		\end{enumerate}
	\end{cor}
	\begin{proof}
		Note that $I_V^\reg$ is an $H$-equivariant group scheme on $V^\reg$, and hence descends to a group scheme over the stack quotient $[V^\reg/H]$. In fact, this descent is exactly given by the inertia stack $\mathcal{I}[V^\reg/H]$. As $I_V^\reg$ is assumed to be flat, and is certainly finitely presented, it follows that the quotient $[V^\reg/H]\myfatslash I_V^\reg$ exists as an algebraic stack, which we abbreviate to $V^\reg\myfatslash H$.
		
		By property (2) of Theorem \ref{thm:aov}, the group $\ul{\Aut}(\xi)$ is trivial for objects $\xi\in (V^\reg\myfatslash H)(T)$. Hence, $V^\reg\myfatslash H$ is an algebraic space. There is an obvious map
		\begin{equation}
			\label{eqn: map regular quot to git}
			V^\reg\myfatslash H\to V\git H
		\end{equation}
		through which $[V^\reg/H]\to V\git H$ factors. On geometric points, this map takes a regular orbit in $V^\reg\myfatslash H$ to the unique closed $H$-orbit in its closure in $V$. For any closed $H$-orbit $\cO$ in $V$, there are finitely many regular $H$-orbits containing $\cO$ in their closures since regular orbits are open in the fibers of the map $V^\reg\to V\git H$. Hence, \eqref{eqn: map regular quot to git} is quasi-finite. Since $V\myfatslash H$ admits a quasi-finite map to a scheme, it too must be a scheme.
	\end{proof}
	
	\begin{remark}
		The above result was used in \cite[Lemma 4.10]{gppn} to introduce the space $\fp^\reg\myfatslash H$.
	\end{remark}
	
	\begin{remark}
		Note the existence result above does not imply surjectivity. Indeed, examples are known for which $V^\reg\myfatslash H\to V\git H$ is not a surjection. For example, consider the action of $H = \GL_2$ on pairs of matrices
		\[
		V = \mathfrak{gl}_2\times \mathfrak{gl}_2
		\]
		with the action by diagonal adjoint action. The map $V^\reg\myfatslash H\to V\git H$ is the open embedding of $\bA^2\setminus \{0\}$ into $\bA^2$.
		
		In the case of $H$ acting on $\fp$ from a symmetric pair, the existence of Kostant-Rallis sections will imply surjectivity.
	\end{remark}
	
	We collect some corollaries of Corollary \ref{cor:existence of regular quot V/H}.
	
	\begin{prop}[Properties of Regular Quotient]
		\label{propostion: properties regular quotients}
		The regular quotient has the following properties:
		\begin{itemize}
			\item If $I_V^\reg$ is abelian then it descends to a group scheme $J\rightarrow V^{\reg}\myfatslash H$.
			\item In general, $I_V^\reg$ descends to a band in the sense of Giraud \cite{giraud1971cohomologie} $J_{band}\rightarrow V^{\reg}\myfatslash H$.
			\item The map $[V^{\reg}/H]\rightarrow V^{\reg}\myfatslash H$ is a gerbe banded by $J_{band}$; when $I_M^\reg$ is abelian it is a $J$-gerbe.
			\item If there are compatible $\bG_m$ actions on $I^\reg$ and $V^\reg$, then there is a canonical $\bG_m$ action on $V^\reg\myfatslash H$ and the morphisms $[V^\reg/H]\to V^\reg\myfatslash H$ is $\bG_m$ equivariant.
		\end{itemize}    
	\end{prop}
	
	The first three properties follow directly from \cite[Remark A.2]{AOV}, while the fourth can be easily deduced.
	
	We will, of course, be interested in the special case of $H$ acting on $V = \fp$ coming from a symmetric pair $(G,\theta, H)$.

	\subsection{Quotients in the fppf topology and representability}
	
	In latter sections, we will frequently consider quotients of non-constant group schemes over the GIT base $\fc$. In this section, we will prove an assortment of general representability results to justify these quotients. We begin with a result which we will apply to certain quotients by $I_H^\reg$.
	
	\begin{prop}
		\label{prop: I/J is representable}
		Let $I$ and $J$ be flat group schemes of finite type over a base scheme $S$ equipped with a closed embedding $J\to I$ over $S$. Let $I/J$ be the fppf quotient sheaf of the map of fppf sheaves $J\to I$. Then, $I/J$ is representable by a separated, flat algebraic space over $S$.
	\end{prop}
	\begin{proof}
		The fppf quotient $I/J$ is a sheaf valued in groups. Moreover, it is representable by an algebraic space as the action of $J$ on $I$ is free. We have Cartesian diagrams in algebraic spaces
		\[
		\xymatrix{
			I\times_SJ\ar[r]^-{\mathrm{act}}\ar[d]_-{\pi_1} & I \ar[d] \\
			I \ar[r] & I/J
		}\quad \quad \xymatrix{
			I\times_S J\ar[d]\ar[r] & I\times_S I\ar[d] \\
			I/J\ar[r]^-{\Delta} & I/J\times_S I/J
		}
		\]
		As $J$ is finite type and quasi-compact, the first diagram implies that the covering map $I\to I/J$ is fpqc. By fpqc descent, we deduce that $I/J$ is flat over $S$. Since the top arrow of the second diagram is a closed embedding, we deduce from the diagram that $I/J$ is separated.
	\end{proof}
	
	We will also need a result for quotient of constant groups with discrete fibers.
	
	\begin{proposition}
		\label{prop: respresentability of finite quotients}
		Let $F$ be a finite group and let $\underline{F} = F\times S$ be the constant group scheme over a noetherian scheme $S$ with fiber $F$. Let $U\subset \underline{F}$ be an open subgroup of $\ul{F}$ over $S$; that is, we can write
		\[
		U = \coprod_{a\in F} U_a\hookrightarrow \ul{F} = \coprod_{a\in F} S_a
		\]
		where $S_a\simeq S$ for each $a\in F$, and the map $U\to \ul{F}$ is a disjoint union of open inclusions $U_a\to S_a$. Then, the fppf quotient $\ul{F}/U$ is representable by a (non-separated) scheme, which is described explicitly by gluing the sections $S_a$ and $S_b$ in $\ul{F}$ along the open subscheme $U_{ab^{-1}}\subset S$.
	\end{proposition}
	\begin{proof}
		As $S$ is noetherian, any open subset of $S$ is quasi-compact, and hence the inclusion $U\to \ul{F}$ is quasi-compact. An identical argument to Proposition \ref{prop: I/J is representable} shows that the fppf quotient $\ul{F}/U$ is representable by an algebraic space. There is an evident morphism from this algebraic space $\ul{F}/U$ to the naive quotient obtained by gluing, and it therefore follows from \cite[Lemma 3.1]{olsson-starr} that the quotient $\ul{F}/U$ is representable by a scheme.
	\end{proof}

	\subsection{The regular quotient and smoothness of stabilizer group schemes via Kostant--Rallis sections}
	\label{subsec: regular quotient via Kostant Rallis sections}
	
	In this section we describe the regular locus $\fp^{\reg}$ as the union of the $H$-orbits of potentially multiple Kostant--Rallis sections.  We use this to deduce smoothness of several of the group schemes considered in the previous section in a way completely analogous to the case of the adjoint action of $G$ on $\fg$ as considered in \cite{ginzburg2008variations, riche2017kostant}. We will deduce that the regular quotient can be obtained by gluing together multiple copies of the GIT quotient together. In particular, we show the regular quotient for the action of $H$ on $\fp^\reg$ is a (non-separated) scheme. An explicit description of the gluing will be described in the Section \ref{sec: explicit descriptin of regular quotient}.  This is a modification of an argument for the case of the Vinberg monoid found in Proposition 2.12 of \cite{bouthier2015dimension} and Equation 2.7 and Lemma 2.2.8 of \cite{chi2018geometry}. We will then give a direct argument that $I^{\reg}$ descends to the regular quotient.
	
	The key technical input is the following.
	
	\begin{lemma}[Analogue of Proposition 2.12 of \cite{bouthier2015dimension} and Lemma 2.2.8 of \cite{chi2018geometry}]
		\label{lemma Gm equivariant and includes nilpotent cone is whole thing}
		Let $U\subset \fp^{\reg}$ be stable under the $H\times \bbG_{m}$-action.  If $U\cap \cN_\fp=\cN_\fp^{\reg}$ then $U=\fp^{\reg}$.
	\end{lemma}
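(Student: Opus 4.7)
My plan is to use the Kostant--Rallis sections attached to the finitely many regular nilpotent $H$-orbits, together with a rescaled contracting $\bG_m$-action, to force $U$ to contain every such section and then to spread via $H$. By Theorem~\ref{theorem : structure of nilpotent cone}, I would first pick representatives $e_1,\dots,e_m\in \cN_\fp^{\reg}$ of the regular nilpotent $H$-orbits, and by Lemma~\ref{lem: existence of sections} fix Kostant--Rallis sections $\cS_i = e_i + \mathfrak{v}_i \subset \fp^{\reg}$, each inducing an isomorphism $\cS_i \xrightarrow{\sim} \fp\git H$. Attached to each $e_i$ is a cocharacter $\lambda_i\colon \bG_m\to K^\circ\subset H$ with $\mathrm{Ad}(\lambda_i(t))\,e_i = t^2 e_i$, and $\mathfrak{v}_i$ can be arranged so that $\mathrm{Ad}(\lambda_i)$ acts on $\mathfrak{v}_i$ with non-positive weights. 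The key construction is then the rescaled action
\[
\rho_i(t)\cdot x := t^2\, \mathrm{Ad}(\lambda_i(t))^{-1} x,
\]
which combines the given scaling with the $H$-action of $\lambda_i(t)^{-1}$; since $U$ is $H\times \bG_m$-stable, it is automatically $\rho_i$-stable.

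A direct computation then gives $\rho_i(t)\cdot(e_i + v) = e_i + t^{2-k}v$ for any $v\in\mathfrak{v}_i$ of $\mathrm{Ad}(\lambda_i)$-weight $k\le 0$, so $\rho_i$ preserves $\cS_i$, fixes $e_i$, and contracts $\cS_i$ onto $e_i$ as $t\to 0$. Taking $U$ to be open (as is implicit in the analogous statements of \cite{bouthier2015dimension, chi2018geometry}), this contraction will force $U\cap \cS_i = \cS_i$: for any $s\in \cS_i$ the orbit map $\bG_m\to \cS_i$, $t\mapsto \rho_i(t)\cdot s$, extends to a morphism $\bA^1\to \cS_i$ sending $0\mapsto e_i\in \cN_\fp^{\reg}\subset U$, so the preimage of $U$ is an open neighborhood of $0$ in $\bA^1$ and hence contains some $t_0\ne 0$; by $\rho_i$-stability of $U$, then $s\in U$. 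Thus $\cS_i\subset U$, and by $H$-stability $H\cdot \cS_i\subset U$ for every $i$.

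To finish, I would invoke the covering $\fp^{\reg} = \bigcup_{i=1}^{m} H\cdot \cS_i$. This structural statement follows from smoothness of the action map $H\times \cS_i\to \fp^{\reg}$ at $(1,e_i)$ (Lemma~\ref{lem: differential at zero is surjective}, extended by $\rho_i$-equivariance) combined with the flatness of $\fp^{\reg}\to \fp\git H$ (Lemma~\ref{lemma: p-->a//Wa is flat}): each fiber decomposes into finitely many regular $H$-orbits which, via $\rho_i$-degeneration, correspond bijectively to the $H\cdot e_i$, each of which is hit by the corresponding section $\cS_i$. The hardest step will be justifying this last covering claim cleanly, as it amounts to tracking the specialization of regular $H$-orbits in intermediate fibers to the $m$ regular nilpotent orbits; this is essentially the structural content of the regular quotient $\fp^{\reg}\myfatslash H$ developed later in the paper.
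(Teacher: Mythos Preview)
Your proposal is circular relative to the paper's logical development. The covering $\fp^{\reg}=\bigcup_i H\cdot\cS_i$ that you invoke at the end is exactly Proposition~\ref{proposition: regular locus is a union}, and in the paper that proposition is deduced \emph{from} this lemma by applying it to the open set $U=\bigcup_i \fp^{\kappa_i,H}$. You recognize that the covering is ``the hardest step'' and gesture toward smoothness of the action map, flatness of $\chi$, and orbit specialization, but this sketch does not constitute an independent proof: knowing that each fiber of $\chi|_{\fp^{\reg}}$ decomposes into $H$-orbits indexed bijectively by the regular nilpotent orbits is precisely the structure of the regular quotient, and in the paper all of that is built on the present lemma. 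Your contraction argument via the rescaled action $\rho_i$ showing $\cS_i\subset U$ is correct and pleasant, but it addresses only the easy half.

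The paper's proof is completely different and does not touch Kostant--Rallis sections. It works directly with the complement $F=\fp^{\reg}\setminus U$ (implicitly its closure in $\fp$): if $F\neq\varnothing$ then $\overline{F}\cap\cN_\fp\subset\cN_\fp\setminus\cN_\fp^{\reg}$ has dimension strictly less than $\dim H-\dim\fa$ by Corollary~\ref{lemma: non regulars codimension geq 1 in cNp}, and upper semicontinuity of fiber dimension together with the ordinary contracting $\bG_m$-action on $\fp$ forces every fiber of $\chi|_{\overline F}$ to satisfy the same bound. But any regular point of $F$ has an $H$-orbit of dimension exactly $\dim H-\dim\fa$ lying inside its fiber, a contradiction. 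This argument is elementary and self-contained, and it is what makes the subsequent covering statement a one-line corollary rather than the hard input. Your observation that openness of $U$ should be assumed is well taken and is implicitly used in the paper's argument as well.
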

	
	The following proof is identical to that of \cite[Lem 2.2.8]{chi2018geometry}; we provide it here for completeness.
	
	\begin{proof}
		We let $F:=\fp\backslash U$. Let $d_{\reg}$ denote the dimension of a regular $K$ orbit in $\fp$, which is given by the difference of the dimension of $\fp$ and the dimension of the centralizer of a regular element of $\fp$ in $K$. Let $\chi_F = \chi|_F$ be the restriction of the projection $\chi\colon \fp\to \fp\git K$ to $F$, and let $V\subset F$ denote the following variety of $F$
		\[
		V = \{x\in F\colon \dim_x \chi_F^{-1}\big(\chi_F(x)\big) < d_\reg\}.
		\]
		It is clear that $V$ and $F$ are stable under the action of $\bG_m$. Moreover, by upper semicontinuity of the function $x\mapsto \dim_x \chi_F^{-1}\big(\chi_F(x)\big)$, the set $V$ is open in $F$.
		
		We have $\cN_\fp\cap F = \cN_\fp\setminus \cN_\fp^\reg$, so by Lemma \ref{lemma: non regulars codimension geq 1 in cNp}, $V$ contains $0\in \fp$. Since $V$ is $\bG_m$ stable and open in $F$, it follows that $V = F$

		By assumption this is a $H\times \bbG_{m}$ stable subscheme of $\fp$. We let $V\subset F$ be the inverse image under $\chi_F := \chi|_{F}$ of the subset 
		\[\{x\in \fa\git W_{\fa}\colon \dim(\chi_{F}^{-1}(x))< \dim(H)-\dim(\fa)\}.\]
		This is an open subscheme of $F$ by upper semicontinuity.  Furthermore, it includes $0\in \fp$ by Lemma \ref{lemma: non regulars codimension geq 1 in cNp}.  As $V$ is preserved by $\bbG_{m}$ and $0$ is in the closure of every $\bbG_{m}$-orbit of $F$ we have that $V=F$ and consequently, $V\cap \fp^\reg = \emptyset$. As $\fp\setminus \fp^\reg\subset \fp\setminus U = F$, it follows that $U = \fp^\reg$.
	\end{proof}

	We now recall that, by Theorem \ref{theorem : structure of nilpotent cone}, for each irreducible component $S\in Irr(\cN_{\fp})$ there is a unique regular $K$-orbit $\cO_S$ of $\fp$ in $S$. Furthermore, by Lemma \ref{lem: existence of sections}, there is a (non-unique) Kostant--Rallis section $\kappa_{S}:\fa\git W_{\fa}\rightarrow \fp^{\reg}$ such that $\kappa_{S}(0)\in \cO_S$.
	
	Let $\cI$ denote the set of $H$ orbits on $\cN_{\fp}^\reg$. For each representative pick a Kostant-Rallis section $\kappa_{i}$ whose image at 0 is in the regular nilpotent $H$ orbit $i\in \cI$.  Let $\cS_{i}$ be the image of the corresponding Kostant-Rallis section. Let $I_{\cS_i}^\reg = I^\reg|_{\cS_i}$. Then, by Proposition \ref{prop: I/J is representable}, the fppf quotient sheaf
	\begin{equation}
		\label{eqn: slice in preg}
		(H\times \cS_i)/I^\reg_{\cS_i}
	\end{equation}
	is represented by a separated, flat algebraic space over $S$. Moreover, the action map induces 
	\[
	(H\times \cS_i)/I^\reg_{\cS_i}\to \fp^\reg,
	\]
	which by Lemma \ref{lem: K-conjugacy in the reg,ss locus} is an isomorphism over the regular, semisimple locus $\fp^{\mathrm{rs}}\subset \fp^\reg$ and which is evidently quasifinite. Hence by \cite[Lemma 3.1]{olsson-starr}, \eqref{eqn: slice in preg} is representable by a scheme, and by Zariski's Main Theorem this map is an open embedding, whose image we denote by $\fp^{\kappa_{i}, H}\subset \fp^\reg$.
	
	\begin{proposition}
		\label{proposition: regular locus is a union}
		The scheme $\fp^\reg$ decomposes as a union
		\[\fp^{\reg}=\bigcup_{i\in \cI}\fp^{\kappa_{i}, H}.\]
	\end{proposition}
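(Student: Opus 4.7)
My plan is to apply Lemma \ref{lemma Gm equivariant and includes nilpotent cone is whole thing} to the open subset $U := \bigcup_{i \in \cI} \fp^{\kappa_i, H}$. This reduces the statement to three verifications: (i) $U$ is open in $\fp^{reg}$; (ii) $U$ is stable under the $H \times \bG_m$-action; and (iii) $U \cap \cN_\fp = \cN_\fp^{reg}$.

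Items (i) and the $H$-stability part of (ii) will be immediate from the discussion preceding the proposition, which realizes each $\fp^{\kappa_i, H}$ as the image of an open embedding $(H \times \cS_i)/I^{reg} \hookrightarrow \fp^{reg}$ equivariant for the $H$-action. For the nilpotent condition (iii), I plan to use that $\kappa_i \colon \fa \git W_\fa \to \fp^{reg}$ is a section of $\chi$ by Lemma \ref{lem: existence of sections}, which forces $\cS_i \cap \chi^{-1}(0) = \{e_i\}$. Consequently $\fp^{\kappa_i, H} \cap \cN_\fp = H \cdot (\cS_i \cap \cN_\fp) = H \cdot e_i = \cO_i$, and taking the union over $i \in \cI$ gives $U \cap \cN_\fp = \bigcup_i \cO_i = \cN_\fp^{reg}$ by the very definition of $\cI$ together with Theorem \ref{theorem : structure of nilpotent cone}.

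The main obstacle will be establishing $\bG_m$-stability of each $\fp^{\kappa_i, H}$. To handle this, I intend to invoke the construction of $\cS_i$ from a normal associated character $\lambda_i\colon \bG_m \to H$ (as reviewed in Section \ref{subsec: generalities on Kostant Rallis sections}): one has $\mathrm{Ad}(\lambda_i(t)) e_i = t^2 e_i$ and $\mathfrak{v}$ decomposes into $\mathrm{Ad}(\lambda_i)$-weight spaces, so the twisted scaling $x \mapsto t^{-2}\, \mathrm{Ad}(\lambda_i(t)) \cdot x$ preserves $\cS_i$. Rearranging gives $t^2 \cdot s \in \mathrm{Ad}(\lambda_i(t^{-1})) \cdot \cS_i \subset H \cdot \cS_i$ for every $s \in \cS_i$ and every $t \in \bG_m$. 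Since $k$ is algebraically closed, the squaring map on $\bG_m$ is surjective, whence $\bG_m \cdot \fp^{\kappa_i, H} = \fp^{\kappa_i, H}$. With (i)--(iii) verified, Lemma \ref{lemma Gm equivariant and includes nilpotent cone is whole thing} yields $U = \fp^{reg}$, completing the proof.
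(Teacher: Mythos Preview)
Your proof is correct and follows exactly the paper's approach: the paper's proof is the one-liner ``This is an immediate consequence of Lemma~\ref{lemma Gm equivariant and includes nilpotent cone is whole thing} applied to $U=\cup_{i\in \cI}\fp^{\kappa_{i}, H}$,'' and you have simply spelled out the verification of that lemma's hypotheses (openness, $H\times\bG_m$-stability, and the nilpotent-cone condition) in detail. One very minor slip: in your $\bG_m$-stability argument the rearrangement actually gives $t^2\cdot s \in \mathrm{Ad}(\lambda_i(t))\cdot \cS_i$ rather than $\mathrm{Ad}(\lambda_i(t^{-1}))\cdot \cS_i$, but since both elements lie in $H$ this does not affect the conclusion.
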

	
	\begin{proof}
		This is an immediate consequence of Lemma \ref{lemma Gm equivariant and includes nilpotent cone is whole thing} applied to $U=\cup_{i\in \cI}\fp^{\kappa_{i}, H}$.
	\end{proof}

	\subsubsection{Application to smoothness of $I^{\reg}$}
	
	We start by proving a Lemma.
	
	\begin{lemma}
		\label{lemma: H orbit onto the H orbit}
		We keep notation as in the previous section. For $i\in \cI$, then the morphism 
		\begin{equation}
			\label{eqn : action map on KR sections}
			H\times \cS_{i}\rightarrow \fp^{\kappa_{i}, H}
		\end{equation}
		is smooth and surjective.
	\end{lemma}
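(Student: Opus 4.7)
The plan is to separate the statement into surjectivity and smoothness.

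Surjectivity of \eqref{eqn : action map on KR sections} is immediate: the subscheme $\fp^{\kappa_i,H}\subset \fp^{\reg}$ is defined in the paragraph preceding Proposition \ref{proposition: regular locus is a union} as precisely the image of this map.

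For smoothness, my plan is to verify that the differential of $\mu\colon H\times\cS_i\to \fp^{\kappa_i,H}$ is surjective at the single point $(1,e)$ and then propagate to all of $H\times\cS_i$ via two equivariance arguments. Lemma \ref{lem: differential at zero is surjective} gives surjectivity of the differential at $(1,e)$; since both source and target are smooth varieties (the source because $H$ is smooth by hypothesis and $\cS_i\simeq \fa\git W_{\fa}$ by Lemma \ref{lem: existence of sections}), this implies $\mu$ is smooth at $(1,e)$. Hence the smooth locus $V\subset H\times\cS_i$ of $\mu$ is a non-empty open subset containing $(1,e)$.

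Next, I will show $V$ is stable under two group actions. First, $V$ is stable under the left multiplication of $H$ on the first factor, since $\mu$ is $H$-equivariant for this action and the adjoint action on $\fp$. Second, let $\lambda\colon\Gm\to H$ denote the associated cocharacter of \cite{levy} used to build the section $\cS_i=e+\mathfrak{v}$, and write $d$ for the $\mathrm{Ad}(\lambda)$-weight of $e$. By construction, $\mathfrak{v}$ is $\mathrm{Ad}(\lambda)$-graded with all weights strictly less than $d$. The modified $\Gm$-action $t\cdot x := t^{-d}\mathrm{Ad}(\lambda(t))x$ on $\fp$ then preserves $\cS_i$, fixes $e$, and contracts $\cS_i$ to $\{e\}$ as $t\to\infty$; taken together with the conjugation action $t\cdot h := \lambda(t)h\lambda(t)^{-1}$ on $H$, it makes $\mu$ $\Gm$-equivariant by a direct calculation. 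Thus $V$ is also $\Gm$-stable.

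Now for any $s\in\cS_i$, the $\Gm$-orbit of $(1,s)$ equals $\{(1, t\cdot s) : t\in \Gm\}$ because $\lambda(t)$ centralizes $1\in H$. This orbit limits to $(1,e)\in V$ as $t\to\infty$, so $(1,s)\in V$ by $\Gm$-stability; hence $\{1\}\times\cS_i\subset V$. Applying left $H$-stability gives $H\times\cS_i\subset V$, and therefore $\mu$ is smooth everywhere.

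The main technical point I anticipate having to check carefully is the strict inequality of $\mathrm{Ad}(\lambda)$-weights on $\mathfrak{v}$ in the positive-characteristic, associated-character framework of \cite{levy}; once this weight condition is in hand, the contraction and equivariance argument above closes the proof without further difficulty.
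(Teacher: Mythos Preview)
Your proof is correct and follows essentially the same approach as the paper's own proof: surjectivity by definition, $H$- and $\Gm$-equivariance of the action map, surjectivity of the differential at $(1,e)$ via Lemma~\ref{lem: differential at zero is surjective}, and then the observation that the smooth locus is an $H\times\Gm$-stable open containing $(1,e)$, which forces it to be all of $H\times\cS_i$. The paper compresses your contraction argument into the single phrase ``The only such neighbourhood is $H\times\cS_i$''; you have simply unpacked that line by making the modified $\Gm$-action and the limit to $(1,e)$ explicit.
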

	\begin{proof}
		The morphism \eqref{eqn : action map on KR sections} is surjective by definition of $\fp^{\kappa_i,H}$ and is evidently $H$-equivariant and $\bbG_{m}$-equivariant. Hence, the locus in $H\times \cS_i$ at which \eqref{eqn : action map on KR sections} is smooth is open and stable under the actions of $H$ and $\bG_m$. Moreover, by Lemma \ref{lem: differential at zero is surjective}, the differential of \eqref{eqn : action map on KR sections} at $(1,\kappa_{i}(0))$ is surjective, and so the smooth locus contains $(1,\kappa_i(0))\in H\times \cS_i$. Hence, smooth locus must be the entirety of $H\times \cS_i$.
	\end{proof}
	
	\begin{proposition}
		\label{propostion: map to GIT quotient smooth and surjective}
		For any $i\in \cI$, the composition $\chi|_{\fp^{\kappa_{i}, H}}:\fp^{\kappa_{i}, H}\hookrightarrow \fp\rightarrow \fp\git H$ is smooth and surjective.
	\end{proposition}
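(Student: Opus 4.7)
The plan is to reduce the claim to the smoothness of the action map on a Kostant–Rallis section, which is already essentially proved, and then to use smoothness descent.

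First I would unpack the structure around the section $\cS_i$. By Lemma \ref{lem: existence of sections}, the section $\kappa_i$ gives an isomorphism $\cS_i \xrightarrow{\sim} \fp \git H$ via the restriction of $\chi$. In particular, the composition $H \times \cS_i \to \cS_i \to \fp \git H$, where the first map is the projection, is the composition of a smooth surjective map with an isomorphism, hence smooth and surjective.

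Next, I would invoke Lemma \ref{lemma: H orbit onto the H orbit}, which says the action map $a\colon H \times \cS_i \to \fp^{\kappa_i,H}$ is smooth and surjective. Because $\chi$ is $H$-invariant, the two maps $H \times \cS_i \to \fp \git H$ obtained by first going through $\cS_i$ and by first going through $\fp^{\kappa_i,H}$ agree, i.e.\ we have a commutative diagram
\[
\begin{tikzcd}
H \times \cS_i \arrow[r, "a"] \arrow[rd, "\mathrm{pr}_2"'] & \fp^{\kappa_i,H} \arrow[d, "\chi|_{\fp^{\kappa_i,H}}"] \\
& \fp \git H.
\end{tikzcd}
\]
The diagonal arrow $\mathrm{pr}_2$ (composed with $\kappa_i^{-1}\circ \chi|_{\cS_i}$, an isomorphism) is smooth and surjective, while $a$ is smooth and surjective by Lemma \ref{lemma: H orbit onto the H orbit}.

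Finally, I would conclude by smooth descent: since $a$ is smooth surjective (in particular faithfully flat) and the composition $\chi|_{\fp^{\kappa_i,H}} \circ a$ is smooth, the morphism $\chi|_{\fp^{\kappa_i,H}}$ is itself smooth. Surjectivity is immediate because $\cS_i \subset \fp^{\kappa_i,H}$ already maps isomorphically onto $\fp \git H$. The only potentially subtle step is the descent conclusion, but it is standard that smoothness is local for the smooth topology on the source, so no serious obstacle is anticipated here.
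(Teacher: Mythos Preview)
Your proof is correct and follows exactly the same approach as the paper: identify the composition $H\times \cS_i \to \fp^{\kappa_i,H} \to \fp\git H$ with the projection onto $\cS_i$, and then use that smoothness descends along the smooth surjective action map (the paper cites \cite{stacks2022stacks} Tag 02K5 for this descent). Your write-up simply unpacks the argument in more detail.
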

	
	\begin{proof}
		This is identical to the proof of Proposition 3.3.3 of \cite{riche2017kostant}, namely the composition $H\times \cS_{i}\rightarrow \fp^{\kappa_{i, H}}\rightarrow \fp\git H\cong \cS_{i}$ is identified with the projection to $\cS_{i}$.  Hence \cite[Tag 02K5]{stacks2022stacks} implies  $\chi|_{\fp^{\kappa_{i}, H}}$ is smooth and surjective.
	\end{proof}
	
	We denote by $I_{\cS_{i}}:=I_{H}^{\reg}\times_{\fp^{\reg}}\cS_{i}$ the restriction of $I_{H}^{\reg}$ to $\cS_{i}$.
	
	\begin{proposition}
		\label{prop: Icsi is smooth over Si}
		The map $I_{\cS_{i}}\rightarrow \cS_{i}$ is smooth.
	\end{proposition}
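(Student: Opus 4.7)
The plan is to deduce smoothness of $I_{\cS_i} \to \cS_i$ by smooth descent along the action map $a\colon H\times \cS_i \to \fp^{\kappa_i,H}$, which is smooth and surjective by Lemma \ref{lemma: H orbit onto the H orbit}. The Kostant--Rallis property that $\chi|_{\cS_i}\colon \cS_i \to \fp\git H$ is an isomorphism (Lemma \ref{lem: existence of sections}) will be the key ingredient allowing us to identify the self-fiber-product of $a$ in a simple form.

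First I would form the self fiber product $P := (H\times \cS_i)\times_{\fp^{\kappa_i,H}}(H\times \cS_i)$. As the base change of a smooth morphism, either projection $P\to H\times \cS_i$ is smooth.

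Next, I would identify $P \simeq H\times I_{\cS_i}$ as schemes over $H\times \cS_i$. A point of $P$ is a tuple $(h_1,s_1,h_2,s_2)$ with $h_1\cdot s_1 = h_2\cdot s_2$. The two sections $s_1,s_2\in \cS_i$ then have the same image under $\chi$; since $\chi|_{\cS_i}$ is an isomorphism, we must have $s_1 = s_2 =: s$, and $k := h_2^{-1}h_1$ lies in $I^{\reg}_s$. The map $(h_1,s,h_2,s)\mapsto (h_2, k, s)$ yields the required isomorphism. Under this isomorphism, the first projection to $H\times \cS_i$ becomes $(h,k,s)\mapsto (hk,s)$, which, after composing with the automorphism $(h,k,s)\mapsto (hk^{-1},k,s)$ of $H\times I_{\cS_i}$, is identified with $\id_H\times q$, where $q\colon I_{\cS_i}\to \cS_i$ is the structure morphism. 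Since $H$ is smooth over $k$, smoothness of $\id_H\times q$ is equivalent to smoothness of $q$, concluding the argument.

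The only non-formal step is the identification $P\simeq H\times I_{\cS_i}$, which rests squarely on the defining bijectivity of $\chi|_{\cS_i}$; everything else is straightforward descent of smoothness along a smooth morphism. In particular the argument is self-contained and does not presuppose the smoothness of the full group scheme $I^{\reg}\to \fp^{\reg}$ (that smoothness will then follow by combining this lemma with the covering statement of Proposition \ref{proposition: regular locus is a union} and a further descent along each $H\times \cS_i \to \fp^{\kappa_i,H}$).
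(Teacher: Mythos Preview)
Your argument is correct and is essentially the same as the paper's, with only a cosmetic difference in packaging. The paper writes the identification more directly as $I_{\cS_i}\cong \cS_i\times_{\fp}(H\times \cS_i)$ (pulling the smooth action map back along the inclusion $\cS_i\hookrightarrow\fp$, using that $\fp\times_{\fp\git H}\cS_i\cong\fp$ by the Kostant--Rallis isomorphism), whereas you form the self-fiber-product $P$ over $\fp^{\kappa_i,H}$ and then strip off the redundant $H$ factor; the latter amounts to restricting your second copy of $H\times\cS_i$ to $\{1\}\times\cS_i$, which recovers the paper's fiber product on the nose. Either way, the content is that $a\colon H\times\cS_i\to\fp$ is smooth and $\chi|_{\cS_i}$ is an isomorphism; your detour through the automorphism $(h,k,s)\mapsto(hk^{-1},k,s)$ could be avoided by simply taking the second projection of $P$, which already equals $\id_H\times q$.
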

	
	\begin{proof}
		This proof of Proposition 3.3.5 of \cite{riche2017kostant} carries over to this setting.  For completeness we summarize:  As schemes over $\cS_{i}$ we have isomorphisms
		\[I_{\cS_{i}}\cong \cS_{i}\times_{\fp\times \cS_{i}}(H\times \cS_{i})\cong  \cS_{i}\times_{\fp\times_{\fp\git H} \cS_{i}}(H\times \cS_{i})\cong  \cS_{i}\times_{\fp}(H\times \cS_{i}).\]
		Hence as $H\times \cS_{i}\rightarrow \fp^{\reg}\hookrightarrow \fp$ is smooth by Lemma \ref{lem: differential at zero is surjective}, we have that $I_{\cS_{i}}\rightarrow \cS_{i}$ is smooth.
	\end{proof}
	
	\begin{prop}
		\label{prop: smoothness of Ireg}
		Recall we assume that $G$ satisfies the standard hypotheses of Section \ref{subsec: conditions on char}. Then the regular centralizer group scheme $I^{\reg}\rightarrow \fp^{\reg}$ is smooth.
	\end{prop}
	
	Note that for $\theta$-quasisplit pairs $(G,\theta)$ in characteristic 0 this is proved for $I_{G^{\theta}}^\reg$ and $I_{N_G(K)}^\reg$ in \cite[Lemma 5 and Lemma 8]{gppn} adapting the ideas of \cite{donagi2002gerbe}. In Theorem 4.7 of \cite{leslie} this is generalized to the $\theta$-quasisplit case when the characteristic is $p>2$ and $p$ is such that the regular centralizer group scheme $I_G^{\reg}\rightarrow \fg^{\reg}$ for the adjoint action of $G$ on $\fg$ is smooth. This holds for example under condition (C3) of \cite{riche2017kostant}.
	
	\begin{proof}
		We recall the notation $\cI$ for an indexing set of the $\pi_0(H)$-orbits of irreducible components in the nilpotent cone $\cN_\fp$. For each $i\in \cI$, we fix a Kostant-Rallis section $\kappa_i$ whose image over $0$ is contained in the $\pi_0(H)$ orbits of $\cN_\fp^\reg$ indexed by $i$. By Proposition \ref{proposition: regular locus is a union} it is sufficient to show that for each $i\in \cI$, the morphism $I^{\reg}|_{\fp^{\kappa_{i}, H}}\rightarrow \fp^{\kappa_{i}, H}$ is smooth. The proof is now identical to Corollary 3.6 in \cite{riche2017kostant}:  Namely, the diagram 
		\[
		\begin{tikzcd}
			H\times I_{\cS_{i}} \arrow{r}\arrow{d} & I^{\reg}|_{\fp^{\kappa_{i}, H}}\arrow{d}\\
			H\times \cS_{i} \arrow{r} & \fp^{\kappa_{i}, H}
		\end{tikzcd}
		\]
		is Cartesian.  Hence Lemma \ref{lemma: H orbit onto the H orbit} implies that $H\times I_{\cS_{i}}\rightarrow I_{H}^{\reg}$ is smooth and surjective.  Furthermore the Lemma \ref{lemma: H orbit onto the H orbit} and Proposition \ref{prop: Icsi is smooth over Si} tell us that the composition $H\times I_{\cS_{i}}\rightarrow \fp^{\kappa_{i}, H}$ is smooth.  Hence $I^{\reg}|_{\fp^{\kappa_{i}, H}}\rightarrow \fp^{\kappa_{i}, H}$ is smooth by \cite[Tag 0K25]{stacks2022stacks}.
	\end{proof}
	
	\subsubsection{Construction of the regular quotient via Kostant-Rallis sections}
	\label{subsubsec: application regular quotient}
	
	We continue the notation of the previous section:  Namely, $\cI$ will denote an indexing set for the $\pi_0(H)$ orbits of irreducible components of the nilpotent cone $\cN_\fp$. Let $\wt{\fp\git H}$ be the union of $|\cI|$ copies of $\fp\git H$ where we glue the copies labeled by $i$ and $j$ on the subscheme $U_{ij} = \chi(\fp_i^\reg\cap \fp_j^\reg)\subset \fp\git H$ where the sections $\kappa_{i}$ and $\kappa_{j}$ are conjugate. Note that the locus $U_{ij}$ is open in $\fp\git H$ since $\chi$ is flat, see Lemma \ref{lemma: p-->a//Wa is flat}. Note that \emph{a priori} it is not clear that the gluing is done in a fashion compatible with the $\bbG_{m}$-action, and thus it is not clear that $\widetilde{\fp\git H}$ has a $\bbG_{m}$-action coming from the $\bbG_{m}$-actions on $\fa\git W_{\fa}$.

	\begin{thm}
		\label{theorem : Description of regular quotient gluing not explicit}
		We have a $\bbG_{m}$-equivariant isomorphism of schemes
		\[\widetilde{\fp\git H}\simeq \fp^{\reg} \myfatslash H,\]
		where the $\bbG_{m}$-action comes on $\widetilde{\fp\git H}$ comes from the $\bbG_{m}$-action on each copy of $\fp\git H$.
		Furthermore this isomorphism commutes with the natural morphisms to $\fp\git H$.
	\end{thm}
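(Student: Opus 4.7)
The plan is to apply Proposition \ref{proposition: Gerbe implies regular quotient}: it suffices to exhibit a $\bbG_m$-equivariant morphism $\fp^{reg} \to \widetilde{\fp\git H}$ over $\fp \git H$ whose fibers are single $H$-orbits. I will build this morphism by gluing local pieces on the open cover $\fp^{reg} = \bigcup_{i \in \cI} \fp^{\kappa_i, H}$ furnished by Proposition \ref{proposition: regular locus is a union}.

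On each chart, the map will be the restriction $\chi|_{\fp^{\kappa_i,H}}$ of the Chevalley morphism $\chi \colon \fp \to \fp \git H$, landing in the $i$-th copy of $\fp \git H$ inside $\widetilde{\fp\git H}$. The first step is to verify that these local maps glue. A point $x$ lies in $\fp^{\kappa_i,H} \cap \fp^{\kappa_j,H}$ precisely when its $H$-orbit contains both $\kappa_i(\chi(x))$ and $\kappa_j(\chi(x))$, i.e.\ when $\chi(x)$ belongs to the locus $U_{ij} \subset \fa\git W_\fa$ on which $\kappa_i$ and $\kappa_j$ are $H$-conjugate; this is exactly the locus prescribed in the construction of $\widetilde{\fp\git H}$. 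I will then observe that $U_{ij}$ is open in $\fp\git H$: it equals the image of the open set $\fp^{\kappa_i,H} \cap \fp^{\kappa_j,H}$ under $\chi|_{\fp^{\kappa_i,H}}$, which is smooth (hence open) by Proposition \ref{propostion: map to GIT quotient smooth and surjective}. Thus the gluing is well-defined scheme-theoretically.

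Next I will verify the fiber condition. Since $H \times \cS_i \to \fp^{\kappa_i,H}$ is an open embedding (established in the paragraph preceding Proposition \ref{proposition: regular locus is a union}), the fiber of $\chi|_{\fp^{\kappa_i,H}}$ over any $y \in \fp\git H$ is precisely the single $H$-orbit through $\kappa_i(y)$; on overlaps these fiber descriptions agree by the very definition of $U_{ij}$. Proposition \ref{proposition: Gerbe implies regular quotient} then produces the claimed isomorphism with $\fp^{reg}\myfatslash H$, automatically compatible with the projections to $\fp\git H$ since this compatibility holds on each chart by construction.

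Finally, $\bbG_m$-equivariance follows for free: since the scaling action commutes with the $H$-action, the property ``$\kappa_i(a)$ and $\kappa_j(a)$ are $H$-conjugate'' is preserved by the $\bbG_m$-action on $\fp\git H$, so each $U_{ij}$ is $\bbG_m$-stable and the gluing is $\bbG_m$-equivariant. I expect the main bookkeeping to lie in step two---carefully identifying the scheme-theoretic structure of $U_{ij}$ and matching it to the gluing datum defining $\widetilde{\fp\git H}$---after which everything else is essentially formal from the characterization of the regular quotient.
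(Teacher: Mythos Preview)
Your construction of the map $\fp^{reg}\to\widetilde{\fp\git H}$ and the verification that its fibers are single $H$-orbits is correct and is exactly what the paper's (very terse) proof leaves implicit; both arguments reduce to Proposition~\ref{proposition: Gerbe implies regular quotient} via the open cover of Proposition~\ref{proposition: regular locus is a union}, and your identification $U_{ij}=\chi(\fp^{\kappa_i,H}\cap\fp^{\kappa_j,H})$ together with openness via Proposition~\ref{propostion: map to GIT quotient smooth and surjective} is the right bookkeeping.

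There is, however, a genuine gap in your $\bbG_m$-equivariance step. Your justification ``since the scaling action commutes with the $H$-action'' only yields $t\,\kappa_i(a)\sim_H t\,\kappa_j(a)$; what you need is $\kappa_i(t\cdot a)\sim_H \kappa_j(t\cdot a)$, and the Kostant--Rallis sections are not literally $\bbG_m$-equivariant. The paper explicitly flags this issue just before the theorem (``\emph{a priori} it is not clear that the gluing is done in a fashion compatible with the $\bbG_m$-action''). Your argument can be repaired using the normal associated cocharacter $\lambda$ for $e_i=\kappa_i(0)$: since $\mathrm{Ad}(\lambda(s))$ lies in $K\subset H$ and sends $e_i$ to $s^2e_i$, the modified action $x\mapsto s^{-2}\mathrm{Ad}(\lambda(s))x$ preserves $\cS_i$ and covers $a\mapsto s^{-2}a$ on $\fp\git H$, giving $\kappa_i(s^{-2}a)\sim_H s^{-2}\kappa_i(a)$ and hence $\bbG_m$-stability of $U_{ij}$.

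The paper sidesteps this entirely: it first establishes the non-equivariant isomorphism, then transports the canonical $\bbG_m$-action on $\fp^{reg}\myfatslash H$ (supplied by Proposition~\ref{propostion: properties regular quotients}) across to $\widetilde{\fp\git H}$, and finally observes that equivariance of the projection $\widetilde{\fp\git H}\to\fp\git H$ forces this transported action to be the sheetwise standard one, since that projection is a local isomorphism on each copy of $\fp\git H$. This indirect route avoids having to analyze the sections $\kappa_i$ at all.
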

	
	\begin{remark}
		Note that, while we assume that $H$ is smooth throughout, the left hand side is definable regardless of whether or not $H$ is smooth. In contrast, the right hand side is only defined when $I^\reg$ is smooth.
	\end{remark}
	
	\begin{proof}
		We check that the quotient $\widetilde{\fp\git H}$ satisfies the conclusions of Theorem \ref{thm:aov}. The map $\fp^\reg\to \fp\git K$ factors through the scheme $\widetilde{\fp\git H}$, so we have a morphism $[\fp^\reg/H]\to \widetilde{\fp\git H}$. Since $\widetilde{\fp\git H}$ is a scheme and $I^\reg$ descends to the full inertia scheme of $[\fp^\reg/H]$, property (2) is immediate. It suffices, therefore, to show that $[\fp^\reg/H]$ is an fppf gerbe over $\widetilde{\fp\git H}$. To this end, consider the pullback of $[\fp^\reg/H]$ along the fppf cover $\fp^\reg\to \widetilde{\fp\git H}$. There is a natural $BI^\reg$ action on the pullback $[\fp^\reg/H]\times_{\widetilde{\fp\git H}}\fp^\reg$ over $\fp^\reg$. The diagonal map
		\[
		\fp^\reg\to \fp^\reg\times \fp^\reg\to [\fp^\reg/H]\times_{\widetilde{\fp\git H}}\fp^\reg
		\]
		gives a section of 
		\[
		[\fp^\reg/H]\times_{\widetilde{\fp\git H}}\fp^\reg\to \fp^\reg.
		\]
		The action of $BI^\reg$ on this section gives a map
		\[
		BI^\reg\to [\fp^\reg/H]\times_{\widetilde{\fp\git H}}\fp^\reg.
		\]
		which is easily checked to be an isomorphism. We conclude that $[\fp^\reg/H]$ is indeed a gerbe, banded by $I^\reg$, over $\fp\git H$.
		
		We have concluded by uniqueness that $\fp^\reg\myfatslash H\simeq\widetilde{\fp\git H}$ over $\fp^\reg$. There is a unique $\bG_m$ action on $\fp^\reg\myfatslash H$ lifting the $\bG_m$ action on $\fp\git H$, and hence there is a natural $\bG_m$ action on $\widetilde{\fp\git H}$ coming from the $\bG_m$ action on $\fp\git H$.
	\end{proof}

	\subsection{Explicit Description of the Regular Quotient}
	\label{sec: explicit descriptin of regular quotient}
	
	\subsubsection{Overview}
	
	We now turn to an explicit description of the geometry of the regular quotient $\fp^{\reg}\myfatslash H$.  We ultimately provide two different descriptions.
	
	The first appears in Theorem \ref{thm: Description of regular quotient as a quotient of component groups}, where we describe the regular quotient in terms of certain quotients of component groups. This perspective reduces to computing the differences between certain regular centralizer group schemes as the group varies. This description is also used to prove the inductive description below.
	
	The second is done by the following multistep procedure:
	\begin{itemize}
		\item Firstly, we reduce to the case of simply connected simple groups and the diagonal case $H=G_{1}\xhookrightarrow{\Delta} G_{1}\times G_{1}$ of Example \ref{example: complex case}, using Proposition \ref{prop: reduction of regular quotient to simples}.
		\item Secondly, we reduce understanding the orbits above a point in $\fa\git W_{\fa}$ to the case of nilpotent cones of certain Levis, that we call descendants of the symmetric pair, in Proposition \ref{prop: description of regular quotient: reduction to Levi's} and Proposition \ref{proposition: Above the hyperplanes}. This is a Lie algebra version of the descendants described in Section 5.1.1 of \cite{sl_endoscopy}.
		\item We use the immediately preceding point to describe the structure for simple, simply connected groups (and the diagonal case, Example \ref{example: complex case}).  Except for the case of $SO(n)\times SO(n)\hookrightarrow SO(2n)$ (considered in Example \ref{ex son son so2n}) this is not complicated due to the fact that there are at most $2$ regular $H$-orbits in the nilpotent cone. The resulting explicit description of the regular quotient is included as Theorem \ref{theorem: gluing pattern}. For such cases, we get a description $\fp^\reg\myfatslash H\cong \fa\git W_{\fa}\coprod_{U}\fa\git W_{\fa}$ for an explicitly described open $U\subset \fa\git W_{\fa}$.  
	\end{itemize}
	
	In Section \ref{subsections: Examples regular quotient} we explicitly compute the regular quotient in several cases of interest.

	\subsubsection{Description of the Regular Quotient via Comparison of Regular Centralizers}
	\label{subsec: centralizer description of regular quotient}
	
	In this section, we give a first description of the regular quotient using a comparison of regular centralizer group schemes for $H$ and for the full normalizer $N_G(K)$. Throughout this section, we will use subscripts to indicate in which group centralizers are taken; for example, $I_H^\reg$ will denote the centralizers in the group $H$. 
	
	\begin{lem}
		\label{lemma: the quotient INGK as a cover}
		Fix a maximal $\theta$-split torus $A$, and let 
		\[
		F^* = \{a\in A\colon a^2\in Z(G)\}\subset A
		\]
		be as in Proposition \ref{prop: extension of K}. Moreover, let $Z_-$ denote the subgroup of the center $Z(G)$ on which $\theta$ acts by inversion. 
		\begin{enumerate}
			\item We have an isomorphism over $\fp^{\mathrm rs}$
			\begin{equation}
				\label{eqn: INGK/IK and Z-}
				\left.\big(I^{\reg}_{N_G(K)}/Z_-\cdot I^{\reg}_H\big)\right|_{\fp^{\mathrm rs}} \simeq \coprod_{a\in F^*/Z_-\cdot (F^*\cap H)}\fp^{\mathrm rs}
			\end{equation}
			\item The isomorphism \eqref{eqn: INGK/IK and Z-} extends to an isomorphism 
			\begin{equation}
				\label{eqn: INGK mod IK over reg locus}
				I^{\reg}_{N_G(K)}/Z_-\cdot I^{\reg}_H \simeq \coprod_{a\in F^*/Z_-\cdot (F^*\cap H)} U_{a}
			\end{equation}
			over $\fp^\reg$, where $U_a\hookrightarrow \fp^{\reg}$ is the inclusion map for an open set $\fp^{\mathrm{rs}}\subset U_a\subset \fp^{\reg}$.
		\end{enumerate}
	\end{lem}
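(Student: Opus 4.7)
For part (1), I would compute the fiber of the quotient at each $y\in\fp^{\mathrm{rs}}$ and then trivialize the resulting family of finite groups. By Proposition \ref{prop: extension of K}(b) applied to the maximal $\theta$-split torus $A_y\ni y$, one has $N_G(K)=F^*_y\cdot K$; since $y\in\mathrm{Lie}(A_y)$ is centralized by all of $A_y$, and in particular by $F^*_y$, this yields
\[
I^\reg_{N_G(K),y}=F^*_y\cdot I^\reg_{K,y},\qquad F^*_y\cap I^\reg_{K,y}=F^*_y\cap K,
\]
so that the fiber at $y$ is canonically $F^*_y/Z_-\cdot(F^*_y\cap K)$. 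I would then use the $K$-conjugacy of maximal $\theta$-split tori (Proposition \ref{prop: all cartans are conjugate}) to identify this fiber with the fixed model $F^*/Z_-\cdot(F^*\cap K)$. Independence of the conjugating element reduces to showing that the little Weyl group $W_\fa\simeq N_K(\fa)/Z_K(\fa)$ acts trivially on $F^*/Z_-\cdot(F^*\cap K)$, which I would prove by observing that the squaring map $a\mapsto a^2$ induces an injection $F^*/Z_-\cdot(F^*\cap K)\hookrightarrow Z(G)/Z_-$ (any $a\in F^*$ with $a^2\in Z_-$ can be adjusted by a square root in the torus $Z_-$ to land in $A[2]=F^*\cap K$), combined with the fact that $W_\fa$ fixes $Z(G)$ pointwise.

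For part (2), the strategy is to realize $Q:=I^\reg_{N_G(K)}/Z_-\cdot I^\reg_K$ as an open sub-group-scheme of the constant finite group scheme $\big(F^*/Z_-\cdot(F^*\cap K)\big)\times\fp^\reg$. Composing the inclusion $I^\reg_{N_G(K)}\hookrightarrow N_G(K)\times\fp^\reg$ with the quotient map $N_G(K)\twoheadrightarrow N_G(K)/Z_-K\simeq F^*/Z_-\cdot(F^*\cap K)$ produces a morphism of group schemes over $\fp^\reg$ whose kernel is precisely $Z_-\cdot I^\reg_K$ (a short pointwise check: if $g=zk\in Z_-K$ centralizes $y$, then so does $k=z^{-1}g$, whence $k\in I^\reg_{K,y}$); this descends to a monomorphism
\[
Q\hookrightarrow \big(F^*/Z_-\cdot(F^*\cap K)\big)\times\fp^\reg.
\]
The crucial technical input is that $Q$ is étale over $\fp^\reg$. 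Smoothness follows from Theorem \ref{theorem: smoothness of Ireg} applied to both $I^\reg_{N_G(K)}$ and $I^\reg_K$, together with the observation that $Z_-\cap I^\reg_K=Z_-[2]$ is finite étale (so $Z_-\cdot I^\reg_K$ is smooth as the quotient of the smooth scheme $Z_-\times I^\reg_K$ by this finite étale group), giving smoothness of the quotient. Quasifiniteness is automatic from the monomorphism into a finite étale cover. Since any étale monomorphism is an open immersion, decomposing the image by the first coordinate yields $Q=\coprod_{a}U_a$ with each $U_a\subset\fp^\reg$ open, and part (1) forces $\fp^{\mathrm{rs}}\subset U_a$ for every $a$.

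The two main obstacles are: first, the Weyl-equivariance step in part (1), which I would settle by reduction to centrality of $Z(G)$ as sketched above; and second, the étaleness of the quotient $Q$ in part (2), which rests on the smoothness result Theorem \ref{theorem: smoothness of Ireg} together with the structural observation about $Z_-\cap I^\reg_K$. Once these are in place, both the trivialization over $\fp^{\mathrm{rs}}$ and its extension to the $U_a$-decomposition over $\fp^\reg$ follow formally.
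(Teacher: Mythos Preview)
Your proposal is essentially correct and, for part~(2), matches the paper's approach: the paper constructs the very same map
\[
I^{\reg}_{N_G(K)}/Z_-\cdot I^{\reg}_K \longrightarrow \big(N_G(K)/Z_-\cdot K\big)\times\fp^{\reg}
\]
that you describe (via the inclusion $I^{\reg}_{N_G(K)}\subset N_G(K)\times\fp^{\reg}$), checks that it is bijective fiberwise over $\fp^{\mathrm{rs}}$ and injective over all of $\fp^{\reg}$ by exactly your pointwise argument (if $g=zk$ with $z\in Z_-$, $k\in K$ centralizes $y$, then $k=z^{-1}g$ does too). The paper then concludes that the image is a disjoint union of open subsets extending the sheets over $\fp^{\mathrm{rs}}$.

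Two remarks. First, your treatment of part~(1) is over-engineered: the target $N_G(K)/Z_-\cdot K$ is already a \emph{constant} group, so the map above trivializes the family globally with no need for a separate Weyl-equivariance argument. Moreover, your squaring argument has a small imprecision: one has $F^*\cap G^\theta = A[2]$, but $F^*\cap K = A[2]\cap K$, and $A[2]$ need not lie in the identity component $K=(G^\theta)^\circ$. The conclusion that $W_\fa$ acts trivially is nonetheless correct, since conjugation by $K$ is trivial on $N_G(K)/Z_-K$ ($K$ being normal in $N_G(K)$); but the cleanest fix is simply to use the global map from part~(2) already for part~(1), as the paper does.

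Second, you are more explicit than the paper on one point: the paper passes directly from ``injective'' to ``disjoint union of open subsets'' without comment, whereas you justify openness by showing $Q$ is \'etale over $\fp^{\reg}$ (via Theorem~\ref{theorem: smoothness of Ireg} applied to both $I^{\reg}_{N_G(K)}$ and $I^{\reg}_K$, plus the finiteness of $Z_-\cap I^{\reg}_K$) and invoking that an \'etale monomorphism is an open immersion. That extra care is a genuine clarification.
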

	\begin{remark}
		Note that, in the above lemma, $Z_-\cdot H$ also gives a symmetric pair $(G,\theta,Z_-\cdot H)$, and so the inclusion $Z_-\cdot I_H^\reg\to I_{N_G(K)}^\reg$ satisfies the conditions of Proposition \ref{prop: I/J is representable}. In particular, the quotient $I_{N_G(K)}^\reg/Z_-\cdot I_H^\reg$ is a separated, flat algebraic space over $\fp^\reg$ representing the corresponding fppf quotient sheaf. It is part of the claim of the lemma that this quotient is representable by a scheme over $\fp^\reg$.
	\end{remark}
	\begin{proof}
		Note that $F^*$ is a smooth group scheme since $\mathrm{char}(k)\neq 2$, so that the quotient $F^*/Z_-\cdot (F^*\cap H)$ is reduced. Hence, the above disjoint unions taken over the finite group $F^*/Z_\cdot (F^*\cap H)$ make sense.
		
		The inclusion $I_{N_G(K)}^\reg\subset N_G(K)\times \fp^{\reg}$ defines a map
		\begin{equation}
			\label{eqn: INGK/IK on pi0}
			I_{N_G(K)}^\reg/Z_-\cdot I_H^\reg\to \big(N_G(K)/Z_-\cdot H\big)\times \fp^{\reg}
		\end{equation}
		with target a constant group scheme with discrete fibers. This map is evidently quasi-finite, and so by \cite[Lemma 3.1]{olsson-starr}, we see that the quotient $I_{N_G(K)}^\reg/Z_-\cdot I_H^\reg$ is representable by a scheme.
		
		For any fixed $y\in \fp^{\mathrm{rs}}$, let $A_y$ be the unique maximal $\theta$-split torus whose Lie algebra contains $y$, and let $F_y^*\subset A_y$ be the subgroup of $a\in A_y$ such that $a^2\in Z(G)$. Then, it is clear from Proposition \ref{prop: extension of K} that $I_{N_G(K),y}^\reg = F_y^*\cdot Z_-\cdot I_{H,y}^\reg$. Therefore, the fiber of the map \eqref{eqn: INGK/IK on pi0} at $y$ is identified with the identity map
		\[
		F_y^*/Z_-\cdot (F_y^*\cap K)\to F_y^*/Z_-\cdot (F_y^*\cap K)
		\]
		In particular, \eqref{eqn: INGK/IK on pi0} is an isomorphism over the regular, semisimple locus, proving part (1).
		
		For (2), we first show that the map \eqref{eqn: INGK/IK on pi0} has open image. Indeed, the map $I_{N_G(K)}^\reg\to \fp^\reg$ is flat by \ref{prop: smoothness of Ireg}. Hence, for any connected open set $V\subset I_{N_G(K)}^\reg$, the image of $V$ in $\fp^\reg$ is also connected and open. By connectedness of $V$, the image of $V$ in $\big(N_G(K)/Z_-\cdot H\big)\times \fp^{\reg}$ lands in a single component. Since the map $I_{N_G(K)}^\reg\to \fp^\reg$ factors through \eqref{eqn: INGK/IK on pi0}, and since $\big(N_G(K)/Z_-\cdot H\big)\times \fp^{\reg}$ is a constant group scheme with finite and discrete fibers, it therefore follows that the image of $V$ under the map 
		\[
		I_{N_G(K)}^\reg\to \big(N_G(K)/Z_-\cdot H\big)\times \fp^{\reg}
		\]
		is open. 
		
		We finally claim that the map \eqref{eqn: INGK/IK on pi0} remains an injection over $\fp^{\reg}$. In particular, this amounts to the following claim:  Let $y\in \fp^\reg$. For any $g_1,g_2\in I_{N_G(K),y}^\reg$, if $g_1 = hg_2$ for $h\in H$, then in fact $h\in I_{H,y}^\reg$. Indeed, since $g_1$ and $g_2$ centralize $y$, we have
		\[
		y = \mathrm{ad}(hg_2)\cdot y = \mathrm{ad}(h)y,
		\]
		and hence, $h\in I_{G,y}\cap H = I_{H,y}^\reg$.
		
		It follows that the map \eqref{eqn: INGK/IK on pi0} describes the quotient $I_{N_G(K)}^\reg/Z_-\cdot I_H^\reg$ as the disjoint union of open subsets of $\fp^{\reg}$ extending the sheets $\big( N_G(K)/Z_-\cdot H \big)\times \fp^{\mathrm{rs}}$.
	\end{proof}
	
	\begin{lem}    
		\label{lem: N acts transitively on fibers}
		The group $N_G(K)$ acts transitively on the fibers of the map $\fp^\reg\to \fp\git K$.
	\end{lem}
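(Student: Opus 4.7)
My plan is to parametrize the $K$-orbits in each fiber of $\chi^{\reg}:\fp^{\reg}\to\fp\git K$ by the set $\cI$ of regular nilpotent $K$-orbits, and then transfer the transitivity of $N_G(K)$ on $\cI$ (Theorem~\ref{theorem: N_G(K) acts transitively on regular nilpotents}) to transitivity on each fiber.

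By Proposition~\ref{proposition: regular locus is a union}, we have $\fp^{\reg}=\bigcup_{i\in\cI}\fp^{\kappa_i,K}$, so every $x\in\fp^{\reg}$ is $K$-conjugate to $\kappa_i(\chi(x))$ for some $i\in\cI$. Writing $\Omega(c)$ for the set of $K$-orbits on the fiber $\fp^{\reg}\cap\chi^{-1}(c)$, this yields a surjection
\[ \Phi_c:\cI\twoheadrightarrow\Omega(c),\qquad i\mapsto [\kappa_i(c)]_K. \]
I would first verify that $\Phi_c$ does not depend on the choice of Kostant--Rallis section $\kappa_i$ representing each $i\in\cI$: two such choices produce sections of the étale cover $\fp^{\reg}\myfatslash K\to\fp\git K$ (constructed in Theorem~\ref{theorem : Description of regular quotient gluing not explicit}) that agree at $0$, and since $\fp\git K$ is connected, they must agree everywhere.

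Next I would check $N_G(K)$-equivariance of $\Phi_c$. For $g\in N_G(K)$, the composition $g\cdot\kappa_i$ is again a Kostant--Rallis section, whose value at $0$ lies in the $K$-orbit $g\cdot i$. Using the independence just established, we compute
\[ \Phi_c(g\cdot i)=[(g\cdot\kappa_i)(c)]_K=g\cdot[\kappa_i(c)]_K=g\cdot\Phi_c(i). \]
Since $N_G(K)$ acts transitively on $\cI$ by Theorem~\ref{theorem: N_G(K) acts transitively on regular nilpotents}, the surjectivity and equivariance of $\Phi_c$ force transitivity of $N_G(K)$ on $\Omega(c)$. Combined with the inclusion $K\subset N_G(K)$, this gives the desired transitivity of $N_G(K)$ on each fiber of $\chi^{\reg}$.

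The main obstacle is the well-definedness of $\Phi_c$: namely, that any two Kostant--Rallis sections passing through a common regular nilpotent $K$-orbit give rise to the same $K$-orbit at every point of $\fp\git K$. This rests on the étale structure of the cover $\fp^{\reg}\myfatslash K\to\fp\git K$, which is the main content of Section~\ref{subsec: regular quotient via Kostant Rallis sections}.
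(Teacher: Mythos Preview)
Your overall strategy is sound and close to the paper's, but the well-definedness step contains a genuine gap. You argue that two sections of the \'etale map $\fp^{\reg}\myfatslash K\to\fp\git K$ agreeing at $0$ must agree everywhere because $\fp\git K$ is connected. This inference is valid for \emph{separated} \'etale maps (e.g.\ finite \'etale covers), where the diagonal is closed and the agreement locus is clopen. Here, however, $\fp^{\reg}\myfatslash K$ is non-separated by design, so the agreement locus is open but not obviously closed; two sections through a common point could in principle diverge over another component of the non-separated locus. The fix is immediate: Kostant--Rallis sections are $\bG_m$-equivariant, so the induced sections of $\fp^{\reg}\myfatslash K\to\fp\git K$ are too (Theorem~\ref{theorem : Description of regular quotient gluing not explicit} records this equivariance). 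The agreement locus is then a $\bG_m$-stable open containing $0$, and since $0$ lies in the closure of every $\bG_m$-orbit in $\fp\git K$, its complement must be empty.

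By contrast, the paper bypasses this issue entirely by applying Theorem~\ref{theorem : Description of regular quotient gluing not explicit} directly with $H=N_G(K)$: since $N_G(K)$ has a single regular nilpotent orbit (Theorem~\ref{theorem: N_G(K) acts transitively on regular nilpotents}), the gluing description gives $\fp^{\reg}\myfatslash N_G(K)\simeq\fp\git K$, i.e.\ every fiber of $\fp^{\reg}\to\fp\git K$ is already a single $N_G(K)$-orbit. Your route---working with $H=K$ and transporting the $N_G(K)$-action through the surjection $\Phi_c$---reaches the same conclusion but requires the extra well-definedness check, which in turn forces you to invoke the $\bG_m$-contraction explicitly.
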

	\begin{proof}
		Theorem \ref{theorem : Description of regular quotient gluing not explicit} reduces this to the $N_G(K)$ action on the zero fiber $\cN_\fp^\reg$. Theorem \ref{theorem: N_G(K) acts transitively on regular nilpotents} proves this case.
	\end{proof}

	\begin{thm}
		\label{thm: Description of regular quotient as a quotient of component groups}
		Consider the natural action of the constant group scheme $\ul{N_G(K)}:=N_G(K)\times \fp\git H$ on $\fp^\reg$ over $\fp\git H$.
		\begin{enumerate}
			\item A choice of Kostant-Rallis section $\kappa\colon \fp\git H\to \fp^\reg$ gives an identification
			\begin{equation}
				\label{eqn: preg as a quotient}
				\fp^\reg\simeq \ul{N_G(K)}/\kappa^*I^\reg_{N_G(K)},
			\end{equation}  
			as schemes over $\fp\git H$.
			\item The regular quotient $\fp^\reg\myfatslash H$ is identified with the quotient
			\begin{equation}
				\label{eqn: regular quot via reg centralizers}
				\fp^\reg\myfatslash H \simeq \frac{\ul{N_G(K)}/\ul{Z_-\cdot H}}{\kappa^*(I^\reg_{N_G(K)}/Z_-\cdot I^\reg_H)}
			\end{equation}
		\end{enumerate}
	\end{thm}
	
	\begin{remark}
		Before proving Theorem \ref{thm: Description of regular quotient as a quotient of component groups}, we first remark on the representability of the various quotients appearing in the statement of the theorem. Let $\underline{N_G(K)} = N_G(K)\times \fp\git H$ denote the constant group scheme on $\fp\git H$, and let $\kappa\colon \fp\git H\to \fp^\reg$ denote a Kostant-Rallis section. Note that $\kappa^*I_{N_G(K)}^\reg$ is a flat, affine group scheme over $\fp^\reg$. Hence, by Proposition \ref{prop: I/J is representable}, the fppf quotient $\underline{N_G(K)}/\kappa^*I_{N_G(K)}^\reg$ is a separated, flat algebraic space over $\fp\git H$. The map
		\[
		\underline{N_G(K)}/\kappa^*I_{N_G(K)}^\reg\to \fp^\reg
		\]
		obtained by action on a Kostant-Rallis section is quasi-finite, and so we conclude that the quotient $\underline{N_G(K)}/\kappa^*I_{N_G(K)}^\reg$ is representable.
		
		We have shown in Lemma \ref{lemma: the quotient INGK as a cover} that the quotient $I_{N_G(K)}^\reg/Z_-\cdot I_H^\reg$ is representable by a flat group scheme over $\fp^\reg$, and hence its pullback under $\kappa$ is as well. We have a natural inclusion
		\[
		\kappa^*(I_{N_G(K)}^\reg/Z_-\cdot I_H^\reg)\subset \underline{N_G(K)/Z_-\cdot H}
		\]
		of flat, affine group schemes over $\fp\git H$. The group scheme $\underline{N_G(K)/Z_-\cdot H}$ is a constant group scheme with finite fibers, while by Lemma \ref{lemma: the quotient INGK as a cover}, the inclusion of $\kappa^*(I_{N_G(K)}^\reg/Z_-\cdot I_H^\reg)$ into $\underline{N_G(K)/Z_-\cdot H}$ is an open embedding. Therefore, representability of the right hand side of \eqref{eqn: regular quot via reg centralizers} follows from Proposition \ref{prop: respresentability of finite quotients}. Hence, all quotients in Theorem \ref{thm: Description of regular quotient as a quotient of component groups} are representable by schemes.
	\end{remark}
	
	We now prove the theorem.
	
	\begin{proof}[Proof of Theorem \ref{thm: Description of regular quotient as a quotient of component groups}]
		By acting on the image of the Kostant--Rallis section $\kappa$ we gain a surjective morphism $\ul{N_{G}(K)}\rightarrow \fp^{\reg}$.  This clearly factors through an isomorphism $\ul{N_{G}(K)}/\kappa^*I^\reg_{N_G(K)}\rightarrow \fp^{\reg}$.
		
		Part (2) then follows by considering the transitive $\underline{N_G(K)}$ action on the right hand side of the description of $\fp^{\reg}\myfatslash H$ given in Theorem \ref{theorem : Description of regular quotient gluing not explicit}. 
	\end{proof}

	\subsubsection{Reduction to the Adjoint Case}
	\label{subsection: reduction to the adjoint case}
	
	We begin by reducing to the case of $G$ simple, adjoint. Namely, since $\theta$ is an algebraic involution, the center $Z(G)$ is stable under $\theta$. Hence, there is a well-defined involution $\theta_{\ad}$ on the quotient $G_{\ad}:=G/Z(G)$. Let $Z_H(G) = Z(G)\cap H$.
	\begin{definition}
		To a symmetric pair $(G,\theta,H)$, we associate the \emph{adjoint symmetric pair} $$(G_{\ad},\theta_{\ad},H/Z_H(G)).$$
		We denote by $\fp_\ad$ the $(-1)$ eigenspace of $\theta_{\ad}$ on $\fg_{\ad} = \mathrm{Lie}(G_\ad)$.
	\end{definition}
	
	It is immediate that we have an exact sequence
	\begin{equation}
		\label{eqn: ses p to pad}
		0\to \mathfrak{z}^-\to \fp\to \fp_{\ad}\to 0
	\end{equation}
	where $\mathfrak{z}^-$ denotes the $(-1)$ eigenspace of $\theta$ acting on $\mathfrak{z}(\fg)$. The projection $\fp\to \fp_\ad$ is compatible with action of $H$, where $H$ acts on $\fp_\ad$ through the projection map $H\to H/Z_H(G)$. 
	
	Now, we may decompose $G_\ad$ as a product 
	\[
	G_\ad = \prod_j G_j
	\]
	where each $G_j$ is simple. Now, for each $j$, by simplicity of the factors $G_j$, we see that either $\theta(G_j) = G_j$ is preserved by the involution or else there exists a unique index $i$ such that $\theta$ induces isomorphisms 
	\[
	\theta|_{G_j}\colon G_j\to G_i\quad \text{and}\quad \theta|_{G_i}\colon G_i\to G_j
	\]
	Reordering the indices as necessary, we deduce the following decomposition.
	
	\begin{cor}
		\label{cor: Gad decomposition}
		There is a decomposition 
		\begin{equation}
			\label{eqn: Gad decomposition compatible with theta}
			G_\ad \simeq\prod_{j}(G_j\times G_j)\times \prod_{i}G_i
		\end{equation}
		where for each index $i$, $G_i$ is simple and $\theta_i = \theta|_{G_i}$ preserves $G_i$ and for each index $j$, $G_j$ is simple and $\theta_j = \theta|_{G_j\times G_j}$ is the swaps the factors in $G_j\times G_j$.
	\end{cor}
	
	Now, consider the decomposition \eqref{eqn: Gad decomposition compatible with theta}. For each index $i$, let $\fp_i$ denote the $(-1)$ eigenspace of $\theta_i$ on $\fg_i$ and let $H_i$ be the image of $H/Z_H(G)$ along the projection $G_{\ad}\to G_i$. We are ready to state the main result of this subsection.
	
	\begin{proposition}
		\label{prop: reduction of regular quotient to simples}
		Let $(G,\theta,H)$ be a symmetric pair with associated adjoint pair $(G_\ad,\theta_\ad,H/Z_H(G))$. Then, with notation as above, we have
		\[
		\fp^\reg\myfatslash H\simeq \mathfrak{z}^- \times \prod_j \fg_j\git G_j\times \left(\prod_i \fp_i^\reg\myfatslash K_i\right)/\pi_0(H)
		\]
		where in the second terms, $\fg_j\git G_j$ denotes the GIT quotient of $\fg_j$ by the diagonal action of $G_j$, and $\pi_0(H)$ acts through the diagonal map
		\[
		\pi_0(H)\twoheadrightarrow \pi_0(H/Z_H(G))\to \prod_i\pi_0(H_i).
		\]
	\end{proposition}
	\begin{proof}
		From the short exact sequence \eqref{eqn: ses p to pad}, we have
		\[
		\fp^\reg\myfatslash H\simeq \mathfrak{z}^-\times \fp^\reg_\ad\myfatslash (H/Z_H(G))
		\]
		Moreover, the decomposition of Corollary \ref{cor: Gad decomposition} gives a decomposition of Lie algebras
		\[
		\fp_\ad = \left(\prod_j \fg_j\right)\oplus \left( \prod_i \fp_i \right)
		\]
		which is compatible with $H$ actions, where $H$ acts through $H\to H/Z_H(G)$ on the left hand side and through the diagonal map
		\[
		H\to H/Z_H(G)\to \prod_j N_{G_j\times G_j}(G_j)\times \prod_i H_i
		\]
		on the right hand side. The proposition therefore follows.
	\end{proof}

	\subsubsection{Reduction to Levi Subgroups}
	\label{section: reduction to levis}
	
	From now on, we assume that $G$ is simple and of adjoint type. We describe in this section the process of Levi induction needed to compute the gluing loci for the regular quotient. The Levi induction we use is closely related to the degeneration used by S. Leslie in Section 5.1.1 of \cite{sl_endoscopy} termed the ``descendant'' of a semisimple element. We will first state a Lie theoretic definition of descendants and give a root theoretic description of the Lie algebra of this Levi subgroup. Then, we will prove a reduction result, see Proposition \ref{proposition: Above the hyperplanes}, which will prove invaluable for computations.
	
	\begin{definition}
		Fix a $\theta$-Cartan $\fa\subset \fp$. For an element $x\in \fa$, the descendant of $x$ is the tuple $(G_x,\theta|_{G_x},H_x)$ where $G_x$, respectively $H_x$, is the stabilizer of $x$ in $G$, resp. $H$.
	\end{definition}

	\begin{proposition}
		\label{prop: descendants are symmetric pairs}
		For $x\in \fa$, the descendant at $x$ is a symmetric pair.
	\end{proposition}
	\begin{proof}
		First, note that $G_x$ is a Levi of $G$ and hence is reductive. Since the adjoint action preserves the Cartan decomposition, $G_x$ is stable by the action of $\theta$. It is trivial that $H_x$ contains the connected component of the fixed point scheme $K_{G_x}:=(G_x^\theta)^\circ$. Moreover, for any $h\in H_x$, $\mathrm{Ad}(h)$ by definition lies in $N_G(K) = N_G(G^\theta)$ and so preserves $G_x^\theta$. Hence, $H_x$ is contained in the normalizer $N_{G_x}(G_x^\theta) = N_{G_x}(K_{G_x})$. 
		
		Finally, we note that $G_x$ is smooth as it is a Levi of $G$ and the characteristic is assumed to be good for $G$. For $H_x$, take a character $\lambda$ whose Lie algebra is the span of $x$. The subgroup $H_x$ is the fixed points of the image of $\lambda$, which is a subgroup of multiplicative type. Hence, by Proposition R.1.1 of \cite{conrad}, $H_x$ is smooth.
	\end{proof}
	
	Fix a maximally $\theta$-split torus $T\supset A$. As in Subsection \ref{subsec: restricted root systems}, we fix the Levi $M = C_G(\fa)$ as well as a so-called minimal $\theta$-split parabolic $P$, i.e. a parabolic $P$ of $G$ such that $P\cap \theta(P) = M$. (See \ref{subsec: restricted root systems}, above Definition \ref{def: Satake diagram}.) Fix a choice of Borel $B$ of $G$ such that $T\subset B\subset P$. Let $\Sigma\subset \Phi$ denote the corresponding set of simple roots of $G$. 
	
	The Levi subgroup $G_x$ is determined by a subset of the simple roots, which we denote by $\Sigma_x\subset \Sigma$. We determine the Satake diagram of $(G_x,\theta|_{G_x})$ in the following.
	
	\begin{proposition}
		\label{prop: descendants given by satake}
		The Satake diagram of the descendant of the symmetric pair $(G,\theta,H)$ at a semisimple element $x\in \fa$ is given by deleting from the Satake diagram of $(G,\theta)$ all vertices not in $\Sigma_x\subset \Sigma$.
	\end{proposition}
	\begin{proof}
		As the involution on the descendant is given by restricting the involution on $G$ to $G_x$, the involution $\iota$ on $\Sigma_x$ as well as the compact nodes must match those of $(G,\theta)$.
	\end{proof}

	\begin{rem}
		We note that the above does not determine the subgroup $H_x$ of $G_x$. Indeed, even if one takes $H = K$, it is \emph{not} true in general that $H_x = K_{G_x}$ for the descendant. For example, consider the symmetric pair corresponding to the diagonally embedded $S(\GL_2\times \GL_2)\subset \SL_{4}$ (see example \ref{example:U(n,n) case}). Then, the Levi $G_x$ with Lie algebra
		\[
		\fg_x = \left\{ \begin{pmatrix}
			A & B \\ B & A
		\end{pmatrix}\colon {\mathrm Tr}(A) = 0\right\}
		\]
		is a descendant at $x = \begin{pmatrix}
			& I_2 \\ I_2 & 
		\end{pmatrix}$. One computes
		\[
		K_x = \left\{ \begin{pmatrix}
			g & \\ & g
		\end{pmatrix}\colon \det(g) = \pm 1\right\}.
		\]
		In particular, $K_x$ is disconnected. By definition, $K_{G_x} = (G_x^\theta)^\circ = (K_x)^\circ$ is the diagonally embedded copy of $\SL_2$.
	\end{rem}
	
	The converse of Proposition \ref{prop: descendants given by satake} is given in the following result.
	
	\begin{proposition}
		\label{prop: satake gives descendants}
		The Satake diagrams arising from descendants of $(G,\theta,H)$ are exactly those for which we delete an $\iota$-stable collection of white nodes from the Satake diagram of $(G,\theta, H)$.
	\end{proposition}
	\begin{proof}
		The centralizer $G_x$ always contains the Levi $M = C_G(\fa)$, and therefore, $\Sigma_x\subset \Sigma$ must contain the simple roots $\Sigma_M$ of the Levi $M$. These are exactly the compact (black) nodes in the Satake diagram. Of the remaining nodes, we note first that $\Sigma_x$ must be $\iota$-stable. Conversely, for any $\iota$-stable subset of simple roots $\Sigma_M\subset \Sigma'\subset \Sigma$, we can take the associated Levi subgroup $M\subset L'\subset G$. Since $\mathfrak{l}' = \mathrm{Lie}(L')$ contains $\fa$, its center $\mathfrak{z}(\mathfrak{l}')$ is contained in $\fa$. Moreover, $L'$ is the centralizer of a generic element in $\mathfrak{z}(\mathfrak{l}')$, and so the result follows.
	\end{proof}
	
	To simplify notation, for the rest of this section we fix $x\in \fa$, and put $L = G_x$.
	
	\begin{lem}
		\label{lem: restricted roots of L are sub of G}
		There is an inclusion $\Phi_{r,L}\subset \Phi_r$. Moreover, the little Weyl group $W_{\fa,L}$ of the Levi $L$ is a subgroup of $W_\fa$, and the resulting map
		\[
		\varphi_L\colon \fa\git W_{\fa,L}\to \fa\git W_\fa
		\]
		is a flat, ramified cover.
	\end{lem}
	\begin{proof}
		By construction $\fa\subset \fp_L\subset \fp$ and $\Phi_L\subset \Phi_G$. Hence, we have a commutative diagram
		\[
		\xymatrix{
			\Phi_L\ar@{^{(}->}[r]\ar[dr]_-{\mathrm{res}} & \Phi_G\ar[d]^-{\mathrm{res}} \\
			& \fa^*
		}
		\]
		where $\mathrm{res}$ denotes the restriction map to $\fa$ and $\fa^* = \Hom(\fa,k)$. It follows that $\Phi_{r,L}\subset \Phi_{r}$. 
		
		It is therefore immediate that $W_{\fa,L}\subset W_\fa$. Flatness follows from flatness of the projection maps $\fa\to \fa\git W_\fa$ and $\fa\to \fa\git W_{\fa,L}$.
	\end{proof}
	
	Recall $\Phi_r^{\mathrm{red}}$, resp. $\Phi_{r,L}^{\mathrm{red}}$, denote the \emph{reduced} restricted root system of the symmetric pairs $(G,\theta, H)$, resp. $(L,\theta, H_L)$. The function 
	\[
	\prod_{\alpha\in \Phi_r^{\mathrm{red}}} d\alpha\in k[\fa]
	\]
	is $W_\fa$ invariant and so defines a divisor $\fD\subset \fa\git W_\fa$. Similarly, we define a divisor $\fD_{L}\subset \fa\git W_{\fa,L}$. While it is obvious since $L$ is a Levi of $G$ that the root system for $L$ is a subroot system of $G$, i.e. $\Phi_L\subset \Phi$, we also have this result for the restricted root systems of the symmetric pairs.
	
	The Levi $L$ will be used to describe the stack $[\fp/H]$ on the unramified locus of $\varphi_L$. This is described in the following proposition.
	
	\begin{proposition}
		\label{proposition: map from levi on hitchin base is etale}
		Choose a subset $\Psi\subset \Phi_r^{\mathrm{red}}-\Phi_{r,L}^{\mathrm{red}}$ such that for every pair of roots $\pm\alpha\in \Phi_r^{\mathrm{red}} - \Phi_{r,L}^{\mathrm{red}}$, the cardinality of $\{\pm\alpha\}\cap \Psi$ is one. Consider the function
		\begin{equation}
			\label{eqn: function of RL}
			\prod_{\alpha\in \Psi}d\alpha \in k[\fa]
		\end{equation}
		
		\begin{enumerate}
			\item \eqref{eqn: function of RL} is invariant under the $W_{\fa,L}$ action and so defines a divisor $\fR_L$.
			\item There is an equality of divisors
			\[ \varphi_L^*\fD = \fD_L+2\fR_L. \]
			\item The ramification of the map $\varphi_L$ is supported on the divisor $\fR_L$; in particular, $\varphi_L$ is \'etale on the complement of $\fR_L$.
		\end{enumerate}
	\end{proposition}
	\begin{proof}
		Part (1) follows verbatim from the proof of \cite[Lemma 1.10.2]{ngo2010lemme}, while part (2) follows from a direct computation. 
		
		The projection $\pi$ factors through $\varphi_L$, giving covers $\pi$ and $\pi_L$ as below.
		\[
		\xymatrix{
			\fa\ar[d]_-{\pi_L}\ar[dr]^-{\pi} & \\
			\fa\git W_{\fa,L}\ar[r]_-{\varphi_{L}} & \fa\git W_\fa
		}
		\]
		The ramification of $\varphi_L$ is given by the closed locus of points $y\in \fa\git W_{\fa,L}$ such that $\varphi_L(y)\in \fD$ while $y\not\in \fD_L$. By part (2), this locus is agrees with the divisor $\fR_L$.
	\end{proof}
	
	\begin{definition}
		\label{definition: UL and sundry}
		Let $\varphi_{L}$, $\pi$, $\pi_L$, and $\fR_L$ be as above and let $\fc = \fp\git H$ and $\fc_L = \fp_L\git H_L$. Let $V_L\subset \fa\git W_{\fa,L} = \fc_L$ be the image of $\fa\setminus \fR_L$ under the projection $\pi_L$, and let $U_L\subset \fp_L^\reg$ be the preimage of $V_L$ under the map $\chi_L^\reg\colon \fp_L^\reg\to \fc_L$.
		
		We will denote by $i_{L}\colon \fp_{L}\rightarrow \fp$ the inclusion map and
		\[  \chi\colon \fp\to \fc \quad\text{ and }\quad \chi_L\colon \fp_L\to \fc_L  \]
		the projection maps.
	\end{definition}
	
	\begin{prop}
		\label{prop: description of regular quotient: reduction to Levi's}
		The map $i_L$ restricts to an $H_L$-equivariant map
		\[
		i_L\colon U_L\to \fp^\reg,
		\]
		and there is an isomorphism of regular centralizer group schemes $i_L^*I_{H_L}^\reg|_{U_L}\simeq I_{H}^\reg|_{U_L}$.
		
		Furthermore, there is an induced isomorphism of regular quotients
		\[p_{L}\colon (\fp^{\reg}_{L}\myfatslash H_{L})|_{V_L}\rightarrow \varphi_L^*(\fp^{\reg}\myfatslash H)|_{V_L}.\]
	\end{prop}
	
	\begin{proof}
		Recall that $L = G_x$ for a fixed $x\in \fa$. Let $y\in U_L$. Then, let $y = s+n$ be the Jordan decomposition of $y$ from Lemma \ref{lemma : jordan decomposition}. Since $\chi(y) = \chi(s)\in V_L$, the centralizer $C_G(s)$ is contained in $L$, and since $C_G(y) = C_G(s)\cap C_G(n)$, it follows that $I_{G,y} = I_{L,y}$ and hence $I_{H_L,y} = I_{H,y}\cap I_{L,y}$. In particular, $i_L$ sends $U_L\cap \fp_L^\reg$ to the regular locus $\fp^\reg$. Moreover, since $I_{G,y} = I_{L,y}$ for $y\in U_L$, it is immediate that $i_L^*I_{H_L}^\reg|_{U_L}\simeq I_{H}^\reg|_{U_L}$.
		
		That there exists an isomorphism $p_L$ of regular quotients over $V_L$ follows from the construction of the regular quotient combined with the isomorphism of regular centralizers over $U_L$.
	\end{proof}

	To conclude, we will reduce to computations of regular nilpotent orbits. To do this we will use Proposition \ref{proposition: Above the hyperplanes}.
	
	\begin{lemma}
		\label{lemma: semisimple hyperplanes intersect at a point}
		Let $(G, \theta)$ be a semisimple group with involution $\theta$.  Then the intersection of all root hyperplanes of the restricted root system is 
		\[\bigcap_{\alpha\in \Phi_{r}}H_{\alpha}=0\in \fa.\]
	\end{lemma}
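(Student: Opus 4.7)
The plan is to reduce the statement to the standard fact that in a semisimple Lie algebra the full root system has trivial common kernel. First I would fix a maximally split maximal torus $T$ containing the maximal $\theta$-split torus $A$, as in the setup just before Definition \ref{definition: restricted roots}, so that $\fa \subset \ft$ and the restricted roots $\Phi_r$ arise as the nonzero restrictions $r(\beta) = \beta|_\fa$ for $\beta$ in the root system $\Phi$ of $G$ with respect to $T$.

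The key observation is that every root $\beta \in \Phi$ vanishes on the intersection. Take $x \in \bigcap_{\alpha \in \Phi_r} H_\alpha$, so $\alpha(x) = 0$ for all $\alpha \in \Phi_r$. For $\beta \in \Phi$ with $r(\beta) \neq 0$, we get $\beta(x) = r(\beta)(x) = 0$ by hypothesis. For $\beta \in \Phi$ with $r(\beta) = 0$, we get $\beta(x) = 0$ for free, because $x \in \fa$ and $\beta$ restricts to zero on $\fa$. Hence $\beta(x) = 0$ for every root $\beta \in \Phi$.

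To finish, I would invoke the semisimplicity of $G$: the root system $\Phi$ spans $\ft^*$, equivalently $\bigcap_{\beta \in \Phi} \ker(\beta) = 0$ inside $\ft$. Therefore $x = 0$ in $\ft$, and a fortiori $x = 0$ in $\fa$. I do not anticipate any serious obstacle here; the only points to check carefully are that one may choose $T$ to contain $A$ (which is part of the definition of maximally split) and that no roots trivial on $\fa$ can sneak in extra kernel (handled automatically, since such roots are automatically zero on $x \in \fa$).
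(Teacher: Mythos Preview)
Your proof is correct and follows essentially the same approach as the paper: both use the dichotomy that each root $\beta\in\Phi$ either restricts to some $\alpha\in\Phi_r$ or vanishes identically on $\fa$, and then invoke semisimplicity to conclude that the common kernel of all of $\Phi$ in $\ft$ is zero. The paper merely rephrases this in terms of hyperplanes rather than linear functionals.
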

	\begin{proof}
		For every root hyperplane $H_\alpha$ in $\fa$, let $S_\alpha$ denote the set of all hyperplanes of $\ft$ which restrict to $H_\alpha$. Let $S$ denote the set of all hyperplanes in $\ft$ which contain $\fa$. Note that for every root hyperplane $H\subset \ft$, $H\cap \fa$ is either a root hyperplane in $\fa$ or is all of $\fa$. Hence, $S$ and $S_\alpha$ as $\alpha$ varies gives a partition of all root hyperplanes of $\ft$. We conclude that
		\[
		\bigcap_{\alpha\in \Phi_r}H_\alpha = \fa\cap \bigcap_{\alpha\in \Phi_r}H_\alpha\subset \left(\bigcap_{H\in S} H\right)\cap\bigcap_{\alpha}\left(\bigcap_{H\in S_\alpha} H\right) = \bigcap_{H\subset \ft\colon \text{ root hyperplane}} H= \{0\}\qedhere
		\]
	\end{proof}

	\begin{proposition}
		\label{proposition: Above the hyperplanes}
		Let $(G,\theta, H)$ be a symmetric pair, and let $Y = \cap_\alpha H_\alpha\subset \fa$ be the intersection of all root hyperplanes in $\fa$, so that $Y = \fa^{W_\fa}$ when $\mathrm{char}(k)>|W_\fa|$. There is then an isomorphism of stacks over $\pi(Y)$
		\[[\fp/H]\times_{\fc}\pi(Y)\cong \cN_{\fp}/H\times \pi(Y)\]
		Restricting to regular elements gives:
		\[[\fp^{\reg}/H]\times_{\fc}\pi(Y)\cong \cN_{\fp}^{\reg}/H\times \pi(Y)\]
	\end{proposition}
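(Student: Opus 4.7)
The plan is to identify $Y$ explicitly as the central part of $\fa$ and then reduce the fiber product to a direct computation, using that the center acts trivially.

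First, I would pin down $Y$. The decomposition $\fg = \mathfrak{z}(\fg) \oplus [\fg,\fg]$ is $\theta$-stable since $\theta$ is an automorphism, yielding $\fp = \mathfrak{z}(\fg)_- \oplus \fp_{ss}$ where $\mathfrak{z}(\fg)_- := \mathfrak{z}(\fg) \cap \fp$ and $\fp_{ss} := \fp \cap [\fg,\fg]$; likewise $\fa = (\fa \cap \mathfrak{z}(\fg)) \oplus \fa_{ss}$ for $\fa_{ss} := \fa \cap [\fg,\fg]$. Every restricted root vanishes on $\fa \cap \mathfrak{z}(\fg)$, and the restricted root system of $(G,\theta)$ agrees with that of the derived group $(G_{ss}, \theta|_{G_{ss}})$, whose Cartan is $\fa_{ss}$. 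Applying Lemma \ref{lemma: semisimple hyperplanes intersect at a point} to $G_{ss}$ then gives $\bigcap_{\alpha \in \Phi_r} (H_\alpha \cap \fa_{ss}) = 0$, hence $Y = \fa \cap \mathfrak{z}(\fg) = \mathfrak{z}(\fg)_-$. Note in particular that $W_\fa$ fixes $Y$ pointwise, so $p(Y) \cong Y$.

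Second, I would compute the fiber product at the level of schemes. The Chevalley map $\fp \to \fa \git W_\fa$ decomposes as the identity on the central factor times the Chevalley map $\fp_{ss} \to \fa_{ss} \git W_\fa$, and under $\fa\git W_\fa \cong Y \times (\fa_{ss}\git W_\fa)$ one has $p(Y) = Y \times \{0\}$. A point $(z,y) \in \mathfrak{z}(\fg)_- \oplus \fp_{ss}$ therefore lies in $\fp \times_{\fa\git W_\fa} p(Y)$ if and only if the image of $y$ in $\fa_{ss}\git W_\fa$ is zero, i.e.\ $y$ is nilpotent in $\fg_{ss}$. Since $\mathfrak{z}(\fg)$ acts trivially in any faithful representation, nilpotent elements of $\fp$ lie in $\fp_{ss}$, so $\cN_{\fp_{ss}} = \cN_\fp$ and we obtain
\[
\fp \times_{\fa\git W_\fa} p(Y) \;\cong\; Y \times \cN_\fp.
\]

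Third, I would pass to stack quotients. Because $Y \subset \mathfrak{z}(\fg)$ is centralized by all of $G$, the $H$-action on the first factor is trivial, so
\[
[\fp/H] \times_{\fa\git W_\fa} p(Y) \;=\; [(Y \times \cN_\fp)/H] \;=\; [\cN_\fp / H] \times p(Y),
\]
which is the first stated isomorphism. For the regular version, I would observe that for $x = z + n$ with $z \in Y$ central and $n \in \cN_\fp$, the stabilizer satisfies $I_{H,x} = I_{H,n}$ (alternatively apply Proposition \ref{proposition : regularity detected by Jordan decomposition} with semisimple part $z$), so $x$ is $H$-regular in $\fp$ exactly when $n$ is $H$-regular in $\cN_\fp$. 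Hence the regular locus of the fiber product is $Y \times \cN_\fp^{reg}$, and the same trivial-action argument yields the second isomorphism. No serious obstacle arises: the only substantive input is the identification $Y = \mathfrak{z}(\fg)_-$ via Lemma \ref{lemma: semisimple hyperplanes intersect at a point}; everything else follows formally from the central/semisimple splitting.
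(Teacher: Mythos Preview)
Your proposal is correct and follows the same underlying idea as the paper: identify $Y$ with the $(-1)$-eigenspace of the center via Lemma~\ref{lemma: semisimple hyperplanes intersect at a point} applied to the semisimple part, then use the resulting product decomposition of the Chevalley map to read off the fiber over $p(Y)$ as $Y\times\cN_\fp$.

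The only real difference is in how the product decomposition is set up. The paper routes through the $\theta$-compatible $z$-extension $\wt{G}=T^{sc}\times^{Z(G^{sc})}G^{sc}$ of Proposition~\ref{prop: compatible z-extensions} and invokes the splitting already established in the proof of Theorem~\ref{theorem: Reducing regular quotient to case of simples}; you instead use the bare decomposition $\fg=\mathfrak z(\fg)\oplus[\fg,\fg]$ directly. Your route is more elementary and self-contained for this particular statement, since the $z$-extension machinery (needed elsewhere to reduce to simply connected simple factors) is more than what is required here. The paper's route has the advantage of reusing an identity already on hand. Either way, the substantive input is the same: Lemma~\ref{lemma: semisimple hyperplanes intersect at a point} pins down the semisimple contribution to $Y$ as zero, and the rest is the observation that central translates do not affect $H$-orbits or regularity.
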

	
	\begin{proof}
		Let $(G_\ad,\theta_{\ad},H/Z_H(G))$ be the associated adjoint symmetric pair. Then, from the short exact sequence \eqref{eqn: ses p to pad}, we have
		\[
		[\fp/H]\simeq \mathfrak{z}_-\times [\fp_L/H_L]
		\]
		By Lemma \ref{proposition: Above the hyperplanes} and this decomposition, we have that $\pi(Y) \simeq \mathfrak{z}_-$, and the result follows.
	\end{proof}
	
	We conclude this section by giving a description of the regular quotient $\fp^\reg\myfatslash H$ for $G$ simple. Recall from Theorem \ref{theorem : Description of regular quotient gluing not explicit} that it suffices to describe the explicit gluing on Kostant-Rallis sections. By Proposition \ref{prop: description of regular quotient: reduction to Levi's} and Proposition \ref{proposition: Above the hyperplanes} the number of sheets of $\fp^\reg\myfatslash H$ over the image of $x\in \fa$ is determined by the regular $H_L$-orbits of the nilpotent cone for $L = G_x$. This is determined by the list in Proposition \ref{prop:table of nilpotent orbits}. For all simple groups except $\SO_n\times \SO_n\subset \SO_{2n}$, there are at most 2 regular nilpotent orbits, so it suffices to describe only the number of sheets in fibers of the map $\fp^{\reg}\myfatslash H\to \fp^{\reg}\git H$, as is made precise in Theorem \ref{theorem: gluing pattern}. For the $\SO_n\times \SO_n\subset \SO_{2n}$ case, one also needs to compute the gluing pattern of the 4 sheets at the origin as they degenerate.  This case is discussed in Example \ref{ex son son so2n}.
	
	We summarize this as follows:
	
	\begin{thm}
		\label{theorem: gluing pattern}
		Let $(G,\theta, H)$ be a symmetric pair corresponding to a simple group $G$,such that $(G,H)\neq (SO_{4n},SO_{2n}\times SO_{2n})$ is not the split symmetric pair of type $D_{2n}$.  Let $U=\cup_{L}U_{L}$, where $U_{L}$ as in Definition \ref{definition: UL and sundry} and $L$ ranges over the descendants of $(G,\theta,H)$, such that $\cN_{\fp_{L}}^\reg$ has a single regular $H_{L}$-orbit\footnote{We note that this can be worked out using Proposition \ref{prop:table of nilpotent orbits} together with, if necessary, computing $\pi_{0}(H_{L})$ and its action on irreducible components of $\cN_{\fp_{L}}$.}.  We then have that $\fp^\reg\myfatslash H\cong \fa\git W_{\fa}\coprod_{U}\fa\git W_{\fa}$, and this identification is $\bbG_{m}$-equivariant.
	\end{thm}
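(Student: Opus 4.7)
The plan is to combine the abstract description of the regular quotient in Theorem \ref{theorem : Description of regular quotient gluing not explicit} with the Levi reduction of Theorem \ref{theorem: description of regular quotient: reduction to Levi's} and the fiber count over root-hyperplane strata furnished by Proposition \ref{proposition: Above the hyperplanes}.

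By Proposition \ref{prop:table of nilpotent orbits} together with Remark \ref{remark: H orbits of regular nilpotents}, the hypothesis on $(K,G)$ ensures that the set $\cI$ of regular nilpotent $H$-orbits on $\cN_\fp^\reg$ has $|\cI|\leq 2$. Theorem \ref{theorem : Description of regular quotient gluing not explicit} then presents $\fp^{reg}\myfatslash H$ $\bbG_m$-equivariantly as $|\cI|$ copies of $\fa\git W_\fa$ glued along the open locus $\Gamma\subset \fa\git W_\fa$ on which distinct Kostant--Rallis sections become $H$-conjugate. When $|\cI|=1$ the statement is trivial (take $L=G$, for which $U_G=\fa$ since $\Phi_{r,G}=\Phi_r$). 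The remaining task is to show that when $|\cI|=2$, the locus $\Gamma$ agrees with the image $\pi(U)$ in $\fa\git W_\fa$ under the quotient map $\pi\colon\fa\to \fa\git W_\fa$.

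First I would establish $\pi(U)\subseteq \Gamma$. Given any distinguished Levi $L$ with $|\cI_L|=1$, Proposition \ref{prop: descendants are symmetric pairs} guarantees that $H_L$ is smooth, so Theorem \ref{theorem : Description of regular quotient gluing not explicit} applies to the symmetric pair $(\fp_L,H_L)$ and yields $\fp_L^{reg}\myfatslash H_L\cong \fa\git W_{\fa,L}$, a single sheet. Combining with the identification $\fp^{reg}\myfatslash H|_{U_L}\cong \fp_L^{reg}\myfatslash H_L|_{U_L}$ from Theorem \ref{theorem: description of regular quotient: reduction to Levi's}, we conclude that $\fp^{reg}\myfatslash H$ has exactly one sheet over $\pi(U_L)$, so the two copies of $\fa\git W_\fa$ must already be glued there.

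For the reverse inclusion $\Gamma\subseteq\pi(U)$, take $v\in \Gamma$, choose a preimage $X\in \fa$, and let $L:=L_X$ be the associated distinguished Levi; by construction $X\in U_L$. The key observation is that $X$ is central in $\fl$, hence lies in every restricted root hyperplane of $L$, i.e.\ in the intersection $Y_L$ appearing in Lemma \ref{lemma: semisimple hyperplanes intersect at a point} applied to $L$. Proposition \ref{proposition: Above the hyperplanes} applied to $(\fp_L,H_L)$ then identifies the fiber of the gerbe $\fp_L^{reg}/H_L\to \fa\git W_{\fa,L}$ above $\pi_L(X)$ with $\cN_{\fp_L}^{reg}/H_L$, whose set of connected components is $\cI_L$. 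Passing to the regular quotient, the fiber of $\fp_L^{reg}\myfatslash H_L$ over $\pi_L(X)$ has exactly $|\cI_L|$ points; Theorem \ref{theorem: description of regular quotient: reduction to Levi's} transports this to $|\cI_L|$ points in the fiber of $\fp^{reg}\myfatslash H$ above $v$. Since $v\in \Gamma$ by assumption, the fiber has exactly one point, forcing $|\cI_L|=1$ and $v\in \pi(U_L)\subseteq \pi(U)$. The $\bbG_m$-equivariance of the final isomorphism is inherited from Theorem \ref{theorem : Description of regular quotient gluing not explicit}, using that $U$, as a union of complements of linear subspaces of $\fa$, is $\bbG_m$-stable.

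The main technical point is the fiber count in the second inclusion; it hinges entirely on the observation that for $L=L_X$ the base point $\pi_L(X)$ automatically lies in the stratum over which Proposition \ref{proposition: Above the hyperplanes} reduces fibers to nilpotent ones. This follows cleanly from the fact that $L_X$ is by definition the centralizer of $X$, so $X$ lies in every root hyperplane of the restricted root system of $L_X$. Once this is in hand, the proof is an assembly of previously established tools.
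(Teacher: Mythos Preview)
Your proposal is correct and follows the same approach the paper intends. The paper's own proof is a single sentence deferring to Theorem \ref{theorem : Description of regular quotient gluing not explicit}, with the real argument sketched in the paragraph preceding the theorem statement: use the abstract gluing description, then identify the gluing locus stratum-by-stratum via Theorem \ref{theorem: description of regular quotient: reduction to Levi's} and Proposition \ref{proposition: Above the hyperplanes}. Your write-up makes both inclusions $\pi(U)\subseteq\Gamma$ and $\Gamma\subseteq\pi(U)$ explicit, and your key observation---that for $L=L_X$ the point $X$ automatically lies in the intersection of all restricted root hyperplanes of $L$, placing $\pi_L(X)$ in the locus where Proposition \ref{proposition: Above the hyperplanes} applies---is exactly the mechanism the paper relies on but does not spell out.
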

	
	\begin{remark}
		\label{remark: single orbit then just get the GIT quotient}
		We note that if $\cN_{\fp}$ has one irreducible component, then by Proposition \ref{prop:table of nilpotent orbits} it follows $\fp^\reg\myfatslash H\cong \fa\git W_{\fa}$.
	\end{remark}
	
	\begin{proof}[Proof of Theorem \ref{theorem: gluing pattern}]
		This follows immediately from Theorem \ref{theorem : Description of regular quotient gluing not explicit}.
	\end{proof}
	
	We note that in the diagonal case of $(G,H)=(G_{1}\times G_{1}, G_1)$ there is no non-separated structure as shown in Example \ref{ex: non separated structure diagonal case}.  The following proposition shows that the regular semisimple locus is always in the open set $U$ of Theorem \ref{theorem: gluing pattern}, i.e. the map $\fp^\reg\myfatslash H\to \fp\git H$ is an isomorphism over the regular, semisimple locus.
	
	\begin{prop}
		\label{prop: one orbit on regular semisimple locus}
		The map $\fp^{\reg}\myfatslash H\to \fp^{\reg}\git H\simeq \fa\git W_\fa$ is an isomorphism over $\fa^\reg\git W_\fa$.
	\end{prop}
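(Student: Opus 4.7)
The plan is to apply Proposition \ref{proposition: Gerbe implies regular quotient}: since the map $\fp^\reg \to \fa\git W_\fa$ is known to have the regular quotient as target universally, it suffices to show that over the complement $U' \subset \fa\git W_\fa$ of the image of all root hyperplanes, each fiber of $\fp^\reg \to \fa\git W_\fa$ consists of a single $H$-orbit.

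To verify this, I would take two elements $x, y \in \fp^\reg$ mapping to the same $\alpha \in U'$ and apply the Jordan decomposition of Lemma \ref{lemma : jordan decomposition}, writing $x = s_x + n_x$ and $y = s_y + n_y$. Every semisimple element of $\fp$ lies in a $\theta$-Cartan, and all $\theta$-Cartans are $K$-conjugate by Proposition \ref{prop: all cartans are conjugate}, so after $H$-conjugation I may assume $s_x, s_y \in \fa$. They have common image $\alpha$, so they are $W_\fa$-conjugate, and by Proposition \ref{proposition : little weyl group is N_G(A)/C_G(A)} this conjugation is realized by an element of $N_H(\fa) \subset H$; thus I may further assume $s_x = s_y =: s$. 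Since $\alpha \in U'$, the element $s$ lies on no restricted root hyperplane, so by Lemma \ref{lemma: reg,ss locus is dense}(c) the Levi $L := Z_G(s)^\circ$ coincides with $Z_G(\fa)^\circ$, and by Proposition \ref{proposition : regularity detected by Jordan decomposition} both $n_x$ and $n_y$ are regular nilpotent in $\fp_L$. The problem thereby reduces to showing that $H_L := H \cap L$ acts transitively on $\cN_{\fp_L}^\reg$.

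For this final step, I would invoke Theorem \ref{theorem : Description of regular quotient gluing not explicit} (which identifies $\fp^\reg \myfatslash H$ with the gluing of copies of $\fa\git W_\fa$ indexed by Kostant--Rallis sections) and Theorem \ref{theorem: gluing pattern}: reducing to $G$ simple via Theorem \ref{theorem: Reducing regular quotient to case of simples}, the essential observation is that a maximal torus $T \supset A$ serves as the distinguished Levi associated to any generic $X \in \fa$, and for this choice $\fp_T = \fa$ has trivial nilpotent cone. Thus $T$ appears among the Levis in the union defining the open locus $U$ of Theorem \ref{theorem: gluing pattern}, and the associated $U_T$ is exactly the preimage of $U'$ in $\fa$. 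The exceptional split form $\SO_{2n}\times\SO_{2n} \subset \SO_{4n}$ excluded from Theorem \ref{theorem: gluing pattern} is addressed via the explicit gluing described in Example \ref{ex son son so2n}, where one verifies directly that all four sheets coalesce away from root hyperplanes.

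The main difficulty in this argument lies in correctly analyzing the non-quasisplit setting, where $\fp_L$ may contain nontrivial nilpotent elements coming from roots of $G$ that restrict to zero on $\fa$; here one must carefully apply the structural results on nilpotent orbits (Proposition \ref{prop:table of nilpotent orbits} and Theorem \ref{theorem: N_G(K) acts transitively on regular nilpotents}) together with the Levi induction of Theorem \ref{theorem: description of regular quotient: reduction to Levi's} to conclude the transitivity claim, and separately to handle the $\SO$ exception.
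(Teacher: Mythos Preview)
Your approach is correct but takes an unnecessarily long detour. The paper's proof is far more direct: after writing the Jordan decomposition $x = s + n$, it observes that since $s$ lies off every restricted root hyperplane, $s$ is regular semisimple in $\fp$, and hence its centralizer $\fp_L = \mathfrak{z}_\fp(s) = \mathfrak{z}_\fp(\fa)$ equals $\fa$ itself (this is exactly the maximality of $\fa$ as an abelian subalgebra of $\fp$). The only nilpotent in $\fa$ is $0$, so Proposition \ref{proposition : regularity detected by Jordan decomposition} forces $n = 0$. Thus $x$ is semisimple, and the claim reduces to the uniqueness of the semisimple orbit in a fiber of $\fp \to \fa\git W_\fa$, which is standard.

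You reach the same reduction (to transitivity of $H_L$ on $\cN_{\fp_L}^\reg$) but then invoke the gluing description, the reduction to simple $G$, Theorem \ref{theorem: gluing pattern}, and a separate treatment of the $\SO_{2n}\times\SO_{2n}$ case. All of this is unnecessary once you notice $\fp_L = \fa$, which makes $\cN_{\fp_L}^\reg = \{0\}$ trivially.

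Two specific corrections. First, your identification of the distinguished Levi for generic $X \in \fa$ as ``a maximal torus $T \supset A$'' is wrong outside the quasisplit case: by Proposition \ref{proposition: equivalent characterizations of quasi-split}, $Z_G(A)^\circ$ is a torus precisely when the pair is quasisplit. Second, your stated ``main difficulty'' --- that in the non-quasisplit case $\fp_L$ might contain nilpotents from root spaces for roots restricting to zero on $\fa$ --- is a non-issue. Any such root space centralizes $\fa$; if it met $\fp$ nontrivially it would enlarge the maximal abelian $\fa \subset \fp$, a contradiction. So $\fp_L = \fa$ uniformly, and no case analysis or Levi induction machinery is needed.
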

	\begin{proof}
		Follows from Lemma \ref{lem: K-conjugacy in the reg,ss locus}.
	\end{proof}
	
	\begin{thm}
		\label{thm: description of U as complement of subdivisor!}
		Let $(G,\theta, H)$ be a symmetric pair for a simple group $G$ which is not the split symmetric pair of type $D_{2n}$. Then $\fp^\reg\myfatslash H\cong \fa\git W_{\fa}\coprod_{U}\fa\git W_{\fa}$ where $U$ is the complement of a closed subvariety which is the union of intersections of root hyperplanes.
	\end{thm}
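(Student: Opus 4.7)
The plan is to unpack the explicit description of $U$ furnished by Theorem \ref{theorem: gluing pattern} and take complements. By Definition \ref{definition: UL and sundry}, each $U_L=\fa\setminus D_L$, where $D_L$ is a finite union of root hyperplanes $\fh_\alpha$ indexed by $\alpha\in\Phi_r\setminus\Phi_{r,L}$. Writing $\mathscr L$ for the (finite) collection of distinguished Levis $L$ such that $\cN_{\fp_L}^\reg$ has a single regular $H_L$-orbit, my first step would be to observe
\[
\fa\setminus U \;=\; \bigcap_{L\in\mathscr L} D_L \;=\; \bigcap_{L\in\mathscr L}\;\bigcup_{\alpha\in\Phi_r\setminus\Phi_{r,L}} \fh_\alpha.
\]

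The next step is a purely combinatorial rearrangement: distributing finite intersection over finite union yields
\[
\fa\setminus U \;=\; \bigcup_{f}\;\bigcap_{L\in\mathscr L} \fh_{f(L)},
\]
where $f$ ranges over choice functions $L\mapsto f(L)\in \Phi_r\setminus \Phi_{r,L}$. This exhibits $\fa\setminus U$ as a union of intersections of root hyperplanes of the form demanded by the theorem, hence as a closed subvariety of $\fa$.

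Finally, I would descend this picture from $\fa$ to $\fa\git W_a$, where the gluing of Theorem \ref{theorem: gluing pattern} actually takes place. The key point to verify is that $\mathscr L$ is stable under $W_a$-conjugation: for $w\in W_a$, the Levi $wLw^{-1}$ is again a distinguished Levi with the same restricted root datum, so the single-regular-orbit property on $\cN_{\fp_L}^\reg$ is preserved. Consequently $\fa\setminus U$ is $W_a$-stable, its image in $\fa\git W_a$ is closed, and the union-of-intersections-of-root-hyperplanes description descends to $\fa\git W_a$, finishing the proof.

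I do not anticipate any genuine obstacle beyond the routine $W_a$-invariance check; the content of the theorem beyond Theorem \ref{theorem: gluing pattern} is essentially a combinatorial repackaging via distributivity of intersection over union, together with the observation that the collection $\mathscr L$ is intrinsic to the symmetric pair and hence Weyl-equivariant.
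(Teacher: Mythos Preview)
Your proposal is correct and is essentially the same argument as the paper's, just written out in full: the paper's proof is the single line ``This follows immediately from Theorem~\ref{theorem: gluing pattern} and Proposition~\ref{prop: one orbit on regular semisimple locus},'' and your distributivity computation $\fa\setminus U=\bigcap_L D_L=\bigcup_f\bigcap_L \fh_{f(L)}$ together with the $W_\fa$-equivariance check is precisely the content hidden behind the word ``immediately.'' The only difference is that the paper additionally cites Proposition~\ref{prop: one orbit on regular semisimple locus} (the regular semisimple locus lies in $U$), but this is already implicit in your argument since each $D_L$ is a union of root hyperplanes.
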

	
	\begin{proof}
		This follows immediately from Theorem \ref{theorem: gluing pattern} and Proposition \ref{prop: one orbit on regular semisimple locus}.
	\end{proof}
	
	Finally, we introduce an alternative method of encoding the data of the regular quotient using a minimal descendant. We will use this to describe concisely the regular quotient in several examples of interest.
	
	\begin{definition}
		\label{def: minimal descendants}
		Let $(L,\theta,H_L) = (G_x,\theta,H_x)$ be a descendant of $(G,\theta,H)$. We say that $(L,\theta,H_L)$ is a \emph{minimal descendant} of $(G,\theta,H)$ if $(L,\theta,H_L)$ is minimal (with respect to inclusion) with the property that the isomorphism 
		\[p_{L}\colon (\fp^{\reg}_{L}\myfatslash H_{L})|_{V_L}\xrightarrow{\sim} \varphi_L^*(\fp^{\reg}\myfatslash H)|_{V_L}\]
		of Proposition \ref{prop: description of regular quotient: reduction to Levi's} extends to an isomorphism
		\[
		(\fp^{\reg}_{L}\myfatslash H_{L})\xrightarrow{\sim} \varphi_L^*(\fp^{\reg}\myfatslash H)
		\]
		over all of $\fc_L$.
		
		In this case, we call the Satake diagram of $(L,\theta,H_L)$ a \emph{minimal Satake diagram} of $(G,\theta, H)$.
	\end{definition}

	\subsection{Examples}
	\label{subsections: Examples regular quotient}
	
	\begin{ex}
		\label{ex: non separated structure diagonal case}
		Consider the diagonal case $G_{1}\overset{\Delta}{\subset}G_{1}\times G_{1}$ from Example \ref{example: complex case}. In this case, we have an isomorphism of stacks
		\[
		\fp/G_{1}\to \fg_{1}/G_{1}
		\]
		by projecting onto the first factor. The latter quotient is well studied over the regular locus. In particular, it is shown in \cite{donagi2002gerbe, ngo2010lemme} that the map
		\[
		\fg_{1}^{\reg}/G_{1}\to \fg_{1}\git G_{1}
		\]
		is a gerbe banded by the descent of $I_{G_{1}}^{\reg}$ to $\fg_{1}\git G_{1}$. The regular quotient $\fg^{\reg}_{1}\myfatslash G_{1}$ is therefore just the GIT quotient $\fg_{1}\git G_{1}$. Note that, as the Satake diagram of $G_1\overset{\Delta}{\subset}G_{1}\times G_{1}$ is given by two copies of the Dynkin diagram for $G_1$ with all nodes shaded white and connected in pairs (see figure \ref{fig:satake_diagonal}), the descendants of $G_{1}\overset{\Delta}{\subset}G_{1}\times G_{1}$ are again given by the diagonal involution on a product $L_1\times L_1$ for $L_1$ a Levi of $G_1$. 
		\begin{figure}
			\centering
			\begin{tikzpicture}
				%\draw[gray,latex-latex,shorten >=2mm,shorten <=2mm] (-2.52,0.15) -- (-2.52,1.5);
				\draw[gray!70, line width = .5mm] (-2.52,0.26) -- (-2.52,1.38);
				\draw[gray!70, line width = .5mm] (-1.13,0.26) -- (-1.13,1.38);
				\draw[gray!70, line width = .5mm] (2.52,0.26) -- (2.52,1.38);
				\draw[gray!70, line width = .5mm] (1.13,0.26) -- (1.13,1.38);
				\node at (0,1.5) {\dynkin[scale=4,line width=5mm]{A}{oo...oo}};
				\node at (0,0) {\dynkin[scale=4,line width=5mm]{A}{oo...oo}};
			\end{tikzpicture}
			\caption{Satake diagram for diagonal case $G_1\overset{\Delta}{\subset}G_{1}\times G_{1}$, when $G_1$ has type $A$.}
			\label{fig:satake_diagonal}
		\end{figure}
	\end{ex}

	\begin{figure}
		\centering
		\begin{tabular}{|m{7cm}|m{5cm}|}
			\hline
			Symmetric Pair &  Satake Diagram \\
			\hline
			\renewcommand{\arraystretch}{3}
			$(\SL_{2n}, \SO_{2n})$ &  \begin{tikzpicture}
				\node at (0,0) {\dynkin[scale=2,line width=5mm]{A}{oo..oo}};
			\end{tikzpicture} \\
			$(\SL_{2n}, S(\GL_n\times \GL_n))$ & \begin{tikzpicture}
				\dynkin[scale=2] A{IIIb};
			\end{tikzpicture} \\
			$(\SO_{2n+1},\SO_{2m}\times \SO_{2(n-m)+1})$, $2m\leq n$ & \begin{tikzpicture}
				\node at (0,0) {\dynkin[scale=2] BI};
				\node at (-.35,-.3) {$m$};
			\end{tikzpicture} \\
			$(\Sp_{2n},\GL_n)$ & \begin{tikzpicture}
				\dynkin[scale=2] CI;
			\end{tikzpicture} \\
			$(\SO_{2n},\SO_{n}\times \SO_{n})$ & \begin{tikzpicture}
				\dynkin[scale=2] D{Ic};
			\end{tikzpicture} \\
			$(\SO_{2n},\SO_{2m}\times \SO_{2(n-m)})$, $2m<n-1$ & \begin{tikzpicture}
				\node at (0,0) {\dynkin[scale=2] D{Ia}};
				\node at (-.17,-.3) {$m$};
			\end{tikzpicture}\\
			$(\SO_{2n},\SO_{n-1}\times \SO_{n+1})$, $n$ odd & \begin{tikzpicture}
				\dynkin[scale=2] D{Ib};
			\end{tikzpicture}\\
			$(\SO_{4n},\GL_{2n})$ & \begin{tikzpicture}
				\dynkin[scale=2] D{IIIa};
			\end{tikzpicture}\\
			$(E_7,A_7)$ & \begin{tikzpicture}
				\dynkin[scale=2] EV;
			\end{tikzpicture}\\
			$(E_7,E_6\times k^\times)$ & \begin{tikzpicture}
				\dynkin[scale=2] E{VII};
			\end{tikzpicture}\\
			\hline
		\end{tabular}
		\caption{Table of Satake diagrams for all simple symmetric pairs with nontrivial regular quotient.}
		\label{fig:table of satake diagrams}
	\end{figure}

	\begin{example}
		\label{example: structure of regular quotient for U(n,n) case}
		We revisit Example \ref{example:U(n,n) case}, given by the symmetric pair $H =K = \GL_n\times\GL_n\subset \GL_{2n} = G$. The Satake diagram can be read off of the table in Figure \ref{fig:table of satake diagrams}. In particular, the descendants of $(G,\theta,H)$ have Satake diagrams which are products of the diagonal case (Example \ref{ex: non separated structure diagonal case}) and smaller copies $\GL_m\times\GL_m\subset \GL_{2m}$ for $m< n$. It is easy to see that $H\cap G_x$ is connected in the latter case, so we get 2 sheets whenever we have a factor of $G_x$ isomorphic to $\GL_m\times\GL_m\subset \GL_{2m}$. In other words, we summarize:
	\end{example}
	
	\begin{prop}
		\label{prop: minimal satake for U(n,n)}
		The minimal Satake diagram (in the sense of Definition \ref{def: minimal descendants}) for the symmetric pair $\GL_n\times\GL_n\subset \GL_{2n}$ is given by the Satake diagram corresponding to the Levy $\GL_1\times \GL_1\subset \GL_2$, i.e.
		
		\begin{center}
			\begin{tikzpicture}
				\dynkin[scale=3] A{IIIb};
				\draw (.35,-.3) node[cross,red] {};
				\draw (.35,.3) node[cross,red] {};
				\draw (0,-.3) node[cross,red] {};
				\draw (0,.3) node[cross,red] {};
				\draw (.91,-.3) node[cross,red] {};
				\draw (.91,.3) node[cross,red] {};
			\end{tikzpicture}
		\end{center}
		where crossed out vertices are removed.
	\end{prop}
	
	We can describe this locus with explicit equations on $\fa\git W_\fa$ as well. Having a Levi factor in $G_x$ of type $\GL_m\times\GL_m\subset \GL_{2m}$ is equivalent to the image of the set of simple roots $\Sigma_x$ under the restriction map containing a long root in the restricted root system. Hence, the gluing locus is described by the vanishing of the long root in $\fa\git W_\fa$. More explicitly, if one takes
	\[
	\fa = \left\{\begin{pmatrix}
		0 & \delta \\
		\delta & 0
	\end{pmatrix}\colon \delta = \mathrm{diag}(\delta_1,\dots, \delta_n)\text{ diagonal}\right\}
	\]
	Then, we have: 
	
	\begin{proposition}
		\label{proposition: gluing pattern U(n,n)}
		For $\fa$ as above, let $V\subset \fa$ be the subscheme that is the complement of the union of the hyperplanes $\delta_i = 0$. Then, we have $\fp^{\reg}\myfatslash K\cong \fa\git W_{\fa}\coprod_{U}\fa\git W_{\fa}$, where  $U:=V\git W_{\fa}\subset \fa\git W_{\fa}$.
	\end{proposition}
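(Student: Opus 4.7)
The plan is to apply Theorem \ref{theorem: gluing pattern} together with the classification of distinguished Levi subgroups and their regular nilpotent orbits already worked out in this example. Theorem \ref{theorem: gluing pattern} gives $\fp^\reg\myfatslash K \cong \fa\git W_\fa \coprod_U \fa\git W_\fa$, where $U$ is the image in $\fa\git W_\fa$ of $\bigcup_L U_L$, taken over distinguished Levis $L$ with $|\cN_{\fp_L}^\reg/H_L| = 1$. Our task reduces to identifying this union explicitly as $V\git W_\fa$.

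First, I would recast the condition ``$x\in U_L$ for some such $L$'' as a condition on the distinguished Levi $L_x$ associated with $x$. Writing $S_x$ for the set of restricted roots vanishing at $x$, one has $x\in U_L$ if and only if $S_x\subset \Phi_{r,L}$, i.e., $L_x\subset L$. The ``if'' direction is immediate: if $|\cN_{\fp_{L_x}}^\reg/H_{L_x}| = 1$, then $x\in U_{L_x}\subset U$. For the converse, I would argue by monotonicity of the $B$-factor in the classification above: if $S_x$ contains a short root $\pm 2\delta_j^*$, then so does any $\Phi_{r,L}$ containing $S_x$. Since $S_x$ automatically satisfies condition $(*)$ (if both $\pm 2\delta_j^*, \pm 2\delta_k^*\in S_x$ then $\delta_j^*=\delta_k^*=0$ at $x$, so the full $B_2$ subsystem vanishes), any such $L$ still decomposes with a $\GL_{2n_1'}$-factor with $n_1'\geq 1$, forcing the orbit count to be at least two.

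Next, I would combine multiplicativity of nilpotent orbit counts across product decompositions with the two regimes enumerated in Proposition \ref{prop:table of nilpotent orbits} and Lemma \ref{lemma: nilpotents in diagonal case}. Writing $L_x \simeq \GL_{2n_1}\times \prod_j(\GL_{m_j}\times \GL_{m_j})\times T'$ with $H_{L_x} = (\GL_{n_1}\times \GL_{n_1})\times \prod_j \GL_{m_j}\times (T')^\theta$ as in the preceding discussion, each diagonal factor $(\GL_{m_j}\times \GL_{m_j}, \GL_{m_j})$ contributes a single regular nilpotent orbit by Lemma \ref{lemma: nilpotents in diagonal case}, the torus contributes trivially, and the factor $(\GL_{2n_1}, \GL_{n_1}\times \GL_{n_1})$ contributes two regular nilpotent orbits when $n_1\geq 1$ and one when $n_1=0$ by Proposition \ref{prop:table of nilpotent orbits}. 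Hence $|\cN_{\fp_{L_x}}^\reg/H_{L_x}| = 1$ if and only if $n_1 = 0$, equivalently, no short root $\pm 2\delta_i^*$ vanishes at $x$, i.e., $x\in V$.

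Combining these two steps identifies $U$ with the image of $V$ in $\fa\git W_\fa$, which, since $V$ is $W_\fa$-stable, is exactly $V\git W_\fa$. The main obstacle is the monotonicity step in the second paragraph, but this is precisely the observation that any enlargement of $L_x$ to a Levi $L$ with $S_x\subset \Phi_{r,L}$ preserves the presence of a $B$-type subfactor whenever a short root is present, which in turn forces a $\GL_{2n_1'}$-factor in $L$ and the corresponding two-orbit count.
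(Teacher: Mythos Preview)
Your proposal is correct and follows essentially the same approach as the paper: the paper's proof simply cites the preceding Levi computations in the example together with Theorem~\ref{thm: description of U as complement of subdivisor!} (which is itself an immediate consequence of Theorem~\ref{theorem: gluing pattern}), while you spell out the monotonicity step showing that $x\in U$ if and only if $L_x$ itself has a single regular nilpotent $H_{L_x}$-orbit. This extra detail is sound and implicit in the paper's treatment.
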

	
	\begin{proof}
		Follows immediately from the above computations and Theorem \ref{thm: description of U as complement of subdivisor!}.
	\end{proof}

	\begin{ex}
		\label{example: Regular quotient for SO in SL}
		Consider the split symmetric pair $\mathrm{PSO}_n\subset \PGL_n$. If $n$ is odd, there is only one regular nilpotent $H$-orbit in $\cN_\fp$ and the regular quotient and GIT quotient agree. We will assume therefore that $n$ is even. We have $\fa = \ft$ is the diagonal Cartan inside $\fp = \mathfrak{sym}_n$ (symmetric $n\times n$ matrices). The Satake diagram is the same as the Dynkin diagram of type $A_{n-1}$, with all vertices shaded white and the involution $\iota$ being trivial. All descendants are therefore products of split symmetric pairs of type $A$.
		
		It remains to check how $\pi_0(H_x)$ acts on Levi factors for each piece of the Satake diagram. Suppose that the Satake diagram of $(G_x,\theta|_{G_x})$ has $s$ disjoint components of ranks $n_1,\dots,n_s$, with indices $n_1,\dots, n_\ell$ odd and $n_{\ell+1},\dots, n_s$ even. Then, it is straightforward to check that:
		\begin{itemize}
			\item The nilpotent cone $\cN_\fp$ of $(G_x,\theta)$ has $2^\ell$ components;
			\item The subgroup $H_x$ is of the form
			\[
			H_x = P\left( \prod_{j=1}^r O_{n_j+1} \times \{\pm 1\}^{n_0}\right)
			\]
			where $n_0 = n -s- \sum_j n_j$. Let $Q$ be the quotient of the component group $\pi_0(H_x)$ through which the action on the irreducible components of the nilpotent cone $\cN_\fp$ is faithful. Then it is straightforward to check
			\[
			|Q| = \begin{cases}
				2^{\ell-1} & \text{if }n_0=0,\; \ell = s\\
				2^\ell & \text{else} 
			\end{cases}
			\]
		\end{itemize}
	\end{ex}
	
	Hence, the 2 sheets are unglued over the image of $x\in \fa$ with Satake diagram a disjoint union of $s$ connected diagrams, each of odd rank, such that the sum of the ranks of these components is $n-s$. In fact, we may conclude:
	\begin{prop}
		\label{prop: minimal satake for split type A}
		The minimal Satake diagram (in the sense of Definition \ref{def: minimal descendants}) for the split symmetric pair of type $A_{n-1}$ is the Satake diagram obtained by removing every other node from the split type $A_{n-1}$ Satake diagram, i.e.
		
		\begin{center}
			\begin{tikzpicture}
				\dynkin[scale=3]{A}{ooo...ooo};
				\draw (1.61,0) node[cross,red] {};
				\draw (.35,0) node[cross,red] {};
			\end{tikzpicture}
		\end{center}
	\end{prop}
	
	We conclude by describing the gluing locus using equations on the diagonal matrices $\fa = \{\mathrm{diag}(\delta_1,\dots,\delta_n)\}\subset \mathfrak{pgl}_n$. The closure of the set of $x\in \fa$ with Satake diagram as in Proposition \ref{prop: minimal satake for split type A} is described in $\fa$ by the $W$ orbit of closed subvariety described by the equations 
	\[
	\delta_1 = \delta_2, \; \delta_3 = \delta_4, \; \dots\; , \delta_{n-1} = \delta_n
	\]
	We conclude:
	\begin{prop}
		\label{prop: non-separated structure for split form of type A}
		Let $n$ be even. With notation for $\fa$ as above, let $Z_1\subset \fa$ denote the vanishing of the equations
		\[
		\delta_1 = \delta_2, \; \delta_3 = \delta_4, \; \dots\; , \delta_{n-1} = \delta_n
		\]
		and let $Z = \cup_{w\in W}w(Z_1)\subset \fa$. Put $V = \fa\setminus Z$ and $U = V\git W_\fa\subset \fa\git W_\fa$. Then for the split symmetric pair of type $A_{n-1}$, we have 
		\[
		\fp^\reg\myfatslash K\simeq \fa\git W_\fa\coprod_U\fa\git W_\fa.
		\]
	\end{prop}
	\begin{proof}
		This follows by Theorem \ref{thm: description of U as complement of subdivisor!} and the computations above.
	\end{proof}
	
	In the above proposition, we have
	\[
	\fa\git W_\fa = \ft\git W = \Spec k[a_2,\dots, a_n],
	\]
	where $a_i$ is the $i$-th elementary symmetric polynomial in the components $\delta_i$ in $\mathrm{diag}(\delta_1,\dots, \delta_n)\in \fa$.
	The open subset $V$ in Proposition \ref{prop: non-separated structure for split form of type A} is the complement of a closed subset of $\fa\git W_\fa$, which is described in low dimensions as follows:
	\begin{itemize}
		\item When $n=2$: $a_2=0$;
		\item When $n=4$: $a_3=0,\; a_4=a_2^2$;
		\item When $n=6$: $16a_4=a_2^2$, $2a_5=a_2a_3$, $4a_6=a_3^2$.
	\end{itemize}

	\begin{ex}
		\label{ex son son so2n}
		Consider the split symmetric pair $P(\SO_{2n}\times \SO_{2n})\subset \mathrm{PSO}_{4n}$ of Example \ref{example: so x so}. Recall that this is the unique split symmetric pair of type $D_{2n}$, which is simple and simply-laced when $n\geq 2$. Recall from Proposition \ref{prop:table of nilpotent orbits} that for $n\geq2$ this is the unique family of simple symmetric pairs up to isogeny for which there are 4 regular nilpotent orbits.
		
		We assume that $n\geq 2$, and consider the split symmetric pair of type $D_{2n}$. The Satake diagram is given in Figure \ref{fig:table of satake diagrams}, namely the usual Dynkin diagram of type $D_{2n}$ with all white nodes. Subdiagrams of this are Satake diagrams of the split involutions of type $D_{n_0}$ for $n_0<2n$ or of type $A_{m}$ for $m\leq 2n-1$. Fix some $x\in \fa$ and consider the descendant at $x$. Let $Q$ denote the quotient through which $\pi_0(H_x)$ acts faithfully on the components of $\cN_\fp^\reg$. We can verify the table in Figure \ref{fig:descendants of split Dn}.
	\end{ex}
	
	\begin{figure}
		\begin{adjustbox}{rotate=90,center}
			\begin{tabular}{|m{4.5cm}|m{5.5cm}|m{2.3cm}|m{4.8cm}|m{4.3cm}|}
				\hline
				Type of $G_x$ & Satake diagram for $(G_x,\theta|_{G_x})$ & $\#(\cN_{\fp_x}^\reg/K_{G_x})$ & $\#Q$ & $\#(\cN_{\fp_x}^\reg/H_x)$ \\
				\hline
				$\displaystyle D_{n_0}\times \prod_{i=1}^a A_{n_i}\times \prod_{i=1}^b A_{m_i}$, for $n_0$,$n_i$ odd, $m_i$ even & \begin{tikzpicture}
					\dynkin[scale=2] D{ooo...oooooo};
					\draw (.7,0) node[cross,red] {};
					\draw (1.26,0) node[cross,red] {};
				\end{tikzpicture} & $2^{a+1}$ & $\begin{cases}
					2^{a} & \text{if $b=0$ and }\\
					& \;\;n=n_0+a+\sum_i n_i \\
					2^{a+1} & \text{else}
				\end{cases}$ & $\begin{cases}
					2 & \text{if $b=0$ and }\\
					&\;\;n=n_0+a+\sum_i n_i \\
					1 & \text{else}
				\end{cases}$ \\
				$\displaystyle D_{n_0}\times \prod_{i=1}^a A_{n_i}\times \prod_{i=1}^b A_{m_i}$, for $n_i$ odd, $n_0$,$m_i$ even &\begin{tikzpicture}
					\dynkin[scale=2] D{ooo...oooooo};
					\draw (.7,0) node[cross,red] {};
					\draw (1.61,0) node[cross,red] {};
				\end{tikzpicture} & $2^{a+2}$ & $\begin{cases}
					2^{a} & \text{if $b=0$ and }\\
					& \;\;n=n_0+a+\sum_i n_i \\
					2^{a+1} & \text{else}
				\end{cases}$ & $\begin{cases}
					4 & \text{if $b=0$ and }\\
					& \;\;n=n_0+a+\sum_i n_i \\
					2 & \text{else}
				\end{cases}$ \\
				$\displaystyle \prod_{i=1}^a A_{n_i}\times \prod_{i=1}^b A_{m_i}$, for $n_i$ odd, $m_i$ even &\begin{tikzpicture}
					\dynkin[scale=2] D{ooo...oooooo};
					\draw (.7,0) node[cross,red] {};
					\draw (1.61,0) node[cross,red] {};
					\draw (2.49,-.3) node[cross,red] {};
				\end{tikzpicture} &  $2^a$ & $\begin{cases}
					2^{a-1} & \text{if $b=0$ and }\\
					& \;\;n=a+\sum_i n_i \\
					2^{a} & \text{else}
				\end{cases}$  & $\begin{cases}
					2 & \text{if $b=0$ and }\\
					&\;\;n=n_0+a+\sum_i n_i \\
					1 & \text{else}
				\end{cases}$\\
				$A_1\times A_1\times \displaystyle \prod_{i=1}^a A_{n_i}\times \prod_{i=1}^b A_{m_i}$, for $n_i$ odd, $m_i$ even & \begin{tikzpicture}
					\dynkin[scale=2] D{ooo...oooooo};
					\draw (.7,0) node[cross,red] {};
					\draw (1.26,0) node[cross,red] {};
					\draw (2.31,0) node[cross,red] {};
				\end{tikzpicture} &  $2^{a+2}$ & $\begin{cases}
					2^{a} & \text{if $b=0$ and}\\
					& \;\; n=a+\sum_i n_i \\
					2^{a+1} & \text{else}
				\end{cases}$ & $\begin{cases}
					4 & \text{if $b=0$ and }\\
					& \;\;n=n_0+a+\sum_i n_i \\
					2 & \text{else}
				\end{cases}$ \\
				\hline
			\end{tabular}   
		\end{adjustbox}
		\caption{Table of all possible descendants of split type $D_{2n}$.}
		\label{fig:descendants of split Dn}
	\end{figure}     
	
	We summarize in the following:
	
	\begin{prop}
		\label{prop: minimal satake for split type D}
		The minimal Satake diagram (in the sense of Definition \ref{def: minimal descendants}) for the split symmetric pair of type $D_{2n}$ is obtained by deleting every second node in the Satake diagram of the symmetric pair, i.e.
		\begin{center}
			\begin{tikzpicture}
				\dynkin[scale=2] D{ooo...oooooo};
				\draw (.35,0) node[cross,red] {};
				\draw (1.61,0) node[cross,red] {};
				\draw (2.31,0) node[cross,red] {};
			\end{tikzpicture}
		\end{center}
	\end{prop}
	
	To avoid messy computations, we do not summarize the gluing pattern in explicit equations for this case. We note that all sheets are glued away from a codimension $2$ locus and there are 4 sheets over a codimension $n$ locus.
	
	\iffalse 
	We instead give the following description:
	\begin{prop}
		\label{prop: SO(2n,2n) case}
		Let $\mathfrak{C}_{A_1}$ denote the affine line with doubled origin, which is the the regular quotient for the symmetric pair with Satake diagram of type $A_1$ with a single white node. Then, there is a $W = W_{D_{2n}}$ 
		for the symmetric pair $X = \SO_{4n}/\SO_{2n}\times \SO_{2n}$, the regular quotient is described by
		\[
		\fp^\reg\myfatslash H\times_\fc \fc_L = \mathfrak{C}_{A_1}^{n+2}/Q
		\]
		where $Q = (\Z/2)^{n}$ acts freely over the zero fiber of $\mathfrak{C}_{A_1}^{n+2}$.
		
		In particular, all sheets are glued away from a codimension $2$ locus and there are 4 sheets over a codimension $n$ locus.
	\end{prop}
	\fi

	\begin{ex}
		\label{ex: regular quotient SO 2m times SO 2n+1-2m}
		Consider the case of $\SO_{2m}\times \SO_{2n+1-2m}\subset \SO_{2n+1}$, $2m\leq n$; see example \ref{ex: so x so odd}. Recall that the restricted root system in this case is simple of type $B_m$.
		
		The Satake diagram for this symmetric pair can be read from the table in figure \ref{fig:table of satake diagrams}; in particular, this pair is not $\theta$-quasisplit and has compact (black shaded) nodes in the Satake diagram. For any $x\in \fa$, the descendant at $x$ must have Satake diagram given by a product of split Satake diagrams of type $A$ and exactly one of the same type as the original symmetric pair, i.e. one of the form $\SO_{2(m-n+n_0)}\times \SO_{2(n-m)+1}\subset \SO_{2n_0+1}$ for $n_0\leq n$. Suppose that we have $a$ copies of odd type $A$, with ranks $n_1,\dots, n_a$ and $b$ copies of even type $A$. Then, the nilpotent cone, $\cN_{\fp_x}$, of the descendant at $x$ has $2^{a+1}$ irreducible components, and the action of $\pi_0(H_x)$ is through a faithful action of a group of size $2^a$ if $b=0$ and $n = n_0+a+\sum_i n_i$ and a group of size $2^{a+1}$ else. Hence, we deduce the following description:
	\end{ex}
	
	\begin{prop}
		\label{prop:minimal satake for non-qs case}
		The minimal Satake diagram for the symmetric pair $\SO_{2m}\times \SO_{2n+1-2m}\subset \SO_{2n+1}$, $2m<n$, is given by deleting every second white vertex. For example, when $m$ is even, the diagram is depicted by
		\begin{center}
			\begin{tikzpicture}
				\dynkin[scale=3] B{oooo...ooo...ooo};
				\fill[black] (3.22,0) circle (.55mm);
				\fill[black] (3.57,0) circle (.55mm);
				\fill[black] (2.87,0) circle (.55mm);
				\fill[black] (2.31,0) circle (.55mm);
				\draw (1.96,0) node[cross,red] {};
				\draw (1.05,0) node[cross,red] {};
				\draw (.35,0) node[cross,red] {};
				\draw (1.96,-.13) node {$m$};
			\end{tikzpicture}
		\end{center}
		while for $m$ odd, it is
		\begin{center}
			\begin{tikzpicture}
				\dynkin[scale=3] B{oooo...oooo...ooo};
				\fill[black] (3.92,0) circle (.55mm);
				\fill[black] (3.57,0) circle (.55mm);
				\fill[black] (3.22,0) circle (.55mm);
				\fill[black] (2.66,0) circle (.55mm);
				\draw (1.96,0) node[cross,red] {};
				\draw (1.05,0) node[cross,red] {};
				\draw (.35,0) node[cross,red] {};
				\draw (2.31,-.13) node {$m$};
			\end{tikzpicture}
		\end{center}
	\end{prop}
	
	We finish with a description of the explicit gluing locus in $\fa$. Let $\fa$ have coordinates $\delta_i$, with the simple roots on $\fa$ being given by $i(\delta_j-\delta_{j+1})$, $j=1,\dots, n-1$, and $i\delta_n$. (Here, $i$ denotes the imaginary unit.) Let $Z_1\subset \fa$ be the closed subvariety of $\fa$ defined by the equations
	\[
	\delta_1-\delta_2 = \delta_3-\delta_4 = \cdots =\delta_{m-1}-\delta_m = 0
	\]
	if $m$ is even and 
	\[
	\delta_1-\delta_2 = \delta_3-\delta_4 = \cdots =\delta_{m-2}-\delta_{m-1} = \delta_m=0
	\]
	if $m$ is odd. Let $Z = \cup_{w\in W}w(Z_1)$ be the union of the $W$ orbits of $Z_1$, and let $U = \fa\setminus Z$. Then we have the following:
	\begin{prop}
		\label{prop:gluing for so x so odd}
		With notation as above, for the symmetric pair $\SO_{2n+1}/\SO_{2m}\times\SO_{2n+1-2m}$, let $V =U\git W_\fa\subset \fa\git W_\fa$ be the image of $U$ in $\fa\git W_\fa$. Then, the regular quotient is given by 
		\[
		\fp^\reg\myfatslash H = \fa\git W_\fa\coprod_{U}\fa\git W_\fa
		\]
	\end{prop}
	
	In particular, the non-separated locus (the set $Z$) is of codimension $\lfloor \frac{m}{2}\rfloor$ in $\fa\git W_\fa$.

	\section{Galois Description of \texorpdfstring{$J$}{J} for \texorpdfstring{$\theta$}{theta}-Quasisplit Symmetric Pairs}
	\label{section: GS cover}
	
	For the results of this section, we restrict to the case of $\theta$-quasisplit symmetric pairs. Moreover, for the results of this section, we assume that the characteristic of the $k$ does not divide the order of the Weyl group $W$. We will denote $\fc = \fp\git H\simeq \fa\git W_\fa$. In particular, by Proposition \ref{proposition: equivalent characterizations of quasi-split}, we assume that the regular centralizer group scheme $I^\reg = I^\reg_H\to \fp^\reg$ is abelian. In this case, we have the following descent statement.
	
	\begin{prop}
		\label{prop: descent of I to J}
		If $(G,\theta,H)$ is a $\theta$-quasisplit symmetric pair, then the regular centralizer group scheme $I_H^\reg$ descends to a smooth group scheme $J$ over $\fc$ such that there is a canonical isomorphism $\chi^*J|_{\fp^\reg}\simeq I_H^\reg$. Moreover, there is a canonical extension of this isomorphism to a map
		\[
		\chi^*J\to I_H
		\]
	\end{prop}
	\begin{proof}
		First note that whether a symmetric pair $(G,\theta,H)$ is $\theta$-quasisplit is independent of $H$, and hence, $(G,\theta,N_G(K))$ is also a symmetric pair. Therefore, by Proposition \ref{proposition: equivalent characterizations of quasi-split}, part (c), the regular centralizer group $I_{N_G(K)}^\reg$ is also abelian. Moreover, $N_G(K)$ normalizes $H$, and so we can equip $I^\reg = I_H^\reg$ with an $N_G(K)$ equivariant structure. From here, the proof of \cite[Lemma 2.1.1]{ngo2010lemme} follows verbatim, using the transitivity of the $N_G(K)$ action in Lemma \ref{lem: N acts transitively on fibers} and \cite[Lemma 6.32]{levy}.
	\end{proof}
	
	Note that this is the descent of the group scheme that was also denoted by $J$ over the regular quotient, and hope this does not cause confusion.  We also note that this is clearly $\bbG_{m}$-equivariant.
	
	Our goal in this section is a Galois description of the regular centralizer group scheme $J$ for $\theta$-quasisplit symmetric pairs. 
	
	Following the skeleton of Section 2.4 in \cite{ngo2010lemme}, one might hope to find a finite, flat cover 
	\[
	\pi\colon \wt{\fc}\to \fc,
	\]
	with group $\wt{W}$ acting on the centralizer group scheme $C=C_H(\fa)$, such that $J$ embeds as an open subgroup scheme of the Weyl restriction $\mathrm{Res}_\fc^{\tilde{\fc}}(\wt{\fc}\times C_H)^{\wt{W}}$. Such a description was the objective of Section 5.1 of \cite{gppn} and Section 4 of \cite{leslie} using cameral covers modeled on the flat cover $\fa\to \fc$, which is finite flat with group $W_\fa$. However, this particular cover is inadequate for general results on regular centralizers, as illustrated by the following example.
	
	\begin{ex}
		\label{ex: sl3 case}
		Consider the symmetric pair on $G = \SL_3$ given by the involution $\theta$ conjugating by $\begin{pmatrix} 1 & \\ & -I_2 \end{pmatrix}$ with $H = S(\bG_m\times \GL_2)$ embedded block diagonally. We choose 
		\[
		\fa = \left\{ \begin{pmatrix} & \delta & 0 \\ \delta & & \\ 0 & &  \end{pmatrix} \colon \delta\in k \right\} \simeq \bA^1,
		\]
		so that its centralizer is
		\[
		C_H = C_H(\fa) = \left\{ \begin{pmatrix} x &  &  \\ & x & \\  & & y \end{pmatrix} \colon x^2y = 1 \right\}
		\]
		One can verify that $W_\fa\simeq \{\pm 1\}$ acts trivially on $C$. Hence, $\mathrm{Res}^\fa_\fc(C\times \fa)^{W_\fa} = C\times\fc$ is the constant group scheme. It is trivial to compute the fiber of $\pi_0(J_H)$ at 0 is $\mu_3$ while the fiber of $\mathrm{Res}^\fa_\fc(C\times \fa)^{W_\fa} = C\times \fc$ is connected everywhere. Hence, $J_H$ cannot be an open subscheme of $\mathrm{Res}^\fa_\fc(C\times \fa)^{W_\fa}$.
	\end{ex}
	
	\begin{remark}
		\label{rmk: specific errors in literature}
		Theorem 4.7 of \cite{leslie} and Theorem 21 of \cite{gppn} fail for Example \ref{ex: sl3 case}. 
	\end{remark}
	
	We will remedy this by working with a cover with several irreducible components. To do so, we will begin by introducing the canonical torus of a symmetric space $X$ and its relationship to the canonical torus of $G$. We will then correct the subtle error in the existing literature in section \ref{sec: revising Leslie's cover}. We show how one can still get a Galois description of the regular centralizer group scheme $J$ for quasisplit symmetric spaces by allowing the flat cover to have mutliple irreducible components. In section \ref{sec: relative dual centralizers}, we will then use this description to give a more precise description of a closely related group scheme. This description plays an essential role in our work on a relative Dolbeault geometric Langlands in \cite{me2}.

	\subsection{Canonical Tori}
	\label{sec: canonical torus}
	
	This section reviews the relevant results of \cite{leslie}. Let $\canT$ denote the canonical torus of the group $G$. Let $X$ be a symmetric space (or more generally a spherical variety, i.e. a $G$ variety with open Borel orbit). choose a Borel $B\subset G$ such that $B$ has an open orbit $\mathring{X}\subset X$. Let $P(X)$ be the stabilizer of $\mathring{X}$, and let $U$ be the unipotent radical of $B$. The parabolic subgroup $P(X)$ acts on $U\backslash\!\!\backslash\mathring{X}$ through a maximal quotient $P(X)\to \canA$ for a torus $\canA$, which we call the canonical torus of $X$. The canonical torus $\canA$ inherits an action of a Weyl group $W_X$, which is in general a subquotient of the Weyl group $W$. In the case of $X = G/H$ a quasi-affine homogeneous spherical variety, $P(X) = L(X)\cdot B$ where $L(X)$ is the centralizer of the action of $G$ on a generic regular semisimple element in $\fg/\fh\subset \fg^*$, see \cite[Theorem 2.3]{knop-invariant} and \cite[Lemma 3.1]{knop-invariant}. When $X$ is further restricted to the case of a symmetric space, the canonical torus is closely related to maximal $\theta$-split tori of $G$ and $W_X$ is identified with the little Weyl group $W_\fa$, see Lemma \ref{lem: isogeny A -> AX}.
	
	Now, let $X = G/H$ be a $\theta$-quasisplit symmetric space. The parabolic $P(X)$ is then equal to the Borel $B$, and we have a projection map $\canT\to \canA$. Our choice of notation is suggestive of a link between the canonical torus $\canA$ and a maximal $\theta$-split torus $A$. Indeed, the following is a very mild generalization of \cite[Lemma 1.11]{leslie}.
	\begin{lem}
		\label{lem: isogeny A -> AX}
		Fix a maximally $\theta$-split torus $A$ of $G$ and maximally split $T\supset A$. Recall from Proposition \ref{prop: extension of K} that 
		\[
		F^* = \{a\in A\colon a^2\in Z(G)\}.
		\]
		Choose a $\theta$-stable Borel $B\supset T$ of $G$ so that we get a isomorphism $T\to \canT$, and use $B$ to define the canonical torus $\canA$. The composition
		\begin{equation}
			\label{eqn: canonical comparison}
			A\hookrightarrow T\to \canT\to \canA
		\end{equation}
		is surjective with kernel $H\cap A = H\cap F^*$. In particular, \eqref{eqn: canonical comparison} is an isogeny whenever either $H\subset G^\theta$ or the subgroup $Z_-\subset Z(G)$ on which $\theta$ acts by inversion is finite. Moreover, the little Weyl groups $W_\fa\simeq W_X$ are identified, and the isogeny above is equivariant with respect to their actions on $A$ and $\canA$, respectively.
	\end{lem}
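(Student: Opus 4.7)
The plan is to unpack the construction of $\canA$ in the quasisplit case and then verify the isogeny and its kernel by direct computation. First, I would show that $\canA \simeq T/(T\cap H)$. Since the pair is quasisplit, the parabolic stabilizer of the open $B$-orbit satisfies $P(X) = B$, so the open $B$-orbit on $X = G/H$ is $\mathring X = BH/H \simeq B/(B\cap H)$. Since $T \cap U = \{e\}$, the stabilizer of the $T$-action on $U\backslash\!\backslash\mathring X$ is $T \cap H$, giving $\canA \simeq T/(T\cap H)$. Composing with the identification $T \simeq \canT$ coming from the choice of $B$, the map in \eqref{eqn: canonical comparison} is identified with the restriction $A \hookrightarrow T \twoheadrightarrow T/(T\cap H)$.

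Next, I would compute the kernel. The kernel is $A \cap (T \cap H) = A \cap H$. By Proposition \ref{prop: extension of K}(a), $H \subset N_G(K) = \{g : g\theta(g^{-1})\in Z(G)\}$. For $a \in A$ we have $\theta(a) = a^{-1}$, so $a\theta(a^{-1}) = a^2$; consequently $a \in H$ forces $a^2 \in Z(G)$, i.e.\ $a \in F^*$. Since conversely $F^* \subset A$, one obtains $A \cap H = F^* \cap H$, which is the claimed kernel.

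For surjectivity, I would use the Cartan decomposition $\ft = \ft^\theta \oplus \fa$, which lifts to the equality of tori $T = T^\theta \cdot A$ (both sides are connected with Lie algebra $\ft$). Since $T^\theta$ is connected and fixed by $\theta$, it lies in $K \subset H$, so $T^\theta \subset T \cap H$. Therefore $T = A \cdot (T \cap H)$, and $A$ surjects onto $T/(T \cap H) \simeq \canA$. Combined with the finite kernel $F^* \cap H$, the map \eqref{eqn: canonical comparison} is an isogeny.

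Finally, for the Weyl group statement: by Proposition \ref{proposition : little weyl group is N_G(A)/C_G(A)}, $W_\fa \simeq N_H(\fa)/C_H(\fa)$ acts on $A$ by conjugation, and any lift $n \in N_H(\fa) \subset N_H(T)$ normalizes $T \cap H$, so the action descends through the quotient $A \to A/(F^*\cap H) \simeq \canA$. The identification $W_\fa \simeq W_X$ follows from the fact that in the quasisplit case both Weyl groups are the Weyl group of the restricted root system $\Phi_r^{\mathrm{red}}$; equivalently $W_X$ is the Weyl group of the dual group $G_X^\vee$ of Proposition \ref{prop: canonical torus of spherical variety} whose maximal torus is $\canA^*$. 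The main obstacle I foresee is verifying the identification $\canA \simeq T/(T\cap H)$ uniformly for all $H$ sandwiched between $K$ and $N_G(K)$: one needs to check that the ``extra'' components of $H$ beyond $K$ enlarge $T \cap H$ by precisely the factor $F^* \cap H$ that then drops out as the kernel of the isogeny, without disrupting Knop's construction of $\canA$ as an image in $\Aut(U\backslash\!\backslash\mathring X)$.
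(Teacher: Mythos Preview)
Your approach is essentially the same as the paper's: both identify $\canA$ with $T/(T\cap H)$ via the $T$-action on $U\backslash\!\backslash\mathring X$, then compute $A\cap H$ in terms of $F^*$; the paper simply cites \cite[Lemma 1.11]{leslie} for the Weyl-group statement rather than sketching it. One small slip: $T^\theta$ need not be connected (e.g.\ $\theta(t)=t^{-1}$ on $\bG_m$), but your surjectivity argument only requires $(T^\theta)^\circ\subset K$, and since $(T^\theta)^\circ\cdot A$ already has Lie algebra $\ft$ you still get $T=(T^\theta)^\circ\cdot A\subset (T\cap H)\cdot A$; with that correction the argument stands.
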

	\begin{proof}
		The action of $P(X) = B$ on $U\backslash\!\!\backslash \mathring{X}$ factors through the canonical torus $\canT = B/[B,B] = B/U$. Identify $T\simeq \canT$ via the Borel $B$, we see that an element $t\in T$ acts trivially on $U\backslash\!\!\backslash \mathring{X}$ if and only if $t\in H$. Hence the map \eqref{eqn: canonical comparison} is identified with the map
		\[
		A\hookrightarrow T\to T/T\cap H = A/A\cap H.
		\]
		It remains to show $H\cap A = H\cap F^*$. If $h\in H\cap A$, then by Proposition \ref{prop: extension of K}, we can write $h = ak$ for $a\in F^*$ and $k\in K$. Then, since $h\in A$, also $k\in A$ so that $k\in K\cap A$. But
		\[
		K\cap A = \{a\in A\mid a^2 = 1\}\subset F^*
		\]
		so we conclude that $h\in H\cap F^*$. 
		
		Finally, the identification of the Weyl group actions can be found in \cite[Lemma 1.11]{leslie}.
	\end{proof}
	
	\begin{remark}
		\label{rmk: kernel of can map on lie algebras}
		We note that $\mathrm{Lie}(H\cap F^*) = \fz_-\cap \fh$ where $\fz_- = \mathrm{Lie}(Z_-)$ is the Lie algebra of the subgroup of $Z(G)$ on which $\theta$ acts by inversion, and hence the kernel of $\mathrm{Lie}(A)\to \canfa$ is identified with $\fz_-\cap \fh\subset \mathrm{Lie}(A)$.
	\end{remark}

	Let $\canft=\mathrm{Lie}(\canT)$, respectively $\canfa = \mathrm{Lie}(\canA)$, be the Lie algebra of the canonical torus $\canT$, resp. $\canA$ (note that as $A$ and $\canA$ are isogeneous by lemma \ref{lem: isogeny A -> AX} this should not create any ambiguity). We end this section with a result of Leslie, which implies a canonical splitting of the morphism $\canft\to \canfa$.
	
	\begin{lem}(\cite[Prop 1.12]{leslie})
		\label{lem: canonical involution}
		There is a canonical involution $\theta_{can}$ on the canonical torus $\canT$ as well as on its Lie algebra $\canft$. When $H\subset G^\theta$ or when the subgroup $Z_-\subset Z(G)$ (defined in Lemma \ref{lem: isogeny A -> AX}) is finite, the map $\canft\to \canfa$ restricts to an isomorphism on the $(-1)$-eigenspace of $\theta_{can}$.
	\end{lem}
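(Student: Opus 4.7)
The plan is to use the existence of $\theta$-stable Borels in the quasisplit case to define $\theta_{can}$ directly, and then to deduce the eigenspace statement immediately from Lemma \ref{lem: isogeny A -> AX}.

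First, I would construct the involution. Since $(G,\theta,H)$ is quasisplit, there exists a $\theta$-stable Borel $B$ containing a maximally split torus $T \supset A$: the involution $\theta$ acts on the root system $\Phi$ of $G$ with respect to $T$, and by Lemma \ref{lemma: no roots restrict to zero} no root restricts to zero on $\fa$, so a generic element of $\fa$ defines a $\theta$-stable system of positive roots and hence a $\theta$-stable Borel. Given such a $B$, the involution $\theta$ restricts to a group automorphism of $B$ preserving the unipotent radical $U$, and so descends to an involution on $\canT = B/U$, which I take to be $\theta_{can}$. To verify independence from the choice of $\theta$-stable Borel, one uses that any two such Borels containing $T$ are conjugate by an element of $N_K(T)$, and that the canonical identifications between different realizations $B/U$ and $B'/U'$ of $\canT$ intertwine the induced involutions. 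Differentiating produces the associated involution on $\canft$.

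For the eigenspace statement, the chosen $\theta$-stable Borel $B$ yields an isomorphism $\alpha\colon T \xrightarrow{\sim} \canT$ which, by construction of $\theta_{can}$, intertwines $\theta|_T$ with $\theta_{can}$. Hence $\alpha$ restricts to an isomorphism $\ft^- \xrightarrow{\sim} \canft^-$ on the $(-1)$-eigenspaces. The quasisplit hypothesis and Proposition \ref{proposition: equivalent characterizations of quasi-split} give $C_G(A) = T$, which forces $\ft^- = \fa$. Consequently, the composite $\canft^- \hookrightarrow \canft \to \canfa$ is identified via $\alpha$ with the Lie algebra map $\fa \hookrightarrow \ft \to \canft \to \canfa$ of Lemma \ref{lem: isogeny A -> AX}; the latter is the Lie algebra of the isogeny $A \to \canA$ established there, and is thus an isomorphism in good characteristic. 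This delivers the desired conclusion.

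The main obstacle is the well-definedness of $\theta_{can}$: one must check that the involution on $\canT$ produced by transport along $\theta$-stable Borels does not depend on which one is chosen. This is the content of the statement that $\canT$ is intrinsic to $G$ together with a compatibility for the $\theta$-action on the flag variety; once this is in hand, the eigenspace assertion reduces to a short translation between $T$ and $\canT$.
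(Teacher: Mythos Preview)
The paper does not supply its own proof of this lemma; it is cited from \cite[Prop.\ 1.12]{leslie}. So there is nothing in the paper to compare against directly, but your argument has a genuine gap.

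Your construction of $\theta_{can}$ rests on the existence of a $\theta$-stable Borel $B \supset T$ with $T$ maximally split, justified by the claim that a generic $X \in \fa$ defines a $\theta$-stable system of positive roots. This is exactly backwards. Since $\theta$ acts as $-1$ on $\fa$, for any root $\alpha$ one has $(\theta\alpha)(X) = \alpha(\theta X) = -\alpha(X)$; hence $\alpha(X) > 0$ if and only if $(\theta\alpha)(X) < 0$, so the positive system you build satisfies $\theta(\Phi^+) = \Phi^-$ and $\theta(B)$ is the \emph{opposite} Borel, not $B$. In the split case $\ft^+ = 0$ and no $\theta$-stable Borel through $T$ can exist at all; more generally, such a Borel would force the open chamber to meet $\ft^+$, i.e.\ $\ft^+$ would have to contain a regular element of $\ft$, which fails whenever some root satisfies $\theta\alpha = -\alpha$.

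The repair is to work with the Borel your computation actually produces, namely one with $B \cap \theta(B) = T$; this is precisely the choice the paper makes in the paragraph following the lemma. With that $B$, $\theta$ does not preserve $B$ and so cannot be descended along $B \to B/U$; instead one transports the involution $\theta|_T$ to $\canT$ via the isomorphism $T \xrightarrow{\sim} B/U = \canT$ and then checks independence of the choice of such $B$. Once $\theta_{can}$ is constructed this way, your eigenspace argument through Lemma~\ref{lem: isogeny A -> AX} goes through as written.
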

	
	The construction of $\theta_{can}$ involves first choosing a $\theta$-split Borel $B$ (i.e. $B$ such that $B\cap \theta(B)$ is a torus) and considering the $\theta$ action on $\canT = B/[B,B]$.

	\begin{definition}
		\label{def: A1}
		We will denote by $\canfa_1$ the $(-1)$-eigenspace of the canonical involution $\theta_{can}$ acting on $\canft$ and by $\canA_1$ the subtorus of $\canT$ on which $\theta_{can}$ acts by inversion.
	\end{definition}

	\subsection{Description of $J$}
	\label{sec: revising Leslie's cover}
	
	In this section we provide a description of $J$ via Weil restriction. This is an analogue of the work of Donagi--Gaitsgory for the usual Hitchin fibration \cite{donagi2002gerbe}, and corrects the earlier works \cite{leslie,gppn}. We will use this modified result to prove a description of a closely related group scheme in Section \ref{sec: relative dual centralizers}.
	
	We let $\fc_G = \fg\git G$ denote the GIT base for $G$ and $\fc = \fp\git H$ the GIT base for $X = G/H$.  Consider the base change
	\[
	\xymatrix{
		\canft\times_{\fc_G}\fc\ar[r]\ar[d] & \canft\ar[d] \\
		\fc\ar[r] & \fc_G
	}
	\]
	We note that the map 
	\[
	\canft\times_{\fc_G}\fc\to \fc
	\]
	is finite and flat of degree $|W|$ as it is the base change of the map $\canft\to \fc_G$. The variety $\canft\times_{\fc_G}\fc$ is not irreducible. In fact, following \cite{leslie}, we can describe its components as follows. Recall that $\canfa_1\subset \canft$ denotes the image of the canonical splitting of Lemma \ref{lem: canonical involution}. For every $\nu\in W/W_X$, choose a lift $\wt{\nu}\in W$. We may consider the subset $\canfa_\nu\subset\canft$ given by $\wt{\nu}(\canfa_1)$. As $W_X$ fixes $\canfa_1$, this is independent of the choice of lift $\wt{\nu}$. Then, we have
	\begin{lem}
		\label{lem: t is a union of a's}
		We can express the fiber product as a finite union
		\[
		\canft\times_{\fc_G}\fc = \bigcup_{\nu\in W/W_X}\canfa_\nu
		\]
		and the subvarieties $\canfa_\nu$ are the distinct (but not disjoint) components of this fiber product.
	\end{lem}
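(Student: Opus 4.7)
The plan is to identify each $\canfa_\nu \subset \canft$ with an irreducible component of $\fc\times_{\fc_G}\canft$ via the universal property of the fiber product, verify that these exhaust the fiber product by a set-theoretic argument over $k$-points, and then check distinctness using the quasisplit hypothesis. First, I would observe that the map $\fc\to \fc_G$ is induced by the inclusion $\canfa\hookrightarrow\canft$ (through the canonical splitting $\canfa\simeq\canfa_1$) descending to $\canfa\git W_X\hookrightarrow \canft\git W$, and that by Lemma~\ref{lemma: qspt implies unramified map of hitchin bases} this is a closed embedding in the quasisplit setting. For each $\nu\in W/W_X$, the composite $\canfa_\nu\hookrightarrow\canft\to\fc_G$ is $W$-invariantly equal to the composite $\canfa_\nu\xrightarrow{\nu^{-1}}\canfa_1\simeq\canfa\to\fc\hookrightarrow\fc_G$, since $W_X$ stabilizes $\canfa_1$. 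By the universal property, this produces a well-defined map $\iota_\nu\colon \canfa_\nu\to \fc\times_{\fc_G}\canft$ whose second projection is the inclusion $\canfa_\nu\hookrightarrow\canft$.

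Next I would verify that $\bigcup_\nu\iota_\nu(\canfa_\nu)=\fc\times_{\fc_G}\canft$ set-theoretically. A $k$-point is a pair $(c,t)$ with matching images in $\fc_G$. Choosing a lift $a\in\canfa_1$ of $c$, the equality $[t]_W=[a]_W\in\fc_G$ gives $t=wa$ for some $w\in W$; writing $w=\nu w_0$ with $\nu\in W/W_X$ and $w_0\in W_X$, and using that $W_X$ preserves $\canfa_1$, we have $w_0 a\in\canfa_1$ and $t=\nu(w_0 a)\in \canfa_\nu$, so $(c,t)=\iota_\nu(w_0 a)$. For the dimension match: $\canft\to\fc_G$ is finite flat, so $\dim(\fc\times_{\fc_G}\canft)=\dim\fc=\dim\canfa=\dim\canfa_\nu$. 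Combined with surjectivity, each $\iota_\nu(\canfa_\nu)$ is an irreducible component.

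The step I expect to be the main obstacle, and the one that genuinely uses the quasisplit hypothesis, is showing that distinct cosets $\nu\in W/W_X$ yield distinct subvarieties of $\canft$. If $\nu(\canfa_1)=\nu'(\canfa_1)$, then $\nu^{-1}\nu'\in W$ stabilizes the subspace $\canfa_1$. Under a $\theta$-stable Borel identification $\canT\simeq T$ carrying $\canA_1$ to a maximal $\theta$-split torus $A$ (as in Lemma~\ref{lem: isogeny A -> AX}), elements of $W=N_G(T)/T$ preserving $\fa=\mathrm{Lie}(A)$ correspond to $N_G(A)\cap N_G(T)$ modulo $T$. By Proposition~\ref{proposition: equivalent characterizations of quasi-split}, the quasisplit condition gives $C_G(A)=T$, so the subspace stabilizer of $\canfa_1$ in $W$ equals $N_G(A)/C_G(A)=W_\fa\simeq W_X$ (using Proposition~\ref{proposition : little weyl group is N_G(A)/C_G(A)} and Lemma~\ref{lem: isogeny A -> AX}). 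Hence $\nu^{-1}\nu'\in W_X$, i.e.\ $\nu=\nu'$ in $W/W_X$, completing the proof.
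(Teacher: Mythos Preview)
Your argument is correct and is essentially the direct argument the paper defers to via the citation to \cite[Lemma 3.3]{leslie}: you construct the embeddings $\iota_\nu$, check surjectivity on $k$-points, use the dimension count to see each $\canfa_\nu$ is a component, and then identify the subspace stabilizer of $\canfa_1$ in $W$ with $W_X$ via the quasisplit condition $C_G(A)=T$. The only cosmetic slip is the notation $\iota_\nu(w_0 a)$ in the surjectivity step---since $\iota_\nu$ is defined on $\canfa_\nu$, you mean $\iota_\nu(t)$ with $t=\nu(w_0a)$---but the content is right.
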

	\begin{proof}
		Follows from an identical argument to \cite[Lemma 3.3]{leslie}.
	\end{proof}
	
	\begin{cor}
		The morphism 
		\begin{equation}
			\label{eqn: Lie alg of can map}
			\canft\times_{\fc_G}\fc\to \canfa   
		\end{equation}
		induced by the canonical map $\canT\to \canA$ is defined over $\fc$.
	\end{cor}
	\begin{proof}
		For every $\nu$, the composition
		\[
		\fa_\nu\subset \ft\times_{\fc_G}\fc\to \fa
		\]
		is a $W_X$ equivariant isomorphism while the map 
		\[
		\fa_\nu\subset \ft\times_{\fc_G}\fc\to \fc
		\]
		is the quotient map by the $W_X$ action. As $\ft\times_{\fc_G}\fc$ is a union of these $\fa_\nu$, the result follows.
	\end{proof}
	
	Recall from Remark \ref{rmk: kernel of can map on lie algebras} that for each $\nu\in W/W_X$, the morphism \eqref{eqn: Lie alg of can map} restricts to maps $\canfa_\nu\to \canfa$ whose kernel is the $W$-stable subspace $\fz_-\cap \fh\subset \fa_\nu$. Here, $\fz_- = \mathrm{Lie}(Z_-)$ is the Lie algebra of the subgroup of $Z(G)$ on which $\theta$ acts by inversion. 
    
    Though it will not be used later in this article, it is interesting that the quotient $W/W_X$ admits canonical representatives. We take a brief detour to describe them.
	
	\begin{lem}
		\label{lem: all auts of a come from WX}
		Any map $\xi\colon \canfa\to \canfa$ which commutes with the morphisms to $\fc$ is equal to multiplication by an element of $W_X$.   
	\end{lem}
	\begin{proof}
		Let $\canfa_w = \{x\in \canfa\colon \xi(x) = w\cdot x\}$. Then, $\canfa_w$ is closed in $\canfa$ and $\canfa = \cup_{w\in W_X}\canfa_w$. We conclude that $\canfa = \canfa_w$ for some $w\in W_X$.
	\end{proof}
	
	\begin{proposition}
		\label{prop: description of W0}
		For each coset $\nu\in W/W_X$, there exists a unique lift $w_\nu\in W$ of $\nu$ such that the following diagram commutes
		\begin{equation}
			\label{eqn: W0 diagram}
			\xymatrix{
				\canfa_1\ar[r]^-{w_\nu}\ar@{^{(}->}[d] & \canft\times_{\fc_G}\fc\ar[d]^-{\mathrm{can}} \\
				\canft\times_{\fc_G}\fc\ar[r]^-{\mathrm{can}} & \canfa
			}
		\end{equation}
		Above, $\canfa_1\subset \canft\times_{\fc_G}\fc$ is the splitting induced by the canonical involution as in Definition \ref{def: A1}.
	\end{proposition}
	\begin{proof}
		Note first that all morphisms in the diagram \eqref{eqn: W0 diagram} are defined over $\fc$. Moreover, since $\fz_-\cap \fh$ is $W$ stable, for any $w\in W$, the map 
		\begin{equation}
			\label{eqn: lift of nu}
			\fa_1\xrightarrow{w}\canft\times_{\fc_G}\fc\xrightarrow{\mathrm{can}} \canfa
		\end{equation}
		has kernel $\fz_{-}\cap \fh$. In particular, identifying $\canfa_1/(\fz_{-}\cap \fh)\simeq \canfa$ using the canonical map, we may apply Lemma \ref{lem: all auts of a come from WX} to identify the map \eqref{eqn: lift of nu} with multiplication by some element $w_0$ of $W_X$ on $\canfa$. In particular, there is a unique $W_X$ translate $w_\nu = w_0^{-1}w$ of $w$ for which \eqref{eqn: lift of nu} agrees with the canonical map $\canfa_1\to  \canfa$.
	\end{proof}

	We now recenter our discussion on regular centralizers. The finite, flat map $\canft\to \fc_G$ was used in the work of Donagi-Gaitsgory to describe the regular centralizer group scheme $J_G$ \cite{donagi2002gerbe}. The following is a reformulation of this result due to Ng\^o.
	
	\begin{lem}(\cite[Lemma 2.4.6 and 2.4.7]{ngo2010lemme})
		\label{lem: Galois description of JG}
		Assume that the characteristic of $k$ does not divide the order of $W$. Let $J_G$ be the regular centralizer group scheme of $G$ acting on $\fg$. There is an open embedding 
		\begin{equation}
			\label{eqn: Galois description of JG}
			J_G\to \mathrm{Res}^{\canft}_{\fc_G}(\canT\times \canft)^W
		\end{equation}
		where $W$ acts diagonally on the product $\canT\times \canft$ and $\mathrm{Res}^{\canft}_{\fc_G}(-)$ denotes the Weil restriction along the map $\canft\to \fc_G$. Moreover, the image of this map admits a simple description:  For any root $\alpha$, let $\fh_{G,\alpha}\subset \canft$ be the $\alpha$-root hyperplane. Then \eqref{eqn: Galois description of JG} identifies $J_G$ with the subgroup of $\mathrm{Res}^{\canft}_{\fc_G}(\canT\times \canft)^W$ consisting of $S$-points
		\[
		S\times_{\fc_G}\canft\to \canT
		\]
		for which, for all roots $\alpha\in \Phi$, the composition
		\[
		S\times_{\fc_G}\fh_{G,\alpha}\to S\times_{\fc_G}\canft\to \canT\xrightarrow{\alpha}\bG_m,
		\]
		is trivial.
	\end{lem}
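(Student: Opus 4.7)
The plan is to follow Ng\^o's approach from Section 2.4 of \cite{ngo2010lemme}, with the main technical input being the Grothendieck simultaneous resolution. Fix a Kostant section $\sigma\colon \fc_G\to \fg^\reg$, so that $J_G\simeq \sigma^{*}I^{\reg}$, where $I^{\reg}\to \fg^\reg$ denotes the universal centralizer group scheme. Recall the Grothendieck--Springer resolution $\tilde \fg\to \fg$ and its factorization $\tilde \fg\to \canft\to \fc_G$; restricted to $\fg^\reg$, the map $\tilde\fg^\reg\to \fg^\reg$ is a finite \'etale $W$-torsor. For any $(x,B)\in \tilde\fg^\reg$ with $x\in \fb$ regular, the centralizer $Z_G(x)$ is connected and contained in $B$, and the composition $Z_G(x)\hookrightarrow B\to B/[B,B]\simeq \canT$ is independent of $B$ up to the $W$-action on $\canT$. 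This produces a canonical $W$-equivariant morphism $\tilde I^\reg\to \canT\times \tilde\fg^\reg$ over $\tilde\fg^\reg$, where $\tilde I^\reg$ denotes the pullback of $I^\reg$.

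Passing to Weil restrictions along $\tilde\fg^\reg\to \fg^\reg$ and taking $W$-invariants, this data descends to a morphism of group schemes
\[
I^\reg\to \mathrm{Res}^{\fg^\reg\times_{\fc_G}\canft}_{\fg^\reg}(\canT\times \canft)^W
\]
over $\fg^\reg$. Pulling back along the Kostant section $\sigma$ produces the desired morphism $J_G\to \mathrm{Res}^{\canft}_{\fc_G}(\canT\times \canft)^W$, which is manifestly independent of the choice of $\sigma$ by universality.

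Next, I would verify that this is an open embedding. Over the regular semisimple locus $\fc_G^{\mathrm{rs}}\subset \fc_G$, the map $\canft^{\mathrm{rs}}\to \fc_G^{\mathrm{rs}}$ is an \'etale $W$-cover and the regular centralizer is a torus identified (up to $W$) with $\canT$, so the morphism restricts to an isomorphism onto $\mathrm{Res}^{\canft^{\mathrm{rs}}}_{\fc_G^{\mathrm{rs}}}(\canT\times \canft^{\mathrm{rs}})^W$. Both source and target are affine, smooth (using smoothness of $J_G$ and flatness of $\canft\to \fc_G$), and finite type over $\fc_G$, and the morphism is quasi-finite (fibers inject on the neutral component using regularity of elements along $\fh_{G,\alpha}$-strata). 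Zariski's Main Theorem, applied in its form for quasi-finite separated birational morphisms between normal schemes, then yields that the morphism is an open immersion.

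Finally, I would identify the image. Necessity of the root-hyperplane condition is the heart of the argument: for a point lying over the generic point of $\fh_{G,\alpha}\subset \canft$, the regular element $\sigma(a)\in \fg^\reg$ has centralizer whose reductive quotient is a rank-$(r-1)$ torus — the kernel of $\alpha$ on $\canT$ — because the Levi of semisimple rank one associated to $\alpha$ degenerates to a unipotent extension of this torus. Thus the character $\alpha$ must vanish on the image, which gives the stated condition. For sufficiency, I would argue that the closed subscheme $\cJ\subset \mathrm{Res}^{\canft}_{\fc_G}(\canT\times \canft)^W$ cut out by these hyperplane conditions is smooth of relative dimension $r$ over $\fc_G$, contains $J_G$ as an open subscheme, and agrees with $J_G$ generically; by smoothness and normality (and equality of fibers in codimension one computed on each $\fh_{G,\alpha}$), the open embedding $J_G\hookrightarrow \cJ$ must be an isomorphism. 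The hardest step is the last one: controlling the codimension-one behavior precisely enough to see that the vanishing conditions above are not merely necessary but cut out $J_G$ on the nose; this reduces to the rank-one calculation inside each $\fsl_2$-triple Levi, which is classical.
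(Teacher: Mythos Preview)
The paper does not supply its own proof of this lemma; it states the result as a reformulation of the Donagi--Gaitsgory / Ng\^o description and cites \cite{donagi2002gerbe} and \cite{ngo2010lemme}. Your proposal reconstructs precisely the argument from \S2.4 of \cite{ngo2010lemme}: build the map via the Grothendieck--Springer resolution, check it is an isomorphism over the regular semisimple locus, use Zariski's Main Theorem for the open embedding, and reduce the image description to a rank-one Levi computation. This is the intended argument, and your sketch is accurate.
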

	
	Let $J_G|_\fc = J_G\times_{\fc_G}\fc$. The closed embedding $I^\reg\hookrightarrow I_{G}^\reg|_{\fp^\reg}$ is $H$ equivariant, and hence descends to a closed embedding $J\hookrightarrow J_{G}|_{\fc}$. Composing \eqref{eqn: Galois description of JG} with the map $J\hookrightarrow J_G|_\fc$ gives a locally closed embedding 
	\[
	J\hookrightarrow \mathrm{Res}^{\canft}_{\fc_G}(\canT\times \canft)^W|_\fc = \mathrm{Res}^{\canft\times_{\fc_G}\fc}_{\fc_G}\big(\canT\times (\canft\times_{\fc_G}\fc)\big)^W
	\]
	We describe the image of this embedding over the regular, semisimple locus in Proposition \ref{prop: Spencer's description fixed}. First, we introduce some notation. 
	
	\begin{definition}
		\label{def: canC definition}
		We denote by $\canC\subset \canT$ the kernel of the canonical projection $\canT\to \canA$. For every $\nu\in W/W_X$, we further denote $\canC_\nu = w_\nu\cdot \canC$.
	\end{definition}
	
	Note that $\canC$ is $W_X$ stable, and so the definition of $\canC_\nu$ above is independent of the choice of lift of $\nu$.

	We now state the revised description of $J$, correcting the errors in the literature referenced in Remark \ref{rmk: specific errors in literature}.
	
	\begin{proposition}
		\label{prop: Spencer's description fixed}
		Assume that the characteristic of $k$ is coprime to the order of $W$. The map \eqref{eqn: Galois description of JG} restricted to $\fc^\reg$ sends $J|_{\fc^\reg}$ to the Weil restriction 
		\begin{equation}
			\label{eqn: J over rss}
			\mathrm{Res}^{\canft\times_{\fc_G}\fc^\reg}_{\fc^\reg}\left(\bigoplus_{\nu\in W/W_X} \canC_\nu\times \canfa_\nu^\reg\right)^W.
		\end{equation}
		In particular, $J$ is open in the closure of \eqref{eqn: J over rss} in $\mathrm{Res}^{\canft\times_{\fc_G}\fc}_\fc(\canT\times(\canft\times_{\fc_G}\fc))^W$.
	\end{proposition}
	\begin{proof}
		First, we recall the construction of the morphism $J_G\to \mathrm{Res}^\canft_{\fc_G}\big(\canT\times_{\fc_G}\canft\big)^W$ from Lemma \ref{lem: Galois description of JG}, which was described in the proof of \cite[Proposition 2.4.2]{ngo2010lemme}. To describe this morphism, it is equivalent to describe the corresponding morphism $\pi_\fg^*I_{G}^\reg\to \canT\times \wt{\fg}^\reg$, where 
		\[
		\wt{\fg} = \{(x,B)\colon x\in \fg,\; B\text{ is a Borel of $G$},\; \text{ and }x\in \mathrm{Lie}(B)\}
		\]
		and the projection $\pi_\fg\colon \wt{\fg}\to \fg$ is the Grothendieck-Springer resolution. For this, \cite[Lemma 2.4.3]{ngo2010lemme} shows that $I_{G,x}\subset B$ for any pair $(x,B)$ in $\wt{\fg}^\reg$. The corresponding morphism $\pi_\fg^*I_G\to \canT\times \wt{\fg}^\reg$ at a point $(x,B)$ comes from the map 
		\[
		I_{G,x}\hookrightarrow B\to B/[B,B] = \canT
		\]
		Now, consider the composition
		\begin{equation}
			\label{eqn: J to JG1}
			J\to J_G|_\fc\to \mathrm{Res}^{\fc\times_{\fc_G}\canft}_{\fc}\big(\canT\times (\fc\times_{\fc_G}\canft)\big)^{W},
		\end{equation}
		where the second map is the Galois description of $J_G$ in Lemma \ref{lem: Galois description of JG}. Let $\wt{\fp}^\reg$ denote the base change of the diagram
		\[
		\xymatrix{
			\wt{\fp}^\reg\ar[d]^-{\pi_\fp} \ar[r] & \canft\times_{\fc_G}\fc\ar[d] \\
			\fp^\reg\ar[r] & \fc
		}
		\]
		By \ref{lem: t is a union of a's}, we can express $\wt{\fp}^\reg$ as a union of components
		\[
		\wt{\fp}^\reg = \bigcup_{\nu\in W/W_X}\wt{\fp}^\reg_\nu.
		\]
		By \cite[Proposition 3.5 and page 87]{leslie} the component $\wt{\fp}^\reg_1$ is the collection of pairs $(x,B)$ with $x\in \fp^\reg$ and $B$ a Borel of $G$ such that $B\cap \theta(B) = T$ is a maximal torus of $G$. (These are the so-called ``$\theta$-split Borels'' of $G$.) For any such Borel, the canonical involution on $\canT$ is obtained by the canonical involution on $T$ via the identification $T\simeq \canT$ induced by $B$. For any pair $(x,B)\in \wt{\fp}^\reg$, we claim that the map
		\[
		I_x\hookrightarrow B \to B/[B,B] = \canT
		\]
		has image in $\canC$. Over $\fc^\reg$, let $T = B\cap \theta(B)$, and identify $T\simeq \canT$ via $B$. Then, $I_x\subset I_{G,x} = T$ since $x\in \fp^{\mathrm{rss}}\subset \fg^{\mathrm{rss}}$ is regular, semisimple, and by the definition of the canonical involution (see Lemma \cite[Proposition 1.12]{leslie}), $I_x$ coincides with the group scheme $\canC$ under this map. Hence, we conclude that the restriction of the map $\pi_\fp^*I^\reg|_{\fp^{\mathrm{rss}}}\to \canT$ to $\wt{\fp}^{\mathrm{rss}}_1$ has image equal to $\canC$.
		
		Now, for any other component $\wt{\fp}_\nu^\reg$, the $W$-equivariance of the map $\pi_\fp^*I^\reg|_{\fp^{\mathrm{rss}}}\to \canT$ together with the image over $\wt{\fp}^{\mathrm{rss}}_1$ imply that the image is $\canC_\nu$. We conclude that the image of $J|_{\fc^\reg}$ is given by \eqref{eqn: J over rss}.
		
		As $J$ is closed in $J_G$ and the map \eqref{eqn: Galois description of JG} is open and an isomorphism over the regular, semisimple locus, $J$ is locally closed and is open in the closure of its image in the regular, semisimple locus.
	\end{proof}

	\begin{remark}
		We note that the above formulation is close to that of \cite[Theorem 4.7]{leslie}, but we use all components of $\canft\times_{\fc_G}\fc$ rather than restricting to the canonical component $\canfa_1\subset \canft\times_{\fc_G}\fc$.
	\end{remark}
	
	The description of Proposition \ref{prop: Spencer's description fixed} is quite inexplicit. We will prove a better description of the cokernel in the next section using this result.

	\subsection{Application to the Cokernel Group Scheme $J_X$}
	\label{sec: relative dual centralizers}
	
	We continue the notation of the previous section, with $\canfa = \mathrm{Lie}(\canA)$ the Lie algebra of the canonical torus. Recall that by Lemma \ref{lemma: qspt implies unramified map of hitchin bases}, the natural map $\fc\to \fc_G$ is an embedding when $X$ is $\theta$-quasisplit, and we have an inclusion $J\to J_G|_\fc$. Let us denote the cokernel of this inclusion by $J_X$. The goal of this section, is to give a Galois description for $J_X$ using the finite flat cover $\canfa\to \fc$. 
	
	Indeed, this idea is not new to this paper:  While Knop did not use the construction of regular centralizers in \cite{knop1996automorphisms}, he did consider the Weil restriction $J_A^1:=\mathrm{Res}^\canfa_\fc(\canA\times \canfa)^{W_\fa}$. The key result of \emph{loc cit} was providing an action of the fiberwise neutral component of $J_A^1$, which we denote by $J_A^0$, on the cotangent bundle $T^*X$ over $\fc$ for $X$ a $G$-variety. He used this action to characterize the group of $G$-equivariant automorphisms of $X$.
	
	In fact, we will show that Knop's groups $J_A^0$ and $J_A^1$ are closely related to the cokernel $J_X$ in the case where $X$ is a symmetric space. We prove the following comparison.
	\begin{thm}
		\label{thm: galois description of JA}
		Assume that the characteristic of $k$ does not divide the order of $W$. There exist open embeddings
		\[
		J_A^0\hookrightarrow J_X\hookrightarrow J_A^1.
		\]
	\end{thm}
	\begin{proof}
		Let $J_G^1 = \mathrm{Res}^\canft_{\fc_G}(\canT\times \canft)^W$ be the smooth group scheme on $\fc_G$ appearing in Lemma \ref{lem: Galois description of JG}. Let $S$ be a $\fc$ scheme, and consider an $S$ point $x\in \left(J_G^1|_{\fc}\right)(S)$, which is the data of a $W$-equivariant map
		\[
		\xi_x\colon S\times_{\fc_G} \canft\to \canT\times (\canft\times_{\fc_G}\fc).
		\]
		Write 
		\[
		S\times_{\fc_G}\canft = \bigcup_{\nu\in W/W_X}S\times_{\fc}\canfa_\nu\quad %\text{and} \quad \canT\times (\canft\times_{\fc_G}\fc) = \bigcup_{\nu\in W/W_X}\canT\times \canfa_\nu
		\]
		and let $\xi_{x,1} = \xi_x|_{S\times_{\fc}\canfa_1}$ be the restriction of $\xi_x$ to the component $S\times_{\fc}\canfa_1$. The composition of $\xi_{x,1}$ with the canonical map $\canT\times (\ft\times_{\fc_G}\fc)\to \canA\times \fa$ gives a $W_X$ equivariant map
		\[
		S\times_{\fc}\canfa_1\to \canA\times \canfa
		\]
	    Identifying $\fa_1\simeq \fa$ via the canonical map, this determines a unique element of $J_A^1(S)$. This induces the desired map $\vartheta\colon J_G^1|_\fc\to J_A^1$. It is a direct computation that the kernel of this map agrees with \eqref{eqn: J over rss} over the regular semisimple locus $\fc^\reg$. 

        Since $\ker(\vartheta)$ is closed in $J_G^1$ and since $J\to J_G|_{\fc}\to J_G^1|_\fc$ restricts to an isomorphism $J|_{\fc^\reg}\to \ker(\vartheta)|_{\fc^\reg}$, the image of the embedding $J\to J_G^1|_\fc$ lies in $\ker(\vartheta)$. As $J$ is closed in $J_G|_\fc$, $J$ coincides with the preimage of $\ker(\vartheta)$ in $J_G|_\fc$. Hence, the induced map $J_X\to J_A^1$ is an injection. It is immediate that this map is an isomorphism over the regular, semisimple locus, so it is an open affine embedding. It follows from \cite[IV$_B$, 4.4]{SGA} that $J_X$ also contains $J_A^0$ as an open subscheme. 
	\end{proof}

	One can ask exactly which subgroup $J_X$ is inside of $J_A^1$.	We state and prove such a description below. 
	
	\begin{thm}
		\label{thm: exact image in JA1}
		Assume that the characteristic of $k$ does not divide the order of $W$. For any restricted root $\alpha\in \Phi_r$, define the corresponding hyperplane $\fh_\alpha$ to be the fixed points of the reflection $s_\alpha$ on $\canfa$. Define the open subgroup $J_X'\subset J_A^1$ to be the subscheme of $J_A^1$ consisting of $S$-points $x$ for which the composition 
		\[
		S\times_\fc \fh_\alpha \hookrightarrow S\times_{\fc}\canfa\xrightarrow{x} \canA\xrightarrow{\alpha} \bG_m
		\]
		avoids $-1$. Then, the embedding $J_X\hookrightarrow J_A^1$ of Theorem \ref{thm: galois description of JA} identifies $J_X$ with $J_X'$.
	\end{thm}
	\begin{proof}
        For any $\fc$ scheme $S$ and $x\in J_A^1(S)$, we say $\tilde{x}\in J_G^1|_\fc$ is a lift of $x$ if for all $\nu\in W/W_X$ the following diagram commutes
		\[
		\xymatrix{
			S\times_\fc \canfa\ar[r]^-{x}\ar@{^{(}->}[d] & \canA \\
			S\times_\fc\canft\ar[r]^-{\tilde{x}} & \canT\ar[u]
		}
		\]
        where the left vertical map is induced by the canonical section $\fa\simeq \fa_1\to \ft$ and the right vertical map is the canonical map $\canT\to \canA$. By Lemma \ref{lem: Galois description of JG}, the image of $J_G|_\fc$ in $J_A^1$ consists of $S$-points $x$ which have a lift $\tilde{x}$ satisfying the property that
		\[
		S\times_{\fc_G}\fh_{G,\beta}\to S\times_{\fc_G} \canft\xrightarrow{\tilde{x}} \canT\xrightarrow{\alpha}\bG_m
		\]
		avoids $-1$ for all roots $\beta\in \Phi$, where $\fh_{G,\beta}\subset \canft$ is the root hyperplane for $\beta$. The result follows as 
		\[
		\fh_{G,\beta}\times_{\fc_G}\fc = \bigcup_{\nu\in W/W_\fa} w_\nu\cdot \fh_{\mathrm{res}(\beta)}
		\]
		for all $\beta\in \Phi$ and for $\mathrm{res}\colon \Phi\to \Phi_r$ the restriction map from $\canft$ to $\canfa_1$.
	\end{proof}
	
	\begin{remark}
		In \cite{me2}, we show that the group scheme appearing in Theorem \ref{thm: exact image in JA1} corresponds by \cite[Theorem 7.7]{knop1996automorphisms} to the spherical root system of $X$ (with the Sakellaridis-Venkatesh renormalization).
	\end{remark}
	
	\begin{remark}
		One may note that the dual group $G_X^\vee$ is constructed with maximal torus the dual of the canonical torus of $X$ and with root system given by the dual of the spherical root system. Hence, Lemma \ref{lem: Galois description of JG} applied to the group $G_X^\vee$ also describes the regular centralizers for $G_X^\vee$ in terms of the finite flat map $\canfa\to \fc$. In fact, one will find that the group scheme of regular centralizers for $G_X^\vee$ is dual to $J_X$ in an analogous fashion to the duality between $J_G$ and $J_{G^\vee}$. This duality can be made precise; see our companion work on duality for relative Hitchin systems \cite{me2}.
	\end{remark}
	
	\subsection{Direct description of $J$}
	
	In this section, we give a more direct description of $J$, see Theorem \ref{thm: exact description of J}. The argument uses the description of $J$ as a kernel given in Theorem \ref{thm: galois description of JA}. As an application, we give conditions under which the description given in \cite{leslie} holds.
	
	\begin{lemma}
		\label{lem: normalization of t x c}
		The normalization of $\ft\times_{\fc_G}\fc$ is the disjoint union $\coprod_{\nu \in W/W_{X}} \fa_{\nu}$.
	\end{lemma}
	\begin{proof}
		Follows from \cite[Tag 0CDV]{stacks2022stacks} and Lemma \ref{lem: t is a union of a's}.
	\end{proof}
	
	We denote $\Norm = \coprod_{\nu \in W/W_{X}} \fa_{\nu}$ the normalization of Lemma \ref{lem: normalization of t x c}. We further denote
	\[
	\Pi = \mathrm{Res}^{\fc\times_{\fc_{G}}\ft}_{\fc}\big(T\times (\fc\times_{\fc_{G}}\ft)\big) \quad \text{ and }\quad \ol{\Pi} = \mathrm{Res}^{\Norm}_{\fc}\big(T\times \Norm\big)
	\]
	The group schemes $\Pi$ and $\ol{\Pi}$ are smooth, commutative group schemes over $\fc$. The natural map
	\[
	\Norm\to \ft\times_{\fc_G}\fc
	\]
	induces a morphism $\Pi\to \ol{\Pi}$.
	We define the group scheme
	\begin{equation}
		\label{eqn: J1 over c}
		J^{1}:=\left(\mathrm{Res}^{\Norm}_{\fc}\left(\bigoplus_{\nu\in W/W_X} \canC_\nu\times \canfa_\nu\right)\times_{\ol{\Pi}}\Pi\right)^W.
	\end{equation}
	The projection onto the second factor induces a map
	\begin{equation}
		\label{eqn: J1 to J1G}
		J^1\to J^1_G.
	\end{equation}
	
	\begin{lemma}
		\label{lem: J1 to J1G is closed}
		Assume that the characteristic of $k$ does not divide the order of $W$. Then the map \eqref{eqn: J1 to J1G} is a closed embedding.
	\end{lemma}
	\begin{proof}
		As closed embeddings are stable under base change, the morphism
		\[
		\mathrm{Res}^{\Norm}_{\fc}\left(\bigoplus_{\nu\in W/W_X} \canC_\nu\times \canfa_\nu\right)\times_{\ol{\Pi}}\Pi \to \Pi
		\]
		is a closed embedding. By the assumption on characteristic, taking $W$ invariants preserves closed embeddings, see for example \cite[Remark 6(iii)]{mumford}.
	\end{proof}

	Let $J' = J_G\times_{J_G^1}J^1$. That is, by Lemma \ref{lem: Galois description of JG}, $J'$ is the subgroup scheme of $J^{1}$ defined by the condition that for any root $\alpha$ of $G$ and for any $S$ point $\xi\in J^1(S)$, the composition 
	\[S\times_{\fc}(\fh_{G,\alpha}\cap (\fc\times_{\fc_{G}}\ft))\rightarrow S\times_{\fc}(\fc\times_{\fc_{G}}\ft)\xrightarrow{\xi} T\xrightarrow{\alpha}\bbG_{m},\]
	is trivial. It is immediate by base change that the group scheme $J'$ is an open subgroup of $J^1$ and that $J' \simeq J_G|_{\fc}\cap J^1$.
	
	\begin{thm}
		\label{thm: exact description of J}
		Assume that the characteristic of $k$ does not divide the order of the Weyl group $W$. The natural map $J\to J^1_G|_\fc$ identifies $J\simeq J'$.
	\end{thm}
	
	\begin{proof}
		By construction, the map $J^1_G|_\fc\to J_A^1$ in the proof of Theorem \ref{thm: galois description of JA} fits into a commutative diagram
		\[
		\xymatrix{
			J_G^1|_\fc\ar[rr]\ar@{^{(}->}[d] & & J_A^1\ar@{^{(}->}[d] \\
			\Pi\ar[r] & \ol{\Pi}\ar[r]^-{\Xi} & \mathrm{Res}^\fa_\fc(\canA\times \fa)
		}
		\]
		where the kernel of the map $\Xi$ above is the Weil restriction
		\[
		\mathrm{Res}^{\Norm}_\fc\left(\bigoplus_{\nu\in W/W_X}\canC_\nu\times \fa_\nu\right).
		\]
		In particular, we immediately deduce that the kernel of the map $\Pi\to \mathrm{Res}^\fa_\fc(\canA\times \fa)$ is identified with the fiber product
		\[
		\mathrm{Res}^{\Norm}_\fc\left(\bigoplus_{\nu\in W/W_X}\canC_\nu\times \fa_\nu\right)\times_{\ol{\Pi}}\Pi
		\]
		and so $J^1$ is the kernel of the morphism $J_G^1|_\fc\to J_A^1$. By \ref{thm: galois description of JA}, the group scheme $J$ is the kernel of the morphism $J_G|_\fc\to J_G^1|_\fc\to J_A^1$, and so we conclude that $J = J_G|_\fc \cap J^1 = J'$.
	\end{proof}
	
	Note that Theorem \ref{thm: exact description of J} immediately implies that $J'$ is a smooth commutative group scheme over $\fc$.
	
	We now use Theorem \ref{thm: exact description of J} to study when existing results in the literature do hold. We will show that Example \ref{ex: sl3 case} is essentially the only case where the results of \cite{leslie} fail. Recall that $\canC = \canC_1$ was defined in Definition \ref{def: canC definition}.
	
	\begin{corollary}
		\label{cor: correcting SL and GPPN}
		Suppose that a symmetric pair has no descendant isogeneous to a product of a torus and Example \ref{ex: sl3 case}. Then, we have
		\[
		J^1\simeq \mathrm{Res}^\fa_\fc(\canC\times \fa)^{W_X}
		\]
	\end{corollary}
	\begin{proof}
		For any descendant $(L,\theta,H_L)$, we have an agreement of canonical tori $\canA_L = \canA$. Recall that we have a map of GIT quotients
		\[
		\varphi_L\colon \fc_L\to \fc.
		\]
		Put 
		\[
		J_{A,L}^1 = \mathrm{Res}^\fa_{\fc_L}(\canA\times \fa)^{W_{X,L}}
		\]
		Then we have an isomorphism $\varphi_L^*J_A^1 \simeq J_{A,L}^1$ on open set $V_L\subset \fc_L$ over which $\varphi_L$ is unramified (see Definition \ref{definition: UL and sundry}). Similarly, we have $\varphi_L^*J_G^1 \simeq J_{L}^1$ over $V_L$, fitting into a Cartesian diagram
		\[
		\xymatrix{
			\varphi_L^*J_A^1\ar[r]\ar[d]^-{\cong} & J_G^1 \ar[d]^-{\cong} \\
			J_{A,L}^1\ar[r] & J_L^1
		}
		\]
		Hence, we have an isomorphism
		\[
		\varphi_L^*J^1 \simeq \ker(J_L^1|_{\fc}\to J_{A,L}^1)
		\]
		over $V_L$. 
		
		Now, the normalization map $\Pi\to \ol{\Pi}$ induces a map 
		\begin{equation}
			\label{eqn: correction map}
			J^1\to \mathrm{Res}^\fa_\fc(\canC\times \fa)^{W_X}
		\end{equation} 
		As this is a map of smooth affine group schemes, it suffices to show that \eqref{eqn: correction map} is an isomorphism away from codimension 2. Therefore, by the above discussion, we are reduced to considering semisimple rank 1 descendants.
		Moreover, for any symmetric pair $(G,\theta,H)$ with adjoint form $(G_{\mathrm{ad}},\theta_{\mathrm{ad}},H/Z(G)\cap H)$, there is a short exact sequence
		\[
		1\to (Z(G)\cap H)\times \fc\to \mathrm{Res}^\fa_\fc(\canC\times \fa)^{W_X} \to \mathrm{Res}^\fa_\fc(\canC_{\mathrm{ad}}\times \fa)^{W_X}\to 1
		\]
		where $\canC_{\mathrm{ad}}= \canC/Z(G)\cap H$ is the kernel of the canonical map $\canT_{\mathrm{ad}}\to \canA_{\mathrm{ad}}$ for the adjoint symmetric pair. Hence, we may further reduce to the case of rank 1 symmetric pairs $(G,\theta,H)$ with $G$ adjoint, simple.
		
		There are four adjoint simple rank 1 symmetric pairs:
		\begin{enumerate}
			\item $PGL_{2}$, with involution coming from conjugation by $\left(\begin{matrix}1 & 0\\ 0 & -1\end{matrix}\right)$.
			\item $PGL_{3}$, with involution coming from conjugation by $\left(\begin{matrix}1 & 0\\ 0 & -Id_{2}\end{matrix}\right)$.
			\item $PGL_{2}$ with involution $g\mapsto g^{-1}$.
			\item $PGL_{2}\times PGL_{2}$, with involution swapping the two $PGL_{2}$ factors.
		\end{enumerate}
		We now check by direct computation:
		
		In cases (1) and (3), we may identify
		\[
		\canC = Z(\PGL_2) = \{1\}
		\]
		so the claim follows trivially.
		
		In case (4), we have $\canC \simeq T_{\PGL_2}$ embedded diagonally in $T_{\PGL_2}\times T_{\PGL_2}$. The little Weyl group $W_X\subset W$ is the diagonally embedded $S_n\subset S_n\times S_n$. The map \eqref{eqn: correction map} over $0\in \fc$ is given by
		\[
		\canC^W\to \canC^{W_X}
		\]
		which are both the trivial group.
		
		As case (3) is excluded, the result now follows.
	\end{proof}

	\begin{remark}
		Note that for case (3) in the proof above, we have
		\[
		\canC = \left\{ \begin{pmatrix}
			a & & \\ & a & \\ & & b
		\end{pmatrix}\in P(\GL_1\times \GL_2)\colon a,b\in k^\times \right\}.
		\]
		(See Example \ref{ex: sl3 case}.) Therefore, the map \eqref{eqn: correction map} is, over $0\in \fc$,
		\[
		(\canC^W = \{1\})\to (\canC^{W_{X}} = \canC)
		\]
		which is not an isomorphism.
	\end{remark}
	
	\begin{remark}
		Under the assumptions of Corollary \ref{cor: correcting SL and GPPN}, the descriptions of $J$ due to S. Leslie \cite{leslie} is valid.
	\end{remark}

	\section{General Structure of the Hitchin Fibration}
	\label{sec: general structure}
	
	In this section, we introduce the Hitchin morphism and prove the basic structure theorems (see Corollary \ref{cor: Areg to A}, Proposition \ref{prop: Areg to A is etale}, and Theorem \ref{thm: general structure of h myfatslash}). Further structure, analogous to \cite{ngo2010lemme}, is the subject of ongoing work.
	
	Fix a smooth projective curve $C$ of genus at least 2 and a line bundle $D$ on $C$ which is either the canonical bundle or has degree $\deg(D)\geq 2g$. Also fix a symmetric pair $(G,\theta,H)$, and let $\cM = \mathrm{Maps}(C,[\fp_D/H])$ denote the stack of maps from the curve $C$ to the twisted stack quotient $[\fp_D/H]$ where $\fp_D = \fp\otimes D$. On $k$-points, $\cM$ classifies pairs
	\[
	\cM(k) = \{(\cT_H,\sigma)\colon \cT_H\text{ is a $H$ torsor and }\sigma\in \Gamma(C,\cT_H\wedge^{\mathrm{Ad}} \fp_D)\}
	\]
	We have a Hitchin base $\cA = \mathrm{Maps}(C,\fc_D)$ classifying maps from $C$ to the twisted GIT quotient $\fc_D = (\fp\git H)_D$. In particular, by Theorem \ref{theorem : Chevalley}, there is a $\bG_m$ equivariant isomorphism $\fc\simeq \bA^r$ where the $\bG_m$ action on $\bA^r$ is given by exponents $(e_1,\dots, e_r)$ of Lemma \ref{lem: exponents}. This induces an identification of affine spaces
	\[
	\cA \simeq \bigoplus_{i=1}^r H^0(C,D^{\otimes e_i})
	\]
	There is a natural Hitchin morphism
	\[
	h\colon \cM\to \cA
	\]
	induced by the Chevalley map $[\fp/H]\to \fc$. We restrict our attention to the regular locus in $\cM$; namely, we let $\cM^\reg = \mathrm{Maps}(C,[\fp^\reg_D/H])$ be the substack of $\cM$ classifying maps $C\to [\fp_D/H]$ which factor through the open substack $[\fp^\reg_D/H]\subset [\fp_D/H]$. We abuse notation to denote $h\colon \cM^\reg\to \cA$.
	
	To study the geometry of $h$ over the regular locus, we introduce the space $$\cA^{\reg} = \mathrm{Maps}(C,(\fp^\reg\myfatslash H)_D).$$ The factorization $[\fp^\reg/H]\to \fp^\reg\myfatslash H\to \fc$ induces a factorization
	\[
	\cM^\reg\xrightarrow{h^{\reg}} \cA^{\reg}\xrightarrow{\phi} \cA.
	\]
	
	We consider the following subsets in the Hitchin base $\cA$.
	\begin{definition}
		For any closed, $\bG_m$-stable subset $Z\subset \fc$, denote by $\cA_{Z}^\heartsuit\subset \cA$ the subvariety consisting of $S$ points
		\[
		a\colon C\times S\to \fc_D
		\]
		whose image is not completely contained in $Z_D$.
		
		Similarly, define $\cA_Z^\diamondsuit\subset \cA$ to be the open subvariety consisting of $S$ points $a$ as above whose image intersects $Z_D$ transversely.
	\end{definition} 
	
	We let $\cA^{\reg,\heartsuit}$, resp. $\cA^{\reg,\diamondsuit}$, denote the base change $\cA^{\reg}\times_{\cA}\cA^\heartsuit$, resp. $\cA^\reg\times_\cA\cA^\diamondsuit$. We describe the map $\cA^{\reg,\heartsuit}\to \cA^\heartsuit$ below. Note that if $Z\subset \fc$ is of codimension at least 2, then $\cA_Z^\diamondsuit$ consists of $S$ points $a$ whose image is disjoint from $Z_D$.
	
	Now, let $Z\subset \fc$ be the minimal closed subset away from which $\fp^\reg\myfatslash H\to \fc$ is an isomorphism. We assume without loss of generality that $Z$ is irreducible. Note that this is true whenever $G$ is almost simple. Furthermore, we assume that
	\[
	\fp^\reg\myfatslash H = \fc\coprod_{\fc\setminus Z}\fc,
	\]
	as is the case if $G$ is almost simple, and the symmetric pair $(G,\theta,H)$ is not the split form of type $D_{2n}$.
	
	We now restrict further to the case when the nonseparated locus $Z$ is a divisor in $\fc$. Note that it is immediate that $Z$ is preserved by the $\bG_m$ action since the $\bG_m$ on $\fc$ lifts to $\fp^\reg\myfatslash H$.
	Let $d(Z)$ denote the exponent of the $\bG_m$ action on $Z$, and put
	\[
	d :=d(Z)\deg(D)
	\]
	There is an evaluation map
	\[
	\cA_Z^\heartsuit\to \Sym^{d}(C)
	\]
	taking an $S$ point $a\colon C\times S\to \fc_D$ to the preimage $a^{-1}(Z_D)$.
	
	We may define a similar evaluation map for $\cA_Z^{\reg,\heartsuit}$. Namely, we denote the two components in $\fp^\reg\myfatslash H|_{Z}$ by $Z_1$ and $Z_2$. Then, we have an evaluation map
	\begin{equation}
		\label{eqn: evaluation map on Areg}
		\cA_Z^{\reg,\heartsuit}\to \coprod_{i=0}^d \Sym^i(C)\times \Sym^{d-i}(C)
	\end{equation}
	taking an $S$ point $a\colon C\times S\to (\fp^\reg\myfatslash H)_D$ to the preimage $(a^{-1}(Z_1),a^{-1}(Z_2))$. The resulting diagram
	\[
	\xymatrix{
		\cA_Z^{\reg,\heartsuit}\ar[r] \ar[d] & \coprod_{i=0}^d \Sym^i(C)\times \Sym^{d-i}(C)\ar[d] \\
		\cA_Z^{\heartsuit}\ar[r] & \Sym^d(C)
	}
	\]
	is commutative.
	
	\begin{prop}
		\label{prop: Areg to A over Aheart}
		Suppose that $Z\subset \fc$ is such that
		\[
		\fp^\reg\myfatslash H = \fc\coprod_{\fc\setminus Z}\fc
		\]
		and $Z$ is an irreducible divisor in $\fc$. Let 
		\[
		\Sym^{i,j}(C)\subset\Sym^i(C)\times \Sym^{j}(C)
		\]
		denote the open subscheme
		\[
		\{(D',D'')\in \Sym^i(C)\times \Sym^{j}(C)\mid \mathrm{Supp}(D')\text{ and Supp}(D'')\text{ are disjoint}\}.
		\]
		Then, the image of the evaluation map \eqref{eqn: evaluation map on Areg} is equal to the disjoint union $\coprod_{i=0}^d\Sym^{i,d-i}(C)$ and the resulting diagram
		\begin{equation}
			\label{eqn: comm diagram for Areg to A}
			\xymatrix{
				\cA_Z^{\reg,\heartsuit}\ar[d]\ar[r] & \coprod_{i=0}^d \Sym^{i,d-i}(C)\ar[d] \\
				\cA_Z^{\heartsuit}\ar[r] & \Sym^d(C)
			}
		\end{equation}
		is Cartesian.
	\end{prop}
	
	\begin{remark}
		If $\fp^\reg\myfatslash H$ has more complicated structure over a closed subset $Z'\subset Z$, then the above proposition remains valid over the locus of points $a\in \cA(S)$ for which the image of $a$ is disjoint from $Z'_D$.
	\end{remark}
	
	\begin{proof}
		We first characterize the image of the evaluation map \eqref{eqn: evaluation map on Areg}. We immediately get a decomposition into components $\cA^{\reg,\heartsuit}_Z = \cup_i \cA^{\reg,\heartsuit}_{Z,i}$, where $\cA^{\reg,\heartsuit}_{Z,i}$ denotes the preimage of $\Sym^i(C)\times \Sym^{d-i}(C)$ under \eqref{eqn: evaluation map on Areg}. It is clear that the image of $\cA^{\reg,\heartsuit}_{Z,i}$ consists of 
		\[
		(D',D'')\in \Sym^i(C)\times \Sym^{d-i}(C)
		\]
		such that the supports of $D'$ and $D''$ are distinct, which is exactly the open subset $\Sym^{i,d-i}(C)$. Consider the diagram
		\[
		\xymatrix{
			\cA_{Z,i}^{\reg,\heartsuit}\ar[d]\ar[r] & \Sym^{i,d-i}(C)\ar[d] \\
			\cA_Z^{\heartsuit}\ar[r] & \Sym^d(C)
		}
		\]
		The preimage of any $S$ point $a\in \cA_Z^{\heartsuit}(S)$ consists of the possible lifts of $a$ to $\tilde{a}\in \cA_{Z,i}^{\reg,\heartsuit}$. It is easy to see this is canonically identified with the fiber of the evaluation of $a$ under the
		\[
		\Sym^{i,d-i}(C)\to \Sym^{d}(C).
		\]
		Hence, the diagram \eqref{eqn: comm diagram for Areg to A} is Cartesian.
	\end{proof}
	
	\begin{cor}
		\label{cor: Areg to A}
		Suppose that $Z$ is an irreducible divisor in $\fc$ and $\fp^\reg\myfatslash H = \fc\coprod_{\fc\setminus Z}\fc$. Then we have a decomposition $\cA^\reg = \cup_{i=0}^d \cA^\reg_i$ where 
		\[
		\cA^\reg_i = \begin{cases}
			\cA_{Z,i}^{\reg,\heartsuit} & \text{if }i\neq 0,d \\
			\cA & \text{if }i=0,d
		\end{cases}
		\]
		where the restriction $\phi|_{\cA_{Z,i}^{\reg,\heartsuit}}$ is described by Proposition \ref{prop: Areg to A over Aheart} and the map $\cA^\reg_i\to \cA$ is the identity for the components $i = 0,d$.
	\end{cor}
	\begin{proof}
		Points $a\in \cA(S)\setminus\cA^{\heartsuit}_Z(S)$ consist of points
		\[
		a\colon C\times S\to \fc_D
		\]
		whose image over some $s\in S$ is completely contained in $Z_D$. It is clear therefore that if $S$ is local and if $a\in \cA(S)\setminus\cA^{\heartsuit}_Z(S)$, there are exactly two lifts $\tilde{a}\in \cA^\reg(S)$ and these lie in the closure of $\cA^{\reg,\heartsuit}_{Z,0}$ and $\cA^{\reg,\heartsuit}_{Z,d}$, respectively. The result now follows.
	\end{proof}

The following is immediate from the description in Corollary \ref{cor: Areg to A}.
    
	\begin{cor}
		\label{cor: Areg to A is etale for divisor case}
		Suppose that $Z$ is an irreducible divisor in $\fc$ and $\fp^\reg\myfatslash H = \fc\coprod_{\fc\setminus Z}\fc$. Then the map $\phi\colon \cA^{\reg}\to \cA$ is \'etale.
	\end{cor}
	
	In fact, it is always true that the map $\phi$ is \'etale. 
	
	\begin{prop}
		\label{prop: Areg to A is etale}
		The map $\phi\colon \cA^\reg\to \cA$ is \'etale. That is, Corollary \ref{cor: Areg to A is etale for divisor case} holds without any assumptions.
        \end{prop}
	\begin{proof}
		We note first that $\cA^\reg$ is finite type by \cite[Pages 267-68]{grothendieck}. Hence, the morphism $\phi$ is locally of finite type, and it suffices to show it is formally \'etale. Suppose that we have a commutative diagram
		\[
		\begin{tikzcd}
			T \arrow{d}\arrow{r} & \cA^\reg\arrow{d}\\
			T' \arrow{r}\arrow[dashed]{ru} & \cA
		\end{tikzcd}
		\]
		where $T\rightarrow T'$ is a first order thickening of affine schemes over $\cA$. We will show that there exists a unique lifting $T'\to \cA^\reg$ making the above diagram commute.
		
		By definition, this corresponds to there being a dotted morphism making the below diagram commute.
		\[
		\begin{tikzcd}
			C\times T \arrow{r} \arrow{d} & (\fp^{reg}\myfatslash H)_D\arrow{d}\\
			C\times T' \arrow{r}\arrow[dashed]{ru} & \fc_D
		\end{tikzcd}
		\]
		Now $\fp^{reg}\myfatslash H\rightarrow \fc$ is \'etale, and hence formally etale. By \cite[Tag 04FD]{stacks2022stacks}, there exists a unique lift $C\times T'\rightarrow (\fp^{reg}\myfatslash H)_D$, concluding the proof.
	\end{proof}

	In addition, we state a result on the structure of the map $h^{\reg}$ when $(G,\theta,H)$ is a $\theta$-quasisplit pair. Firstly note that by the $\bG_{m}$-equivariance of $J$, this group scheme descends to a group scheme on $[\fc/\bbG_{m}]$, which we pull back to a group scheme on $\fc_{D}$, which we also denote by $J$.  Now, for $a\colon S\times C\to (\fp^\reg\myfatslash H)_D$, we view $a$ as a map
	\[
	S\times C\to (\fp^\reg\myfatslash H)_D\to \fc_D
	\]
	and put $J_a :=a^*J$. We may define a commutative group stack $\cP$ over $\cA$ whose $S$ points are given by the space of $J_a$ torsors
	\[
	\cP(S) = (\text{$J_a$-torsors on $S\times C$}).
	\]
	for an $S$ point $a\colon S\times C\to(\fp^\reg\myfatslash H)_D$.
	
	\begin{thm}
		\label{thm: general structure of h myfatslash}
		Let $(G,\theta, H)$ be a $\theta$-quasisplit symmetric pair. The action of $\phi^{*}\cP$ makes $\cM^\reg\rightarrow \cA^{reg}$ into a $\phi^{*}\cP$-torsor. (We recall that $\phi$ denotes the morphism $\phi\colon \cA^{reg}\to\cA$.) 
	\end{thm}
	\begin{proof}
		The morphism $[\fp^\reg/H]\to \fp^\reg\myfatslash H$ is a gerbe banded by regular centralizers. The result is immediate as maps into a $J$-gerbe are torsors under the action of the space of $J$ torsors $\cP$.
	\end{proof}
	
	\begin{remark}
		We note that it is not necessarily the case that $\cM^\reg$ is dense in $\cM$ even over an open subset of $\cA$. For example, in the case of $X = \GL_{p+q}/\GL_p\times \GL_{q}$ ($p\neq q$), it is shown in \cite{bradlow2003surface} that $\cM_X$ contains a non-regular component whose image in $\cA$ is full.
	\end{remark}

	\section{The Symmetric Pair \texorpdfstring{$(\GL_{2n},\GL_n\times\GL_n)$}{(GL2n,GLn x GLn)}}
	
	In this section, we provide a spectral description of $J$ for the example of the symmetric pair $(\GL_{2n},\GL_n\times\GL_n)$.  This is very closely related to the description of Hitchin fiber in \cite{schaposnik2015spectral}.  The result of Schaposnik has the advantage that it is directly describing the Hitchin fiber rather than describing $\cP$ and $\cA^{\reg}$, while our result has the advantage that it is applicable over the entire Hitchin base.
	
	\subsection{The Spectral Cover}
	\label{sec: U(n,n) spectral cover}
	
	The map $\fa\git W_\fa\to \ft\git W$ (here $\ft$ and $W$ are for the group $GL_{2n}$) is an embedding by Lemma \ref{lemma: qspt implies unramified map of hitchin bases}. Over $\ft\git W$, there is a natural spectral cover $\ol{\fc}_{\GL_{2n}}\to \ft\git W$ given by $\ol{\fc}_{\GL_{2n}} = \ft\git S_{2n-1}$ for $S_{2n-1}\subset S_{2n}=W$ the index $2n$ subgroup which is the stabilizer of a fixed element in $\{1,2,...,2n\}$. Explicitly, we have $k[\ft\git W] = k[a_1,\dots, a_{2n}]$ where the $a_{i}$ are the degree $i$ elementary symmetric polynomials in $k[\ft]$, and
	\[
	k[\ol{\fc}_{\GL_{2n}}] = k[\ft\git W][x]/(x^{2n}+a_1x^{2n-1}+\cdots+a_{2n}).
	\]
	The map $\fa\git W_{\fa}\rightarrow \ft\git W$ corresponds to the map $k[a_{1},a_2,...,a_{2n}]\rightarrow k[a_{2},a_4,...,a_{2n}]$ which sends $a_{2i+1}\mapsto 0$ for each $0\leq i <n$.
	
	Let $\ol{\fc} = \ol{\fc}_{\GL_{2n}}\times_{\ft\git W}\fa\git W_\fa$ be the restriction of this spectral cover to $\fa\git W_\fa$. Explicitly, we have
	\[
	k[\ol{\fc}] = k[\fa\git W_\fa][x]/(x^{2n}+a_{2}x^{2n-2}+\cdots +a_{2n-2}x^2+a_{2n}) 
	\]
	The map $ k[\fa\git W_\fa][x]\rightarrow k[\fa\git W_\fa][x]/(x^{2n}+a_{2}x^{2n-2}+\cdots +a_{2n-2}x^2+a_{2n})$ gives a $\bbG_{m}$-equivariant embedding $\overline{\fc}\hookrightarrow (\fa\git W_{\fa})\times \bA^{1}$, where $\bbG_{m}$ acts with weight one on $\bA^{1}$.
	
	The involution $\theta$ on $GL_{2n}$ acts on $\ft$ and also on the quotient $\ft\git S_{n-1}$. Explicitly, this action takes $x\mapsto -x$ and $a_i\mapsto (-1)^ia_i$. The spectral cover $\ol{\fc}\subset \ft\git S_{2n-1}$ is preserved by this action, and so we have an involution $i\colon \ol{\fc}\to \ol{\fc}$ defined over $\fa\git W_\fa$ taking $x\mapsto -x$. We denote by
	\[ p\colon \ol{\fc}/i\to \fa\git W_\fa \]
	the quotient of the cover $\ol{\fc}\to \fa\git W_\fa$. We will refer to the map $p$ as the (generic) spectral cover of $\fa\git W_\fa$. We note that $p$ corresponds to the inclusion 
	\[k[\fa\git W_{\fa}]\hookrightarrow  k[\fa\git W_{\fa}][y]/(y^{n}+a_{2}y^{n-1}+...+a_{n}).\]
	The map 
	\[ k[\fa\git W_{\fa}][y]\rightarrow  k[\fa\git W_{\fa}][y]/(y^{n}+a_{2}y^{n-1}+...+a_{n})\]
	gives a $\bbG_{m}$-equivariant embedding 
	\[\overline{\fc}/i\hookrightarrow \fc\times \bA^{1},\]
	where $\bbG_{m}$ acts on $\bA^{1}$ with weight two.
	
	Finally we want to note that there is a particularly nice description of the set $U$ of Proposition \ref{proposition: gluing pattern U(n,n)} via the spectral cover.
	
	\begin{proposition}
		\label{Spectral description of $U$}
		The set $U$ of Proposition \ref{proposition: gluing pattern U(n,n)} is $k[\fa\git W_{\fa}][a_{2n}^{-1}]$, that is to say it is the complement of the vanishing locus of $a_{2n}$.
	\end{proposition}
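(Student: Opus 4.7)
The plan is to compute the restriction of the function $a_{2n}$ to the Cartan $\fa$ explicitly and compare with the description of $V$ given in Proposition \ref{proposition: gluing pattern U(n,n)}. Recall that $a_{2n}$ is the constant term of the characteristic polynomial of a matrix $M \in \fp = \{\begin{pmatrix} 0 & C \\ D & 0 \end{pmatrix}\}$ (up to sign convention, $a_{2n} = \det(M)$).

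First I would restrict to $\fa$. For $M = \begin{pmatrix} 0 & \delta \\ \delta & 0 \end{pmatrix}$ with $\delta = \mathrm{diag}(\delta_1,\dots,\delta_n)$ a routine block computation (eigenvectors have the form $(v, \pm v)^t$) shows that the eigenvalues of $M$ are $\pm \delta_i$ for $i = 1,\dots,n$, so the characteristic polynomial equals $\prod_i (x^2 - \delta_i^2)$. Comparing with $x^{2n} + a_2 x^{2n-2} + \cdots + a_{2n}$ gives
\[
a_{2n}|_\fa = (-1)^n \prod_{i=1}^n \delta_i^2.
\]
In particular, the vanishing locus of $a_{2n}|_\fa$ is exactly the union of the hyperplanes $\{\delta_i = 0\}$ for $1 \leq i \leq n$.

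Second, by Proposition \ref{proposition: gluing pattern U(n,n)}, the open subscheme $V \subset \fa$ is the complement of the root hyperplanes associated to the roots $\pm 2\delta_i^*$, which are precisely the hyperplanes $\{\delta_i = 0\}$. Hence $\fa \setminus V = V(a_{2n}|_\fa)$ as subschemes of $\fa$. Since $a_{2n}$ is a $W_\fa$-invariant polynomial (and in fact one of the generators of $k[\fa]^{W_\fa}$, up to a unit), passing to the GIT quotient under the surjection $\fa \to \fa \git W_\fa = \fc$ identifies $U = V \git W_\fa$ with the non-vanishing locus $\Spec\!\big(k[\fa\git W_\fa][a_{2n}^{-1}]\big) \subset \fc$, as claimed.

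No substantive obstacle is expected: the entire argument reduces to the eigenvalue computation above together with the elementary observation that both $V$ and $V(a_{2n})$ are $W_\fa$-stable subsets of $\fa$ with the same support, so they descend to the same open subscheme of $\fc$.
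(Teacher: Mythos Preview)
Your proof is correct and follows the same approach as the paper, which simply states that the result is ``immediate from the definition of $U$''; you have spelled out the underlying computation (that $a_{2n}|_\fa = (-1)^n\prod_i \delta_i^2$ vanishes exactly on $\bigcup_i\{\delta_i=0\}$) that the paper leaves implicit.
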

	
	The interpretation of this in terms of the spectral cover is that the vanishing locus of $a_{2n}$ is precisely the image in $\fa\git W_{\fa}$ of the intersection $((\fa\git W_{\fa})\times \{0\})\times_{(\fa\git W_{\fa})\times \bA^{1}}\overline{\fc}/i$.
	
	\begin{proof}
		This is immediate from the definition of $U$ in proposition \ref{proposition: gluing pattern U(n,n)}.
	\end{proof}

	Recall that the regular centralizer group scheme $I_{K}^\reg\rightarrow \fp^{\reg}$ descends to a smooth group scheme $J$ on the GIT quotient $\fp^{\reg}\git (\GL_n\times \GL_n)\simeq \fa\git W_\fa$ since the symmetric pair is $\theta$-quasisplit. We give a description of $J$ using the spectral cover above.
	
	\begin{proposition}
		\label{proposition: U(n,n) spectral cover}
		There is a natural map $J\to \mathrm{Res}_{\fa\git W_\fa}^{\ol{\fc}/i}(\bG_m)$ where $\mathrm{Res}_{\fa\git W_\fa}^{\ol{\fc}/i}(\bbG_{m})$ denotes the Weil restriction of $\bG_m$ along the map $p\colon \ol{\fc}/i\to \fc$. This map is an isomorphism. 
	\end{proposition}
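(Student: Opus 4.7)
The plan is to reduce the statement to a direct computation of the centralizer of a regular element in $\fp$, and then identify the resulting algebra with the structure sheaf of the spectral cover.

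First I would compute the centralizer fiberwise. Let $\phi = \begin{pmatrix} 0 & B \\ C & 0 \end{pmatrix} \in \fp^{\reg}$ lie over $a \in \fc$. Since $(G,\theta,H)$ is quasisplit, Proposition \ref{proposition: equivalent characterizations of quasi-split} implies that $\phi$ is regular in $\fg$, so $I_{\GL_{2n}}(\phi) = k[\phi]^\times$. Any polynomial $p(\phi)$ lies in $K = \GL_{2n}^\theta$ if and only if $p(\theta(\phi)) = p(\phi)$, that is, $p(-\phi) = p(\phi)$; equivalently, $p$ is an even polynomial. Hence $(I_K)_\phi = k[\phi^2]^\times$. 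Next, a block matrix determinant calculation gives
\[
\det(\lambda I_{2n} - \phi) = \det(\lambda^2 I_n - CB) = \lambda^{2n} + a_2 \lambda^{2n-2} + \cdots + a_{2n},
\]
so the characteristic polynomial of $CB$ (equivalently $BC$), viewed as a polynomial $f_a(y) = y^n + a_2 y^{n-1} + \cdots + a_{2n}$ in $y = \lambda^2$, is exactly the defining equation of the fiber $(\ol{\fc}/i)_a \to \{a\}$. Since $\phi$ is non-derogatory and $\dim k[\phi^2] = n$, the surjection $k[y]/f_a(y) \twoheadrightarrow k[\phi^2]$ sending $y \mapsto \phi^2$ is an isomorphism. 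Thus fiberwise, $(I_K)_\phi^{\times} \cong (p_*\cO_{\ol{\fc}/i})^{\times}_a$.

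Next I would globalize this identification. Fix a Kostant--Rallis section $\kappa\colon \fc \to \fp^{\reg}$, and view the universal element $\Phi := \kappa$ as a section of the trivial bundle $\fp \otimes \cO_\fc \subset \mathrm{End}(\cO_\fc^{\oplus 2n})$. The identity $f(\Phi^2) = 0$ with $f(y) = y^n + a_2 y^{n-1} + \cdots + a_{2n} \in \cO_\fc[y]$ then induces an $\cO_\fc$-algebra isomorphism
\[
p_* \cO_{\ol{\fc}/i} = \cO_\fc[y]/(f(y)) \xrightarrow{\sim} \cO_\fc[\Phi^2],\qquad y \mapsto \Phi^2,
\]
both sides being locally free $\cO_\fc$-modules of rank $n$ with equal fibers. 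Passing to units and using the canonical identification $\kappa^* I^{\reg} \simeq J$ (from the fact that $J$ is the descent of $I^{\reg}$ along $\fp^{\reg} \to \fc$), we obtain the desired morphism
\[
\mathrm{Res}_\fc^{\ol{\fc}/i}(\bG_m) = (p_* \cO_{\ol{\fc}/i})^{\times} \xrightarrow{\sim} \cO_\fc[\Phi^2]^{\times} = \kappa^* I^{\reg} \simeq J.
\]

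Finally I would verify naturality and the isomorphism statement. The map is an isomorphism on fibers by the first step, and both source and target are smooth commutative affine group schemes over $\fc$ of relative dimension $n$, so the construction yields an isomorphism globally. Independence of the choice of Kostant--Rallis section follows from the fact that $J$ descends canonically to $\fc$: any two choices $\kappa, \kappa'$ of section give the same descent datum, and the formula $p(\Phi^2) \leftrightarrow p(y)$ is plainly invariant under $H$-conjugation on $\Phi$. The main obstacle in carrying this out rigorously is the globalization: one must check that $\cO_\fc[\Phi^2]$ is indeed locally free of rank $n$ over $\cO_\fc$ and that the map $y \mapsto \Phi^2$ is surjective, rather than merely an isomorphism on fibers. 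This can either be verified directly using freeness of $p_*\cO_{\ol{\fc}/i}$ and Nakayama, or alternatively deduced as a consequence of Theorem \ref{thm: Spencer's description fixed}, identifying $\mathrm{Res}_\fc^{\ol{\fc}/i}(\bG_m)$ with the group scheme $\cC$ in this example.
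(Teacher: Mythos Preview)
Your proof is correct and takes a genuinely different route from the paper's. The paper argues by first invoking the spectral description of the regular centralizer for the adjoint action of $\GL_{2n}$, namely $J_{\GL_{2n}} \simeq \mathrm{Res}^{\ol{\fc}_{\GL(2n)}}_{\ft \git W}(\bG_m)$, restricting to $\fa \git W_\fa$, and taking $\theta$-invariants to obtain $J \simeq \mathrm{Res}^{\ol{\fc}}_{\fa \git W_\fa}(\bG_m)^i$. The remaining work is then the purely local computation $\mathrm{Res}^{\ol{\fc}}_{\ol{\fc}/i}(\bG_m)^i \simeq \bG_m$, which the paper checks fiberwise over the unramified and ramified loci of the double cover $\ol{\fc} \to \ol{\fc}/i$.

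Your approach bypasses the $\GL_{2n}$ spectral description entirely and computes the $K$-centralizer directly as the even polynomials $k[\phi^2]^\times$, then matches the minimal polynomial of $\phi^2 = \mathrm{diag}(BC,CB)$ with the defining equation of $\ol{\fc}/i$. This is more elementary and self-contained: one sees immediately why the quotient cover $\ol{\fc}/i$ rather than $\ol{\fc}$ is the relevant object, since $\phi^2$ already satisfies a degree-$n$ equation. The paper's approach, by contrast, situates the result within the general framework of Section~\ref{section: GS cover} (taking $\theta$-invariants of the Galois description for $G$), which is what one would imitate for other quasisplit pairs where no explicit matrix model is available. Your globalization via a Kostant--Rallis section is fine; the rank check you flag as the main obstacle is resolved by Nakayama, since $p_* \cO_{\ol{\fc}/i}$ is free of rank $n$ over $\cO_\fc$ and the fiberwise surjection $k[y]/f_a(y) \to k[\Phi^2]$ is between $n$-dimensional spaces.
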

	\begin{proof}
		Note that one has the description of regular centralizers of the adjoint action of $G$ on $\fg$ as the Weil restriction of $\bG_m$ along the spectral cover $\ol{\fc}_{\GL_{2n}}\to \ft\git W$. In particular, restricting to $\fa\git W_\fa\subset \ft\git W$, it follows that there is an isomorphism
		\[
		J\xrightarrow{\sim} \mathrm{Res}^{\ol{\fc}}_{\fa\git W_\fa}(\bG_m)^i
		\]
		of $J$ with the $i$-invariant locus in $\mathrm{Res}^{\ol{\fc}}_{\fa\git W_\fa}(\bG_m)^i$. Since
		\[
		\mathrm{Res}^{\ol{\fc}}_{\fa\git W_\fa}(\bG_m)^i = \mathrm{Res}^{\ol{\fc}/i}_{\fa\git W_\fa}\left(\mathrm{Res}^{\ol{\fc}}_{\ol{\fc}/i}(\bG_m)\right)^i
		\]
		it therefore suffices to show that 
		\[
		\mathrm{Res}^{\ol{\fc}}_{\ol{\fc}/i}(\bG_m)^i \simeq \bG_m.
		\]
		Note that we have a map
		\[
		\xi\colon \bG_m\to \mathrm{Res}^{\ol{\fc}}_{\ol{\fc}/i}(\bG_m)^i.
		\]
		Namely, an $S$-point $S\to \bG_m\times \ol{\fc}/i$ has image given by the base change
		\[
		\xymatrix{
			S\ar[r] & \bG_m\times \ol{\fc}/i \\
			S\times_{\ol{\fc}/i}\ol{\fc}\ar[u]\ar[r] & \bG_m\times \ol{\fc}\ar[u]
		}
		\]
		We claim this map is an isomorphism. Let $\cD\subset \ol{\fc}/i$ be the ramification locus of the map $\ol{\fc}\to \ol{\fc}/i$, i.e. the image of the fixed point locus of $i$ in $\ol{\fc}/i$. For $x\in (\ol{\fc}/i)\setminus \cD$, we have the stalk
		\[
		\mathrm{Res}^{\ol{\fc}}_{\ol{\fc}/i}(\bG_m)_x = \bG_m\times \bG_m
		\]
		with $i$ acting by swapping the two factors. As the preimage of such an $x$ is two points, it is easy to see that the map $\xi_x\colon \bG_m\to \bG_m\times \bG_m$ is the diagonal map. Hence, $\xi$ is an isomorphism away from $\cD$. For $x\in \cD$, we have the fiber 
		\[
		\mathrm{Res}^{\ol{\fc}}_{\ol{\fc}/i}(\bG_m)_x = \bG_m\times \bG_a
		\]
		with $i$ acting by $(y,z)\mapsto (y,-z)$. The map $\xi_x\colon \bG_m\to \bG_m\times \bG_a$ is the inclusion into the first factor, and we conclude that the map $\xi$ is an isomorphism.
	\end{proof}

	\subsection{Applications to the Hitchin Fibration for \texorpdfstring{$(\GL_{2n},\GL_n\times\GL_n)$}{(GL2n,GLn x GLn)}}
	
	The regular quotient for the case of $(\GL_{2n},\GL_n\times \GL_n)$ was computed in Example \ref{example: structure of regular quotient for U(n,n) case}. Now that we have constructed spectral covers, we give an alternate description. This recovers the work of Schaposnik on spectral covers \cite{schaposnik2013spectral}.
	
	We will make use of the notation from Section \ref{sec: general structure}; in particular, we fix a smooth projective curve $C$ of genus at least 2 and a line bundle $D$ on $C$ of degree at lest $2g$. For any $S$ point $a:S\times C\to \fc_D$, we define the spectral cover at $a$ to be the base change
	\[
	\xymatrix{
		\ol{C}_a\ar[d]\ar[r] & (\ol{\fc}/i)_D\ar[d] \\
		S\times C\ar[r] & \fc_D
	}
	\]
	In particular, we will set $\ol{C}$ to be the base change along the evaluation map $\cA\times C\to \fc_D$. We can realize $\ol{C}_a$ as a subvariety of the total space $\mathrm{Tot}(2D)$ by considering the vanishing locus of the characteristic polynomial equation.
	
	It is immediate from the Weil restriction description of Proposition \ref{proposition: U(n,n) spectral cover} that we have the following description of fibers.
	
	\begin{cor}
		\label{cor: Mreg as a torsor in U(n,n) case}
		The space $\cM^\reg$ is a torsor over $\cA^\reg$ under the abelian group scheme $\Pic(\phi^*\ol{C}/\cA^{\reg})$.
	\end{cor}

	\begin{cor}
		\label{cor: Dns for U(n,n)}
		For any point $a\in \cA(S)$, the image of the map $a\colon S\times C\to \fc_D$ meets the non-separated divisor $Z = \mathfrak{D}^{\mathrm{ns}}$ exactly at the image of the zero section of $\ol{C}_a$ in $\mathrm{Tot}(2D)$.
	\end{cor}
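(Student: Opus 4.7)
The plan is to reduce the statement to the explicit description of the nonseparated divisor $\mathfrak{D}^{\mathrm{ns}}\subset\fc$ given in Example~\ref{example: structure of regular quotient for U(n,n) case} and then to match this up with the defining equation of the spectral cover. The key observation will be that both loci are cut out, in the appropriate sense, by the single invariant $a_{2n}$.

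First I would identify $\mathfrak{D}^{\mathrm{ns}}$ explicitly inside $\fc\simeq \fa\git W_\fa$. By Proposition~\ref{proposition: gluing pattern U(n,n)} the fiber of $\fp^{\reg}\myfatslash H\to\fc$ has two elements exactly over the complement of the open set $U$ appearing in that proposition. Proposition~\ref{Spectral description of $U$} characterizes $U$ as the principal open $\mathrm{Spec}\,k[\fa\git W_\fa][a_{2n}^{-1}]$. Hence $\mathfrak{D}^{\mathrm{ns}}$ is precisely the vanishing divisor of the top elementary symmetric function $a_{2n}$ in $\fc$.

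Next I would compare this to the spectral cover $p\colon\ol{\fc}/i\to\fc$. Recall the presentation
\[
k[\ol{\fc}/i]=k[\fa\git W_\fa][y]/\!\bigl(y^n+a_2 y^{n-1}+\cdots+a_{2n-2}y+a_{2n}\bigr),
\]
together with the $\bG_m$-equivariant closed embedding $\ol{\fc}/i\hookrightarrow\fc\times\bA^1$ (with $\bG_m$ acting on $\bA^1$ with weight $2$). The ``zero section'' of the cover $\ol{\fc}/i$ is the intersection with $\fc\times\{0\}\subset\fc\times\bA^1$. Setting $y=0$ in the defining equation shows that this intersection is cut out by $a_{2n}=0$, so it coincides schematically with $\mathfrak{D}^{\mathrm{ns}}$.

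The remaining step is to propagate this equality through the twist by $D$ and the evaluation map. For a test scheme $S$ and $a\colon S\times C\to\fc_D$, the spectral curve $\ol C_a\subset\mathrm{Tot}(D)$ is by definition the pullback of $(\ol{\fc}/i)_D\hookrightarrow\mathrm{Tot}(D)\times_C\fc_D$ (using the tautological section of $\mathrm{Tot}(D)$). The zero section of $\ol C_a$, viewed inside $\mathrm{Tot}(D)$, is the intersection of $\ol C_a$ with the zero section of $\mathrm{Tot}(D)$, which by the preceding paragraph is the pullback of $\mathfrak{D}^{\mathrm{ns}}_D$ under $a$. This is exactly the assertion of the corollary; there is no real obstacle here, the content is the identification of $\mathfrak{D}^{\mathrm{ns}}$ with $\{a_{2n}=0\}$, which is made transparent by the spectral description of $J$ in Proposition~\ref{proposition: U(n,n) spectral cover}.
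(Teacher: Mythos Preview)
Your proposal is correct and follows the same line as the paper: the paper's one-line proof simply recalls that the nonseparated locus was computed in Example~\ref{example: structure of regular quotient for U(n,n) case} to be the image of the coordinate hyperplanes $\delta_i^*=0$ in $\fa$, which is exactly the content of Proposition~\ref{Spectral description of $U$} that you invoke to identify $\mathfrak{D}^{\mathrm{ns}}=\{a_{2n}=0\}$. Your additional step of matching this with the $y=0$ locus of the spectral equation and propagating through the $D$-twist just spells out what the paper leaves implicit.
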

	\begin{proof}
		As the non-separated locus was found to be given exactly by the coordinate axes $\delta^* = 0$ in $\fa$, the claim is immediate.
	\end{proof}
	
	\begin{remark}
		Corollary \ref{cor: Dns for U(n,n)} is related to mirror symmetry in \cite{me2}.
	\end{remark}
	
	We note that Proposition \ref{prop: Areg to A over Aheart} and Corollary \ref{cor: Areg to A} apply to this case, with $Z = \fD_{\mathrm{ns}}$ a divisor on which $\bG_m$ acts by exponent $2n$. We summarize this in the following.
	
	\begin{prop}
		\label{prop: structure of regular quotient for U(n,n)}
		The map  $\phi\colon \cA^{\reg}\to \cA$ is an \'etale map, with degree $2^{2n\deg(D)}$ over $\cA^\diamondsuit$ and with explicit structure as described in Proposition \ref{prop: Areg to A over Aheart} and Corollary \ref{cor: Areg to A}.
	\end{prop}

	\bibliographystyle{amsalpha}
	\bibliography{bib.bib}

\newcommand{\etalchar}[1]{$^{#1}$}
\providecommand{\bysame}{\leavevmode\hbox to3em{\hrulefill}\thinspace}
\providecommand{\MR}{\relax\ifhmode\unskip\space\fi MR }
% \MRhref is called by the amsart/book/proc definition of \MR.
\providecommand{\MRhref}[2]{%
  \href{http://www.ams.org/mathscinet-getitem?mr=#1}{#2}
}
\providecommand{\href}[2]{#2}
\begin{thebibliography}{BGPMiR20}

\bibitem[AOV08]{AOV}
Dan Abramovich, Martin Olsson, and Angelo Vistoli, \emph{Tame stacks in positive characteristic}, Ann. Inst. Fourier (Grenoble) \textbf{58} (2008), no.~4, 1057--1091. \MR{2427954}

\bibitem[Aut25]{stacks2022stacks}
Stacks~Project Authors, \emph{Stacks project}.

\bibitem[BGPG03]{bradlow2003surface}
Steven~B Bradlow, Oscar Garc{\'\i}a-Prada, and Peter~B Gothen, \emph{Surface group representations and {$U(p,q)$}-{H}iggs bundles}, Journal of Differential Geometry \textbf{64} (2003), no.~1, 111--170.

\bibitem[BGPMiR03]{garcia3}
Steven~B. Bradlow, Oscar Garc\'ia-Prada, and Ignasi Mundet~i Riera, \emph{Relative {H}itchin-{K}obayashi correspondences for principal pairs}, Q. J. Math. \textbf{54} (2003), no.~2, 171--208. \MR{1989871}

\bibitem[BGPMiR20]{garcia2}
Olivier Biquard, Oscar Garc\'ia-Prada, and Ignasi Mundet~i Riera, \emph{Parabolic {H}iggs bundles and representations of the fundamental group of a punctured surface into a real group}, Adv. Math. \textbf{372} (2020), 107305, 70. \MR{4129012}

\bibitem[Bou15]{bouthier2015dimension}
Alexis Bouthier, \emph{Dimension des fibres de {S}pringer affines pour les groupes}, Transform. Groups \textbf{20} (2015), no.~3, 615--663.

\bibitem[Bra18]{branco2018higgs}
Lucas~C Branco, \emph{Higgs bundles, {L}agrangians and mirror symmetry}, Ph.D. thesis, 2018, arXiv preprint arXiv:1803.01611.

\bibitem[BS19]{baraglia2019cayley}
David Baraglia and Laura~P. Schaposnik, \emph{Cayley and {L}anglands type correspondences for orthogonal {H}iggs bundles}, Trans. Amer. Math. Soc. \textbf{371} (2019), no.~10, 7451--7492.

\bibitem[Chi22]{chi2018geometry}
Jingren Chi, \emph{Geometry of {K}ottwitz-{V}iehmann varieties}, J. Inst. Math. Jussieu \textbf{21} (2022), no.~1, 1--65.

\bibitem[CN20]{chen-ngo}
Tsao-Hsien Chen and Bao~Chau Ngo, \emph{On the {H}itchin morphism for higher-dimensional varieties}, Duke Math. J. \textbf{169} (2020), no.~10, 1971--2004. \MR{4118645}

\bibitem[Con]{conrad}
Brian Conrad, \emph{Reductive groups over fields (notes by {T}ony {F}eng on lectures by {B}rian {C}onrad)}.

\bibitem[Cor88]{corlette}
Kevin Corlette, \emph{Flat {$G$}-bundles with canonical metrics}, Journal of differential geometry \textbf{28} (1988), no.~3, 361--382.

\bibitem[DG{\etalchar{+}}70]{SGA}
Michel Demazure, Alexandre Grothendieck, et~al., \emph{Sch{\'e}mas en groupes: s{\'e}minaire de g{\'e}om{\'e}trie alg{\'e}brique}, Springer-Verlag (1970).

\bibitem[DG02]{donagi2002gerbe}
Ron Donagi and Dennis Gaitsgory, \emph{The gerbe of {H}iggs bundles}, Transform. Groups \textbf{7} (2002), no.~2, 109--153.

\bibitem[Don87]{donaldson1987twisted}
Simon~K Donaldson, \emph{Twisted harmonic maps and the self-duality equations}, Proceedings of the London Mathematical Society \textbf{3} (1987), no.~1, 127--131.

\bibitem[DP12]{donagi-pantev}
R.~Donagi and T.~Pantev, \emph{Langlands duality for {H}itchin systems}, Invent. Math. \textbf{189} (2012), no.~3, 653--735. \MR{2957305}

\bibitem[Gin08]{ginzburg2008variations}
Victor Ginzburg, \emph{Variations on themes of {K}ostant}, Transformation Groups \textbf{13} (2008), no.~3, 557--573.

\bibitem[Gir71]{giraud1971cohomologie}
Jean Giraud, \emph{Cohomologie non ab\'{e}lienne}, Die Grundlehren der mathematischen Wissenschaften, Band 179, Springer-Verlag, Berlin-New York, 1971.

\bibitem[GP20]{garcia4}
Oscar Garc\'ia-Prada, \emph{Higgs bundles and higher {T}eichm\"uller spaces}, Handbook of {T}eichm\"uller theory. {V}ol. {VII}, IRMA Lect. Math. Theor. Phys., vol.~30, Eur. Math. Soc., Z\"urich, [2020] \copyright 2020, pp.~239--285. \MR{4321175}

\bibitem[GPGiR12]{garcia2009hitchin}
Oscar Garc\'ia-Prada, Peter~B. Gothen, and Ignasi~Mundet i~Riera, \emph{The {H}itchin-{K}obayashi correspondence, {H}iggs pairs and surface group representations}, 2012, arXiv:0909.4487.

\bibitem[GPPN23]{gppn}
Oscar Garc\'{\i}a-Prada and Ana Pe\'{o}n-Nieto, \emph{Abelianization of {H}iggs bundles for quasi-split real groups}, Transform. Groups \textbf{28} (2023), no.~1, 285--325.

\bibitem[Gro61]{grothendieck}
Alexander Grothendieck, \emph{Techniques de construction et théorèmes d'existence en géométrie algébrique iv : Les schémas de hilbert}, Séminaire Bourbaki, Séminaire Bourbaki, Vol. 6 (1960–1961), no. 221, Secrétariat mathématique, 1961, pp.~1--28.

\bibitem[GWZ20]{GWZ}
Michael Groechenig, Dimitri Wyss, and Paul Ziegler, \emph{Mirror symmetry for moduli spaces of {H}iggs bundles via p-adic integration}, Invent. Math. \textbf{221} (2020), no.~2, 505--596. \MR{4121158}

\bibitem[Hit87a]{hitchin1987self}
N.~J. Hitchin, \emph{The self-duality equations on a {R}iemann surface}, Proc. London Math. Soc. (3) \textbf{55} (1987), no.~1, 59--126.

\bibitem[Hit87b]{hitchin}
Nigel Hitchin, \emph{Stable bundles and integrable systems}, Duke Math. J. \textbf{54} (1987), no.~1, 91--114.

\bibitem[Hit92]{hitchin-teichmuller}
N.~J. Hitchin, \emph{Lie groups and {T}eichm\"uller space}, Topology \textbf{31} (1992), no.~3, 449--473. \MR{1174252}

\bibitem[HLM24]{me2}
Thomas Hameister, Zhilin Luo, and Benedict Morrissey, \emph{Relative {D}olbeault geometric {L}anglands via the regular quotient}, 2024, arXiv preprint, arXiv:2409.15691.

\bibitem[HS14]{hitchin2014nonabelianization}
Nigel Hitchin and Laura~P. Schaposnik, \emph{Nonabelianization of {H}iggs bundles}, J. Differential Geom. \textbf{97} (2014), no.~1, 79--89. \MR{3229050}

\bibitem[HT03]{hausel-thaddeus}
Tam\'as Hausel and Michael Thaddeus, \emph{Mirror symmetry, {L}anglands duality, and the {H}itchin system}, Invent. Math. \textbf{153} (2003), no.~1, 197--229. \MR{1990670}

\bibitem[Jan98]{jantzen1998representations}
Jens~Carsten Jantzen, \emph{Representations of {L}ie algebras in prime characteristic}, Representation theories and algebraic geometry, Springer, 1998, pp.~185--235.

\bibitem[Kno90]{knop_german}
Friedrich Knop, \emph{Weylgruppe und {M}omentabbildung}, Invent. Math. \textbf{99} (1990), no.~1, 1--23. \MR{1029388}

\bibitem[Kno94]{knop-invariant}
\bysame, \emph{The asymptotic behavior of invariant collective motion}, Invent. Math. \textbf{116} (1994), no.~1-3, 309--328. \MR{1253195}

\bibitem[Kno96]{knop1996automorphisms}
\bysame, \emph{Automorphisms, root systems, and compactifications of homogeneous varieties}, J. Amer. Math. Soc. \textbf{9} (1996), no.~1, 153--174. \MR{1311823}

\bibitem[KR71]{KR}
B.~Kostant and S.~Rallis, \emph{Orbits and representations associated with symmetric spaces}, Amer. J. Math. \textbf{93} (1971), 753--809. \MR{311837}

\bibitem[KW07]{kapustin-witten}
Anton Kapustin and Edward Witten, \emph{Electric-magnetic duality and the geometric {L}anglands program}, Commun. Number Theory Phys. \textbf{1} (2007), no.~1, 1--236. \MR{2306566}

\bibitem[Les20]{spencer_annals}
Spencer Leslie, \emph{The endoscopic fundamental lemma for unitary {F}riedberg-{J}acquet periods}, 2020, arXiv preprint, arXiv:1911.07907, To appear in \emph{Annals of Mathematics}.

\bibitem[Les21]{leslie}
Spencer Leslie, \emph{An analogue of the {G}rothendieck-{S}pringer resolution for symmetric spaces}, Algebra Number Theory \textbf{15} (2021), no.~1, 69--107. \MR{4226983}

\bibitem[Les24]{sl_endoscopy}
Spencer Leslie, \emph{Symmetric varieties for endoscopic groups}, 2024, arXiv preprint, arXiv: 2401.09156.

\bibitem[Lev07]{levy}
Paul Levy, \emph{Involutions of reductive {L}ie algebras in positive characteristic}, Adv. Math. \textbf{210} (2007), no.~2, 505--559. \MR{2303231}

\bibitem[MFK94]{mumford}
David Mumford, John Fogarty, and Frances Kirwan, \emph{Geometric invariant theory}, 3rd ed., Ergebnisse der Mathematik und ihrer Grenzgebiete, vol.~34, Springer-Verlag, Berlin, 1994.

\bibitem[MN]{morrissey2022reg}
Benedict Morrissey and Bao~Chau Ngo, \emph{Reqular quotients and {H}itchin type fibrations}, Unpublished.

\bibitem[Ngo10]{ngo2010lemme}
Bao~Chau Ngo, \emph{Le lemme fondamental pour les alg\`ebres de {L}ie}, Publ. Math. Inst. Hautes \'{E}tudes Sci. (2010), no.~111, 1--169.

\bibitem[NS64]{NS}
M.~S. Narasimhan and C.~S. Seshadri, \emph{Stable bundles and unitary bundles on a compact {R}iemann surface}, Proc. Nat. Acad. Sci. U.S.A. \textbf{52} (1964), 207--211. \MR{170350}

\bibitem[OS03]{olsson-starr}
Martin Olsson and Jason Starr, \emph{Quot functors for {D}eligne-{M}umford stacks}, vol.~31, 2003, Special issue in honor of Steven L. Kleiman, pp.~4069--4096. \MR{2007396}

\bibitem[Pan05]{panyu}
Dmitri~I. Panyushev, \emph{On invariant theory of {$\theta$}-groups}, J. Algebra \textbf{283} (2005), no.~2, 655--670. \MR{2111215}

\bibitem[PN13]{peon2013higgs}
Ana Pe{\'o}n-Nieto, \emph{Higgs bundles, real forms and the {H}itchin fibration}, Ph.D. thesis, Universidad Aut{\'o}noma de Madrid, 2013.

\bibitem[Ric82]{richardson}
R.~W. Richardson, \emph{Orbits, invariants, and representations associated to involutions of reductive groups}, Invent. Math. \textbf{66} (1982), no.~2, 287--312.

\bibitem[Ric17]{riche2017kostant}
Simon Riche, \emph{Kostant section, universal centralizer, and a modular derived {S}atake equivalence}, Math. Z. \textbf{286} (2017), no.~1-2, 223--261. \MR{3648498}

\bibitem[Sch13]{schaposnik2013spectral}
Laura~P. Schaposnik, \emph{Spectral data for {G}-{H}iggs bundles}, Ph.D. thesis, 2013, Thesis (D.Phil.)--University of Oxford (United Kingdom). \MR{3389247}

\bibitem[Sch15]{schaposnik2015spectral}
\bysame, \emph{Spectral data for {$U(m,m)$}-{H}iggs bundles}, Int. Math. Res. Not. IMRN (2015), no.~11, 3486--3498. \MR{3373057}

\bibitem[Sch18]{schaposnik2014introduction}
\bysame, \emph{An introduction to spectral data for {H}iggs bundles}, The geometry, topology and physics of moduli spaces of {H}iggs bundles, Lect. Notes Ser. Inst. Math. Sci. Natl. Univ. Singap., vol.~36, World Sci. Publ., Hackensack, NJ, 2018, pp.~65--101. \MR{3837869}

\bibitem[Sek84]{sekiguchi}
Jiro Sekiguchi, \emph{The nilpotent subvariety of the vector space associated to a symmetric pair}, Publ. Res. Inst. Math. Sci. \textbf{20} (1984), no.~1, 155--212. \MR{736100}

\bibitem[Sim90]{simpson}
Carlos~T Simpson, \emph{Nonabelian {H}odge theory}, Proceedings of the International Congress of Mathematicians, vol.~1, 1990, pp.~747--756.

\bibitem[SV17]{sakellaridis2017periods}
Yiannis Sakellaridis and Akshay Venkatesh, \emph{Periods and harmonic analysis on spherical varieties}, Ast\'{e}risque (2017), no.~396, viii+360. \MR{3764130}

\bibitem[Vus74]{vust}
Thierry Vust, \emph{Op\'eration de groupes r\'eductifs dans un type de c\^ones presque homog\`enes}, Bull. Soc. Math. France \textbf{102} (1974), 317--333. \MR{366941}

\bibitem[Wan24]{griffin}
X.~Griffin Wang, \emph{Multiplicative {H}itchin fibration and fundamental lemma}, 2024, arXiv preprint, arXiv:2402.19331.

\end{thebibliography}

\end{document}